\numberwithin{equation}{section}
\numberwithin{figure}{section}
\newcommand{\where}{\qquad \text{where} \quad}
\newcommand{\hp}[1]{with $#1$-high probability}
\renewcommand{\b}[1]{\boldsymbol{\mathrm{#1}}} 
\newcommand{\bb}{\mathbb} 
\renewcommand{\cal}{\mathcal} 
\newcommand{\fra}{\mathfrak} 
\newcommand{\ol}[1]{\overline{#1} \!\,} 
\newcommand{\wh}{\widehat}
\newcommand{\wt}{\widetilde}
\newcommand{\me}{\mathrm{e}}
\newcommand{\ii}{\mathrm{i}}
\newcommand{\dd}{\mathrm{d}}
\newcommand{\col}{\mathrel{\vcenter{\baselineskip0.75ex \lineskiplimit0pt \hbox{.}\hbox{.}}}}
\newcommand*{\deq}{\mathrel{\vcenter{\baselineskip0.65ex \lineskiplimit0pt \hbox{.}\hbox{.}}}=}
\newcommand*{\eqd}{=\mathrel{\vcenter{\baselineskip0.65ex \lineskiplimit0pt \hbox{.}\hbox{.}}}}
\newcommand{\eqdist}{\overset{d}{=}}
\newcommand{\simd}{\overset{d}{\sim}}
\newcommand{\umat}{\mathbbmss{1}} 
\renewcommand{\leq}{\leqslant}
\renewcommand{\geq}{\geqslant}
\renewcommand{\le}{\leqslant}
\renewcommand{\ge}{\geqslant}
\newcommand{\ind}[1]{\b 1 (#1)}
\newcommand{\indb}[1]{\b 1 \pb{#1}}
\renewcommand{\epsilon}{\varepsilon}
\renewcommand{\P}{\mathbb{P}}
\newcommand{\E}{\mathbb{E}}
\newcommand{\R}{\mathbb{R}}
\newcommand{\C}{\mathbb{C}}
\newcommand{\N}{\mathbb{N}}
\newcommand{\p}[1]{({#1})}
\newcommand{\pb}[1]{\bigl({#1}\bigr)}
\newcommand{\pB}[1]{\Bigl({#1}\Bigr)}
\newcommand{\pbb}[1]{\biggl({#1}\biggr)}
\newcommand{\pBB}[1]{\Biggl({#1}\Biggr)}
\newcommand{\q}[1]{[{#1}]}
\newcommand{\qb}[1]{\bigl[{#1}\bigr]}
\newcommand{\qB}[1]{\Bigl[{#1}\Bigr]}
\newcommand{\qbb}[1]{\biggl[{#1}\biggr]}
\newcommand{\qBB}[1]{\Biggl[{#1}\Biggr]}
\newcommand{\h}[1]{\{{#1}\}}
\newcommand{\hb}[1]{\bigl\{{#1}\bigr\}}
\newcommand{\hB}[1]{\Bigl\{{#1}\Bigr\}}
\newcommand{\hbb}[1]{\biggl\{{#1}\biggr\}}
\newcommand{\hBB}[1]{\Biggl\{{#1}\Biggr\}}
\newcommand{\abs}[1]{\lvert #1 \rvert}
\newcommand{\absb}[1]{\bigl\lvert #1 \bigr\rvert}
\newcommand{\absB}[1]{\Bigl\lvert #1 \Bigr\rvert}
\newcommand{\absbb}[1]{\biggl\lvert #1 \biggr\rvert}
\newcommand{\absBB}[1]{\Biggl\lvert #1 \Biggr\rvert}
\newcommand{\norm}[1]{\lVert #1 \rVert}
\newcommand{\normb}[1]{\bigl\lVert #1 \bigr\rVert}
\newcommand{\scalar}[2]{\langle{#1} \mspace{2mu}, {#2}\rangle}
\newcommand{\scalarb}[2]{\bigl\langle{#1} \mspace{2mu}, {#2}\bigr\rangle}
\DeclareMathOperator{\diag}{diag}
\DeclareMathOperator{\tr}{Tr}
\DeclareMathOperator{\re}{Re}
\DeclareMathOperator{\im}{Im}
\DeclareMathOperator{\dist}{dist}
\DeclareMathOperator{\diam}{diam}
\newcommand{\msc}{m}
\newcommand{\beqa}{\begin{eqnarray}}
\newcommand{\eeqa}{\end{eqnarray}}
\newcommand{\be}{\begin{equation}}
\newcommand{\ee}{\end{equation}}
\theoremstyle{plain} 
\newtheorem{theorem}{Theorem}[section]
\newtheorem*{theorem*}{Theorem}
\newtheorem{lemma}[theorem]{Lemma}
\newtheorem*{lemma*}{Lemma}
\newtheorem*{corollary*}{Corollary}
\newtheorem{proposition}[theorem]{Proposition}
\newtheorem*{proposition*}{Proposition}
\newtheorem{definition}[theorem]{Definition}
\newtheorem*{definition*}{Definition}
\theoremstyle{definition} 
\newtheorem*{example*}{Example}
\newtheorem{remark}[theorem]{Remark}
\newtheorem*{remark*}{Remark}
\newtheorem*{remarks*}{Remarks}
\title{The Isotropic Semicircle Law and Deformation of Wigner Matrices}
\author{
Antti Knowles${}^1$\thanks{Partially supported by NSF grant DMS-0757425} \; and \; Jun Yin${}^2$\thanks{Partially 
supported by NSF grant DMS-1001655} \\\\\\
Department of Mathematics, Harvard University \\
Cambridge MA 02138, USA \\ knowles@math.harvard.edu${}^1$ \\\\
Department of Mathematics, University of Wisconsin \\
Madison WI 53706, USA \\
jyin@math.wisc.edu${}^2$ \\ \\}
\begin{document}


\maketitle


\begin{abstract}
We analyse the spectrum of additive finite-rank deformations of $N \times N$ Wigner matrices $H$. The spectrum of the deformed matrix undergoes a transition, associated with the creation or annihilation of an outlier, when an eigenvalue $d_i$ of the deformation crosses a critical value $\pm 1$. This transition happens on the scale $\abs{d_i} - 1 \sim N^{-1/3}$. We allow the eigenvalues $d_i$ of the deformation to depend on $N$ under the condition $\absb{\abs{d_i} - 1} \geq (\log N)^{C \log \log N} N^{-1/3}$.
We make no assumptions on the eigenvectors of the deformation. In the limit $N \to \infty$, we identify the law of the outliers and prove that the non-outliers close to the spectral edge have a universal distribution coinciding with that of the extremal eigenvalues of a Gaussian matrix ensemble.

A key ingredient in our proof is the \emph{isotropic local semicircle law}, which establishes optimal high-probability bounds on the quantity $\scalarb{\b v}{\pb{(H - z)^{-1} - m(z) \umat} \b w}$, where $m(z)$ is the Stieltjes transform of Wigner's semicircle law and $\b v, \b w$ are arbitrary deterministic vectors.
\end{abstract}

\vspace{1cm}

{\bf AMS Subject Classification (2010):} 15B52, 60B20, 82B44

\medskip

\medskip

{\it Keywords:}  Random matrix, universality, deformation, outliers

\newpage

\section{Introduction}

Random matrices were introduced by Wigner \cite{Wig} in the 1950s to model the excitation spectra of large atomic nuclei, and have since been the subject of intense mathematical investigation. In this paper we study Wigner matrices -- random matrices whose entries are independent up to symmetry constraints -- that have been deformed by a finite-rank perturbation. By Weyl's eigenvalue interlacing inequalities, such a deformation does not influence the global statistics of the eigenvalues. Thus, the empirical eigenvalue densities of deformed and undeformed Wigner matrices have the same large-scale asymptotics, and are governed by Wigner's famous semicircle law. However, the behaviour of individual eigenvalues may change dramatically under a deformation. In particular, deformed Wigner matrices may exhibit \emph{outliers}, eigenvalues located away from the bulk spectrum. Such models were first investigated by F\"uredi and Koml\'os \cite{FKoml}. Subsequently, much progress \cite{SoshPert, FP, CDMF1, CDMF2, CDMF3, BGN, BGGM1, BGGM2} has been made in the analysis of the spectrum of such deformed matrix models. See e.g.\ \cite{SoshPert} for a review of recent developments.  Analogous deformations of covariance matrices, so-called \emph{spiked population models}, as well as generalizations thereof, were studied in \cite{BY1,BY2,BS}.

In a seminal work \cite{BBP}, Baik, Ben Arous, and P\'ech\'e investigated the spectrum of deformed (spiked) complex Gaussian sample covariance matrices. They established a phase transition, sometimes referred to as the \emph{BBP transition}, in the distribution of the extremal eigenvalues. In \cite{Pec}, P\'ech\'e proved a similar result for additive deformations of GUE (the Gaussian Unitary Ensemble). Subsequently, the results of \cite{BBP} and \cite{Pec} were extended to the other Gaussian ensembles, such as GOE (the Gaussian Orthogonal Ensemble), by Bloemendal and Vir\'ag \cite{BV1,BV2}. We sketch the results of \cite{BBP, Pec, BV1,BV2} in the case of additive deformations of GUE. For simplicity, we consider rank-one deformations, although the results of \cite{BBP, Pec, BV1,BV2} cover arbitrary rank-$k$ deformations. Thus, let $H$ be an $N \times N$ GUE matrix, normalized so that its entries have variance $N^{-1}$. Let $\wt H(d) \deq H + d \b v \b v^*$, where $\b v$ is a normalized vector and $d$ is independent of $N$. If $d>1$ then the spectrum of $\wt H(d)$ consists of a \emph{bulk spectrum} asymptotically contained in $[-2,2]$, and an \emph{outlier}, located at $d + d^{-1}$ and having a normal law with variance of order $N^{-1}$. If $d < 1$ then there is no such outlier, and the statistics of the extremal eigenvalues of $\wt H(d)$ coincide with those of $H$. Thus, as $d$ increases from $1 - \epsilon$ to $1 + \epsilon$ for some small $\epsilon > 0$, the largest eigenvalue of $\wt H(d)$ detaches itself from the bulk spectrum and becomes an outlier.

The phase transition takes place on the scale $d = 1 + w N^{-1/3}$ where $w$ is of order one. This may be heuristically understood  as follows. The largest eigenvalues of $H$ are known to fluctuate on the scale $N^{-2/3}$ around $2$. The critical scale for $d$, i.e.\ the scale on which the outlier is separated from $2$ by a gap of order $N^{-2/3}$, is therefore $d = 1 + w N^{-1/3}$ (since in that case $d + d^{-1} = 2 + w^2 N^{-2/3} + O(w^3 N^{-1})$). In \cite{BBP, Pec, BV1,BV2}, the authors established the weak convergence as $N \to \infty$
\begin{equation*}
N^{2/3} \pB{\lambda_N \pb{\wt H(1 + w N^{-1/3})} - 2} \;\Longrightarrow\; \Lambda_w\,,
\end{equation*}
where $\lambda_N(A)$ denotes the largest eigenvalue of $A$. Moreover, the asymptotics in $w$ of the law $\Lambda_w$ was analysed in \cite{BBP, Pec, BV1, BV2, Bthesis}: as $w \to + \infty$, the law $\Lambda_w$ converges to a Gaussian; as $w \to - \infty$, the law $\Lambda_w$ converges to the Tracy-Widom-$\beta$ distribution (where $\beta = 1$ for GOE and $\beta = 2$ for GUE). As mentioned above, the results of \cite{BBP, Pec, BV1,BV2} also apply to rank-$k$ deformations, where the picture is similar; each eigenvalue $d_i \in [-1,1]^c$ gives rise to an outlier located around $d_i + d_i^{-1}$, while eigenvalues $d_i \in (-1,1)$ do not change the statistics of the extremal eigenvalues of $\wt H$.

The proofs of \cite{BBP, Pec} use an asymptotic analysis of Fredholm determinants, while those of \cite{BV1,BV2} use an explicit tridiagonal representation of $H$; both of these approaches rely heavily on the Gaussian nature of $H$. In order to study the phase transition for non-Gaussian matrix ensembles, and in particular address the question of spectral universality, a different approach is needed. Interestingly, it was observed in \cite{CDMF1, CDMF2, CDMF3} that the distribution of the outliers is not universal, and may depend on the geometry of the eigenvectors of $A$. The non-universality of the outliers was further investigated in \cite{SoshPert}.

In the present paper we take $H$ to be a real symmetric or complex Hermitian Wigner matrix, and $A$ to be a rank-$k$ deterministic matrix whose symmetry class (real symmetric or complex Hermitian) coincides with that of $H$. We make the following assumptions on the perturbation $A$.
\begin{itemize}
\item[(A1)]
The eigenvalues $d_1, \dots, d_k$ of $A$ may depend on $N$; they satisfy $\absb{\abs{d_i} - 1} \geq (\log N)^{C \log \log N} N^{-1/3}$, i.e., on the scale of the phase transition, the eigenvalues of $A$ are separated from the transition points by at least a logarithmic factor.
\item[(A2)]
The eigenvectors of $A$ are arbitrary orthonormal vectors.
\end{itemize}
Our main results on the spectrum of $H + A$ may be informally summarized as follows.
\begin{itemize}
\item[(R1)]
The non-outliers ``stick'' to eigenvalues of the undeformed matrix $H$ (Theorem \ref{theorem: rank-k lde}). In particular, the extremal bulk eigenvalues of $H + A$ are universal.
\item[(R2)] We identify the distribution of the outliers of $H+A$ (Theorem \ref{theorem: outlier distributions}).
\end{itemize}

A key ingredient in our proof is a generalization of the \emph{local semicircle law}. The study of the local semicircle law was initiated in \cite{ESY1, ESY3}; it provides a key step towards establishing universality for Wigner matrices \cite{ESY4, ESY6, EYY2, EYY3,  TV1,TV2}.   The strongest versions of the local semicircle law, proved  in \cite{EYY3, EKYY1, EKYY2}, give precise estimates on the local eigenvalue density, down to scales containing $N^\epsilon$ eigenvalues.
In fact, as formulated in \cite{EYY3}, the local semicircle law gives optimal high-probability estimates on the quantity
\begin{equation} \label{old lsc}
G_{ij}(z) - \delta_{ij} m(z)\,,
\end{equation}
where $m(z)$ denotes the Stieltjes transform of Wigner's semicircle law and $G(z) = (H - z)^{-1}$ is the resolvent of $H$. Starting from such estimates on \eqref{old lsc}, the two following facts are established in \cite{EYY3}.
\begin{enumerate}
\item
The eigenvalue density is governed by Wigner's semicircle law down to scales containing $N^\epsilon$ eigenvalues.
\item
\emph{Eigenvalue rigidity}: optimal high-probability bounds on the eigenvalue locations. 
\end{enumerate}

Another key ingredient in the proof of universality of random matrices is the \emph{Green function comparison method} introduced in \cite{EYY2}. It  uses a Lindeberg replacement strategy, which previously appeared in the context of random matrix theory in  \cite{Chat, TV1,TV2}. A fundamental input in the Green function comparison method is a precise control on the matrix entries of $G$, which is provided by the local semicircle law. The Green function comparison method has subsequently been applied to proving the spectral universality of adjacency matrices of random graphs \cite{EKYY1,EKYY2} as well as the universality of eigenvectors of Wigner matrices \cite{KY1}.

In this paper, we extend the local semicircle law to the \emph{isotropic local semicircle law}, which gives optimal high-probability estimates on the quantity
\begin{equation} \label{quantity in islc}
\scalarb{\b v}{\pb{G(z) - m(z) \umat} \b w}\,,
\end{equation}
where $\b v$ and $\b w$ are arbitrary deterministic vectors. Note that \eqref{old lsc} is a special case obtained from \eqref{quantity in islc} by setting $\b v = \b e_i$ and $\b w = \b e_j$, where $\b e_i$ denotes $i$-th standard basis vector of $\C^N$.

\subsection{Outline and sketch of proofs}
In Section \ref{sec: results}, we introduce basic definitions and state our results. In a first part, we state the isotropic semicircle law (Theorem \ref{thm: ILSC}) and some important corollaries, such as the isotropic delocalization estimate (Theorem \ref{theorem: delocalization}). The second part of Section \ref{sec: results} is devoted to the spectra of deformed Wigner matrices. Our main results are deviation estimates on the eigenvalue locations (Theorem \ref{theorem: rank-k lde}) and the distribution of the outliers (Theorem \ref{theorem: outlier distributions}). In subsequent remarks we discuss some special cases of interest, in particular making the link to the previous results of \cite{CDMF1, CDMF2, CDMF3, SoshPert}.

The remainder of this paper is devoted to proofs. As it turns out, the proof of the isotropic local semicircle law is considerably simpler if the third moments of the matrix entries of $H$ vanish. This case is dealt with in Section \ref{section: three moments}. The proof is based on the Green function comparison 
method and the local semicircle law of \cite{EYY3}.
In Section \ref{section: two moments}, we give the additional arguments needed to extend the isotropic local semicircle law to arbitrary matrix entries.
We remark that the Green function comparison method has been traditionally \cite{EYY2, EKYY2, KY1} used to obtain limiting distributions of smooth, bounded, observables that depend on the resolvent $G$. In this paper we use it in a novel setting: to obtain high-probability bounds on a fluctuating error.

In Section \ref{section: deloc and exterior} we use the isotropic semicircle law to obtain an improved estimate outside of the classical spectrum $[-2,2]$, and prove the isotropic delocalization result which yields optimal high-probability bounds on projections of the eigenvectors of $H$ onto arbitrary deterministic vectors.

Section \ref{section: eigenvalue locations} is devoted to the proof of deviation estimates for the eigenvalues of $H + A$. Our starting point for locating the eigenvalues is a simple identity from linear algebra (Lemma \ref{lemma: det identity}) already used in the works \cite{SoshPert, BGGM1, BGGM2, BGN}. Similar identities were also used in \cite{BS, BY1, BY2} for deformed covariance matrices. Using such identities, the study of the eigenvalue distribution of the deformed ensemble can be reduced to the study of the resolvent. In our case, this study of the resolvent is considerably more involved because we allow very general perturbations and also identify the distribution of non-outliers. In order to illustrate our method, we first consider the rank-one case in Theorem \ref{theorem: rank-one lde}. The general rank-$k$ case is based on a bootstrap argument -- in which the eigenvalues $\b d = (d_1, \dots, d_k)$ of $A$ are varied -- which may be summarized in the following three steps.
\begin{enumerate}
\item
For arbitrary $\b d$, we establish a ``permissible region'' $\Gamma(\b d) \subset \R$ whose complement cannot contain eigenvalues of $H + A$. The region $\Gamma(\b d)$ consists essentially of small neighbourhoods of the extremal eigenvalues of $H$ as well as of small neighbourhoods of the classical outlier locations $d_i + d_i^{-1}$ for $i$ satisfying $\abs{d_i} > 1$. 
\item
We fix $\b d$ to be \emph{independent} of $N$. In this simple case, we prove that each permissible neighbourhood of a classical outlier location $d_i + d_i^{-1}$ contains exactly one eigenvalue of $H + A$. Moreover, we prove that the non-outliers of $H + A$ stick to eigenvalues of $H$.
\item
In order to allow arbitrary $N$-dependent $\b d$'s, we construct a continuous path $(\b d(t))_{t \in [0,1]}$ that takes an $N$-independent initial configuration $\b d(0)$ to the desired $N$-dependent configuration $\b d \equiv \b d(1)$. Using (i), (ii), and the continuity of the eigenvalues of $H + A(t)$ as functions of $t$, we infer that the conclusions of (ii) remain valid for all $\b d(t)$ where $t \in [0,1]$, and in particular for $\b d(1)$.
(Here $A(t)$ denotes the perturbation with eigenvalues $\b d(t)$.)
\end{enumerate}

Finally, Section \ref{section: ditribution of outl} contains the proof  of Theorem \ref{theorem: outlier distributions}, the distribution of the outliers. The proof consists of four main steps.
\begin{enumerate}
\item
We reduce the problem of identifying the distribution of an outlier to that of analysing the distribution of random variables of the form $\scalar{\b v}{G(\theta) \b v}$, where $\theta \deq d + d^{-1}$ and $d$ is an eigenvalue of $A$ with associated eigenvector $\b v$. The argument is based on a precise control of the derivative of $G(z)$ and second-order perturbation theory.
\item
We consider the case where $H$ is Gaussian. Using the unitary invariance of the law of $H$, we prove that $\scalar{\b v}{G(\theta) \b v}$, when appropriately rescaled, converges to a normal random variable.
\end{enumerate}
The remainder of the proof consists in analysing the difference between the general Wigner case and the Gaussian case. Ultimately, we shall apply the Green function comparison method to expressions of the form $\scalar{\b v}{G(\theta) \b v}$ (Step (iv) below). However, this method is only applicable if $\norm{\b v}_\infty$ is sufficiently small (in fact, our result shows that the Green function comparison method must fail if $\norm{\b v}_\infty$ is not small). We therefore have to perform a two-step comparison.
\begin{enumerate}
\setcounter{enumi}{2}
\item
Let $H$ be the Wigner matrix we are interested in. We introduce a cutoff $\epsilon_N$ (equal to $\varphi^{-D}$ in the notation of Section \ref{section: almost Gaussian}). We define $\wh H$ as the Wigner matrix obtained from $H$ by replacing the $(i,j)$-th entry of $H$ with a Gaussian whenever $\abs{v_i} \leq \epsilon_N$ and $\abs{v_j} \leq \epsilon_N$. We choose $\epsilon_N$ large enough that most entries of $\wh H$ are Gaussian. We shall compare $H$ with a Gaussian matrix $V$ via the intermediate matrix $\wh H$. In this step, (iii), we compare $\wh H$ with $V$.

Our proof relies on a block expansion of $\wh H$, which expresses the distribution of the difference 
\begin{equation*}
\scalarb{\b v}{\p{\wh H - \theta}^{-1} \b v} - \scalarb{\b v}{\p{V - \theta}^{-1} \b v}
\end{equation*}
in terms of a sum of independent random variables ($\Gamma_1, \dots, \Gamma_6$ in the notation of Section \ref{section: almost Gaussian}) whose laws may be explicitly computed.
\item
In the final step, we use the Green function comparison method to analyse the difference
\begin{equation*}
\scalarb{\b v}{\p{H - \theta}^{-1} \b v} - \scalarb{\b v}{\p{\wh H - \theta}^{-1} \b v}\,.
\end{equation*}
By definition of $\wh H$, whenever the entry $(i,j)$ of $H$ differs from that of $\wh H$, we have $\abs{v_i} \leq \epsilon_N$ and $\abs{v_j} \leq \epsilon_N$. As a consequence, as it turns out, the Green function comparison method is applicable. Of special note in this comparison argument is a shift in the mean of the outlier (arising from the second term on the right-hand side of \eqref{comparison for T}), depending on the third moments of the entries of $H$.
\end{enumerate}

\subsection*{Acknowledgements}
We are grateful to Alex Bloemendal, Paul Bourgade, L\'aszl\'o Erd\H{o}s, and Horng-Tzer Yau for helpful comments.

\section{Results} \label{sec: results}

\subsection{The setup}
Let $H^\omega \equiv H = (h_{ij})$ be an $N \times N$ matrix; here $\omega$ denotes the running element in probability space, which we shall almost always drop from the notation. We assume that the upper-triangular entries $(h_{ij} \col i \leq j)$ are independent complex-valued random variables. The remaining entries of $H$ are given by imposing $H = H^*$. Here $H^*$ denotes the Hermitian conjugate of $H$. We assume that all entries are centred, $\E h_{ij} = 0$. In addition, we assume that one of the two following conditions holds.
\begin{description}
\item[{\rm (i) {\it Real symmetric Wigner matrix}:}] $h_{ij} \in \R$ for all $i,j$ and
\begin{equation*}
\E h_{ii}^2 \;=\; \frac{2}{N} \,, \qquad \E h_{ij}^2 \;=\; \frac{1}{N} \qquad (i \neq j)\,.
\end{equation*}
\item[{\rm (ii) {\it Complex Hermitian Wigner matrix}:}]
\begin{equation*}
\E h_{ii}^2 \;=\; \frac{1}{N} \,, \qquad \E \abs{h_{ij}}^2 \;=\; \frac{1}{N}\,, \qquad \E h_{ij}^2 \;=\; 0 \qquad (i \neq j)\,.
\end{equation*}
\end{description}
We use the abbreviation GOE/GUE to mean GOE if $H$ is a real symmetric Wigner matrix with Gaussian entries and GUE if $H$ is a complex Hermitian Wigner matrix with Gaussian entries.
We assume that the entries of $H$ have uniformly subexponential decay, i.e.\ that there exists a constant $\vartheta > 0$ such that
\begin{equation} \label{subexp for h}
\P\pb{\sqrt{N} \abs{h_{ij}} \geq x} \;\leq\; \vartheta^{-1} \exp(- x^\vartheta)
\end{equation}
for all $i,j$. Note that we do not assume the entries of $H$ to be identically distributed.

The following quantities will appear throughout this paper.  We choose a fixed but arbitrary constant $\Sigma \geq 3$. We define the logarithmic control parameter
\be
\varphi_N \;\equiv\; \varphi \;\deq\; (\log N)^{\log\log N}\,.
\ee
The parameter $\zeta$ will play the role of a fixed positive constant, which simultaneously dictates the power of $\varphi$ in large deviations estimates and characterizes the decay of probability of exceptional events, according to the following definition.
\begin{definition}[High probability events]
Let $\zeta> 0$.
We say that an $N$-dependent event $\Xi$ holds with \emph{$\zeta$-high probability} if there is some constant $C$ such that
\begin{equation} \label{high prob}
\P(\Xi^c) \;\leq\; N^C \exp(-\varphi^\zeta)
\end{equation}
for large enough $N$.
\end{definition}

Introduce the spectral parameter
\begin{equation*}
z \;=\; E + \ii \eta\,,
\end{equation*}
which will be used as the argument of Stieltjes transforms and resolvents. In the following we shall often use the notation $E = \re z$ and $\eta = \im z$ without further comment.
Let
\begin{equation*}
\varrho(\xi) \;\deq\; \frac{1}{2 \pi} \sqrt{[4 - \xi^2]_+} \qquad (\xi \in \R)
\end{equation*}
denote the density of the local semicircle law, and
\begin{equation} \label{definition of msc}
\msc(z) \;\deq\; \int \frac{\varrho(\xi)}{\xi - z} \, \dd \xi \qquad (z \notin [-2,2])
\end{equation}
its Stieltjes transform. To avoid confusion, we remark that the Stieltjes transform $m$ was denoted by $m_{sc}$ in the papers \cite{ESY1, ESY2, ESY3, ESY4,ESY5,ESY6, ESY7, ESYY, EYY1, EYY2, EYY3, EKYY1, EKYY2}, in which $m$ had a different meaning from \eqref{definition of msc}. It is well known that the Stieltjes transform $\msc$ satisfies the identity
\begin{equation} \label{identity for msc}
\msc(z) + \frac{1}{\msc(z)} + z \;=\; 0\,.
\end{equation}
For $\eta > 0$ we define the resolvent of $H$ through
\begin{equation*}
G(z) \;\deq\; (H - z)^{-1}\,.
\end{equation*}

We use the notation $\b v = (v_i)_{i = 1}^N \in \C^N$ for the components of a vector. We introduce the standard scalar product $\scalar{\b v}{\b w} \deq \sum_i \ol v_i w_i$, which induces the Euclidean norm $\norm{\b v} \deq \sqrt{\scalar{\b v}{\b v}}$. By definition, $\b v$ is normalized if $\norm{\b v} = 1$.

We denote by $C$ a generic positive large constant, whose value may change from one expression to the next. If this constant depends on some parameters $\alpha$, we indicate this by writing $C_\alpha$.
Finally, for two positive quantities $A_N$ and $B_N$ we use the notation $A_N \asymp B_N$ to mean $C^{-1} A_N \leq B_N \leq C A_N$ for some positive constant $C$.

\subsection{The isotropic local semicircle law}
For $\zeta > 0$ let
\begin{equation} \label{def D}
\b S(\zeta) \;\deq\; \hb{z \in \C \col \abs{E} \leq \Sigma \,,\, \varphi^\zeta N^{-1} \leq \eta \leq \Sigma}\,.
\end{equation}
For $z \in \b S(\zeta)$ define the control parameter
\begin{equation*}
\Psi(z) \;\deq\; \sqrt{\frac{\im m(z)}{N \eta}} + \frac{1}{N \eta}\,.
\end{equation*}
Our first main result is on the convergence of $G(z)$ to $m(z) \umat$.

\begin{theorem}[Isotropic local semicircle law] \label{thm: ILSC}
Fix $\zeta > 0$. Then there exists a constant $C_\zeta$ such that
\begin{align}\label{ILSC for three moments}
\absb{\scalar{\b v}{G(z) \b w} - \msc(z) \scalar{\b v}{\b w}} \;\leq\; \varphi^{C_\zeta} \Psi(z) \norm{\b v} \norm{\b w}
\end{align}
holds \hp{\zeta} for all deterministic $\b v, \b w \in \C^N$ under either of the two following conditions.
\begin{itemize}
\item[{\rm \bf A.}]
The spectral parameter $z \in \b S(C_\zeta)$ is arbitrary, and the third moments of the entries of $H$ vanish in the sense that
\begin{equation} \label{3rd moment vanishes}
\E h_{ij}^3 \;=\; \E h_{ij}^2 \ol h_{ij} \;=\; 0 \qquad (i,j = 1, \dots, N)\,.
\end{equation}
\item[{\rm \bf B.}]
The spectral parameter $z \in \b S(C_\zeta)$ satisfies
\begin{equation} \label{main assumption on Psi}
\Psi(z)^3 \;\leq\; \varphi^{-C_0} N^{-1/2}
\end{equation}
for some large enough constant $C_0$ depending on $\zeta$.
\end{itemize}
\end{theorem}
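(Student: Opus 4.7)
The plan is to establish both parts of the theorem via a Green function comparison against a reference Gaussian ensemble (GOE if $H$ is real symmetric, GUE if $H$ is complex Hermitian), using the known entrywise local semicircle law of \cite{EYY3} both as an input for controlling sensitivities and as the base case on the Gaussian side. By polarization it suffices to control $T(H) \deq \scalarb{\b v}{(G(z) - \msc(z)\umat) \b v}$ for an arbitrary deterministic unit vector $\b v$.

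If $H = H^G$ is GOE/GUE, the orthogonal/unitary invariance of its law gives $T(H^G) \eqdist \scalarb{\b e_1}{(G^G(z) - \msc(z)\umat) \b e_1} = G^G_{11}(z) - \msc(z)$, and the bound follows from the existing entrywise local law, which yields $\absb{G^G_{11} - \msc} \leq \varphi^{C_\zeta} \Psi$ \hp{\zeta}. This settles the theorem for the Gaussian ensemble and provides the target to which the general $H$ will be compared.

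For the comparison I would enumerate the upper-triangular indices as $\gamma = 1, \dots, M$ with $M = N(N+1)/2$, and define $H^\gamma$ to be the matrix obtained from $H^G$ by replacing the first $\gamma$ Gaussian entries by the corresponding entries of $H$. Differentiation in the single entry $h_{ij}$ yields (in the real case)
\begin{equation*}
\pd_{h_{ij}} T \;=\; - \pb{(G \b v)_i \ol{(G \b v)_j} + (G \b v)_j \ol{(G \b v)_i}} \,,
\end{equation*}
and higher derivatives are likewise polynomials in matrix elements of $G$ and components of $G \b v$. These factors can be bounded using the entrywise local law together with the Ward-type identity $\sum_i \absb{(G \b v)_i}^2 = \eta^{-1} \im \scalar{\b v}{G \b v}$, which self-improves $\norm{G \b v}$ in terms of $\im T$ plus $\im \msc$. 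A Taylor expansion of $T(H^\gamma) - T(H^{\gamma - 1})$ in the difference of the $\gamma$-th entries then produces a sum of terms proportional to the first few moments of that entry. The first and second moments of $H$ and $H^G$ agree, and under hypothesis \textbf{A} the third moments of $H$ also vanish, so only the fourth-order remainder survives; summed over $\gamma$, this contributes of order $\varphi^C N^2 \cdot N^{-2} \Psi^4$, which is safely smaller than $\varphi^{C_\zeta} \Psi$. Under hypothesis \textbf{B} the third moments do contribute, giving a sum of order $\varphi^C N^{1/2} \Psi^3$; the assumption $\Psi^3 \leq \varphi^{-C_0} N^{-1/2}$ is exactly what is needed to absorb this term, once $C_0$ is chosen large enough.

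The main obstacle I anticipate is the passage from the moment-level statements naturally produced by Lindeberg swapping to the pointwise high-probability estimate required by the theorem. I would address this by applying the entire swap argument to $\E \abs{T}^{2p}$ with $p$ of order $\varphi^\zeta$, and then invoking Markov's inequality, at the cost of a larger $C_\zeta$ in the exponent. This is the novel use of the Green function comparison method -- to bound a fluctuating error in probability rather than to compute a limiting distribution -- foreshadowed in the introduction, and it is here that the bookkeeping of all the derivative terms involving the isotropic components $(G \b v)_i$, which the entrywise local law does not directly control, will be the most delicate part of the argument.
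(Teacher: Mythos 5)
Your strategy for Case \textbf{A} is essentially the paper's: reduce to $G_{\b v \b v}-\msc$ by polarization, use orthogonal/unitary invariance plus the entrywise law of \cite{EYY3} to settle the GOE/GUE base case, swap entries one at a time, control the isotropic factors $(G\b v)_i$ via a large-deviations bound combined with the Ward identity $\frac 1N\sum_k\abs{G^{(i)}_{\b v k}}^2=\frac{1}{N\eta}\im G^{(i)}_{\b v\b v}$ (this is Lemma \ref{lemma: Gvi Gvv}), and run the whole comparison on moments $\E\abs{\cdot}^n$ with $n\leq\varphi^\zeta$ followed by Markov. The paper organizes this as a two-stage argument (first an a priori bound on $\E(\im G_{\b v\b v})^n$, then the bound on $\E\abs{G_{\b v\b v}-\msc}^n$), but your self-improving Gronwall-type iteration is the same mechanism, and your fourth-order remainder estimate, while quoting the wrong power of $\Psi$ (two of the four resolvent factors are diagonal and only $O(1)$, so the correct total is $O(\Psi^2)$, not $O(\Psi^4)$), reaches the right conclusion.

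For Case \textbf{B}, however, there is a genuine gap in your accounting of the third-moment terms, and it is precisely where the main difficulty of the theorem lies. A typical surviving third-order term is $N^{-3/2}R_{\b v a}R_{bb}R_{aa}R_{b\b v}$ (two isotropic factors, two \emph{diagonal} factors). Since $R_{aa},R_{bb}=O(1)$ and $R_{\b v a}=O(\Psi+\abs{v_a})$, the direct bound per pair is $N^{-3/2}\Psi^2$ (ignoring the $v$-dependent pieces), which summed over the $N^2$ pairs gives $N^{1/2}\Psi^2$ — not the $N^{1/2}\Psi^3$ you claim. This is larger than the target $\varphi^{C_\zeta}\Psi$ unless $\Psi\lesssim\varphi^C N^{-1/2}$, i.e.\ essentially never in the regime permitted by \eqref{main assumption on Psi}; and even your figure $N^{1/2}\Psi^3\leq\varphi^{-C_0}$ is only a constant, still far above $\Psi$. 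So no direct estimate closes Case \textbf{B}. The paper's Section \ref{section: two moments} gains the missing smallness by exploiting cancellation in expectation: writing $R_{\b v a}=\msc\,\cal R_{\b v a}+\cal R'_{\b v a}$ with $\E_a\cal R_{\b v a}=0$, one replaces $\E\,\cal R_{\b v a}\,(\cdots)$ by $\E\,\cal R_{\b v a}\,[(\cdots)-(\cdots)^{(a)}]$ via minor expansions, extracting extra factors of $\Psi$ or $(N\eta)^{-1/2}$ from the differences; combined with a case split according to whether $\abs{v_a}+\abs{v_b}$ is small, this is where \eqref{main assumption on Psi} and the lower bound $\eta\geq\varphi^{C_0/3}N^{-5/6}$ actually enter. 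Without this centering step your argument cannot establish Case \textbf{B}.
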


Away from the asymptotic spectrum $[-2,2]$, Theorem \ref{thm: ILSC} can be strengthened as follows.

\begin{theorem}[Isotropic local semicircle law outside of the spectrum] \label{theorem: strong estimate}
Fix $\zeta > 0$ and $\Sigma \geq 3$. Then there exist constants $C_1$ and $C_\zeta$ such that for any
\begin{equation*}
E \;\in\; \qb{-\Sigma, -2 - \varphi^{C_1} N^{-2/3}} \cup \qb{2 + \varphi^{C_1} N^{-2/3}, \Sigma}\,,
\end{equation*}
any $\eta \in (0, \Sigma]$, and any deterministic $\b v, \b w \in \C^N$ we have
\begin{equation} \label{strong estimate}
\absb{\scalar{\b v}{G(z) \b w} - \msc(z) \scalar{\b v }{\b w}} \;\leq\; \varphi^{C_\zeta} \sqrt{\frac{\im \msc(z)}{N \eta}} \, \norm{\b v} \norm{\b w}\,.
\end{equation}
\hp{\zeta}.
\end{theorem}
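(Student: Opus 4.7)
The strategy is to combine Theorem \ref{thm: ILSC} applied at a safely large scale $\eta_0 \sim \kappa$ (where condition (B) is comfortably satisfied and the generic control parameter $\Psi$ already matches the target right-hand side of \eqref{strong estimate}) with an analytic-continuation argument that transfers the bound down to arbitrarily small $\eta$. The key geometric input is that, for $E$ outside the spectrum by more than the Tracy--Widom scale $\varphi^{C_1} N^{-2/3}$, the function
\[
\phi(z) \;\deq\; \scalarb{\b v}{G(z) \b w} - \msc(z) \scalar{\b v}{\b w}
\]
is analytic in a neighborhood of $E$ of size $\asymp \kappa$, so that $\eta$-dependence of $\phi(E + \ii \eta)$ is genuinely smooth and the spurious $(N\eta)^{-1}$ term present in $\Psi$ is not felt.

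\textbf{Step 1 (Rigidity).} Apply Theorem \ref{thm: ILSC} with $\b v = \b w = \b e_i$ to recover the entrywise local semicircle law $|G_{ij}(z) - \msc(z)\delta_{ij}| \leq \varphi^{C_\zeta} \Psi(z)$ with $\zeta$-high probability. A standard Helffer--Sj\"ostrand / Stieltjes transform argument (as used in prior versions of the local semicircle law) then yields eigenvalue rigidity at the edge: \hp{\zeta}, every eigenvalue $\lambda_\alpha$ of $H$ satisfies $|\lambda_\alpha| \leq 2 + \varphi^{C_2} N^{-2/3}$ for some absolute constant $C_2$. Choosing $C_1 \geq C_2 + 1$ in the statement, the spectral gap $\kappa \deq \dist(E,\spec(H))$ thus satisfies $\kappa \geq \tfrac12 \varphi^{C_1} N^{-2/3}$, and $\phi$ is analytic on $\C \setminus (\spec(H) \cup [-2,2])$, in particular on the open disk $B(E,\kappa)$.

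\textbf{Step 2 (Base estimate at the reference scale).} Set $z_0 \deq E + \ii\eta_0$ with $\eta_0 \deq \kappa$. Hypothesis (B) of Theorem \ref{thm: ILSC} holds trivially at $z_0$ (a direct computation using $\im \msc(z_0) \asymp \sqrt{\kappa}$ gives $\Psi(z_0)^3 \ll \varphi^{-C_0} N^{-1/2}$), so
\[
|\phi(z_0)| \;\leq\; \varphi^{C_\zeta'} \Psi(z_0)\, \norm{\b v}\norm{\b w} \qquad \text{\hp{\zeta}}.
\]
One also verifies, using monotonicity of $\eta \mapsto \im \msc(E+\ii\eta)/\eta$ and the estimate $\kappa \geq \varphi^{C_1} N^{-2/3}$, that $\Psi(z_0) \leq C\sqrt{\im\msc(z)/(N\eta)}$ uniformly in $\eta \in (0,\Sigma]$ — at this reference scale the two terms of $\Psi$ are comparable to the target bound, and the nuisance term $1/(N\eta_0)$ is dominated.

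\textbf{Step 3 (Analytic continuation).} It remains to transfer the bound from $z_0$ to $z = E + \ii\eta$ for $\eta \in (0,\eta_0)$. The plan is to apply Cauchy's integral estimates to $\phi$ on circles of radius $r \asymp \kappa$ centered at $z_0$, which stay inside the spectral gap region and remain in the upper half-plane (where Theorem \ref{thm: ILSC} controls $\phi$). This yields pointwise bounds on the Taylor coefficients $\phi^{(k)}(z_0)/k!$, and a Taylor expansion $\phi(z) = \sum_k \phi^{(k)}(z_0)(z-z_0)^k/k!$ then controls $\phi(z)$ for all $z \in B(z_0, r/2)$, with the same order of magnitude as $\phi(z_0)$. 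To reach all $\eta \in (0, \Sigma]$, the expansion is iterated, either by a geometric sequence of reference scales $\eta_0^{(j)}$ or, equivalently, by using the symmetry $\phi(\bar\zeta) = \overline{\phi(\zeta)}$ (valid up to conjugation of $\b v, \b w$) to extend $\phi$ analytically across the real axis in the gap and thereby use a single contour centered on $E$ itself.

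\textbf{Main obstacle.} The delicate point is Step 3: the Cauchy contours must be chosen so that \emph{both} analyticity of $\phi$ is preserved \emph{and} the bound of Theorem \ref{thm: ILSC} remains of the order $\sqrt{\im\msc(z)/(N\eta)}$ on the contour. A naive radius $r \asymp \kappa$ would let the circle dip too close to the real axis, where the $(N\eta)^{-1}$ term in $\Psi$ blows up. The resolution is to exploit eigenvalue rigidity from Step 1 (so $\phi$ extends analytically across the gap) together with the symmetry $\phi(\bar\zeta) = \overline{\phi(\zeta)}$, and to apply Theorem \ref{thm: ILSC} only on the portion of the contour where $|\im \zeta| \asymp \kappa$; the remaining portion is controlled by its reflected counterpart in the upper half-plane. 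Once this is arranged, the $(N\eta)^{-1}$ term never appears, yielding \eqref{strong estimate} with the clean right-hand side.
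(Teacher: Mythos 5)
Your overall two-scale skeleton --- prove the bound at a reference height $\eta_0$ where $\Psi(z_0)$ already has the target size $N^{-1/2}\kappa^{-1/4}$, then transfer it down to all smaller $\eta$ --- is the same as the paper's (the paper takes $\eta_0 \deq N^{-1/2}\kappa^{1/4}$ rather than $\eta_0 = \kappa$, but your base estimate in Step 2 is correct, and re-deriving rigidity in Step 1 is harmless even though the paper simply quotes Theorem \ref{theorem: rigidity}). The genuine gap is in Step 3, the transfer mechanism. Any closed contour from which $\phi(E+\ii\eta)$ can be recovered by Cauchy's formula must either stay in the upper half-plane, in which case it necessarily contains points $\zeta$ with $\im \zeta \leq \eta$, or cross the real axis, in which case it contains points with $\abs{\im\zeta}$ arbitrarily small. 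On those portions the only a priori input available is Theorem \ref{thm: ILSC}, whose error $\Psi(\zeta) \geq (N\abs{\im\zeta})^{-1}$ blows up exactly there; and the reflection $\zeta \mapsto \bar\zeta$ sends a near-real point to another near-real point, so ``controlling the remaining portion by its reflected counterpart in the upper half-plane'' does not remove the obstruction --- the reflected counterpart is equally uncontrolled. The same objection defeats the maximum principle on a disk $B(E,r)$ after Schwarz reflection: its boundary always contains the real points $E \pm r$, and bounding $\phi$ at a real point of the gap is precisely the statement being proved. As written, Step 3 is circular and does not close.

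The transfer that does work --- and is what the paper does --- is a real-variable monotonicity argument from the spectral decomposition, requiring no information at intermediate heights. By rigidity one has $E \geq \lambda_N + \eta_0$ \hp{\zeta} (this forces the paper's choice $\eta_0 \leq \kappa$ together with $C_1$ large), and then for every $\alpha$ and every $0 < \eta \leq \eta_0$ one has $(E-\lambda_\alpha)^2 + \eta^2 \geq \tfrac12\bigl((E-\lambda_\alpha)^2 + \eta_0^2\bigr)$, whence
\begin{equation*}
\im G_{\b v \b v}(E+\ii\eta) \;=\; \sum_\alpha \frac{\abs{\scalar{\b u^{(\alpha)}}{\b v}}^2\,\eta}{(E-\lambda_\alpha)^2+\eta^2} \;\leq\; 2\, \im G_{\b v \b v}(E+\ii\eta_0)\,,
\end{equation*}
and a similar one-line computation bounds $\absb{\re G_{\b v \b v}(z) - \re G_{\b v \b v}(z_0)}$ by $\im G_{\b v \b v}(z_0)$; combined with $\abs{\msc(z)-\msc(z_0)} \leq C\kappa^{-1/2}\eta_0$ (from $\msc' \asymp \kappa^{-1/2}$) this yields \eqref{strong estimate} for all $\eta \leq \eta_0$. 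Replacing your Step 3 by this argument completes the proof.
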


\begin{remark} \label{remark: lattice}
Using a simple lattice argument combined with the Lipschitz continuity of $z \mapsto G(z)$, one can easily strengthen the statement \eqref{ILSC for three moments} of Theorem \ref{thm: ILSC} to a simultaneous high probability statement for all $z$, as in \eqref{Gij estimate} below. For more details, see e.g.\ Corollary 3.19 in \cite{EKYY1}.

Similarly, mimicking the proof of Lemma \ref{lemma: derivative of Gvv} below, we find
\begin{equation} \label{sup of derivative of G}
\sup \hB{\abs{\partial_z \scalar{\b v}{G(z) \b w}} \col 2 + \varphi^{C_1} N^{-2/3} \leq \abs{E} \leq \Sigma \,,\, 0 < \abs{\eta} \leq \Sigma} \;\leq\; N
\end{equation}
\hp{\zeta}, from which we infer that the statement \eqref{strong estimate} of Theorem \ref{theorem: strong estimate} holds \hp{\zeta} simultaneously for all $z = E + \ii \eta$ satisfying the conditions in \eqref{sup of derivative of G}.
\end{remark}

For an $N \times N$ matrix $A$ we denote by $\lambda_1(A) \leq \lambda_2(A) \leq \cdots \leq \lambda_N(A)$ the nondecreasing sequence of eigenvalues of $A$. Moreover, we denote by $\sigma(A)$ the spectrum of $A$.
It is convenient to abbreviate the (random) eigenvalues of $H$ by
\begin{equation*}
\lambda_\alpha \;\deq\; \lambda_\alpha(H)\,.
\end{equation*}
Denote by $\b u^{(1)}, \b u^{(2)}, \dots, \b u^{(N)} \in \C^N$ the normalized eigenvectors of $H$ associated with the eigenvalues $\lambda_1 \leq \lambda_2 \leq \cdots \leq \lambda_N$. Our next result provides a bound on $\scalar{\b u^{(\alpha)}}{\b v}$ for arbitrary deterministic $\b v$.

\begin{theorem}[Isotropic delocalization] \label{theorem: delocalization}
Fix $\zeta > 0$. Then there is a constant $C_\zeta$ such that the following holds for any deterministic and normalized $\b v \in \C^N$.
\begin{enumerate}
\item
For any integers $a$ and $b$ satisfying  $1 \leq a < b \leq N/2$ and
\begin{equation} \label{main condition for delocalization}
b - a \;\geq\; 2 \varphi^{C_0} \pB{b^{1/3} N^{-1/6} + (ab)^{1/3} N^{-1/3}}
\end{equation}
we have
\begin{equation} \label{two moment delocalization}
\frac{1}{b - a} \sum_{\alpha = a}^{b} \abs{\scalar{\b u^{(\alpha)}}{\b v}}^2 \;\leq\; \varphi^{C_\zeta} N^{-1}
\end{equation}
\hp{\zeta}. Here $C_0$ is the constant from Theorem \ref{thm: ILSC}.
By symmetry, a similar result holds for the eigenvectors $\alpha \geq N/2$.
\item
If the third moments of the entries of $H$ vanish in the sense of \eqref{3rd moment vanishes}, then we have the stronger statement
\begin{equation} \label{three moment delocalization}
\sup_{\alpha} \abs{\scalar{\b u^{(\alpha)}}{\b v}}^2 \;\leq\; \varphi^{C_\zeta} N^{-1}
\end{equation}
\hp{\zeta}.
\end{enumerate}
\end{theorem}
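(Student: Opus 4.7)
The plan is to use the spectral decomposition
\begin{equation*}
\im \scalar{\b v}{G(E+\ii\eta)\b v} \;=\; \sum_{\alpha=1}^N \frac{\eta \absb{\scalar{\b u^{(\alpha)}}{\b v}}^2}{(\lambda_\alpha - E)^2 + \eta^2}
\end{equation*}
to convert the high-probability bounds on the left-hand side supplied by the isotropic local semicircle law (Theorem \ref{thm: ILSC}) into bounds on the eigenvector projections $\scalar{\b u^{(\alpha)}}{\b v}$. The other main input is eigenvalue rigidity, a standard consequence of the ordinary local semicircle law of \cite{EYY3}, which gives $\absb{\lambda_\alpha - \gamma_\alpha} \leq \varphi^C N^{-2/3}\hat\alpha^{-1/3}$ \hp{\zeta}, where $\hat\alpha \deq \min(\alpha, N+1-\alpha)$ and $\gamma_\alpha$ denotes the classical location of the $\alpha$-th eigenvalue of $H$.

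For part (ii), where the third moments of the entries vanish, case A of Theorem \ref{thm: ILSC} applies without restriction on $z$. Fixing an index $\alpha$, I would apply the spectral identity at $z = \gamma_\alpha + \ii\eta$ with $\eta \asymp \varphi^{C_\zeta}/N$. By rigidity the $\alpha$-th term alone bounds the left-hand side from below by $\absb{\scalar{\b u^{(\alpha)}}{\b v}}^2 / (C\eta)$, so Theorem \ref{thm: ILSC} gives
\begin{equation*}
\absb{\scalar{\b u^{(\alpha)}}{\b v}}^2 \;\leq\; C\eta \p{\im \msc(z) + \varphi^{C_\zeta} \Psi(z)}.
\end{equation*}
Since $\im \msc(z)$ is $O(1)$ in the bulk and $O(\sqrt{\kappa + \eta}) = O((\hat\alpha/N)^{1/3})$ near the edge, and $\eta\Psi(z) \lesssim \sqrt{\eta \im\msc(z)/N} + 1/N$, a direct computation gives $\absb{\scalar{\b u^{(\alpha)}}{\b v}}^2 \leq \varphi^{C_\zeta}/N$. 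I would then pass from a fixed $\alpha$ to the supremum over $\alpha$ via the lattice-plus-Lipschitz device of Remark \ref{remark: lattice}.

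For part (i), I would instead apply the decomposition at $E$ close to the midpoint of $[\gamma_a, \gamma_b]$ and $\eta \asymp \gamma_b - \gamma_a$, enlarged if necessary to absorb the rigidity fluctuations (which can dominate $\gamma_b - \gamma_a$ when $a$ is small). Every term $\alpha \in \{a, \dots, b\}$ then contributes at least $\absb{\scalar{\b u^{(\alpha)}}{\b v}}^2/(C\eta)$, yielding
\begin{equation*}
\sum_{\alpha = a}^b \absb{\scalar{\b u^{(\alpha)}}{\b v}}^2 \;\leq\; C \eta \im \scalar{\b v}{G(E+\ii\eta)\b v}.
\end{equation*}
Only case B of Theorem \ref{thm: ILSC} is available here, and it imposes $\Psi(E+\ii\eta)^3 \leq \varphi^{-C_0} N^{-1/2}$. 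A case analysis, distinguishing edge from bulk and $a \asymp b$ from $a$ much smaller than $b$, shows that the hypothesis $b - a \geq 2\varphi^{C_0}(b^{1/3} N^{-1/6} + (ab)^{1/3} N^{-1/3})$ is precisely what allows an admissible choice of $E$ and $\eta$ to fulfil this constraint. Granting it, Theorem \ref{thm: ILSC} bounds $\im \scalar{\b v}{G\b v} \leq \im \msc + \varphi^{C_\zeta}\Psi$; multiplying by $\eta$ and using $\eta \im \msc(E+\ii\eta) \asymp (b-a)/N$ together with $\eta \Psi \lesssim \sqrt{(b-a)/N^2} + 1/N$ yields, after dividing by $b-a$, the claimed bound $\varphi^{C_\zeta}/N$.

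The main obstacle is the case analysis in part (i): verifying that the stated lower bound on $b - a$ suffices to guarantee condition B of the ISLC in every regime, which requires an optimized choice of $E$ and $\eta$ and a careful treatment of the rigidity correction when $a$ is small relative to $b$ (the origin of the $b^{1/3} N^{-1/6}$ term). Part (ii) is comparatively clean once case A of Theorem \ref{thm: ILSC} is in hand.
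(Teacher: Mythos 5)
Your strategy for both parts coincides with the paper's: the spectral decomposition of $\im G_{\b v \b v}$, combined with Theorem \ref{thm: ILSC} (case {\bf A} at scale $\eta \asymp \varphi^{C} N^{-1}$ for part (ii), case {\bf B} at scale $\eta \asymp \gamma_b - \gamma_a$ with $E$ in $[\gamma_a,\gamma_b]$ for part (i)) and eigenvalue rigidity. For part (i) the paper takes $E = \gamma_a$, $\eta = \gamma_b - \gamma_a$, absorbs the rigidity fluctuations into a prefactor via $(\lambda_\alpha - E)^2 \leq \varphi^{C_\zeta}\eta^2$ rather than enlarging $\eta$ (your alternative also works, since a $\varphi$-power enlargement of $\eta$ only costs $\varphi$-powers in the final bound), and verifies \eqref{main assumption on Psi} exactly as you indicate, using $\kappa \asymp a^{2/3}N^{-2/3}$, $\eta \asymp (b^{2/3} - a^{2/3})N^{-2/3}$ and the elementary inequality $b^{2/3} - a^{2/3} \geq b^{-1/3}(b-a)/2$; this is the source of both terms in \eqref{main condition for delocalization}.

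One step in your part (ii) fails as written. Near the spectral edge, rigidity localizes $\lambda_\alpha$ only to within $\varphi^{C_\zeta} \pb{\min\{\alpha, N+1-\alpha\}}^{-1/3} N^{-2/3}$ of $\gamma_\alpha$, which for, say, $\alpha = 1$ is of order $N^{-2/3} \gg \eta \asymp \varphi^{C}N^{-1}$. The $\alpha$-th term of the spectral sum at $z = \gamma_\alpha + \ii\eta$ is then bounded below only by $\eta \absb{\scalar{\b u^{(\alpha)}}{\b v}}^2 / \pb{2\varphi^{2C_\zeta}N^{-4/3}}$, and the resulting estimate is $\absb{\scalar{\b u^{(\alpha)}}{\b v}}^2 \leq \varphi^{C} N^{-1/3}$, far short of \eqref{three moment delocalization}. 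The paper sidesteps this by evaluating at the \emph{random} point $z = \lambda_\alpha + \ii\eta$, which is legitimate precisely because of the simultaneous-in-$z$ statement of Remark \ref{remark: lattice} (which you invoke, but only to take the supremum over $\alpha$); the $\alpha$-th term then equals $\eta^{-1}\absb{\scalar{\b u^{(\alpha)}}{\b v}}^2$ exactly, and rigidity is needed only to ensure $\lambda_\alpha \in [-\Sigma,\Sigma]$. Alternatively, you could keep the deterministic point $\gamma_\alpha$ but choose $\eta \asymp \varphi^{C}\pb{\min\{\alpha, N+1-\alpha\}}^{-1/3}N^{-2/3}$ adaptively; by Lemma \ref{lemma: msc} one still has $\eta\pb{\im \msc + \varphi^{C}\Psi} \leq \varphi^{C'}N^{-1}$ there. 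Either fix is routine, but as stated the argument does not yield the claimed bound at the edge.
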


\begin{remark}
Theorem \ref{theorem: delocalization} implies that the coefficients of the eigenvectors of $H$ are strongly oscillating. In order to see this, let $\alpha = 1, \dots, N$. If the third moments of the entries of $H$ do not vanish, we require that $\alpha \notin [\varphi^{-4C_0} N^{1/2}, N - \varphi^{-4C_0}N^{1/2}]$. Then choosing $\b v = N^{-1/2}(1, \dots, 1)$ and $\b v = \b e_i$ for $i = 1, \dots, N$ in Theorem \ref{theorem: delocalization} yields
\begin{equation} \label{sum of v alpha}
\absBB{\sum_{i = 1}^N u^{(\alpha)}_i} \;\leq\; \varphi^{C_\zeta}\,, \qquad \max_{1 \leq i \leq N} \abs{u^{(\alpha)}_i} \;\leq\; \varphi^{C_\zeta} N^{-1/2}
\end{equation}
\hp{\zeta}. The second inequality implies
\begin{equation*}
\sum_{i = 1}^N \abs{u^{(\alpha)}_i} \;\geq\; \varphi^{-C_\zeta} N^{1/2} \sum_{i = 1}^N \abs{u^{(\alpha)}_i}^2 \;=\; \varphi^{-C_\zeta} N^{1/2}
\end{equation*}
\hp{\zeta}. Compare this with the first inequality of \eqref{sum of v alpha}.

This behaviour is not surprising. In the GOE/GUE case, it is well known that each eigenvector $\b u^{(\alpha)}$ is uniformly distributed on the unit sphere, so that its entries asymptotically behave like i.i.d.\ Gaussians.
\end{remark}

\subsection{Finite-rank deformation of Wigner matrices}
Let $k \in \N$ be fixed, $V$ be a deterministic $N \times k$ matrix satisfying $V^* V = \umat$, and $d_1, \dots, d_k \in \R \setminus \{0\}$ be deterministic. We allow $d_1 \equiv d_1(N), \dots, d_k \equiv d_k(N)$ to depend on $N$. We also use the notation $V = [\b v^{(1)}, \dots, \b v^{(k)}]$, where $\b v^{(1)}, \dots, \b v^{(k)} \in \C^N$ are orthonormal.
Define the rank-$k$ perturbation
\begin{equation*}
V D V^* \;=\; \sum_{i = 1}^k d_i \b v^{(i)} (\b v^{(i)})^*\,, \qquad D \;=\; \diag(d_1, \dots, d_k)\,.
\end{equation*}
We shall study the spectrum of the deformed matrix
\begin{equation*}
\wt H \;\deq\; H + V D V^*\,.
\end{equation*}
We abbreviate the eigenvalues of $\wt H$ by
\begin{equation*}
\mu_\alpha \;\deq\; \lambda_\alpha(\wt H)\,.
\end{equation*}

In order to state our results, we order the eigenvalues of $D$, i.e.\ we assume that $d_1 \leq \dots \leq d_k$. Define the numbers
\begin{equation*}
k^\pm \;\deq\; \# \h{i \col \pm d_i > 1}\,.
\end{equation*}
As we shall see, $k^-$ is the number of outliers to the left of the bulk and $k^+$ the number of outliers to the right of the bulk.
We shall always assume that $k^-$ and $k^+$ are independent of $N$.

Let
\begin{equation} \label{set of outliers}
O \;\deq\; \hb{i \in \{1, \dots, k\} \col \abs{d_i} > 1} \;=\; \{1, \dots, k^-, k - k^+ + 1, \dots, k\}
\end{equation}
denote the $k^- + k^+$ indices associated with the outliers. 
For $i \in O$ abbreviate the associated eigenvalue index by
\begin{equation} \label{indices of outliers}
\alpha(i) \;\deq\;
\begin{cases}
N - k + i & \text{if } i \geq k - k^+ + 1
\\
i & \text{if } i \leq k^-\,.
\end{cases}
\end{equation}
Finally, for $d \in \R \setminus (-1,1)$ we define
\begin{equation} \label{def theta}
\theta(d) \;\deq\; d + \frac{1}{d}\,.
\end{equation}

\begin{theorem}[Locations of the deformed eigenvalues]\label{theorem: rank-k lde}
Fix $\zeta > 0$, $K > 0$, $k \in \N$,  and $0 < \fra b < 1/3$. Then there exist positive constants $C_2$ and $C_3$ such that the following holds.

Choose a sequence $\psi \equiv \psi_N$ satisfying $1 \leq \psi \leq N^{\fra b}$. Suppose that
\begin{equation} \label{d away from transition}
\abs{d_i} \;\leq\; \Sigma - 1 \,, \qquad \absb{\abs{d_i} - 1} \;\geq\; \varphi^{C_2} \psi N^{-1/3}
\end{equation}
for all $i = 1, \dots, k$. Then for $i \in O$ we have
\begin{equation} \label{main result: outliers}
\absb{\mu_{\alpha(i)} - \theta(d_i)} \;\leq\; \varphi^{C_3} N^{-1/2} (\abs{d_i} - 1)^{1/2}
\end{equation}
\hp{\zeta}.
Moreover,
\begin{subequations} \label{main result: bulk evs}
\begin{align}
\abs{\mu_\alpha - \lambda_{\alpha - k^-}} \;\leq\; \psi^{-1} N^{-2/3} \qquad \text{for} \qquad k^- + 1 \;\leq\; \alpha \;\leq\; \varphi^K\,,
\\
\abs{\mu_\alpha - \lambda_{\alpha + k^+}} \;\leq\; \psi^{-1} N^{-2/3} \qquad \text{for} \qquad N - \varphi^K \;\leq\; \alpha \;\leq\; N - k^+\,,
\end{align}
\end{subequations}
\hp{\zeta}.
\end{theorem}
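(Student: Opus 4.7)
The plan is to follow the three-step strategy outlined in the introduction, with the isotropic local semicircle law as the main analytic tool. The starting point is the determinantal identity (Lemma \ref{lemma: det identity}): provided $\mu \notin \sigma(H)$, $\mu$ is an eigenvalue of $\wt H = H + V D V^*$ if and only if the $k \times k$ matrix
\begin{equation*}
M(\mu) \;\deq\; D^{-1} + V^* G(\mu) V
\end{equation*}
is singular. Theorems \ref{thm: ILSC} and \ref{theorem: strong estimate}, made simultaneous in $z$ via Remark \ref{remark: lattice}, give $V^* G(z) V = \msc(z) \umat + \Delta(z)$ with $\norm{\Delta(z)}$ bounded by $\varphi^{C_\zeta} \Psi(z)$ in the bulk and by $\varphi^{C_\zeta} \sqrt{\im \msc(z)/(N \eta)}$ outside $[-2,2]$. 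Since the eigenvalues of $D^{-1} + \msc(z) \umat$ are $\{d_i^{-1} + \msc(z)\}_{i = 1}^k$, the candidate outlier locations solve $d_i^{-1} + \msc(z) = 0$ for $i \in O$, which via \eqref{identity for msc} are precisely the $\theta(d_i)$. I introduce the permissible region
\begin{equation*}
\Gamma(\b d) \;\deq\; \qB{-2 - \varphi^{C_1} N^{-2/3},\, 2 + \varphi^{C_1} N^{-2/3}} \;\cup\; \bigcup_{i \in O} \qb{\theta(d_i) - r_i,\, \theta(d_i) + r_i}\,,
\end{equation*}
with $r_i \deq \varphi^{C_3} N^{-1/2} (\abs{d_i} - 1)^{1/2}$, and argue that on $\Gamma(\b d)^c$ the leading gap $\min_i \abs{d_i^{-1} + \msc(z)}$ dominates $\norm{\Delta(z)}$, so that $M(z)$ is invertible there and contains no eigenvalues of $\wt H$.

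Assuming first that $\b d$ is independent of $N$, I count zeros of $z \mapsto \det M(z)$ in each of the discs around $\theta(d_i)$, $i \in O$, by a Rouch\'e/winding argument: the leading scalar factor $d_i^{-1} + \msc(z)$ has exactly one simple zero in the disc, while on the boundary $\norm{\Delta(z)}$ is dominated both by this factor and by the uniformly positive off-diagonal gap $\min_{j \neq i} \abs{d_j^{-1} - d_i^{-1}}$. This yields one eigenvalue $\mu_{\alpha(i)}$ per outlier disc and gives \eqref{main result: outliers} in the $N$-independent case. For the non-outliers, the rank-$k$ Weyl interlacing $\lambda_{\alpha - k^-}(H) \leq \mu_\alpha \leq \lambda_{\alpha + k^+}(H)$ combines with (i) the Step~1 exclusion, (ii) edge rigidity of $\lambda_\alpha(H)$ from the standard local semicircle law, and (iii) the outlier count just established, to squeeze each near-edge $\mu_\alpha$ into the $\psi^{-1} N^{-2/3}$ window asserted in \eqref{main result: bulk evs}. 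For arbitrary $N$-dependent $\b d$ satisfying \eqref{d away from transition}, I then bootstrap: choose a continuous path $(\b d(t))_{t \in [0,1]}$ with $\b d(0)$ an $N$-independent configuration and $\b d(1) = \b d$, preserving \eqref{d away from transition} and the sign pattern of $\abs{d_i(t)} - 1$ throughout. The eigenvalues $\mu_\alpha(t)$ of $\wt H(t) = H + V D(t) V^*$ are continuous in $t$, and the Step~1 bound applied with $\b d(t)$ forbids any eigenvalue from migrating between a (moving) outlier disc and the bulk strip. Hence the counts at $t = 0$ survive at $t = 1$.

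The main obstacle is Step~1: the uniform quantitative lower bound on $\absb{\det M(z)}$ on $\Gamma(\b d)^c$. Near each outlier location the gap $\absb{d_i^{-1} + \msc(z)}$ behaves like $(\abs{d_i} - 1)^{1/2} \absb{z - \theta(d_i)}^{1/2}$ thanks to the square-root singularity of $\msc$ at the edge, so the window radius $r_i$ must be tuned to exactly $\varphi^{C_3} N^{-1/2}(\abs{d_i} - 1)^{1/2}$ in order to dominate the isotropic error $\sqrt{\im \msc(z)/(N \eta)}$ on the boundary (with $\eta$ chosen of the order of the distance to the spectrum). The separation hypothesis \eqref{d away from transition} enters here precisely to guarantee that $r_i \ll \abs{d_i} - 1$, so that outlier windows remain disjoint from the bulk strip and from each other. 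An analogous but finer pinch is required at the spectral edge to upgrade plain interlacing to the sharp $\psi^{-1} N^{-2/3}$ sticking rate, and this step leans on Theorem \ref{theorem: strong estimate} together with the edge rigidity of $H$.
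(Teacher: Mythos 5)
Your overall architecture (a permissible region, a Rouch\'e count for the outliers at an $N$-independent configuration, then a bootstrap along a continuous path $\b d(t)$) matches the paper's, and the outlier statement \eqref{main result: outliers} would go through essentially as you describe (modulo the case of nearly coinciding $d_i$, where the outlier windows overlap and one must additionally bound the diameter of a union of overlapping windows). However, there are two genuine gaps in your treatment of the sticking estimate \eqref{main result: bulk evs}.

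First, your permissible region is too coarse near the edge. Knowing only that $\sigma(\wt H)$ avoids $\Gamma(\b d)^c$ with $\Gamma(\b d)$ containing the whole strip $[-2-\varphi^{C_1}N^{-2/3},\,2+\varphi^{C_1}N^{-2/3}]$ says nothing about where $\mu_\alpha$ sits \emph{inside} that strip, and interlacing only localizes $\mu_\alpha$ to within one eigenvalue spacing $\sim N^{-2/3}$, which is far weaker than the claimed $\psi^{-1}N^{-2/3}$ when $\psi \gg 1$. The paper's permissible region instead includes the set $I^0$ of points within $N^{-2/3}\wt\psi^{-1}$ of $\sigma(H)$ near the edges, and the exclusion of $\sigma(\wt H)$ from the gaps between eigenvalues of $H$ is proved with the isotropic law of Theorem \ref{thm: ILSC} at $z = x + \ii\eta$, $\eta = N^{-2/3}\wt\psi^{-1}$, yielding $V^*G(x)V = -\umat + O\pb{\varphi^{C_\zeta}N^{-1/3}(\wt\psi + \cdots)}$, which is then dominated by $\absb{-1+d_i^{-1}} \geq \tfrac12\varphi^{C_2}\wt\psi N^{-1/3}$. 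Your appeal to Theorem \ref{theorem: strong estimate} cannot do this job: that theorem requires $\abs{E} \geq 2 + \varphi^{C_1}N^{-2/3}$, whereas the relevant gaps lie at $E \approx 2 - O(N^{-2/3})$. Second, even with the correct permissible region, exclusion plus interlacing plus the outlier count give only an \emph{upper} bound on the number of eigenvalues of $\wt H$ near each eigenvalue (or cluster of eigenvalues) of $H$; they do not rule out, say, both $\mu_{N-k^+}$ and $\mu_{N-k^+-1}$ landing near $\lambda_{N-1}$ with none near $\lambda_N$. One needs the matching lower bound: at least $\abs{A_q}$ eigenvalues of $\wt H$ in the neighbourhood of each cluster $A_q$ of eigenvalues of $H$. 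A Rouch\'e argument cannot supply this, since any disc around a near-edge $\lambda_\alpha$ contains points of $\sigma(H)$ where $\det M(z)$ has poles. This is precisely the hard content of the paper's Proposition \ref{prop: intial bulk}, which tracks the eigenvalues of $M^\epsilon(x)$ as continuous functions into the projective line and counts their passages through $0$ and $\infty$ (after a small absolutely continuous perturbation of $D$ to avoid degeneracies). That existence/counting step is missing from your proposal and is not a routine consequence of the ingredients you list.
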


\begin{remark}
In \cite{CDMF1}, Capitaine, Donati-Martin, and F\'eral proved that $\mu_{\alpha(i)} \to \theta(d_i)$ almost surely for all $i \in O$, under the assumptions that (i) $D$ does not depend on $N$ and (ii) the law of the entries of $H$ is symmetric and satisfies a Poincar\'e inequality.  Subsequently, the assumption (ii) was relaxed by Pizzo, Renfrew, and Soshnikov \cite{SoshPert}. In fact, in \cite{SoshPert} the authors proved, assuming (i), that the sequence $\sqrt{N} (\mu_{\alpha(i)} - \theta(d_i))$ is bounded in probability for all $i \in O$.

In \cite{BGGM1,BGGM2}, Benaych-Georges, Guionnet, and Ma\"ida  considered deformations of Wigner matrices by finite-rank random matrices whose eigenvalues are independent of $N$ and whose eigenvectors are either independent copies of a random vector with i.i.d.\ centred components satisfying a log-Sobolev inequality or are obtained by Gram-Schmidt orthonormalization of such independent copies. For these random perturbation models, they established eigenvalue sticking estimates similar to \eqref{main result: bulk evs}.
\end{remark}

\begin{remark}
Provided one is only interested in the locations of the outliers, i.e.\ \eqref{main result: outliers}, one can set $\psi = 1$ in Theorem \ref{theorem: rank-k lde}.
\end{remark}

We shall refer to the eigenvalues in \eqref{main result: outliers}, i.e.\ $\mu_1, \dots \mu_{k^{-}}, \mu_{N - k^+ + 1}, \dots, \mu_N$, as the \emph{outliers}, and to the eigenvalues in \eqref{main result: bulk evs}, i.e.\ $\mu_{k^- + 1}, \dots, \mu_{\varphi^K}, \mu_{N - \varphi^K}, \dots, \mu_{N - k^+}$ , as the \emph{extremal bulk eigenvalues}.

\begin{remark}
The phase transition associated with $d_i$ happens on the scale $d_i = 1 + a_i N^{-1/3}$ where $a_i$ is of order one. The condition \eqref{d away from transition} is optimal (up to powers of $\varphi$) in the sense that the power of $N$ in \eqref{d away from transition} cannot be reduced. Indeed, in \cite{BBP, Pec, BV1,BV2} it is established that, for rank-one\footnote{For simplicity of presentation, we consider rank-one deformations, although the results of \cite{BBP, Pec, BV1,BV2} hold for rank-$k$ deformations.} deformations of GOE/GUE with $d = 1 + a N^{-1/3}$ and $a$ of order one, $\mu_N$ fluctuates on the scale $N^{-2/3}$ and its distribution differs from that of $\lambda_N$. Hence in that case \eqref{main result: bulk evs} cannot hold for $\psi \gg 1$. See also Remark \ref{remark: unsticking} below for a more detailed discussion of the qualitative behaviour of eigenvalues of $\wt H$ as $d_i$ crosses a transition point.

Note that the location $\theta(d_i)$ of the outlier associated with $d_i = 1 + a_i N^{-1/3}$ satisfies $\theta(d_i) = 2 + N^{-2/3} a_i^2 + O(a_i^3 N^{-1})$. In comparison, the largest eigenvalue of $H$ fluctuates on a scale $N^{-2/3}$ around $2$.
\end{remark}
\begin{remark}
An immediate corollary of Theorem \ref{theorem: rank-k lde} is the universality of the extremal bulk eigenvalues of $\wt H$. In other words, under the assumption $\abs{\abs{d_i} - 1} \geq \varphi^{C_2 + 1} N^{-1/3}$ for all $i$, the statistics of the extremal bulk eigenvalues of $\wt H$ coincide with those of GOE/GUE.

Indeed, choosing $\psi = \varphi$ in Theorem \ref{theorem: rank-one lde} and invoking the edge universality for the Wigner matrix $H$ proved in Theorem 1.1 of \cite{KY1} (for similar results, see also \cite{EYY3,EKYY2}), we find for all $\ell \in \N$ and all bounded and continuous $f$ that
\begin{equation*}
\lim_{N \to \infty} \qbb{\E f\pB{N^{2/3}(\mu_{k^- + 1} + 2), \dots, N^{2/3} (\mu_{k^- + \ell} + 2)} - \E^G f \pb{N^{2/3}(\lambda_1 + 2), \dots, N^{2/3}(\lambda_\ell + 2)}} \;=\; 0\,,
\end{equation*}
where $\E^G$ denotes expectation with respect to the $N \times N$ GOE/GUE matrices. A similar result holds at the other end of the spectrum.
\end{remark}

\begin{remark}
Theorem \ref{theorem: rank-k lde} was formulated for deterministic perturbations. However, it extends trivially to the case where $V$ is random, independent of $H$, with arbitrary law satisfying $V^* V = \umat$.
\end{remark}

\begin{remark} \label{remark: unsticking}
The parameter $\psi$ describes how strongly the extremal bulk eigenvalues of $\wt H$ stick to extremal eigenvalues of $H$. If $d_i$ is within distance $C N^{-1/3}$ of a transition point $\pm 1$, one does not expect the eigenvalues of $\wt H$ to stick to the eigenvalues of $H$. For very weak sticking on the scale $N^{-2/3} \varphi^{-1}$, corresponding to $\psi = \varphi$, the eigenvalues $d_i$ have to satisfy $\absb{\abs{d_i} - 1} \geq \varphi^{C_2 + 1} N^{-1/3}$. In particular, we may allow outliers at a distance $\varphi^{2 C_2 + 2} N^{-2/3}$ from the spectral edge.

On the other hand, in order to obtain strong sticking on the scale $N^{-1 + \epsilon}$, corresponding to $\psi = N^{1/3 - \epsilon}$, the eigenvalues $d_i$ have to satisfy $\absb{\abs{d_i} - 1} \geq \varphi^{C_2} N^{-\epsilon}$. Now the outliers have to lie at a distance of at least $N^{2 C_2 - 2 \epsilon}$ from the spectral edge.

Thus, Theorem \ref{theorem: rank-k lde} gives a clear picture of what happens to the extremal bulk eigenvalues as $d_i$ passes a transition point $\pm1$. For definiteness, consider the case where $d_i$ is varied from $1 - c$ to $1 + c$ for some small $c > 0$, and all other eigenvalues of $D$ are kept constant. 
Consider an extremal bulk eigenvalue near $+2$, say $\mu_\alpha$. By Theorem \ref{theorem: rank-k lde}, for $d_i \leq 1 - \varphi^{C_2 + 1} N^{-1/3}$, $\mu_\alpha$ sticks to $\lambda_\beta$ where $\beta \deq \alpha + k^+$. As $d_i$ approaches $1$, the eigenvalue $\mu_\alpha$ progressively detaches itself from $\lambda_\beta$. Theorem \ref{theorem: rank-k lde} allows one to follow this behaviour down to $\abs{d_i - 1} = \varphi^{C_2 + 1} N^{-1/3}$. Below this scale, as $d_i$ passes $1$, the eigenvalue $\mu_\alpha$ ``jumps'' from from the vicinity of $\lambda_\beta$ to the vicinity of $\lambda_{\beta + 1}$. This jump happens in the range $d_i \in [1 - \varphi^{C_2 + 1} N^{-1/3}, 1+\varphi^{C_2 + 1} N^{-1/3}]$. After the jump, i.e.\ for $d_i \geq 1 + \varphi^{C_2 + 1} N^{-1/3}$, the eigenvalue $\mu_\alpha$ sticks to $\lambda_{\beta + 1}$ instead of $\lambda_\beta$, provided that $\beta < N$. If $\beta = N$, then $\mu_\alpha$ escapes from the bulk spectrum and becomes an outlier. This jump happens simultaneously for all extremal bulk eigenvalues near $+2$, and is accompanied by the creation of an outlier. This may be expressed as $(k^0, k^+) \mapsto (k^0 - 1, k^+ + 1)$. Meanwhile, the extremal bulk eigenvalues on the other side of the spectrum, i.e.\ near $-2$, remain unaffected by the transition, and continue sticking to the same eigenvalues of $H$ they stuck to before the transition.
\end{remark}

Next, we identify the distribution of the outliers. We introduce the customary symmetry index $\beta$, by definition equal to $1$ if $H$ is real symmetric and $2$ if $H$ is complex Hermitian. In order to state our result, we define the moment matrices $M^{(3)} = (M^{(3)}_{ij})$ and $M^{(4)} = (M^{(4)}_{ij})$ of $H$ through
\begin{equation*}
M^{(3)}_{ij} \;\deq\; N^{3/2} \E \pb{\abs{h_{ij}}^2 h_{ij}}\,, \qquad M^{(4)}_{ij} \;\deq\; N^2 \E \abs{h_{ij}}^4\,.
\end{equation*}
By definition of $H$, the matrices $M^{(3)}$ and $M^{(4)}$ are Hermitian. Moreover, by \eqref{subexp for h} they have uniformly bounded entries. For $\b v = (v_i) \in \C^N$ define
\begin{align}
Q(\b v) &\;\deq\; \frac{1}{2 \sqrt{N}} \sum_{i,j} \ol v_i M^{(3)}_{ij} \pb{\abs{v_i}^2 + \abs{v_j}^2} v_j\,,
\notag \\
R(\b v) &\;\deq\; \frac{1}{N} \sum_{i,j} \pb{M^{(4)}_{ij} - 4 + \beta} \abs{v_j}^4\,,
\notag \\ \label{definition of QRS}
S(\b v) &\;\deq\; \frac{1}{N} \sum_{i,j} \ol v_i M^{(3)}_{ij}  v_j\,.
\end{align}
The functions $Q$, $R$, and $S$ are bounded on the unit ball in $\C^N$, uniformly in $N$.

\begin{theorem}[Distribution of the outliers] \label{theorem: outlier distributions}
There is a constant $C_2$ such that the following holds.
Suppose that
\begin{equation} \label{d assumptions for distribution}
\abs{d_i} \;\leq\; \Sigma - 1 \,, \qquad \absb{\abs{d_i} - 1} \;\geq\; \varphi^{C_2} N^{-1/3}
\end{equation}
for all $i = 1, \dots, k$. Suppose moreover that for all $i \in O$ we have
\begin{equation} \label{non-degeneracy condition}
\min_{j \neq i} \abs{d_i - d_j} \;\geq\; \varphi^{C_2} N^{-1/2} (\abs{d_i} - 1)^{-1/2}\,.
\end{equation}
For $i \in O$ define the random variable
\begin{equation*}
\Pi_i \;\deq\; (\abs{d_i} + 1) (\abs{d_i} - 1)^{1/2} \pBB{\frac{N^{1/2} \scalar{\b v^{(i)}}{H \b v^{(i)}}} {d_i^2} + \frac{S(\b v^{(i)})} {d_i^4}}
\end{equation*}
and $\Upsilon_i$, a random variable independent of $\Pi_i$ with law
\begin{equation*}
\Upsilon_i \;\eqdist\; \cal N \pBB{0 \,,\, \frac{2 (\abs{d_i}+1)}{\beta d_i^4} + (\abs{d_i} +1)^2 (\abs{d_i} - 1) \pbb{\frac{4 Q(\b v^{(i)})}{d_i^5}
+ \frac{R(\b v^{(i)})}{d_i^6}}}\,.
\end{equation*}
Then we have, for all $i \in O$ and all bounded and continuous $f$,
\begin{equation} \label{distribution of outliers}
\lim_{N \to \infty} \qbb{\E f \pB{N^{1/2} (\abs{d_i} - 1)^{-1/2} \pb{\mu_{\alpha(i)} - \theta(d_i)}} - \E f (\Pi_i + \Upsilon_i)} \;=\; 0\,.
\end{equation}
\end{theorem}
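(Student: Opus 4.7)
The plan follows the four-step program sketched in the introduction. As a starting point, I use the identity $\det\pb{D^{-1} + V^* G(\mu) V} = 0$ characterizing $\mu \in \sigma(\wt H) \setminus \sigma(H)$. Theorem \ref{theorem: rank-k lde} confines $\mu_{\alpha(i)}$ to a window of size $\varphi^{C_3} N^{-1/2}(\abs{d_i}-1)^{1/2}$ around $\theta(d_i)$, and on that window Theorem \ref{thm: ILSC} gives $V^* G(\mu) V = m(\mu) \umat_k + o(1)$ in operator norm. Since $d_i^{-1} + m(\theta(d_i)) = 0$, the $(i,i)$-entry of $D^{-1} + V^* G(\mu) V$ is of fluctuation size at $\mu = \theta(d_i)$, while \eqref{non-degeneracy condition} keeps all other diagonal entries uniformly away from zero and forces off-diagonal couplings to contribute only at second order. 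Combined with $m'(\theta(d_i)) = (d_i^2 - 1)^{-1}$, first-order perturbation theory for the eigenvalue of $D^{-1} + V^* G(\mu) V$ crossing zero yields
\begin{equation*}
\mu_{\alpha(i)} - \theta(d_i) \;=\; -(d_i^2-1) \qB{\scalar{\b v^{(i)}}{G(\theta(d_i))\b v^{(i)}} - m(\theta(d_i))} \pb{1 + o(1)}\,,
\end{equation*}
reducing the problem to the limit law of $-(d_i^2 - 1) N^{1/2}(\abs{d_i}-1)^{-1/2}\qb{\scalar{\b v^{(i)}}{G(\theta(d_i))\b v^{(i)}} - m(\theta(d_i))}$.

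Next I apply the Schur complement relative to $P_i = \b v^{(i)}(\b v^{(i)})^*$: writing $\xi_i = \scalar{\b v^{(i)}}{H\b v^{(i)}}$, $\b w_i = (\umat - P_i) H \b v^{(i)}$, and $\wt H_i = (\umat - P_i) H (\umat - P_i)$,
\begin{equation*}
\scalar{\b v^{(i)}}{G(\theta) \b v^{(i)}} \;=\; \bigl(\xi_i - \theta - \scalar{\b w_i}{(\wt H_i - \theta)^{-1}\b w_i}\bigr)^{-1}\,.
\end{equation*}
Taylor-expanding around the deterministic leading value $m(\theta(d_i)) = -1/d_i$ splits the fluctuation additively into a piece proportional to $\xi_i$, which after rescaling becomes $\Pi_i$, and a piece proportional to $\scalar{\b w_i}{(\wt H_i - \theta)^{-1}\b w_i} - m(\theta(d_i))$, which becomes $\Upsilon_i$. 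In the GOE/GUE case, orthogonal/unitary invariance lets us take $\b v^{(i)} = \b e_1$; then $\xi_i$, $\b w_i$, and $\wt H_i$ are respectively the $(1,1)$ entry, the first column off the diagonal, and the bottom-right $(N-1)\times(N-1)$ submatrix of $H$, hence independent. A direct quadratic-form CLT in the independent Gaussian entries of $\b w_i$ (conditionally on $\wt H_i$, using $\tfrac{1}{N}\tr(\wt H_i - \theta)^{-2} \to m'(\theta(d_i)) = (d_i^2-1)^{-1}$) yields $\Upsilon_i$ as a centred Gaussian with variance $2(\abs{d_i}+1)/(\beta d_i^4)$, matching the theorem since $Q$, $R$, $S$ all vanish when $M^{(3)} \equiv 0$ and $M^{(4)} \equiv 4-\beta$.

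For general Wigner $H$ I would carry out the two-step comparison. Introduce a cutoff $\epsilon_N = \varphi^{-D}$ and define $\wh H$ by replacing $h_{ij}$ with a matched-moment Gaussian whenever $\min(\abs{v^{(i)}_i}, \abs{v^{(i)}_j}) \leq \epsilon_N$. An iterated Schur-complement block decomposition of $\wh H$ around the finite index set $B = \hb{j \col \abs{v^{(i)}_j} > \epsilon_N}$ expresses $\scalar{\b v^{(i)}}{(\wh H - \theta)^{-1}\b v^{(i)}} - \scalar{\b v^{(i)}}{(W - \theta)^{-1}\b v^{(i)}}$ (with $W$ a fully Gaussian matrix of the same symmetry class) as an explicit finite sum of independent corrections, producing the fourth-moment contribution $R(\b v^{(i)})$ to the variance of $\Upsilon_i$. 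The Green function comparison method, applied to $\scalar{\b v^{(i)}}{(H-\theta)^{-1}\b v^{(i)}} - \scalar{\b v^{(i)}}{(\wh H - \theta)^{-1}\b v^{(i)}}$, swaps only entries with $\abs{v^{(i)}_i}, \abs{v^{(i)}_j} \leq \epsilon_N$, so the per-swap Taylor remainders carry enough powers of $\epsilon_N$ to sum to $o(1)$ over the $O(N^2)$ swaps; the third-moment contribution of this expansion produces $Q(\b v^{(i)})$ and the deterministic mean shift $S(\b v^{(i)})/d_i^4$ inside $\Pi_i$.

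The main obstacle is precisely the last comparison step. The classical Green function comparison method of \cite{EYY2} requires $\norm{\b v^{(i)}}_\infty$ to be small so that the per-swap Taylor remainders are summable; here $\b v^{(i)}$ is arbitrary, so a direct application fails. The intermediate matrix $\wh H$ is designed to bypass this obstruction by splitting the comparison into a small-coefficient Lindeberg swap $H \leftrightarrow \wh H$ and a purely algebraic block expansion $\wh H \leftrightarrow W$. Executing this two-step strategy while tracking each $1/d_i^n$ prefactor, correctly isolating the deterministic shift $S$ from the Gaussian fluctuations, and verifying the asymptotic independence of $\Pi_i$ and $\Upsilon_i$, is the most delicate part of the argument.
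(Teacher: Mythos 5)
Your proposal follows the same four-step architecture as the paper: reduction to the law of $\scalar{\b v^{(i)}}{G(\theta(d_i))\b v^{(i)}}$ via the determinant identity and (second-order) perturbation of the $k\times k$ matrix $D^{-1}+V^*G V$, the Gaussian case, an intermediate matrix $\wh H$ analysed by an explicit block decomposition, and a final Green function comparison producing the third-moment shift. Your reduction formula matches the paper's Proposition \ref{proposition: rank k distribution}, since $1/\msc'(\theta(d))=d^2-1$. Where you genuinely diverge is the Gaussian step: the paper derives a self-consistent distributional equation $Y_1\eqdist (d+1)(d-1)^{1/2}W+\msc^2 Y_2$ (with $Y_1\eqdist Y_2$ and $W$ independent of $Y_2$) and solves it by an inductive moment computation, whereas you condition on $H^{(1)}$ and apply a quadratic-form CLT with conditional variance $\frac{2}{\beta N^2}\tr (G^{(1)})^2\to \frac{2\msc'}{\beta N}$. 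Both give $N\var(G_{11}-\msc)\to \frac{2\msc^4}{\beta(1-\msc^2)}$ and hence variance $\frac{2(\abs d+1)}{\beta d^2}$ for $X$; your route is more direct but requires checking the Lindeberg condition $\max_\alpha(\lambda_\alpha-\theta)^{-2}/\tr(G^{(1)})^2\to 0$, which amounts to $(\abs d-1)^3\gg N^{-1}$ and thus holds exactly under \eqref{d assumptions for distribution} --- this is worth making explicit, as it shows the CLT degenerates precisely at the transition scale.

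Two local points need correction. First, your definition of $\wh H$ replaces $h_{ij}$ when $\min(\abs{v_i},\abs{v_j})\le\epsilon_N$, i.e.\ when at least one coefficient is small; the correct condition (and the one your next sentence presupposes) is that \emph{both} coefficients are $\le\epsilon_N$, so that the entries swapped in the Lindeberg step are exactly those for which the comparison method is applicable, and the non-Gaussian residue of $\wh H$ consists of the rows and columns indexed by the $O(\varphi^{2D})$ large coefficients. Second, you attribute $Q(\b v^{(i)})$ to the Green function comparison step. In fact both $Q$ and $R$ arise in the block-decomposition step ($\wh H$ versus fully Gaussian): for index pairs with both coefficients below $\epsilon_N$ the corresponding contribution to $Q$ is $O(\epsilon_N)=o(1)$, so $Q$ can only be generated by the unreplaced entries of $\wh H$ (it enters through the variance of the term $\Gamma_4$, a cross term between $\re\scalar{\b w}{B\b u}$ and $\norm{B\b u}^2$); the Green function comparison contributes only the deterministic shift $S(\b v^{(i)})/d_i^4$ plus negligible errors. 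Neither point invalidates the plan, but the second indicates the bookkeeping of where each moment of $H$ enters the limit law still needs to be traced through correctly.
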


Note that, by a standard approximation argument, \eqref{distribution of outliers} also holds for $f(x) = \ind{x \leq a}$ where $a \in \R$; hence the convergence \eqref{distribution of outliers} may also be stated in terms of distribution functions. 

\begin{remark}
In \cite{CDMF2}, Capitaine, Donati-Martin, and F\'eral identified the law of the outliers of deformed Wigner matrices subject to the following conditions: (i) $D$ is independent of $N$ but may have degenerate eigenvalues; (ii) the law of the matrix entries of $H$ is symmetric and satisfies a Poincar\'e inequality; (iii) the eigenvectors of the deformation belong to one of two classes, corresponding roughly to either partially delocalized eigenvectors or strongly localized eigenvectors.
Subsequently, the assumption (ii) was relaxed by Pizzo, Renfrew, and Soshnikov in \cite{SoshPert}. (But assumption (iii) imposes that $S(\b v^{(i)}) = Q(\b v^{(i)}) = 0$ still holds for the results of \cite{SoshPert}.)
\end{remark}

\begin{remark}
The condition \eqref{non-degeneracy condition} has the following interpretation.
Let $i \in O$ and assume for definiteness that $d_i > 1$. If $j$ is not associated with an outlier on the right-hand side of the bulk, i.e.\ if $d_j < 1$, then $d_i - d_j$ is bounded from below by the right-hand side of \eqref{non-degeneracy condition}, as follows from \eqref{d assumptions for distribution}. Hence the condition \eqref{non-degeneracy condition} is only needed to ensure that the outliers are not to close too each other; in fact, this condition is optimal (up to the factor $\varphi^{C_2}$) in guaranteeing that the distributions of the outliers have essentially no overlap. Indeed, by Theorem \ref{theorem: rank-k lde} we know that $\mu_{\alpha(i)}$ lies \hp{\zeta} in an interval of length $2 \varphi^{C_3} N^{-1/2} (d_i - 1)^{1/2}$ centred around $\theta(d_i)$. Moreover, differentiating \eqref{def theta} yields
\begin{equation*}
\theta(d_j) - \theta(d_i) \;\asymp\; (d_i - 1) (d_j - d_i)\,.
\end{equation*}
Imposing the condition $\abs{\theta(d_j) - \theta(d_i)} \geq \varphi^{C_3} N^{-1/2} (d_i - 1)^{1/2}$ leads to \eqref{non-degeneracy condition} (with $C_2$ increased if necessary so that $C_2 \geq C_3$). In fact, in \cite{BBP, Pec, SoshPert} it was proved (for $D$ independent of $N$) that the distribution associated with degenerate outliers is not Gaussian.
\end{remark}

The following remarks discuss some special cases of interest. In order to simplify notations, we set $k = 1$ and write $d \equiv d_1$, $\b v \equiv \b v^{(1)}$, $\Pi \equiv \Pi_1$, and $\Upsilon \equiv \Upsilon_1$.

\begin{remark}
In the GOE/GUE case, we have $M^{(3)} = 0$ and $M^{(4)}_{ij} = (4 - \beta) + \delta_{ij} (17 - 8 \beta)$. Thus we get that $Q(\b v) = S(\b v) = 0$ and $R(\b v) = O(N^{-1})$. Since $N^{1/2} \scalar{\b v}{H \b v}$ is a centred Gaussian with variance $2 \beta^{-1}$, we therefore find that $\Pi + \Upsilon$ has asymptotically\footnote{See Section \ref{section: GOE/GUE} for precise definitions and more details.} the distribution of a centred Gaussian with variance
\begin{equation*}
\frac{2 (\abs{d} + 1)^2 (\abs{d} - 1)}{\beta d^4} + \frac{2 (\abs{d} + 1)}{\beta d^4} \;=\; \frac{2 (\abs{d} + 1)}{\beta d^2}\,.
\end{equation*}
\end{remark}

\begin{remark}
If $\varphi^{C_2} N^{-1/3} \leq \absb{\abs{d} - 1} = o(1)$ then $\Pi + \Upsilon$ converges weakly to a centred Gaussian with variance $4 \beta^{-1}$. As an outlier approaches the bulk spectrum, the dependence of its distribution on the details of $H$ and $\b v$ is washed out. Therefore, unlike outliers located at a distance of order one from the bulk spectrum, outliers close to $\pm 2$ exhibit universality. Moreover, as an outlier approaches the bulk, its variance shrinks from $N^{-1}$ (for $d - 1 \asymp 1$) to $N^{-4/3}$ (for $d - 1 \asymp N^{-1/3}$).
\end{remark}

\begin{remark}
If $\max_i \abs{v_i} \to 0$ as $N \to \infty$, we find that $Q(\b v) \to 0$ and $R(\b v) \to 0$ as $N \to \infty$. Moreover, the Central Limit Theorem implies in this case that $N^{1/2} \scalar{\b v}{H \b v}$ converges in distribution to a centred Gaussian with variance $2 \beta^{-1}$. Therefore $\Pi + \Upsilon$ has asymptotically the distribution of
\begin{equation*}
\cal N \pbb{\frac{(\abs{d} + 1) (\abs{d} - 1)^{1/2} S(\b v)}{d^4}\,,\, \frac{2 (\abs{d} + 1)}{\beta d^2}}\,.
\end{equation*}
Thus, the only difference to the GOE/GUE case is a shift caused by the nonvanishing third moments of $H$. For example, if $M^{(3)}_{ij} = m^{(3)} \in \R$ is independent of $i$ and $j$, and $\b v = N^{-1/2}(1, \dots, 1)$, we find $S(\b v) = m^{(3)} + O(N^{-1})$.
\end{remark}

\begin{remark}
Typically, $R(\b v)$ is nonzero if $\b v$ has entries which do not converge to zero. An example for which $Q(\b v)$ is nonzero is $M^{(3)}_{ij} = m^{(3)} \in \R$ independent of $N$ and $\b v = (2^{-1/2}, (2N - 2)^{-1/2}, \dots, (2N - 2)^{-1/2})$, in which case we have $Q(\b v) = 2^{-3/2} m^{(3)} + O(N^{-1/2})$.
\end{remark}

\begin{remark}
Consider now the case where $\max_i \abs{v_i}$ does not tend to zero as $N \to \infty$. For definiteness, let $\b v = (\b u, \b w)$, where the dimension of $\b u$ is constant and $\max_i \abs{w_i} \to 0$ as $N \to \infty$. By the Central Limit Theorem and a short variance calculation, $N^{1/2} \scalar{\b v}{H \b v}$ has asymptotically the same distribution as $N^{1/2} \scalar{\b u}{H \b u} + 2 \beta^{-1}(1 - \norm{\b u}^2)(1 + 2 \norm{\b u}^2) Z$, where $Z$ is a standard normal random variable independent of $H$.

Let us take for example $\b v = (1, 0, \dots, 0)$. Then $\Pi + \Upsilon$ has asymptotically the same distribution as $\Pi' + \Upsilon'$, where
\begin{equation*}
\Pi' \;\deq\; (\abs{d} + 1) (\abs{d} - 1)^{1/2} d^{-2} N^{1/2} h_{11}\,,
\end{equation*}
and $\Upsilon'$ is a centred Gaussian, independent of $\Pi'$, with variance
\begin{equation*}
\frac{2 (\abs{d} + 1)}{\beta d^4} + \frac{(\abs{d} + 1)^2 (\abs{d} - 1)}{N d^6} \sum_i \pb{N^2 \E \abs{h_{1i}}^4  - 4 + \beta}\,.
\end{equation*}
\end{remark}

\section{Proof of Theorem \ref{thm: ILSC}, Case {\bf A}} \label{section: three moments}

In this section we prove Theorem \ref{thm: ILSC} in the case {\bf A}, i.e.\  where the first three moments of the entries of $H$ coincide with those of GOE/GUE.

We start by introducing the following notations we shall use throughout the rest of the paper. For an $N \times N$ matrix $A$ and $\b v, \b w \in \C^N$ we abbreviate
\begin{equation*}
A_{\b v \b w} \;\deq\; \scalar{\b v}{A \b w}\,.
\end{equation*}
We also write
\begin{equation*}
A_{\b v \b e_i} \;\equiv\; A_{\b v i}\,,
\qquad
A_{\b e_i \b v} \;\equiv\; A_{i \b v}\,,
\qquad
A_{\b e_i \b e_j} \;\equiv\; A_{i j}\,,
\end{equation*}
where $\b e_i \in \C^N$ denotes the $i$-th standard basis vector.

For definiteness, we consider the case where $H$ is a complex Hermitian Wigner matrix; the proof for real symmetric Wigner matrices is the same. By Markov's inequality, in order to prove Theorem \ref{thm: ILSC} it suffices to prove the following result.

\begin{proposition} \label{prop: three moment case}
Assume \eqref{3rd moment vanishes} and let $\zeta > 0$ be fixed. Then there exists a constant $C_\zeta$ such that, for all $n \leq \varphi^{\zeta}$, all deterministic $\b v, \b w \in \C^N$, and all $z \in \b S(C_\zeta)$,
\begin{equation} \label{three moment gen claim}
\E \absb{G_{\b v \b w}(z) - \scalar{\b v}{\b w} \msc(z)}^n \;\leq\; \pb{\varphi^{C_\zeta} \Psi(z) \norm{\b v} \norm{\b w}}^n\,.
\end{equation}
\end{proposition}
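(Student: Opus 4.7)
Write $F(H) \deq G_{\b v \b w}(z) - \scalar{\b v}{\b w} \msc(z)$. The strategy is the Green function comparison method, comparing $H$ to a GUE/GOE matrix $H^G$ whose first three moments match those of $H$ by hypothesis \eqref{3rd moment vanishes}. My first step is to handle the Gaussian case: by unitary (resp. orthogonal) invariance of $H^G$, one may rotate to a basis in which $\b v$ and $\b w$ lie in the span of two standard basis vectors, so that $F(H^G)$ becomes an explicit linear combination of a bounded number of quantities $G_{ij}(z) - \delta_{ij} \msc(z)$, $i,j\in\{1,2\}$. The entrywise local semicircle law of \cite{EYY3,EKYY1} then gives $\E|F(H^G)|^n \leq (\varphi^{C_\zeta}\Psi\norm{\b v}\norm{\b w})^n$ directly.

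Next, I would enumerate the independent upper-triangular entries of $H$ as $(i_\gamma,j_\gamma)_{\gamma = 1}^M$, $M = N(N+1)/2$, and define the interpolating matrices $H_\gamma$ in which positions $(i_1,j_1),\dots,(i_\gamma,j_\gamma)$ come from $H$ and the rest from $H^G$. Writing the telescope
\begin{equation*}
\E|F(H)|^n - \E|F(H^G)|^n \;=\; \sum_{\gamma = 1}^M \pb{\E|F(H_\gamma)|^n - \E|F(H_{\gamma-1})|^n}\,,
\end{equation*}
it suffices to control each summand by $M^{-1} (\varphi^{C_\zeta}\Psi\norm{\b v}\norm{\b w})^n$. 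For fixed $\gamma$ with $(i,j) = (i_\gamma,j_\gamma)$, let $H^0$ denote $H_\gamma$ with the $(i,j)$-entry (and its conjugate) set to zero; then $H_\gamma$ and $H_{\gamma-1}$ are obtained from $H^0$ by restoring the entry equal to $h_{ij}$ and $h^G_{ij}$ respectively. Taylor-expanding $|F|^n$ in real and imaginary parts of the restored entry to order four, the zeroth through third order terms have matching expectations under $h_{ij}$ and $h^G_{ij}$ because of \eqref{3rd moment vanishes} together with the variance matching. The subexponential tail \eqref{subexp for h} truncates $|h_{ij}|$ to $N^{-1/2}\varphi^{O(1)}$ up to a negligible error, so the essential task reduces to bounding $\E|h_{ij}|^4\cdot|\partial^4 |F|^n(H^0 + \xi\Delta^{ij})|$ uniformly over $|\xi|\leq N^{-1/2}\varphi^{O(1)}$.

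The resolvent identity $\partial_{h_{ij}} G = -(G\Delta^{ij}G + G\Delta^{ji}G)$ expands each derivative of $F$ as a sum of products $G_{\b v k}G_{\ell\b w}$ with $k,\ell\in\{i,j\}$. Thus $\partial^4|F|^n$ expands into at most $C n^4$ terms, each of the schematic form
\begin{equation*}
|F|^{n-s}\prod_r G_{\star\star}\,,
\end{equation*}
where each $G_{\star\star}$ is either an isotropic factor $G_{\b v i}$, $G_{\b v j}$, $G_{i\b w}$, $G_{j\b w}$, or an entrywise factor $G_{ii}, G_{ij}, G_{jj}$. The entrywise factors are bounded by $O(1)$ using $|G_{ii}| = |\msc| + o(1)$ and $|G_{ij}| \leq \varphi^C\Psi$ from the entrywise local law. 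The isotropic factors cannot be bounded pointwise, but the Ward identity gives the averaged estimate
\begin{equation*}
\sum_i |G_{\b v i}|^2 \;=\; \frac{\im G_{\b v\b v}}{\eta} \;\leq\; \frac{\im\msc + \varphi^C\Psi}{\eta}\,,
\end{equation*}
which is bounded (up to $\varphi^C$) by $N\Psi^2\norm{\b v}^2$. Since each term in $\partial^4|F|^n$ carries exactly two isotropic factors attached to the swap index (one from $\b v$-side, one from $\b w$-side), summing over $(i,j)$ and applying Cauchy-Schwarz together with Ward yields $N^2\Psi^4\norm{\b v}^2\norm{\b w}^2$. Combined with $\E|h_{ij}|^4\lesssim N^{-2}$, this produces a factor $\Psi^4$, which accounts for the loss from two derivatives on each of $F$ and $\overline F$, and leaves a factor $\E|F|^{n-4}\cdot(\varphi^C\Psi\norm{\b v}\norm{\b w})^4$ to be closed by the induction on $n$.

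\textbf{Main obstacle.} The delicate point is precisely this self-referential structure: controlling $F = G_{\b v\b w} - \msc\scalar{\b v}{\b w}$ requires bounds on isotropic quantities $G_{\b v i}$, which are objects of the very same nature as $F$ itself. The Ward identity bound is the only global tool available and must be used sharply — any slack of a factor of $N$ would ruin the argument. The bookkeeping has to match each isotropic summation against a single factor of $N^{-1}$ arising from $\E|h_{ij}|^4$, and each derivative against the gain of a factor of $\Psi$, so that the total is $(\varphi^C\Psi)^n$ and not a larger power. This requires verifying that every one of the $O(n^4)$ terms in $\partial^4|F|^n$ admits a decomposition with the same power counting, then using the induction hypothesis at order $n-2$ (or lower) to close the estimate. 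The domain condition $z \in \b S(C_\zeta)$ (i.e.\ $\eta \geq \varphi^{C_\zeta}/N$) ensures $N\eta\Psi^2 \gtrsim 1$, which is what permits the Ward-identity bound to be used freely without losing factors of $\varphi$.
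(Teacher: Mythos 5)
Your overall strategy -- Lindeberg swapping to GOE/GUE, cancellation of the first three moments, and a Ward-identity-based treatment of the isotropic resolvent entries -- is exactly the paper's, but two steps in your power counting contain genuine gaps. First, the circularity you flag in your last paragraph is not resolved by induction on $n$: the quantity $\im G_{\b v \b v}$ entering your Ward bound $\sum_i \abs{G_{\b v i}}^2 = \im G_{\b v \b v}/\eta$ appears inside the expectation at full moment order, so lower moments of $F$ give you nothing. The paper breaks the circle with a two-step bootstrap: it first proves the \emph{weaker} apriori bound $\E (\im G_{\b v \b v})^n \leq (\varphi^{C_\zeta}\Phi)^n$ with $\Phi = \im \msc + (N\eta)^{-1}$ (Lemma \ref{lemma: three moment bound}), by allowing the unknown $\E(\im S_{\b v \b v})^n$ to reappear on the right-hand side of each swap estimate multiplied by a coefficient $\cal E_{ab}/\log N$ whose sum over all $N^2$ swaps is $O(1)$, and then closing with a Gronwall-type iteration $X_\gamma \leq r_\gamma X_{\gamma-1} + \dots$ seeded by the Gaussian case. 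Only with this apriori bound in hand does the comparison for $G_{\b v\b v} - \msc$ at the optimal scale $\Psi$ go through (Lemma \ref{lemma: three moment error}). Your proposal needs either this mechanism or an equivalent one.

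Second, your claim that each term of $\partial^4 \abs{F}^n$ carries exactly two isotropic factors is false: a term built from $m$ differentiated copies of $F$ or $\ol F$ (e.g.\ $(\partial F)^2(\partial \ol F)^2 \abs{F}^{n-4}$) carries $2m$ isotropic factors, up to eight, with repeated indices $a$ or $b$. For three or more factors sharing an index the averaged Ward identity is useless -- without a pointwise bound, $\max_a\abs{G_{\b v a}}$ can only be controlled by $\norm{G \b v} \asymp \sqrt{N}\,\Psi$, which loses powers of $N$. The missing ingredient is the pointwise, self-improving estimate of Lemma \ref{lemma: Gvi Gvv},
\begin{equation*}
\abs{G_{\b v a}} \;\leq\; \varphi^{C_\zeta} \sqrt{\frac{\im G_{\b v \b v}}{N \eta}} + C \abs{v_a}\,,
\end{equation*}
obtained from the large deviation bound applied to $G_{\b v a} = -G_{aa}\sum_k G^{(a)}_{\b v k} h_{ka}$. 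Its $\abs{v_a}$ term cannot be discarded (take $\b v = \b e_a$) and forces the explicit $\abs{v_a}^\sigma \abs{v_b}^\tau$ bookkeeping of the paper's error factor $\cal E_{ab}$, whose summability over $(a,b)$ rests on $\norm{\b v} = 1$ rather than on the Ward identity. Your accounting via "$N^2\Psi^4\norm{\b v}^2\norm{\b w}^2$" would not survive these terms.
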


The rest of this section is devoted to the proof of Proposition \ref{prop: three moment case}.

\subsection{Preliminaries}
We start with a few basic tools. For $E \in \R$ define
\begin{equation} \label{def kappa}
\kappa_E \;\deq\; \absb{\abs{E} - 2}\,,
\end{equation}
the distance from $E$ to the spectral edges $\pm 2$. In the following we use the notations
\begin{equation*}
z \;=\; E + \ii \eta \,, \qquad \kappa \;\equiv\; \kappa_E
\end{equation*}
without further comment. The following lemma collects some useful properties of $\msc$, the Stieltjes transform of the semicircle law.
\begin{lemma} \label{lemma: msc}
For $\abs{z} \leq 2 \Sigma$ we have
\begin{equation} \label{bounds on msc}
\abs{\msc(z)} \;\asymp\; 1 \,, \qquad \abs{1 - \msc(z)^2} \;\asymp\; \sqrt{\kappa + \eta}\,.
\end{equation}
Moreover,
\begin{equation*}
\im \msc(z) \;\asymp\;
\begin{cases}
\sqrt{\kappa + \eta} & \text{if $\abs{E} \leq 2$}
\\
\frac{\eta}{\sqrt{\kappa + \eta}} & \text{if $\abs{E} \geq 2$}\,.
\end{cases}
\end{equation*}
(Here the implicit constants depend on $\Sigma$.)
\end{lemma}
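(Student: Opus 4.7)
The proof rests on the explicit formula $\msc(z) = (-z + \sqrt{z^2-4})/2$, implied by \eqref{identity for msc} with the branch chosen so that $\sqrt{z^2-4} \sim z$ at infinity (equivalently, $\im\msc > 0$ for $\eta > 0$). The first claim $\abs{\msc(z)} \asymp 1$ is immediate from $\msc + 1/\msc = -z$ and $\abs{z} \le 2\Sigma$: if either $\abs{\msc}$ or $\abs{1/\msc}$ were larger than $4\Sigma$, then the triangle inequality would force $\abs{\msc + 1/\msc} > 2\Sigma$, a contradiction. The implicit constants depend only on $\Sigma$.

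For $\abs{1 - \msc^2}$, I would first derive the algebraic identity $1 - \msc^2 = -\msc\sqrt{z^2-4}$. From \eqref{identity for msc} one gets $1 - \msc^2 = 2 + z\msc$, and then $(2 + z\msc)^2 = \msc^2(z^2-4)$ follows by substituting \eqref{identity for msc} once more (the sign being fixed by the branch at infinity). This gives $\abs{1 - \msc^2} = \abs{\msc}\sqrt{\abs{z-2}\abs{z+2}}$; the factor corresponding to the nearer edge is $\sqrt{\kappa^2+\eta^2} \asymp \kappa+\eta$, while the other factor is uniformly $\asymp 1$ on $\abs{E} \le 2\Sigma$, so the second claim follows from Step~1.

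For $\im\msc$, I would combine the identity above with two further elementary ones. A direct expansion $\msc = a + \ii b$ yields $\abs{1 - \msc^2}^2 = (1 - \abs{\msc}^2)^2 + 4(\im\msc)^2$, and the imaginary part of \eqref{identity for msc} (using $\im(1/\msc) = -\im\msc/\abs{\msc}^2$) gives $(1 - \abs{\msc}^2)\,\im\msc = \eta\,\abs{\msc}^2 \asymp \eta$. Setting $x \deq \im\msc$ and $y \deq 1 - \abs{\msc}^2 > 0$, these become $x^2 + y^2 \asymp \kappa+\eta$ and $xy \asymp \eta$, which up to constants pin down $\{x,y\} = \{\sqrt{\kappa+\eta},\,\eta/\sqrt{\kappa+\eta}\}$. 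The assignment of the two roots is forced at $\eta = 0$: for $\abs{E} < 2$, $\abs{\msc(E + \ii 0)} = 1$ makes $y$ the small root, so $\im\msc \asymp \sqrt{\kappa+\eta}$; for $\abs{E} > 2$, $\im\msc(E + \ii 0) = 0$ makes $x$ the small root, so $\im\msc \asymp \eta/\sqrt{\kappa+\eta}$.

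The main technical subtlety is extending this boundary assignment uniformly over $(\kappa,\eta)$. A continuity argument suffices because a swap $x = y$ can only occur on the curve $\kappa \asymp \eta$, where the two claimed values coincide anyway. If preferred, one can verify the assignment directly by computing $\omega \deq \im\sqrt{z^2-4}$ via $\omega^2 = (\abs{z^2-4} - \re(z^2-4))/2$ and using $2\im\msc = \omega - \eta$: inside the bulk, every term in $-\re(z^2-4) = (4-E^2) + \eta^2$ is nonnegative, so $\omega \asymp \sqrt{\kappa+\eta}$ and then $\omega - \eta = (\omega^2-\eta^2)/(\omega+\eta) \asymp \sqrt{\kappa+\eta}$; outside the bulk, $\re\sqrt{z^2-4}$ is instead the large root, and $\omega = \im(z^2-4)/(2\re\sqrt{z^2-4}) = E\eta/\re\sqrt{z^2-4} \asymp \eta/\sqrt{\kappa+\eta}$.
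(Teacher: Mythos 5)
Your proof is correct. The paper itself does not prove this lemma; it simply cites Lemma 4.2 of \cite{EYY2}, so there is no in-paper argument to compare against, but your self-contained derivation is sound: the identities $1-\msc^2=-\msc\sqrt{z^2-4}$, $\abs{1-\msc^2}^2=(1-\abs{\msc}^2)^2+4(\im\msc)^2$, and $(1-\abs{\msc}^2)\im\msc=\eta\abs{\msc}^2$ all check out, and the observation that the two roots can only coalesce where $\kappa\lesssim\eta$ (precisely where the two claimed asymptotics for $\im\msc$ agree) is exactly what makes the continuity argument for the root assignment go through.

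One minor remark on the optional direct verification at the end: concluding $\omega-\eta\asymp\sqrt{\kappa+\eta}$ from $\omega\asymp\sqrt{\kappa+\eta}$ via $(\omega^2-\eta^2)/(\omega+\eta)$ requires the lower bound $\omega^2-\eta^2\gtrsim\kappa+\eta$, which needs slightly more than the nonnegativity of the terms in $-\re(z^2-4)$ (e.g.\ one uses $\abs{z^2-4}\geq 4-E^2+\eta^2$ together with $\abs{z^2-4}\geq 2\abs{E}\eta$, plus the trivial regime $\eta\gtrsim 1$). Since your primary continuity argument is already complete, this does not affect the validity of the proof.
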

\begin{proof}
The proof is an elementary calculation; see Lemma 4.2 in \cite{EYY2}.
\end{proof}

In addition to $\Psi$, we shall make use of a larger control parameter $\Phi$, defined as
\be \label{definition of Phi and Psi}
\Phi(z) \;\deq\; \im \msc(z) + \frac{1}{N\eta}\,,
\qquad \Psi(z) \;=\; \sqrt{\frac{\im \msc(z)}{N\eta}}+\frac{1}{N\eta} \;\asymp\; \sqrt{\frac{\Phi(z)}{N\eta}}\,.
\ee
From Lemma \ref{lemma: msc} we find, for any $z$ satisfying $\abs{z} \leq 2 \Sigma$,
\be\label{1.5}
N^{-1/2}
\;\lesssim\; \sqrt{\frac{\im \msc(z)}{N \eta}}
\;\lesssim\; \Psi(z) \;\lesssim\; \Phi(z)\,,
\ee
where $A_N \lesssim B_N$ means $A_N \leq C B_N$ for some constant $C$.

We shall often need to consider minors of $H$, which are the content of the following definition.

\begin{definition}[Minors]
For $\bb T \subset \{1, \dots, N\}$ we define $H^{(\bb T)}$ by
\begin{equation*}
(H^{(\bb T)})_{ij} \;\deq\; \ind{i \notin \bb T} \ind{j \notin \bb T} h_{ij}\,.
\end{equation*}
Moreover, we define the resolvent of $H^{(\bb T)}$ through
\begin{equation*}
G^{(\bb T)}_{ij}(z) \;\deq\; \ind{i \notin \bb T} \ind{j \notin \bb T} (H^{(\bb T)} - z)^{-1}_{ij}\,.
\end{equation*}
We also set
\begin{equation*}
\sum_i^{(\bb T)} \;\deq\; \sum_{i \col i \notin \bb T}\,.
\end{equation*}
When $\bb T = \{a\}$, we abbreviate $(\{a\})$ by $(a)$ in the above definitions; similarly, we write $(ab)$ instead of $(\{a,b\})$.
\end{definition}

We shall also need the following resolvent identities, proved in Lemma 4.2 of \cite{EYY1} and Lemma 6.10 of \cite{EKYY2}.
\begin{lemma}[Resolvent identities]
For any $i,j,k$ we have
\begin{equation} \label{Gij Gijk}
G_{ij} \;=\; G_{ij}^{(k)} + \frac{G_{ik} G_{kj}}{G_{kk}}\,.
\end{equation}
Moreover, for $i \neq j$ we have
\begin{equation} \label{sq root formula}
G_{ij} \;=\; - G_{ii} \sum_{k}^{(i)} h_{ik} G_{kj}^{(i)} \;=\; - G_{jj} \sum_k^{(j)} G_{ik}^{(j)} h_{kj}\,.
\end{equation}
These identities also hold for minors $H^{(\bb T)}$.
\end{lemma}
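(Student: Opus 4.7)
The plan is to derive both identities from the block Schur complement formula applied to the matrix $H - z$, which is the standard and cleanest route for this type of resolvent identity. Throughout I will fix an index $k$ and consider the block decomposition of $H - z$ in which the $k$-th row and column are separated from the remaining $(N-1) \times (N-1)$ block. After a harmless relabelling, this decomposition reads
\begin{equation*}
H - z \;=\; \begin{pmatrix} h_{kk} - z & \b h^{*} \\ \b h & H^{(k)} - z \end{pmatrix},
\end{equation*}
where $\b h \in \C^{N-1}$ is the $k$-th column of $H$ with its $k$-th entry removed, and where we identify $H^{(k)}$ with its restriction to the coordinates different from $k$ (so that $(H^{(k)} - z)^{-1}$ makes sense as a genuine invertible matrix on that subspace). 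I will take for granted the standard $2 \times 2$ block inverse formula together with the Schur complement identity for $G_{kk}$.

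First I would read off the off-diagonal entries of $(H-z)^{-1}$ from the block inverse. For $j \neq k$ one has $G_{kj} = -G_{kk}\bigl((H^{(k)} - z)^{-1} \b h\bigr)_{j}$ after using the Schur formula for $G_{kk}$; writing the matrix-vector product in coordinates yields
\begin{equation*}
G_{kj} \;=\; -G_{kk} \sum_{l}^{(k)} G^{(k)}_{jl} h_{lk},
\end{equation*}
and taking transposes (using $G = G^{*}$ applied to $\overline{z}$, or symmetrically exchanging the roles of the two blocks) gives the corresponding expression with $h_{kl}$ on the left. Specialising $k := i$ (resp.\ $k := j$) in this identity and renaming dummy indices produces the two forms in \eqref{sq root formula}; no smallness or randomness is used, only the fact that $G_{ii}$ and $G^{(i)}$ exist as matrices.

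Next I would extract \eqref{Gij Gijk} from the same block inverse. The $(i,j)$-block of $(H-z)^{-1}$ for $i, j \neq k$ is explicitly
\begin{equation*}
G_{ij} \;=\; G^{(k)}_{ij} + G_{kk}\, \bigl((H^{(k)}-z)^{-1} \b h\bigr)_{i}\, \bigl(\b h^{*}(H^{(k)}-z)^{-1}\bigr)_{j}.
\end{equation*}
Using the expressions for $G_{ik}$ and $G_{kj}$ just derived, the second term equals $G_{kk}\cdot(-G_{ik}/G_{kk})\cdot(-G_{kj}/G_{kk}) = G_{ik}G_{kj}/G_{kk}$, which is exactly \eqref{Gij Gijk}. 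The degenerate cases $i=k$ or $j=k$ are handled trivially: by our convention $G^{(k)}_{kj} = G^{(k)}_{ik} = 0$, and the right-hand side collapses to $G_{kk}G_{kj}/G_{kk} = G_{kj}$, so the identity holds in these cases as well (and holds for $i = j = k$ vacuously).

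Finally, the extension to minors $H^{(\b T)}$ is automatic: for any $\b T$ with $k \notin \b T$ and $i,j \notin \b T$, the argument above applied verbatim to the matrix $H^{(\b T)}$ (of which $H^{(\b T \cup \{k\})}$ is itself a minor) yields the same identities with every resolvent carrying the extra superscript $(\b T)$. There is no real obstacle here — the only thing to be careful about is the index bookkeeping, in particular that the sums in \eqref{sq root formula} run only over indices not in the removed set, which is forced by the convention that $G^{(\b T)}_{lj} = 0$ whenever $l \in \b T$.
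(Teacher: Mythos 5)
Your derivation is correct: the paper itself gives no proof of this lemma but cites Lemma 4.2 of \cite{EYY1} and Lemma 6.10 of \cite{EKYY2}, where the identities are obtained by exactly the Schur-complement/block-inverse computation you carry out, so your argument is the standard one and is self-contained. One cosmetic slip: your intermediate formula $G_{kj}=-G_{kk}\bigl((H^{(k)}-z)^{-1}\b h\bigr)_j$ is in fact the expression for $G_{jk}$ (the $(2,1)$ block), while the $(1,2)$ block gives $G_{kj}=-G_{kk}\sum_l^{(k)} h_{kl}G^{(k)}_{lj}$; since you immediately invoke the transpose/symmetry of the two blocks, both forms of \eqref{sq root formula} still follow, and the verification of \eqref{Gij Gijk}, the degenerate cases $i=k$ or $j=k$, and the extension to minors are all fine (with the implicit, standard remark that $\eta>0$ guarantees the invertibility of $H^{(k)}-z$ and $G_{kk}\neq 0$).
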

It is an immediate consequence of \eqref{Gij Gijk} that
\begin{equation} \label{Gvw minor}
G_{\b v \b w} \;=\; G_{\b v \b w}^{(k)} + \frac{G_{\b v k} G_{k \b w}}{G_{kk}}\,.
\end{equation}
Moreover, we introduce the notations
\begin{equation} \label{def G prime}
\cal G_{\b v i} \;\deq\; - \sum_{k}^{(i)} G_{\b v k}^{(i)} h_{k i}\,, \qquad
\cal G_{i \b v} \;\deq\; - \sum_{k}^{(i)} h_{i k} G_{k \b v}^{(i)}\,,
\end{equation}
so that
\begin{equation} \label{G G prime}
G_{\b v i} \;=\; G_{ii} \pb{\ol v_i + \cal G_{\b v i}}\,,
\qquad
G_{i \b v} \;=\; G_{ii} \pb{v_i + \cal G_{i \b v}}
\end{equation}
by \eqref{sq root formula}.

Next, we record some basic large deviations estimates.
\begin{lemma}[Large deviations estimates] \label{lemma: LDE}
Let $a_1, \dots, a_N, b_1, \dots, b_M$ be independent random variables with zero mean and unit variance. Assume that there is a constant $\vartheta > 0$ such that
\begin{align}
\P(\abs{a_i} \geq x) &\;\leq\; \vartheta^{-1} \exp(-x^\vartheta) \quad (i = 1, \dots, N)\,,
\notag  \\ \label{subexponential decay for LDE}
\P(\abs{b_i} \geq x) &\;\leq\; \vartheta^{-1} \exp(-x^\vartheta) \quad (i = 1, \dots, M)\,.
\end{align}
Then there exists a constant $\rho \equiv \rho(\vartheta) > 1$ such that, for any $\zeta > 0$ and any deterministic complex numbers $A_i$ and $B_{ij}$, we have \hp{\zeta}
\begin{align} \label{LDE}
\absBB{\sum_{i = 1}^N A_i a_i} &\;\leq\; \varphi^{\rho \zeta} \pBB{\sum_{i = 1}^N \abs{A_i}^2}^{1/2}\,,
\\ \label{diag LDE}
\absBB{\sum_i A_i \abs{a_i}^2 - \sum_i A_i} &\;\leq\;\varphi^{\rho \zeta} \pbb{\sum_i \abs{A_i}^2}^{1/2}\,,
\\ \label{offdiag LDE}
\absBB{\sum_{i \neq j} \ol a_i B_{ij} a_j} &\;\leq\; \varphi^{\rho \zeta} \pbb{\sum_{i \neq j} \abs{B_{ij}}^2}^{1/2}\,,
\\ \label{two-set LDE}
\absBB{\sum_{i,j} a_i B_{ij} b_j} &\;\leq\; \varphi^{\rho \zeta} \pbb{\sum_{i,j} \abs{B_{ij}}^2}^{1/2}\,.
\end{align}
\end{lemma}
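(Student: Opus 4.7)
The plan is to use the standard high-moment method followed by Markov's inequality. From the tail bound \eqref{subexponential decay for LDE}, integration and Stirling yield $\E|a_i|^k \leq (C_\vartheta k)^{k/\vartheta}$ uniformly for $k \geq 2$, and similarly for $b_j$. I will establish, for each positive integer $p$,
\begin{equation*}
\E|X|^{2p} \;\leq\; (C_\vartheta p)^{C_\vartheta p/\vartheta} \, \Lambda^{2p},
\end{equation*}
where $X$ is the left-hand side of any of \eqref{LDE}--\eqref{two-set LDE} and $\Lambda$ is the square root of the corresponding sum on the right-hand side. Markov's inequality with threshold $\varphi^{\rho\zeta}\Lambda$ and $p=\lceil\varphi^\zeta\rceil$ then gives failure probability at most $\exp(-c\,\varphi^\zeta\log\varphi)$, which is much stronger than $\zeta$-high probability once $\rho$ is chosen large enough in terms of $\vartheta$.

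For \eqref{LDE}, I expand $\E\absb{\sum_i A_i a_i}^{2p}$ as a multi-index sum. Independence together with $\E a_i = 0$ forces every index to appear with multiplicity at least two. Pure pair partitions produce the leading $(2p-1)!!\,\Lambda^{2p} \leq (Cp)^p \Lambda^{2p}$ term, while higher-multiplicity patterns are controlled via $\E|a_i|^{2k}\leq (C_\vartheta k)^{2k/\vartheta}$ combined with a Cauchy--Schwarz estimate that trades the moment factor against the loss of a free summation index; summing over multiplicity patterns yields the claimed moment bound. The centered diagonal estimate \eqref{diag LDE} reduces to exactly the same argument applied to the independent centered subexponential variables $|a_i|^2 - 1$, which have subexponential tails of a slightly smaller power and unit-order variance.

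The main work, and the main obstacle, is the off-diagonal quadratic form \eqref{offdiag LDE}. I would expand $\E\absb{\sum_{i\neq j}\ol a_i B_{ij} a_j}^{2p}$ and organize the terms by the multigraph on $\{1,\dots,N\}$ whose $2p$ edges record the index pairs $\{i_r,j_r\}$, $r=1,\dots,2p$. Since every vertex of degree one produces a vanishing factor through $\E a_i = 0$, only graphs of minimum degree $\geq 2$ contribute. Graphs with all vertices of degree exactly $2$ (pair partitions) give $(Cp)^p \Lambda^{2p}$ using $\E|a_i|^2 = 1$; each vertex of degree $2k\geq 4$ contributes a factor $\E|a_i|^{2k}\leq (C_\vartheta k)^{2k/\vartheta}$ at the cost of losing one free summation index. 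A book-keeping of this trade-off, combined with a Cauchy--Schwarz contraction onto the Frobenius norm $\sum_{i,j}|B_{ij}|^2$, yields the same $(C_\vartheta p)^{C_\vartheta p/\vartheta}\Lambda^{2p}$ estimate. The delicate point is organizing the combinatorial sum so that the high-moment penalty is exactly offset by the reduction in index freedom.

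Finally, \eqref{two-set LDE} follows by conditioning: given $(b_j)$, the variable $X = \sum_i a_i\bigl(\sum_j B_{ij} b_j\bigr)$ is a linear form in the $a_i$, so \eqref{LDE} gives the conditional \hp{\zeta} bound $\varphi^{\rho\zeta}\bigl(\sum_i \absb{\sum_j B_{ij} b_j}^2\bigr)^{1/2}$. The remaining randomness in $(b_j)$ is then controlled by \eqref{diag LDE} and \eqref{offdiag LDE} applied to the quadratic form $\sum_i\absb{\sum_j B_{ij} b_j}^2 = \sum_{j,j'}\ol b_j b_{j'}\sum_i \ol B_{ij} B_{ij'}$, whose expectation equals $\sum_{i,j}|B_{ij}|^2$ and whose fluctuation is bounded by a factor $1+O(\varphi^{\rho\zeta})$ using that $\tr\pb{(B^*B)^2}\leq\pb{\sum_{i,j}|B_{ij}|^2}^2$. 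This chains to give \eqref{two-set LDE}, with a slightly enlarged constant $\rho$.
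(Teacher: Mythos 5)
Your argument is sound, but note that the paper does not actually prove \eqref{LDE}--\eqref{offdiag LDE}: it cites Appendix B of [EYY1], where precisely the high-moment/graph-counting argument you sketch is carried out. So for those three estimates you are reconstructing the cited proof rather than replacing one in the paper; your outline is the standard one, and the only unfinished business is the combinatorial bookkeeping for \eqref{offdiag LDE} that you yourself flag (a degree-$2k$ vertex costs a moment factor $(C_\vartheta k)^{2k/\vartheta}$ but removes a free summation index; making this trade-off precise is the content of the cited appendix). Where you genuinely diverge from the paper is \eqref{two-set LDE}. The paper's route is shorter: set $A_i = \sum_j B_{ij} b_j$, apply \eqref{LDE} in the $b$-variables to each $A_i$ separately to get $\abs{A_i} \leq \varphi^{\rho\zeta} \pb{\sum_j \abs{B_{ij}}^2}^{1/2}$ \hp{\zeta}, hence $\sum_i \abs{A_i}^2 \leq \varphi^{2\rho\zeta} \sum_{i,j}\abs{B_{ij}}^2$, and then apply \eqref{LDE} once more in the $a$-variables, using that $\{A_i\}$ and $\{a_i\}$ are independent. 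Your version instead controls $\sum_i \abs{A_i}^2$ as a quadratic form in the $b_j$ via \eqref{diag LDE}, \eqref{offdiag LDE}, and $\tr (B^*B)^2 \leq (\tr B^*B)^2$; this is also correct, with the caveat that the resulting multiplicative factor is $O(\varphi^{\rho\zeta})$ rather than $1 + o(1)$ as you wrote -- harmless, since it sits under a square root and is absorbed by enlarging $\rho$. Both routes effectively double $\rho$; the paper's avoids any appeal to the quadratic-form estimates. One further small point: in reducing \eqref{diag LDE} to the linear case you should not assume $\abs{a_i}^2 - 1$ has unit variance (it may even vanish, e.g.\ for Rademacher entries); the moment method only needs the variance bounded above by $C_\vartheta$, so the reduction still goes through with a possibly larger $\rho(\vartheta)$.
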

\begin{proof}
The estimates \eqref{LDE} -- \eqref{offdiag LDE} we proved in Appendix B of \cite{EYY1}. The estimate \eqref{two-set LDE} follows easily from \eqref{LDE} in two steps. Defining $A_i \deq \sum_j B_{ij} b_j$, \eqref{LDE} yields $\abs{A_i} \leq \varphi^{\rho \zeta} \pb{\sum_j \abs{B_{ij}}^2}^{1/2}$ \hp{\zeta}. Since the families $\{A_i\}$ and $\{a_i\}$ are independent, \eqref{two-set LDE} follows by using \eqref{LDE} again.
\end{proof}

Finally, we quote the following results which are proved in Theorems 2.1 and 2.2 of \cite{EYY3}. (Recall that we use the notation $\msc$ for the quantity denoted by $m_{sc}$ in \cite{EYY3}.)

\begin{theorem}[Local semicircle law] \label{LSCTHM}
Fix $\zeta>0$. Then there exists a constant $C_\zeta$ such that the event
 \begin{equation}\label{Gij estimate}
\bigcap_{z \in \b S(C_\zeta)} \hbb{\max_{1 \leq i,j \leq N} \absb{G_{ij}(z) - \delta_{ij} \msc(z)} \leq \varphi^{C_\zeta} \Psi(z)}
\end{equation}
holds \hp{\zeta}.
\end{theorem}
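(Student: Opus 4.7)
The plan is to derive an approximate self-consistent equation for the normalized trace $m_N(z) \deq N^{-1} \tr G(z)$ from the Schur complement formula, establish stability of this equation around $\msc(z)$, and bootstrap downwards in $\eta$ starting from $\eta = \Sigma$, where everything is trivial. The individual diagonal and off-diagonal entries are then recovered by inserting the bound on $m_N$ into the per-$i$ equation and by one further application of the large deviations bounds of Lemma \ref{lemma: LDE}. It suffices to prove the estimate for a single $z$; a standard $N^{-3}$-lattice argument using the crude Lipschitz bound $\abs{\partial_z G_{ij}} \leq \eta^{-2} \leq N^2$ upgrades it to the simultaneous bound over $z \in \b S(C_\zeta)$ appearing in \eqref{Gij estimate}.

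\textbf{Self-consistent equation via Schur complement.} For each fixed $i$, the Schur complement identity gives
\begin{equation*}
\frac{1}{G_{ii}} \;=\; h_{ii} - z - \sum_{k,l}^{(i)} h_{ik} G^{(i)}_{kl} h_{li}\,.
\end{equation*}
Since $(h_{ik})_{k \neq i}$ is independent of $G^{(i)}$, I would split the quadratic form into its conditional expectation $\frac{1}{N}\sum_{k}^{(i)} G^{(i)}_{kk}$ plus the diagonal and off-diagonal fluctuations, controlled by \eqref{diag LDE} and \eqref{offdiag LDE} respectively. The Ward identity $\sum_{k,l}^{(i)} \absb{G^{(i)}_{kl}}^2 = \eta^{-1} \im \tr G^{(i)}$, together with the a priori bound $\im \tr G^{(i)}/N \leq C \Phi$, supplies the $\ell^2$-norms needed and produces fluctuations of size $\varphi^{C_\zeta} \sqrt{\Phi/(N\eta)} \asymp \varphi^{C_\zeta} \Psi$. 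Replacing $G^{(i)}_{kk}$ by $G_{kk}$ via \eqref{Gij Gijk} at the same cost, I obtain
\begin{equation*}
\frac{1}{G_{ii}} \;=\; -z - m_N + \epsilon_i\,, \qquad \max_i \abs{\epsilon_i} \;\leq\; \varphi^{C_\zeta} \Psi
\end{equation*}
\hp{\zeta}. Averaging over $i$ and Taylor-expanding $N^{-1} \sum_i G_{ii}^{-1}$ around $m_N^{-1}$ (using an a priori lower bound $\abs{G_{ii}} \asymp 1$ inherited from the bootstrap hypothesis) yields the approximate self-consistent equation $m_N + z + m_N^{-1} = O(\varphi^{C_\zeta} \Psi)$.

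\textbf{Stability, bootstrap and off-diagonals.} By \eqref{identity for msc}, $\msc$ is an exact root of $f(w) \deq w + z + w^{-1}$; a standard perturbation analysis of $f$ shows that on the physical branch the root is stable, giving $\abs{m_N - \msc} \leq \varphi^{C_\zeta} \Psi$ once one knows the correct branch has been selected. The branch is identified and the estimate is propagated by a continuity argument in $\eta$: at $\eta = \Sigma$ the bound is trivial from the elementary estimate $\norm{G(z)} \leq \eta^{-1}$, and $\eta$ is then decreased along an $N^{-3}$ grid, the smoothness of $G$ in $\eta$ preventing an abrupt transition to the wrong branch. Once $\abs{m_N - \msc} \leq \varphi^{C_\zeta} \Psi$ is in hand, inserting into the per-$i$ equation yields $\abs{G_{ii} - \msc} \leq \varphi^{C_\zeta} \Psi$. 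For $i \neq j$, identity \eqref{sq root formula} writes $G_{ij} = -G_{ii} \sum_k^{(i)} h_{ik} G^{(i)}_{kj}$, and applying \eqref{LDE} to the sum (exploiting independence of $h_{i\cdot}$ from $G^{(i)}$ and the Ward identity $\sum_k \absb{G^{(i)}_{kj}}^2 \leq C \Phi/\eta$) delivers $\abs{G_{ij}} \leq \varphi^{C_\zeta} \Psi$. The main technical obstacle is that stability of $f$ degrades as $\kappa + \eta \to 0$: a naive inversion loses a factor $(\kappa + \eta)^{-1/2}$, which would spoil the optimal rate near the edge. The resolution exploits the fact that $\Psi$ itself carries a factor $\sqrt{\im \msc} \asymp (\kappa + \eta)^{1/4}$, so that a self-improving bootstrap -- feeding a weaker a priori bound on $\max_i \abs{\epsilon_i}$ back through the Schur argument to produce a stronger conclusion, then iterating -- closes the argument with the optimal $\Psi$-rate uniformly down to $\eta \asymp \varphi^{C_\zeta}/N$.
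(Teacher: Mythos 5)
First, a point of comparison: the paper does not prove Theorem \ref{LSCTHM} at all --- it is quoted from Theorems 2.1 and 2.2 of \cite{EYY3} --- so the relevant benchmark is the argument of that reference. Your outline reproduces its overall architecture (Schur complement, large deviations bounds plus the Ward identity, a self-consistent equation for $m_N$, off-diagonals via \eqref{sq root formula}, and a continuity/bootstrap argument in $\eta$ over a lattice), so the strategy is the right one.

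There is, however, a genuine gap at the decisive step, namely where you dismiss the edge instability. If the averaged error $\delta \deq \absb{N^{-1}\sum_i \epsilon_i}$ is bounded only by $\max_i \abs{\epsilon_i} \asymp \varphi^{C}\Psi$, the stability analysis of $w + z + w^{-1} = \delta$ yields $\abs{m_N - \msc} \lesssim \min\hb{\delta (\kappa+\eta)^{-1/2}, \sqrt{\delta}}$, because for $\abs{m_N - \msc} \gtrsim \sqrt{\kappa+\eta}$ the quadratic term dominates the linearization. Near the edge (say $\kappa = 0$, $\eta = N^{-2/3}$, where $\Psi \asymp N^{-1/3}$) this gives only $\sqrt{\Psi} \asymp N^{-1/6} \gg \Psi$, and the extra factor $(\kappa+\eta)^{1/4}$ hidden in $\Psi$ does not repair either branch of the minimum. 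Nor does iterating help: the a priori bound $\Lambda$ enters the Schur-complement error only through $\im G^{(i)}_{kk} \leq \im \msc + \Lambda$, so once $\Lambda \lesssim \im\msc$ the map you are iterating has already reached its fixed point at $\sqrt{\Psi}$-accuracy. The missing ingredient is the \emph{fluctuation averaging} (abstract decoupling) lemma of \cite{EYY3}: the centred parts $Z_i$ of the errors $\epsilon_i$, though not independent, satisfy $\absb{N^{-1}\sum_i Z_i} \leq \varphi^{C}\Psi^2$ with high probability, i.e.\ the average is quadratically small compared to the individual terms. Only with $\delta \asymp \Psi^2$ does the unstable branch give $\sqrt{\delta} \asymp \Psi$ (and the stable branch $\Psi^2(\kappa+\eta)^{-1/2} \lesssim (N\eta)^{-1}$), closing the argument at the optimal rate. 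This cancellation mechanism is a substantial piece of combinatorics/moment estimation and cannot be replaced by the self-improving bootstrap you describe.
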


Denote by $\gamma_1 \leq \gamma_2 \leq \cdots \leq \gamma_N$ the classical locations of the eigenvalues of $H$, defined through
\begin{equation} \label{classical location}
N \int_{-\infty}^{\gamma_\alpha} \varrho(x) \, \dd x \;=\; \alpha \qquad (1 \leq \alpha \leq N)\,.
\end{equation}

\begin{theorem}[Rigidity of eigenvalues] \label{theorem: rigidity}
Fix $\zeta > 0$. Then there exists a constant $C_\zeta$ such that
\begin{equation*}
\abs{\lambda_\alpha - \gamma_\alpha} \;\leq\; \varphi^{C_\zeta} \pb{\min \h{\alpha, N + 1 - \alpha}}^{-1/3} N^{-2/3}
\end{equation*}
for all $\alpha = 1, \dots, N$ \hp{\zeta}.
\end{theorem}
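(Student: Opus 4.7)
The plan is to deduce the rigidity estimate from the pointwise local semicircle law of Theorem~\ref{LSCTHM} via a Helffer--Sj\"ostrand-type conversion between the Stieltjes transform of the empirical spectral measure and the eigenvalue counting function. Set $m_N(z) \deq N^{-1}\sum_i G_{ii}(z)$. Averaging~\eqref{Gij estimate} over $i$ immediately gives
\[
|m_N(z) - \msc(z)| \;\leq\; \varphi^{C_\zeta} \Psi(z)
\]
for all $z \in \b S(C_\zeta)$, \hp{\zeta}. A standard lattice argument combined with the Lipschitz continuity $\abs{\partial_z m_N(z)} \leq \eta^{-2}$ upgrades this bound to a statement simultaneous in $z$, in the spirit of Remark~\ref{remark: lattice}.

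Next I would convert this Stieltjes-transform control into a bound on the eigenvalue counting function $\cN(E) \deq \#\{\alpha \col \lambda_\alpha \leq E\}$. Pick a smooth cutoff $f \equiv f_E$ approximating $\ind{(-\infty, E]}$ on a scale $\eta^\ast = \varphi^{C_\zeta} N^{-1}$ and extend it almost-analytically to $\wt f$ supported in a strip of height $\eta_0 \asymp 1$. The Helffer--Sj\"ostrand formula
\[
f(\lambda) \;=\; \frac{1}{2\pi} \int_{\R^2} \frac{\partial_{\bar z} \wt f(z)}{\lambda - z} \, \dd x \, \dd y\,,
\]
summed over eigenvalues of $H$, expresses
\[
\cN(E) - N n_{sc}(E) \;=\; \frac{N}{2 \pi} \int_{\R^2} \partial_{\bar z} \wt f(z) \, \pb{m_N(z) - \msc(z)} \, \dd x \, \dd y \;+\; O(\varphi^{C_\zeta})\,,
\]
where $n_{sc}(E) \deq \int_{-\infty}^E \varrho(\xi) \, \dd \xi$ and the error absorbs the smoothing at the endpoint. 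Splitting the $y$-integration dyadically into $\abs{y} \lesssim \eta^\ast$ and $\abs{y} \gtrsim \eta^\ast$, applying the bound from the first step on the latter, and controlling the former by $\abs{\im m_N(E + \ii y)} \lesssim y^{-1} N^{-1} \#\{\alpha \col \abs{\lambda_\alpha - E} \leq y\}$ together with a bootstrap on $\cN$, one obtains
\[
|\cN(E) - N n_{sc}(E)| \;\leq\; \varphi^{C_\zeta}
\]
uniformly in $E \in [-\Sigma, \Sigma]$, \hp{\zeta}.

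The final step inverts this counting estimate. From $\cN(\lambda_\alpha) = \alpha = N n_{sc}(\gamma_\alpha)$, which uses~\eqref{classical location}, one gets $N \abs{n_{sc}(\lambda_\alpha) - n_{sc}(\gamma_\alpha)} \leq \varphi^{C_\zeta}$. Combining $n_{sc}'(x) = \varrho(x)$ with the edge asymptotic $\varrho(\gamma_\alpha) \asymp (\hat\alpha / N)^{1/3}$ -- where $\hat\alpha \deq \min\{\alpha, N+1-\alpha\}$, derived from Lemma~\ref{lemma: msc} and a straightforward analysis of the quantile equation~\eqref{classical location} -- this yields
\[
|\lambda_\alpha - \gamma_\alpha| \;\leq\; \frac{\varphi^{C_\zeta}}{N \varrho(\gamma_\alpha)} \;\leq\; \varphi^{C_\zeta} \hat\alpha^{-1/3} N^{-2/3}\,.
\]
The main technical obstacle is the Helffer--Sj\"ostrand step: one has to balance the cutoff scale $\eta^\ast$ against the $1/(N\eta)$ deterioration of $\Psi$ so as to land at the sharp $\varphi^{C_\zeta}$ accuracy for $\cN - N n_{sc}$. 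It is precisely this optimal rate that survives the inversion near the edges $\pm 2$, where $\varrho$ vanishes and any suboptimal factor would translate into a loss of the $\hat\alpha^{-1/3}$ dependence.
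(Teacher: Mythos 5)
First, note that the paper does not prove Theorem \ref{theorem: rigidity} at all: it is quoted verbatim from Theorems 2.1 and 2.2 of \cite{EYY3}, so there is no internal proof to compare against. Judged as a standalone argument, your proposal has a genuine gap at its very first step. Averaging \eqref{Gij estimate} over $i$ only yields $\abs{m_N(z) - \msc(z)} \leq \varphi^{C_\zeta}\Psi(z)$, and $\Psi(z) \asymp (N\eta)^{-1/2}$ in the bulk (and $\Psi \gtrsim N^{-1/2}$ everywhere, by \eqref{1.5}). This is far too weak for the Helffer--Sj\"ostrand step: the term of the formula involving $f'(x)\chi'(y)$, supported at $y \asymp 1$, contributes $N \sup_{y \asymp 1}\abs{m_N - \msc} \lesssim \varphi^{C_\zeta} N^{1/2}$ to $\cal N(E) - N n_{sc}(E)$, not $\varphi^{C_\zeta}$. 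To reach the claimed $O(\varphi^{C_\zeta})$ accuracy for the counting function one needs the \emph{averaged} (fluctuation-averaged) local semicircle law $\abs{m_N - \msc} \leq \varphi^{C_\zeta}(N\eta)^{-1}$, together with its refinements near the spectral edge. This improvement over the naive average of the entrywise bounds is precisely the main content of Theorem 2.1 of \cite{EYY3}; it is obtained from the self-consistent equation for $m_N$ and a cancellation among the fluctuations of the $G_{ii}$, and it does not follow from Theorem \ref{LSCTHM} as stated.

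Two further points would also need attention even granting the strong averaged law. The bound $\abs{\cal N - N n_{sc}} \leq \varphi^{C_\zeta}$ by itself does not control $\lambda_1$ and $\lambda_N$: to invert the counting estimate near $\alpha = N$ you must separately rule out eigenvalues outside $[-2 - \varphi^{C}N^{-2/3}, 2 + \varphi^{C}N^{-2/3}]$, which requires an additional argument (in \cite{EYY3} this uses a refined estimate on $m_N - \msc$ for $E$ outside $[-2,2]$). And your inversion step $\abs{\lambda_\alpha - \gamma_\alpha} \leq \varphi^{C_\zeta}/(N\varrho(\gamma_\alpha))$ implicitly assumes $\lambda_\alpha$ stays in a region where $\varrho$ is comparable to $\varrho(\gamma_\alpha)$; near the edge this needs the a priori localization just mentioned. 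The overall architecture (averaged law $\to$ Helffer--Sj\"ostrand $\to$ counting function $\to$ quantile inversion) is indeed the one used in \cite{EYY3}, but as written the proposal starts from an input that is quantitatively insufficient.
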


\subsection{Estimate of $G_{\b v i}$}
After these preparations, we may prove the key tool behind the proof of Proposition \ref{prop: three moment case}. It will be used as input in the Green function comparison method, throughout Sections \ref{section 3.3}, \ref{section 3.4}, and \ref{section: two moments}. Let us sketch its importance in the Green function comparison method. Anticipating the notation from the proof of Lemma \ref{lemma: three moment bound}, we shall have to estimate quantities of the form
\begin{equation*}
(S - R)_{\b v \b v} \;=\; \pb{-N^{-1/2} RVR + N^{-1} RVRVR + \cdots}_{\b v \b v}\,,
\end{equation*}
where the right-hand side is a resolvent expansion of the left-hand side. The first matrix product on the right-hand side may be written as
\begin{equation*}
(RVR)_{\b v \b v} \;=\; R_{\b v a} V_{ab} R_{b \b v} + R_{\b v b} V_{ba} R_{a \b v}
\end{equation*}
(again anticipating the notation from the proof of Lemma \ref{lemma: three moment bound}). Lemma \ref{lemma: Gvi Gvv} will be used to estimate the resolvent entries of the form $R_{\b v a}$ in such error estimates. These resolvent entries arise whenever the Green function comparison method is applied to the component $(\cdot)_{\b v \b v}$ of a resolvent.

\begin{lemma} \label{lemma: Gvi Gvv}
For any $\zeta > 0$ there exists a constant $C_\zeta$ such that
\begin{equation}
\abs{\cal G_{\b v i}(z)} + \abs{\cal G_{i \b v}(z)} + \abs{G_{\b v i}(z)} + \abs{G_{i \b v}(z)} \;\leq\; \varphi^{C_\zeta} \sqrt{\frac{\im G_{\b v \b v}(z)}{N \eta}} + C \abs{v_i}
\end{equation}
holds \hp{\zeta} for all $z \in \b S(C_\zeta)$.
\end{lemma}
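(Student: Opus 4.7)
\medskip

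\noindent\textbf{Proof plan.} The plan is to establish the bound first for $\cal G_{\b v i}$ and $\cal G_{i \b v}$; the corresponding bounds on $G_{\b v i}$ and $G_{i \b v}$ will then follow from \eqref{G G prime} together with the estimate $|G_{ii}| \asymp 1$ \hp{\zeta}, which is a consequence of the local semicircle law (Theorem \ref{LSCTHM}) and $|\msc(z)| \asymp 1$ from Lemma \ref{lemma: msc}. Write $\cal G_{\b v i} = -N^{-1/2} \sum_{k \neq i} G^{(i)}_{\b v k} \tilde h_{ki}$ with $\tilde h_{ki} \deq \sqrt{N}\, h_{ki}$; these have zero mean, unit variance, subexponential tails (by \eqref{subexp for h}), and are independent of $H^{(i)}$ and hence of the coefficients $\{G^{(i)}_{\b v k}\}_{k \neq i}$. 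Conditioning on $H^{(i)}$ and applying the large-deviation estimate \eqref{LDE} yields, \hp{\zeta},
\begin{equation*}
|\cal G_{\b v i}|^2 \;\leq\; \frac{\varphi^{C_\zeta}}{N} \sum_{k \neq i} |G^{(i)}_{\b v k}|^2 \;=\; \varphi^{C_\zeta}\,\frac{\im G^{(i)}_{\b v \b v}}{N \eta},
\end{equation*}
where the final equality is the Ward identity $\sum_{k \neq i} |G^{(i)}_{\b v k}|^2 = \eta^{-1} \im G^{(i)}_{\b v \b v}$ applied to the resolvent on the reduced space. The symmetric argument gives an identical bound for $|\cal G_{i \b v}|^2$.

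The main obstacle is relating $\im G^{(i)}_{\b v \b v}$ back to the quantity $\im G_{\b v \b v}$ appearing in the statement. For this I would combine the resolvent identity \eqref{Gvw minor} with the representations \eqref{G G prime} to write
\begin{equation*}
G^{(i)}_{\b v \b v} \;=\; G_{\b v \b v} - G_{ii}\,(\bar v_i + \cal G_{\b v i})(v_i + \cal G_{i \b v}),
\end{equation*}
take imaginary parts, and use $|G_{ii}| \leq C$ \hp{\zeta} to obtain
\begin{equation*}
\im G^{(i)}_{\b v \b v} \;\leq\; \im G_{\b v \b v} + C\,\pb{|v_i|^2 + |\cal G_{\b v i}|^2 + |\cal G_{i \b v}|^2}.
\end{equation*}
The circular feature here is that the right-hand side contains the very quantity we are trying to bound.

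The escape from the circularity is a straightforward absorption argument: substituting this estimate into the display above and adding the symmetric bound for $|\cal G_{i \b v}|^2$ yields
\begin{equation*}
|\cal G_{\b v i}|^2 + |\cal G_{i \b v}|^2 \;\leq\; \frac{2 \varphi^{C_\zeta}}{N \eta}\qB{\im G_{\b v \b v} + C |v_i|^2 + C\pb{|\cal G_{\b v i}|^2 + |\cal G_{i \b v}|^2}}.
\end{equation*}
Because $z \in \b S(C_\zeta)$ implies $N \eta \geq \varphi^{C_\zeta}$, the coefficient $2 C \varphi^{C_\zeta}/(N \eta)$ is at most $1/2$ provided the constant appearing in the definition of $\b S$ is chosen large enough (which is allowed since $C_\zeta$ is generic). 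Absorbing the $|\cal G_{\b v i}|^2 + |\cal G_{i \b v}|^2$ term from the right-hand side to the left and extracting the square root gives
\begin{equation*}
|\cal G_{\b v i}| + |\cal G_{i \b v}| \;\leq\; \varphi^{C_\zeta}\sqrt{\frac{\im G_{\b v \b v}}{N \eta}} + \varphi^{C_\zeta}\,\frac{|v_i|}{\sqrt{N\eta}}.
\end{equation*}
Using $N \eta \geq \varphi^{C_\zeta}$ once more absorbs the factor $\varphi^{C_\zeta}/\sqrt{N\eta}$ into a plain constant $C$, and the claimed bound on $|G_{\b v i}|, |G_{i \b v}|$ follows from \eqref{G G prime} and $|G_{ii}| \leq C$. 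The main technical step is thus the self-consistent absorption made possible by the spectral-domain constraint $N\eta \geq \varphi^{C_\zeta}$; the rest is a combination of Ward, large deviations, and the diagonal local law.
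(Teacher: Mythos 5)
Your proof is correct and follows essentially the same route as the paper's: large deviations conditional on $H^{(i)}$ plus the Ward identity on the minor, then the resolvent identity \eqref{Gvw minor} combined with \eqref{G G prime} to return from $\im G^{(i)}_{\b v \b v}$ to $\im G_{\b v \b v}$, and finally a self-consistent absorption using $N\eta \geq \varphi^{C_\zeta}$. The only cosmetic difference is that you absorb the squared quantity $|\cal G_{\b v i}|^2 + |\cal G_{i \b v}|^2$ whereas the paper absorbs $X = |\cal G_{\b v i}| + |\cal G_{i \b v}|$ directly after taking the square root.
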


\begin{proof}
Since the families $(h_{ki})_k$ and $(G^{(i)}_{\b v k})_k$ are independent, \eqref{def G prime}, \eqref{LDE}, and \eqref{subexp for h} yield
\begin{equation*}
\abs{\cal G_{\b v i}} \;\leq\; \varphi^{C_\zeta} \pBB{\frac{1}{N}\sum_{k}^{(i)} \absb{G_{\b v k}^{(i)}}^2}^{1/2}
\end{equation*}
\hp{\zeta} for some constant $C_\zeta$. By spectral decomposition one easily finds that
\begin{equation*}
\frac{1}{N} \sum_{k}^{(i)} \absb{G_{\b v k}^{(i)}}^2 = \frac{1}{N \eta} \im G_{\b v \b v}^{(i)}\,.
\end{equation*}
From \eqref{bounds on msc} and \eqref{Gij estimate} we find that
\begin{equation} \label{Gii bound}
\abs{G_{ii}} \;\leq\; C
\end{equation}
\hp{\zeta} provided that $\eta > \varphi^{C_\zeta}$ for some large enough $C_\zeta$.
Setting
\begin{equation*}
X \;\deq\; \abs{\cal G_{\b v i}} + \abs{\cal G_{i \b v}}\,,
\end{equation*}
we therefore conclude, using first \eqref{Gvw minor} and then \eqref{G G prime}, that
\begin{equation*}
X \;\leq\; \varphi^{C_\zeta} \pBB{\frac{\im G_{\b v \b v} + \abs{G_{ii}} \absb{G_{\b v i} / G_{ii}} \absb{G_{i \b v} / G_{ii}}}{N \eta}}^{1/2}
\;\leq\;
\varphi^{C_\zeta} \sqrt{\frac{\im G_{\b v \b v}}{N \eta}} + \varphi^{C_\zeta} \frac{X}{\sqrt{N \eta}} + \varphi^{C_\zeta} \frac{\abs{v_i}}{\sqrt{N \eta}}
\end{equation*}
\hp{\zeta}.
Thus we find for $\eta \geq \varphi^{2 C_\zeta} N^{-1}$
\begin{equation*}
X \;\leq\; \varphi^{C_\zeta} \sqrt{\frac{\im G_{\b v \b v}}{N \eta}} + \abs{v_i}
\end{equation*}
\hp{\zeta}, and the claim for $\abs{\cal G_{\b v i}} + \abs{\cal G_{i \b v}}$ follows. The claim for $\abs{G_{\b v i}} + \abs{G_{i \b v}}$ follows using \eqref{G G prime} and \eqref{Gii bound}.
\end{proof}

\subsection{Estimate of $\im G_{\b v \b v}$} \label{section 3.3}
The first step in the proof of Proposition \ref{prop: three moment case} is the following estimate of $\im G_{\b v \b v}$. Note that $\im G_{\b v \b v}$ is a nonnegative quantity, as may be easily seen by spectral decomposition of $G$.

\begin{lemma} \label{lemma: three moment bound}
Let $\zeta > 0$ be fixed. Then there exists a constant $C_\zeta$ such that, for all $n \leq \varphi^{\zeta}$, all deterministic and normalized $\b v \in \C^N$, and all $z \in \b S(C_\zeta)$, we have
\begin{equation} \label{rough bound}
\E \pb{\im G_{\b v \b v}(z)}^n \;\leq\; \pb{\varphi^{C_\zeta} \Phi(z)}^n\,.
\end{equation}
\end{lemma}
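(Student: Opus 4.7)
The plan is a Green function comparison argument against GOE/GUE, exploiting the assumption \eqref{3rd moment vanishes} that the first three moments of $H$ match those of the Gaussian ensemble. Let $V$ denote a GOE/GUE matrix of the same symmetry class as $H$ and set $G^V(z) \deq (V-z)^{-1}$.

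I would first handle the Gaussian case directly. The unitary (respectively orthogonal) invariance of the law of $V$ gives $G^V_{\b v \b v}(z) \eqdist G^V_{11}(z)$ for any deterministic unit vector $\b v$. Theorem \ref{LSCTHM} applied to $V$ then yields $\absb{G^V_{11}(z) - \msc(z)} \leq \varphi^{C_\zeta} \Psi(z)$ \hp{\zeta}, so on the good event $\im G^V_{\b v \b v}(z) \leq \im \msc(z) + \varphi^{C_\zeta} \Psi(z) \leq C \varphi^{C_\zeta} \Phi(z)$. The complementary event contributes negligibly after invoking the deterministic bound $\im G^V_{\b v \b v}(z) \leq \eta^{-1} \leq N$ and the decay \eqref{high prob}, provided $n \leq \varphi^\zeta$. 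This yields \eqref{rough bound} with $H$ replaced by $V$.

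To transfer the estimate to general $H$, I would perform a Lindeberg swap, replacing the entries of $H$ one symmetric pair $(a,b), (b,a)$ at a time by the corresponding entries of $V$. For each swap, Taylor-expand $F(G_{\b v \b v}) \deq (\im G_{\b v \b v})^n$ with respect to the swapped entry around the matrix with that entry zeroed out; the chain rule $\partial_{h_{ab}} G = - G E_{ab} G$ shows that each derivative introduces factors of the form $G_{\b v a}$, $G_{b \b v}$, or $G_{ab}$. In the expected difference between the $H$-swap and the $V$-swap, the contributions of orders $0, 1, 2$ cancel because the first and second moments are matched, and the order-$3$ contribution vanishes thanks to \eqref{3rd moment vanishes}. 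Hence the leading non-cancelling contribution is of order $4$, of size $N^{-2}$ per swap, yielding a total of order $1$ times products of resolvent entries summed over all $O(N^2)$ pairs.

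The main obstacle is controlling the factors $G_{\b v a}$ appearing in the expansion, since these are not directly covered by the standard local semicircle law \eqref{Gij estimate}. This is precisely the role of Lemma \ref{lemma: Gvi Gvv}: it bounds $\abs{G_{\b v a}}$ by $\varphi^{C_\zeta} \sqrt{\im G_{\b v \b v}/(N \eta)} + C \abs{v_a}$, tying the error back to the very quantity $\im G_{\b v \b v}$ one wishes to control. Summing the per-swap errors over all pairs, and using the spectral identity $\sum_a \abs{G_{\b v a}}^2 = \im G_{\b v \b v} / \eta$, the normalization $\sum_a \abs{v_a}^2 = 1$, and the LSC bounds on $G_{ab}$, the cumulative error condenses into a self-consistent inequality of schematic form $\E X^n \leq (\varphi^{C_\zeta} \Phi)^n + \varphi^{C'}(N \eta)^{-1} \E X^{n-1} + \cdots$, where $X \deq \im G_{\b v \b v}(H)$. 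The final and most delicate step is closing this bootstrap uniformly for all $n \leq \varphi^\zeta$ and $z \in \b S(C_\zeta)$; I would accomplish it by a continuity argument in $\eta$, starting from $\eta = \Sigma$ where the bound is trivial from LSC and $\im G_{\b v \b v} \leq \eta^{-1}$, and proceeding down to the threshold $\eta = \varphi^{C_\zeta} N^{-1}$ on a fine grid using the Lipschitz estimate $\|\partial_z G(z)\| \leq \eta^{-2}$.
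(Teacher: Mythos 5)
Your overall strategy---settling the Gaussian case by unitary invariance plus Theorem \ref{LSCTHM}, then a Lindeberg swap in which orders $0$--$3$ cancel by moment matching and \eqref{3rd moment vanishes}, with Lemma \ref{lemma: Gvi Gvv} supplying the control on $G_{\b v a}$---is exactly the paper's route up to the last step. The genuine gap is in how you close the self-consistency. You assert that the cumulative error over all $O(N^2)$ swaps ``condenses'' into a single inequality of the form $\E X^n \leq (\varphi^{C_\zeta}\Phi)^n + \varphi^{C'}(N\eta)^{-1}\E X^{n-1}+\cdots$ for the \emph{final} matrix, and then propose to close it by a continuity/grid argument in $\eta$. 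Neither half of this is right. First, each swap's error is controlled in terms of $\E(\im S_{\b v\b v})^n$ for the \emph{intermediate} matrix at that step, not the final one, so you cannot simply sum the per-swap errors into one inequality for $X$. The paper instead keeps the recursion sequential: writing $X_\gamma = \E(\im G^\gamma_{\b v\b v})^n$, each swap gives $X_\gamma - X_{\gamma-1} \leq \cal E_\gamma\pb{X_\gamma + X_{\gamma-1} + (\varphi^{C_\zeta}\Phi)^n}$ with $\cal E_\gamma = (\log N)^{-1}\cal E_{ab}$ and $\sum_\gamma \cal E_\gamma \leq 1$ (see \eqref{main bound for rough estimate} and \eqref{def Eab}); absorbing the $X_\gamma$ term on the left and iterating, the multiplicative factors $r_\gamma = (1-\cal E_\gamma)^{-1}(1+\cal E_\gamma)$ have product $O(1)$, which propagates the GOE/GUE initial bound through all $N^2$ swaps. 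The summability of $\cal E_{ab}$ over pairs (coming from $\sum_a\abs{v_a}^2=1$ and $\sum_a\abs{v_a}\leq N^{1/2}$) is the decisive structural input, and you do not identify it.

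Second, the continuity-in-$\eta$ bootstrap would not work here even as a substitute. There is no closed self-consistent equation for $\im G_{\b v\b v}$ at fixed $\eta$ admitting a stability gap, so the usual dichotomy argument has nothing to bite on; and a dyadic descent in $\eta$ using the monotonicity $\im G_{\b v\b v}(E+\ii\eta/2)\leq 2\im G_{\b v\b v}(E+\ii\eta)$ loses a constant factor per step, hence $2^{n\log N}$ for the $n$-th moment over the full range $\eta\in[\varphi^{C_\zeta}N^{-1},\Sigma]$, which is fatal for $n$ up to $\varphi^\zeta$. A related point you do not address is keeping the final constant $C_\zeta$ independent of $n$: the paper gains a factor $\varphi^{-Dm}$ in \eqref{estimate of powers of s and t} (via Lemma \ref{lemma: expanding estimate}) precisely so that the binomial factors $\binom{n}{m}\leq n^m$ and the $n$-dependence of the error terms can be absorbed uniformly for all $n\leq\varphi^\zeta$.
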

\begin{proof}
We shall prove \eqref{rough bound} using Green function comparison to GOE/GUE. First we claim that \eqref{rough bound} holds if $H$ is a GOE/GUE matrix. Indeed, in that case, using unitary invariance, \eqref{1.5}, and \eqref{Gij estimate}, we find for $z \in \b S(C_\zeta)$ that
\begin{equation*}
\E \pb{\im G_{\b v \b v}(z)}^n \;=\; \E \pb{\im G_{11}(z)}^n \;\leq\; \pb{\varphi^{C_\zeta} \Phi(z)}^n + N^n N^C \exp(-\varphi^{2 \zeta})\,,
\end{equation*}
where in the last inequality we used the rough bound $\abs{G_{11}(z)} \leq \eta^{-1} \leq N$.
Thus \eqref{rough bound} for GOE/GUE follows from \eqref{1.5} and the estimate
\begin{equation*}
N^{C n} \exp(-\varphi^{2 \zeta}) \;\leq\; C\,,
\end{equation*}
valid for $n \leq \varphi^\zeta$.

From now on we work on the product space generated by the Wigner matrix $H = (N^{-1/2} W_{ij})_{i,j}$ and the GOE/GUE matrix $(N^{-1/2} V_{ij})_{i,j}$. We fix a bijective ordering map on the index set of the independent matrix elements,
\begin{equation} \label{ordering map}
\phi \col \{(i, j) \col 1 \le i\le  j \le N \} \;\to\; \Big\{1, \ldots, \gamma_{\rm max}\Big\} \where \gamma_{\rm max} 
\;\deq\; \frac{N(N+1)}{2}\,,
\end{equation}
and denote by $H_\gamma = (h^{\gamma}_{ij})$, $\gamma = 0, \dots, \gamma_{\rm max}$, the Wigner matrix whose upper-triangular entries are defined by
\begin{equation*}
h_{ij}^\gamma \;\deq\;
\begin{cases}
N^{-1/2} W_{ij} & \text{if } \phi(i,j)\le \gamma
\\
N^{-1/2} V_{ij} & \text{otherwise}\,.
\end{cases}
\end{equation*}
In particular, $H_0$ is a GOE/GUE matrix and $H_{\gamma_{\rm max}} = H$.

Let $E^{(ij)}$ denote the matrix whose matrix elements are given by $E^{(ij)}_{kl} \deq \delta_{ik}\delta_{jl}$.
Fix $\gamma\ge 1$ and let $(a,b)$ be determined by  $\phi (a, b) = \gamma$. We shall compare $H_{\gamma-1}$ with $H_\gamma$ for each $\gamma$
and then sum up the differences. Note that the matrices $H_{\gamma - 1}$ and $H_\gamma$ differ only in the entries $(a,b)$ and $(b,a)$, and they can be written as
 \be\label{defHg1}
H_{\gamma-1} \;=\; Q + N^{-1/2} V \where V \;\deq\; V_{ab} E^{(ab)} + \ind{a \neq b} V_{ba} E^{(ba)}\,,
\ee
and
$$
H_\gamma \;=\; Q + N^{-1/2} W \where W \;\deq\; W_{ab} E^{(ab)}+ \ind{a \neq b} W_{ba} E^{(ba)}\,;
$$
here the matrix $Q$ satisfies $Q_{ab} = Q_{ba} = 0$.

Next, we introduce the Green functions
\be\label{defG}
		R \;\deq\; \frac{1}{Q-z}\,, \qquad S \;\deq\; \frac{1}{H_{\gamma-1}-z}\,, \qquad T \;\deq\; 
\frac{1}{H_{\gamma}-z}\,,
\ee
which are well-defined for $\eta > 0$ since $Q$ and $H_\gamma$ are self-adjoint.
Using the notation $G^\gamma \deq (H_\gamma - z)^{-1}$, we have the telescopic sum
\begin{equation} \label{telescopic sum}
\E \pb{\im G^{\gamma_{\rm max}}_{\b v \b v}}^n - \E \pb{\im G^{0}_{\b v \b v}}^n \;=\; \sum_{\gamma = 1}^{\gamma_{\rm max}} \pB{\E \pb{\im G^{\gamma}_{\b v \b v}}^n - \E \pb{\im G^{\gamma - 1}_{\b v \b v}}^n}\,.
\end{equation}

For any $K \in \N$ we have the resolvent expansions
\begin{equation} \label{resolvent exp}
S \;=\; \sum_{k = 0}^{K - 1} N^{-k/2} (-RV)^k R + N^{-K/2} (-RV)^K S
\;=\; \sum_{k = 0}^{K - 1} N^{-k/2} R (-V R)^k + N^{-K/2} S (-VR)^K
\end{equation}
and
\begin{equation} \label{resolvent exp 2}
R \;=\; \sum_{k = 0}^{K - 1} N^{-k/2} (SV)^k S + N^{-K/2} (SV)^K R
\;=\; \sum_{k = 0}^{K - 1} N^{-k/2} S (V S)^k + N^{-K/2} R (VS)^K\,.
\end{equation}
Now we choose $K = 10$ in \eqref{resolvent exp 2}. Applying Theorem \ref{LSCTHM} to the Wigner matrix $S$, using the rough bound $\norm{R} \leq \eta^{-1} \leq N$ to estimate the rest term in \eqref{resolvent exp 2}, and recalling \eqref{subexp for h}, we find
 \begin{equation}\label{Rij estimate}
\absb{R_{ij} - \delta_{ij} \msc} \;\leq\; \absb{S_{ij} - \delta_{ij} \msc} + \varphi^{C_\zeta} N^{-1/2} \;\leq\; \varphi^{C_\zeta} \Psi
\end{equation}
\hp{2 \zeta}. Here we also used \eqref{1.5}. Throughout the proof we shall tacitly make use of the bound $\abs{R_{ij}} \leq C$ \hp{2 \zeta}, as follows from \eqref{Rij estimate}.

Next, setting $K = 1$ in \eqref{resolvent exp}, recalling \eqref{subexp for h}, and using Lemma \ref{lemma: Gvi Gvv}, we find
\begin{equation} \label{Sva Rva}
\abs{S_{\b v a} - R_{\b v a}} \;\leq\; N^{-1/2} \varphi^{C_\zeta} \pB{\abs{S_{\b v a} R_{b a}} + \abs{S_{\b v b} R_{a a}}}
\;\leq\; N^{-1/2} \varphi^{C_\zeta} \pBB{\sqrt{\frac{\im S_{\b v \b v}}{N \eta}} + \abs{v_a} + \abs{v_b}}
\end{equation}
\hp{2\zeta}. Now \eqref{Sva Rva}, \eqref{1.5}, and Lemma \ref{lemma: Gvi Gvv} yield
\begin{equation} \label{Rva estimate}
\abs{R_{\b v a}} \;\leq\; \varphi^{C_\zeta} \sqrt{\frac{\im S_{\b v \b v}}{N \eta}} + C \abs{v_a} + \varphi^{C_\zeta} N^{-1/2}
\;\leq\;
\varphi^{C_\zeta} \sqrt{\frac{\im S_{\b v \b v}}{N \eta}} + \varphi^{C_\zeta} \Psi + C \abs{v_a}
\end{equation}
\hp{2\zeta}. The same bound holds for $R_{a \b v}$. Similarly, choosing $K = 1$ in \eqref{resolvent exp} yields, using \eqref{Rva estimate}, that
\begin{equation} \label{Svv Rvv}
\abs{S_{\b v \b v} - R_{\b v \b v}} \;\leq\; N^{-1/2} \varphi^{C_\zeta} \pb{\abs{S_{\b v a} R_{b \b v}} + \abs{S_{\b v b} R_{a \b v}}}
\;\leq\; N^{-1/2} \varphi^{C_\zeta} \pBB{\frac{\im S_{\b v \b v}}{N \eta} + \abs{v_a}^2 + \abs{v_b}^2}
\end{equation}
\hp{2\zeta}.

After these preparations, we may start to estimate
\begin{equation*}
\pb{\im S_{\b v \b v}}^n - \pb{\im R_{\b v \b v}}^n \;=\; \sum_{m = 1}^n A_m \, (\im R_{\b v \b v})^{n - m}\,,
\end{equation*}
where we defined
\begin{equation*}
A_m \;\deq\; \binom{n}{m} \pb{\im S_{\b v \b v} - \im R_{\b v \b v}}^m\,.
\end{equation*}
We choose $K = 4$ in \eqref{resolvent exp} and introduce the notation $S - R = \sum_{k = 1}^4 Y_k$, whereby $Y_k$ has $k$ factors $V$. We write
\begin{equation} \label{Amk def}
A_m \;=\; \sum_{k = m}^{4m}A_{m,k} \where
A_{m,k} \;\deq\; \binom{n}{m} \sum_{k_1, \dots, k_m = 1}^4 \indb{k_1 + \cdots + k_m = k} \prod_{i = 1}^m \im (Y_{k_i})_{\b v \b v}\,.
\end{equation}
Thus we have
\begin{equation} \label{S - R 1}
\E \pb{\im S_{\b v \b v}}^n - \E \pb{\im R_{\b v \b v}}^n \;=\; \cal A + \sum_{m = 1}^n \sum_{k = \max \{4,m\}}^{4m} \E A_{m,k} \pb{\im R_{\b v \b v}}^{n - m}\,,
\end{equation}
where $\cal A$ depends on the randomness only through $Q$ and the first three moments of $V_{ab}$.

We shall prove that
\begin{equation} \label{main bound for rough estimate}
\sum_{m = 1}^n \sum_{k = 4}^{4m} \E \, \abs{A_{m,k}} \pb{\im R_{\b v \b v}}^{n - m} \;\leq\; \frac{\cal E_{ab}}{\log N} \pB{\E \pb{\im S_{\b v \b v}}^n + (\varphi^{C_\zeta} \Phi)^n}\,,
\end{equation}
where we defined
\begin{equation} \label{def Eab}
\cal E_{ab} \;\deq\; \sum_{\sigma, \tau = 0}^2 N^{-2 + \sigma/2 + \tau/2} \abs{v_a}^\sigma \abs{v_b}^\tau\,.
\end{equation}
For future use, we note that the proof of \eqref{main bound for rough estimate} does not require the vanishing of the third moments of $H$ as in \eqref{3rd moment vanishes}.
Before proving \eqref{main bound for rough estimate}, we show how it implies \eqref{rough bound}. Let us abbreviate $X_\gamma \deq \E \pb{\im G^{\gamma}_{\b v \b v}}^n$ and $\cal E_\gamma \deq (\log N)^{-1} \cal E_{\phi^{-1}(\gamma)}$. Note that, since $\im G^\gamma_{\b v \b v} \geq 0$, we have $X_\gamma \geq 0$ for all $\gamma$. Repeating the derivation of \eqref{S - R 1} for $T$ instead of $S$, using that the first three moments of $V_{ab}$ and $W_{ab}$ are the same, and using the estimate \eqref{main bound for rough estimate} and its analogue with $S$ replaced by $T$, we find
\begin{equation*}
X_\gamma - X_{\gamma - 1} \;\leq\; \cal E_\gamma \pB{X_\gamma + X_{\gamma - 1} + \pb{\varphi^{C_\zeta} \Phi}^n}\,.
\end{equation*}
Abbreviating $r_\gamma \deq (1 - \cal E_\gamma)^{-1} (1 + \cal E_\gamma) \geq 1$ we therefore find
\begin{equation*}
X_\gamma \;\leq\; r_\gamma \, X_{\gamma - 1} + r_\gamma \, \cal E_\gamma \pb{\varphi^{C_\zeta} \Phi}^n\,.
\end{equation*}
Since \eqref{rough bound} holds for GOE/GUE, we have the initial estimate $X_0 \leq \pb{\varphi^{C_\zeta} \Phi}^n$. Iteration therefore yields
\begin{equation*}
X_\gamma \;\leq\; \pBB{\prod_{j = 1}^\gamma r_\gamma} \pBB{1 + \sum_{j = 1}^\gamma \cal E_\gamma} \pb{\varphi^{C_\zeta} \Phi}^n\,.
\end{equation*}
Next, we observe that $\sum_\gamma \cal E_\gamma \leq 1$. Since $0 \leq \cal E_\gamma \leq 1/2$, we find $\prod_\gamma r_\gamma \leq C$.
This implies
\begin{equation*}
\E \pb{\im G_{\b v \b v}}^n \;=\; X_{\gamma_{\rm max}} \;\leq\; \pb{\varphi^{C_\zeta} \Phi}^n\,,
\end{equation*}
which is \eqref{rough bound}.

What remains is to prove \eqref{main bound for rough estimate}. Recall that in \eqref{Amk def}, $(Y_k)_{\b v \b v} = N^{-k/2} \qb{(-RV)^k R}_{\b v \b v}$ if $k < 4$ and $(Y_4)_{\b v \b v} = N^{-2} \qb{(-RV)^k S}_{\b v \b v}$. For each $Y_{k_i}$ in \eqref{Amk def}, we write out the matrix multiplication in terms of matrix elements of $S$, $R$, and $V$. Then we multiply everything out. We classify the resulting terms using two additional parameters $s,t \geq 0$. Here $s$ is the total number of matrix elements $R_{\b v a}$, $R_{a \b v}$, $S_{\b v a}$, and $S_{a \b v}$; $t$ is defined similarly with $a$ replaced by $b$. If $a = b$, we use the symmetric convention $s = t$.

We have the conditions
\begin{equation} \label{conditions on s and t}
s + t \;=\; 2m \,, \qquad k \;\geq\; \max\{s,t\}\,.
\end{equation}
The first one is immediate. The second one is clearly true if $a = b$. In order to prove it in the case $a \neq b$, assume for definiteness that $s \geq t$. Then each factor $R_{\b v a}$, $R_{a \b v}$, $S_{\b v a}$, and $S_{a \b v}$ is associated with a unique factor $V_{ab}$ or $V_{ba}$ (the one standing next to it in the matrix product); this proves the second condition of \eqref{conditions on s and t}. Thus we have the decomposition
\begin{equation} \label{def Amkst}
A_{m,k} \;=\; \sum_{s,t = 0}^{k} \ind{s + t = 2m} A_{m,k,s,t}\,,
\end{equation}
in self-explanatory notation.

Using Lemma \ref{lemma: Gvi Gvv} and \eqref{Rva estimate}, we get the bound
\begin{align}
&\mspace{-40mu} \pB{\abs{R_{\b v a}} + \abs{R_{a \b v}} + \abs{S_{\b v a}} + \abs{S_{a \b v}}}^s
\pB{\abs{R_{\b v b}} + \abs{R_{b \b v}} + \abs{S_{\b v b}} + \abs{S_{b \b v}}}^t
\notag \\
&\;\leq\; \pBB{\varphi^{C_\zeta} \sqrt{\frac{\im S_{\b v \b v}}{N \eta}} + \varphi^{C_\zeta} \Psi + C \abs{v_a}}^{s}
\pBB{\varphi^{C_\zeta} \sqrt{\frac{\im S_{\b v \b v}}{N \eta}} + \varphi^{C_\zeta} \Psi + C \abs{v_b}}^{t}
\notag \\
&\;\leq\; \pBB{\varphi^{C_\zeta} \frac{\im S_{\b v \b v}}{N \eta} + \varphi^{C_\zeta} \Psi^2}^m
+ \pBB{\varphi^{C_\zeta} \frac{\im S_{\b v \b v}}{N \eta} + \varphi^{C_\zeta} \Psi^2}^{m - s/2} (C \abs{v_a})^s
\notag \\
&\mspace{40mu} {}+{} \pBB{\varphi^{C_\zeta} \frac{\im S_{\b v \b v}}{N \eta} + \varphi^{C_\zeta} \Psi^2}^{m - t/2} (C \abs{v_b})^t + (C \abs{v_a})^s (C \abs{v_b})^t
\notag \\ \label{estimate of powers of s and t}
&\;\leq\; \varphi^{- D m} \pBB{\varphi^{C_{\zeta, D}} \pbb{\frac{\im S_{\b v \b v}}{N \eta} + \Psi^2 + N^{-1/2}}}^m
\pB{1 + N^{s/4} \abs{v_a}^s + N^{t/4} \abs{v_b}^t + N^{s/4 + t/4} \abs{v_a}^s \abs{v_b}^t}
\end{align}
\hp{2\zeta}, where in the second step we used Lemma \ref{lemma: expanding estimate} below and $s + t \leq \varphi^\zeta$, and in the third step the inequality $x^{m - a} y^a \leq (x + y)^m$. Here $D > 0$ is some constant to be chosen later, and $C_{\zeta, D}$ denotes a constant depending on $\zeta$ and $D$. For the following it will be convenient to abbreviate
\begin{equation*}
\cal F_{ab}(s,t) \;\deq\; 1 + N^{s/4} \abs{v_a}^s + N^{t/4} \abs{v_b}^t + N^{s/4 + t/4} \abs{v_a}^s \abs{v_b}^t\,.
\end{equation*}
Using \eqref{definition of Phi and Psi}, \eqref{1.5}, and Lemma \ref{lemma: expanding estimate} below, we find that there is a constant $C_{\zeta, D}$ such that for $z \in \b S(C_{\zeta, D})$ we have
\begin{multline} \label{estimate on Eva Rvb power}
\pB{\abs{R_{\b v a}} + \abs{R_{a \b v}} + \abs{S_{\b v a}} + \abs{S_{a \b v}}}^s
\pB{\abs{R_{\b v b}} + \abs{R_{b \b v}} + \abs{S_{\b v b}} + \abs{S_{b \b v}}}^t
\\
\leq\; \varphi^{- D m} \pB{\pb{\im S_{\b v \b v}}^m + \pb{\varphi^{C_{\zeta,D}} \Phi}^m} \cal F_{ab}(s,t)
\end{multline}
\hp{2\zeta}.

Next, we observe that \eqref{Svv Rvv} and \eqref{1.5} imply
\begin{equation} \label{estimate on im R vv}
\im R_{\b v \b v} \;\leq\; \pb{1 + \varphi^{C_\zeta} N^{-1/2}} \im S_{\b v \b v} + \varphi^{C_\zeta} \Phi
\end{equation}
\hp{2\zeta}. Recall that, be definition, $A_{m,k,s,t}$ contains $k$ factors $V$, $s$ factors in the set $\{R_{\b v a}, R_{a \b v}, S_{\b v a}, S_{a \b v}\}$, and $t$ factors in the set $\{R_{\b v b}, R_{b \b v}, S_{\b v b}, S_{b \b v}\}$. Therefore the definitions \eqref{Amk def} and \eqref{def Amkst}, as well as the estimates \eqref{subexp for h}, \eqref{estimate on Eva Rvb power}, and \eqref{estimate on im R vv}, yield
\begin{align}
&\mspace{40mu} \abs{A_{m,k,s,t}} (\im R_{\b v \b v})^{n - m}
\notag \\
&\;\leq\; (4 n)^m \varphi^{k C_\zeta} N^{-k/2} \varphi^{- D m} \pB{\pb{\im S_{\b v \b v}}^m + \pb{\varphi^{C_{\zeta,D}} \Phi}^m}
\, \cal F_{ab}(s,t)\, \pB{\pb{1 + \varphi^{C_\zeta} N^{-1/2}} \im S_{\b v \b v} + \varphi^{C_\zeta} \Phi}^{n - m}
\notag \\ \label{A im R 1}
&\;\leq\; \varphi^{(C_\zeta - D) m} N^{-k/2} \pB{(\im S_{\b v \b v})^n + \pb{\varphi^{C_{\zeta, D}} \Phi}^n}
\cal F_{ab}(s,t)
\end{align}
\hp{2\zeta},
where we used that $k \leq 4 m$, that $n \leq \varphi^\zeta$, $\binom{n}{m} \leq n^m$, and Lemma \ref{lemma: expanding estimate} below. Denote by $\Xi$ the event on which the estimate \eqref{A im R 1} holds; thus, $\P(\Xi^c) \leq N^C \exp(- \varphi^{2 \zeta})$. Using \eqref{subexp for h} and the deterministic bound $\norm{R} + \norm{S} \leq N$, it is easy to see that on $\Xi^c$ we have the rough estimate
\begin{equation*}
\E \abs{A_{m,k,s,t}} (\im R_{\b v \b v})^{n - m} \ind{\Xi^c} \;\leq\; N^n \pB{\E \abs{A_{m,k,s,t}}^2}^{1/2} \P(\Xi^c)^{1/2} \;\leq\; (N \varphi^\zeta)^{C \varphi^\zeta} \exp (- c \varphi^{2 \zeta}) \;\leq\; \Phi^{n} N^{-10n}
\end{equation*}
for all $n \leq \varphi^{\zeta}$ and $N$ large enough. Therefore choosing $D \equiv D_\zeta$ large enough we get from \eqref{A im R 1}
\begin{equation*}
\E \abs{A_{m,k,s,t}} (\im R_{\b v \b v})^{n - m}
\;\leq\;
\varphi^{- m} \pB{(\im S_{\b v \b v})^n + \pb{\varphi^{C_{\zeta}} \Phi}^n}
N^{-k/2} \, \cal F_{ab}(s,t)\,.
\end{equation*}
Therefore \eqref{main bound for rough estimate} follows using \eqref{conditions on s and t} if we can prove that
\begin{equation} \label{bound using cal E}
N^{-\max\{4,s,t\}/2}\pB{1 + N^{s/4} \abs{v_a}^s + N^{t/4} \abs{v_b}^t + N^{s / 4 + t / 4} \abs{v_a}^s \abs{v_b}^t}
\;\leq\; 
C \cal E_{ab} \;=\; C \sum_{\sigma,\tau = 0}^2 N^{-2 + \sigma/2 + \tau/2} \abs{v_a}^\sigma \abs{v_b}^\tau\,.
\end{equation}
for all $s,t$. We check that all terms on the left-hand side of \eqref{bound using cal E} are bounded, for all $s,t \geq 0$, by the right-hand side of \eqref{bound using cal E}. The first term is trivial: $N^{-\max\{4,s,t\}/2} \leq N^{-2}$. The second term is bounded by
\begin{equation*}
N^{-\max\{4,s,t\}/2} N^{s/4} \abs{v_a}^s \;\leq\; N^{-2} + N^{-2 + 1/4} \abs{v_a} + N^{-2 + 1} \abs{v_a}^2\,.
\end{equation*}
The third term is bounded similarly. Finally, the last term is bounded by
\begin{equation*}
N^{-\max\{4,s,t\}/2} N^{s/4 + t / 4} \abs{v_a}^s \abs{v_b}^t \;\leq\; E + N^{-2 + 1/2} \abs{v_a} \abs{v_b} + N^{-2 + 1 + 1/4} (\abs{v_a}^2 \abs{v_b} + \abs{v_a} \abs{v_b}^2) + \abs{v_a}^2 \abs{v_b}^2\,,
\end{equation*}
where $E$ denotes a quantity bounded by the three previous terms. This completes the proof of \eqref{bound using cal E}, and hence of \eqref{main bound for rough estimate}.
\end{proof}

What remains is to prove the following elementary result.
\begin{lemma} \label{lemma: expanding estimate}
For $x,y \geq 0$ and $m \in \N$ we have
\begin{equation*}
(x + y)^m \;\leq\; C x^m + (m y)^m\,.
\end{equation*}
\end{lemma}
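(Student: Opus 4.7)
I would handle this by reducing to a one-parameter inequality and then splitting into two regimes according to the size of $mt := my/x$.

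First, dispose of the trivial case $x = 0$: there the claim reads $y^m \leq (my)^m$, which holds for any $m \geq 1$ and any $C \geq 0$. Assuming $x > 0$, I would set $t := y/x$ and divide the desired inequality by $x^m$. The problem is then equivalent to showing
\begin{equation*}
(1 + t)^m \;\leq\; C + (mt)^m \qquad (t \geq 0,\ m \in \N).
\end{equation*}
This has the advantage of depending on only one scalar variable $t$, plus the integer $m$.

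Next, I would split on whether $mt \leq 2$ or $mt \geq 2$. In the first regime, I would apply the standard exponential bound
\begin{equation*}
(1 + t)^m \;=\; \pbb{1 + \frac{mt}{m}}^m \;\leq\; \exp(mt) \;\leq\; e^2,
\end{equation*}
so that any choice $C \geq e^2$ suffices. In the second regime, $mt \geq 2$, I would instead show directly that $1 + t \leq mt$: for $m = 1$ this is $1 + t \leq 2t$, which is $t \geq 1$, implied by $mt \geq 2$; for $m \geq 2$ one has $t \geq 2/m \geq 1/(m-1)$, equivalently $(m-1) t \geq 1$, which rearranges to $1 + t \leq mt$. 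Raising to the $m$-th power then gives $(1+t)^m \leq (mt)^m$, and the required inequality holds (with no need for the $C$ term in this regime).

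Combining the two regimes gives the lemma with the explicit constant $C = e^2$. There is no real obstacle here; the only minor point of care is the $m = 1$ endpoint in the large-$mt$ case, which is handled separately. The constant $C$ is absolute (independent of $m$), which is what is used in the application in \eqref{estimate of powers of s and t}.
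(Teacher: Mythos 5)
Your proof is correct, but it takes a genuinely different route from the paper's. The paper's argument is a one-line convexity trick: writing $x + y = (1-\lambda)\frac{x}{1-\lambda} + \lambda \frac{y}{\lambda}$ and using convexity of $s \mapsto s^m$ gives $(x+y)^m \leq (1-\lambda)^{-m} x^m + \lambda^{-m} y^m$, and the choice $\lambda = 1/m$ turns the second term into $(my)^m$ while the prefactor $(1 - 1/m)^{-m}$ of the first term is bounded by an absolute constant. Your argument instead normalizes by $x^m$ (treating $x=0$ separately) and splits into the regimes $mt \leq 2$ and $mt \geq 2$, using $(1 + t)^m \leq e^{mt}$ in the first and the elementary inequality $1 + t \leq mt$ in the second; the $m=1$ endpoint is correctly handled. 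Both proofs yield a constant $C$ independent of $m$, which is what the application in \eqref{estimate of powers of s and t} requires; yours has the small advantage of producing the explicit value $C = e^2$, while the paper's is shorter and is the standard weighted power-mean splitting, which adapts immediately to other exponent balances between the two terms.
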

\begin{proof}
By convexity of the function $x \mapsto x^m$ we have, for any $\lambda \in (0,1)$,
\begin{equation*}
(x + y)^m \;=\; \pbb{(1 - \lambda) \frac{x}{1 - \lambda} + \lambda \frac{y}{\lambda}}^m \;\leq\; \frac{1}{(1 - \lambda)^m} x^m + \frac{1}{\lambda^m} y^m\,.
\end{equation*}
Choosing $\lambda = 1/m$ yields the claim.
\end{proof}

\subsection{Estimate of $G_{\b v \b v} - \msc$} \label{section 3.4}
We now conclude the proof of Proposition \ref{prop: three moment case}. By polarization and linearity, it is enough to prove the following result.

\begin{lemma} \label{lemma: three moment error}
Let $\zeta > 0$ be fixed. Then there exists a constant $C_\zeta$ such that, for all $n \leq \varphi^{\zeta}$, all deterministic and normalized $\b v \in \C^N$, and all $z \in \b S(C_\zeta)$, we have
\begin{equation} \label{three moment claim}
\E \absb{G_{\b v \b v}(z) - \msc(z)}^n \;\leq\; \pb{\varphi^{C_\zeta} \Psi(z)}^n\,.
\end{equation}
\end{lemma}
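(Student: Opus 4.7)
By polarization, it suffices to treat $\b w = \b v$, and by Jensen we may restrict to even $n$. The plan is to mimic the Green function comparison strategy of Lemma~\ref{lemma: three moment bound}, but now exploit (i) the full strength of the already-established rough bound $\E(\im G_{\b v \b v})^n \leq (\varphi^{C_\zeta}\Phi)^n$ from Lemma~\ref{lemma: three moment bound} and (ii) the matching of the first three moments of the entries of $H$ and of GOE/GUE (which holds under \eqref{3rd moment vanishes}). The base case is GOE/GUE: there the unitary invariance gives $G_{\b v \b v}(z) \eqdist G_{11}(z)$, so Theorem~\ref{LSCTHM} together with a trivial tail bound from $\|G\|\leq \eta^{-1}\leq N$ yields $\E |G_{\b v \b v}-m|^n \leq (\varphi^{C_\zeta}\Psi)^n$.

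Next, I would set up the same telescopic decomposition along $H_0, H_1, \ldots, H_{\gamma_{\max}}$ as in \eqref{telescopic sum}, reducing to controlling $\E |S_{\b v \b v}-m|^n - \E |R_{\b v \b v}-m|^n$ with $R$ and $S$ as in \eqref{defG}. Expand
\[
|S_{\b v \b v}-m|^n - |R_{\b v \b v}-m|^n \;=\; \sum_{\substack{k_1,k_2\ge 0\\ k_1+k_2\ge 1}} \binom{n/2}{k_1}\binom{n/2}{k_2} (S-R)_{\b v \b v}^{k_1}\,\overline{(S-R)_{\b v \b v}}^{k_2}\,(R_{\b v \b v}-m)^{n/2-k_1}\,\overline{(R_{\b v \b v}-m)}^{n/2-k_2},
\]
and in each $(S-R)_{\b v \b v}$ factor substitute the resolvent expansion \eqref{resolvent exp} truncated at order $K$ in $V$. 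After multiplying out, every summand is a monomial of total degree $\kappa$ in the entries $V_{ab},V_{ba}$, times a product of resolvent entries indexed by $\b v,a,b$. When we take expectations and subtract the analogous contribution from $H_\gamma$ (whose monomials involve $W_{ab},W_{ba}$ of the Wigner matrix), all monomials with $\kappa \le 3$ cancel by \eqref{3rd moment vanishes}, since the first three moments of $V_{ab}$ and $W_{ab}$ agree.

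For the surviving terms with $\kappa\ge 4$, I would argue exactly as in the proof of \eqref{main bound for rough estimate}, parametrizing each monomial by $(m,k,s,t)$ with $m=k_1+k_2$, $k=$ total power of $V$, and $s,t$ the number of ``exterior'' entries $R_{\b v a}/R_{a\b v}/S_{\b v a}/S_{a\b v}$ and their $b$-counterparts; the constraints \eqref{conditions on s and t} still hold. The key improvement over Lemma~\ref{lemma: three moment bound} is that the rough bound is now available, so via Lemma~\ref{lemma: Gvi Gvv} and \eqref{Rva estimate} each exterior factor contributes, in expectation, $\varphi^{C_\zeta}\Psi + C|v_a|$ rather than $\varphi^{C_\zeta}\sqrt{\Phi}$. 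Combined with $|R_{\b v \b v}-m|\le \varphi^{C_\zeta}\Psi+|S_{\b v\b v}-R_{\b v \b v}|$ on the remaining $(R_{\b v \b v}-m)^{n-m}$ factors, one obtains, analogously to \eqref{A im R 1},
\[
\E\, |A_{m,k,s,t}|\,|R_{\b v \b v}-m|^{n-m} \;\leq\; \varphi^{-m}\,N^{-k/2}\,\cal F_{ab}(s,t)\,\Bigl(\E|S_{\b v \b v}-m|^n + (\varphi^{C_\zeta}\Psi)^n\Bigr),
\]
with $\cal F_{ab}(s,t)$ as in Lemma~\ref{lemma: three moment bound}. The crucial gain from $\kappa\ge 4$ is an extra $N^{-1/2}$ per killed $V^3$ term, which makes the analogue of $\cal E_{ab}$ in \eqref{def Eab} summable with the right power of $\Psi$: checking that the bound \eqref{bound using cal E} with $\max\{4,s,t\}$ replaced by $\max\{4,s,t\}$ (same region, but now we also have the gain from $k\ge 4$) yields $\sum_\gamma \cal E_\gamma\lesssim 1/\log N$. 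A bootstrap identical to the one after \eqref{main bound for rough estimate} then closes the induction and propagates $(\varphi^{C_\zeta}\Psi)^n$ from $H_0$ to $H_{\gamma_{\max}}=H$.

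The main obstacle is the careful accounting in step three: one must show that the killed third-order terms, combined with the refined exterior-entry bound via Lemma~\ref{lemma: three moment bound}, genuinely yield $\Psi$ rather than $\Phi$ and a summable gain per $(a,b)$. In particular, one has to verify that for $\kappa\ge 4$ the combinatorial identity $s+t=2m$ together with the extra $N^{-1/2}$ factor from the vanishing third moments is enough to match the right-hand side $(\varphi^{C_\zeta}\Psi)^n$ after summing $\gamma$ over all $N(N+1)/2$ pairs; all other steps are mechanical adaptations of the argument given for Lemma~\ref{lemma: three moment bound}.
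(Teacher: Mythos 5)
Your proposal follows essentially the same route as the paper: GOE/GUE base case via unitary invariance, a Lindeberg-type telescopic swap, cancellation of all terms of degree $\leq 3$ in $V$ by the matching of the first three moments, and control of the $k \geq 4$ remainder via the exterior-entry bounds of Lemma \ref{lemma: Gvi Gvv} and \eqref{Rva estimate}, the summable weight $\cal E_{ab}$, and the a priori bound of Lemma \ref{lemma: three moment bound} to upgrade $\im S_{\b v \b v}/(N\eta)$ to $\Psi^2$. The only cosmetic difference is that you expand $\abs{S_{\b v \b v}-\msc}^n$ via a complex binomial identity where the paper treats real and imaginary parts separately; both are valid and lead to the same estimates.
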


\begin{proof}
The proof is very similar to that of Lemma \ref{lemma: three moment bound}, whose notation we take over without further comment. In order to avoid dealing with complex numbers, we estimate the real and imaginary parts of $G_{\b v \b v} - \msc$ separately. We give the argument for the real part; the imaginary part is dealt with in the same way. Throughout the following $n$ denotes an even number less than $\varphi^{\zeta}$.

For the GOE/GUE matrix $H_0$ we get from Theorem \ref{LSCTHM}, as in the proof of Lemma \ref{lemma: three moment bound}, that
\begin{equation} \label{error for GUE}
\E \pb{\re G^0_{\b v \b v} - \re \msc}^n \;\leq\; \pb{\varphi^{C_\zeta} \Psi}^n\,.
\end{equation}
In order to perform the comparison step, we write, similarly to \eqref{S - R 1},
\begin{equation*}
\E \pb{\re S_{\b v \b v} - \re \msc}^n - \E \pb{\re R_{\b v \b v} - \re \msc}^n \;=\; \cal B + \sum_{m = 1}^n \sum_{k = \max\{4,m\}}^{4m} \E B_{m,k} \pb{\re R_{\b v \b v} - \re \msc}^{n - m}\,,
\end{equation*}
where $\cal B$ depends on the randomness only through $Q$ and the first three moments of $V_{ab}$, and
\begin{equation*}
B_{m,k} \;\deq\; \binom{n}{m} \sum_{k_1, \dots, k_m = 1}^4 \indb{k_1 + \cdots + k_m = k} \prod_{i = 1}^m \re (Y_{k_i})_{\b v \b v}\,.
\end{equation*}

Similarly to \eqref{main bound for rough estimate}, we shall prove that
\begin{equation} \label{main bound for error}
\sum_{m = 1}^n \sum_{k = 4}^{4m} \E \, \pB{\abs{B_{m,k}} \absb{\re R_{\b v \b v} - \re \msc}^{n - m}} \;\leq\; \frac{\cal E_{ab}}{\log N} \, \E \qBB{\pb{\re S_{\b v \b v} - \re \msc}^n + \pbb{\varphi^{C_\zeta}\frac{\im S_{\b v \b v}}{N \eta}}^n + (\varphi^{C_\zeta} \Psi)^n}\,.
\end{equation}
Using Lemma \ref{lemma: three moment bound}, \eqref{definition of Phi and Psi}, and  \eqref{1.5} we find that the right-hand side of \eqref{main bound for error} is bounded by
\begin{equation*}
\frac{\cal E_{ab}}{\log N} \, \E \qBB{\pb{\re S_{\b v \b v} - \re \msc}^n + (\varphi^{C_\zeta} \Psi)^n}\,.
\end{equation*}
Therefore \eqref{error for GUE} and \eqref{main bound for error} yield \eqref{three moment claim}, exactly as in the paragraph following \eqref{def Eab}.

What remains therefore is to prove \eqref{main bound for error}. Using \eqref{estimate of powers of s and t}, \eqref{1.5}, and Lemma \ref{lemma: expanding estimate} we get, for arbitrary $D > 0$,
\begin{multline} \label{estimate of S S for error}
\pB{\abs{R_{\b v a}} + \abs{R_{a \b v}} + \abs{S_{\b v a}} + \abs{S_{a \b v}}}^s
\pB{\abs{R_{\b v b}} + \abs{R_{b \b v}} + \abs{S_{\b v b}} + \abs{S_{b \b v}}}^t
\\
\leq\; \varphi^{-D m} \pBB{\pbb{\varphi^{C_{\zeta, D}}\frac{\im S_{\b v \b v}}{N \eta}}^m + \pb{\varphi^{C_{\zeta, D}} \Psi}^m} \, \cal F_{ab}(s,t)
\end{multline}
\hp{2\zeta}. Therefore we get, similarly to \eqref{A im R 1},
\begin{multline*}
\abs{B_{m,k,s,t}} \absb{\re R_{\b v \b v} - \re \msc}^{n - m}
\\
\leq\; \varphi^{(C_\zeta - D) m} N^{-k/2} \pBB{\pb{\re S_{\b v \b v} - \re \msc}^n + \pbb{\varphi^{C_{\zeta, D}}\frac{\im S_{\b v \b v}}{N \eta}}^n + \pb{\varphi^{C_{\zeta, D}} \Psi}^n}
\cal F_{ab}(s,t)
\end{multline*}
\hp{2 \zeta}, where we used \eqref{Svv Rvv}, $N^{-1/2} \leq \Psi$, and Lemma \ref{lemma: expanding estimate}. Choosing $D > 0$ large enough and recalling \eqref{bound using cal E} yields \eqref{main bound for error}. (We omit the details of the analysis on the low-probability event, which are similar to those following \eqref{A im R 1}.) This concludes the proof of Lemma \ref{lemma: three moment error}.
\end{proof}

\section{Proof of Theorem \ref{thm: ILSC}, Case {\bf B}} \label{section: two moments}

In this section we prove Theorem \ref{thm: ILSC} in the case {\bf B}, i.e.\ we impose no condition on the third moments of the entries of $H$, and $\Psi(z)$ satisfies \eqref{main assumption on Psi}.  By Markov's inequality, it suffices to prove the following result.

\begin{proposition} \label{prop: two moment case}
Fix $\zeta > 0$. Then there are constants $C_0$ and $C_\zeta$, both depending on $\zeta$, such that the following holds. Assume that $z \in \b S(C_\zeta)$ satisfies \eqref{main assumption on Psi} with constant $C_0$. Then we have, for all $n \leq \varphi^{\zeta}$ and all deterministic  $\b v, \b w \in \C^N$, that
\begin{equation} \label{two moment gen claim}
\E \absb{G_{\b v \b w}(z) - \scalar{\b v}{\b w} \msc(z)}^n \;\leq\; \pb{\varphi^{C_\zeta} \Psi(z) \norm{\b v} \norm{\b w}}^n\,.
\end{equation}
\end{proposition}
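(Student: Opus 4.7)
The proof will adapt the Green function comparison strategy of Section \ref{section: three moments}, this time comparing $H$ to an auxiliary Wigner matrix $\wh H$ whose entries share the first two moments of $H$ but have vanishing third moments (for instance, $\wh H$ may be taken to be GOE/GUE, matching the symmetry class of $H$). Since Case A of Theorem \ref{thm: ILSC}, i.e., Proposition \ref{prop: three moment case}, already yields the desired moment bound \eqref{two moment gen claim} for $\wh G \deq (\wh H - z)^{-1}$, the whole task is to show that under the assumption \eqref{main assumption on Psi} the replacement $\wh H \to H$ changes the $n$-th moment of $G_{\b v \b w} - \msc \scalar{\b v}{\b w}$ by an amount bounded by $(\varphi^{C_\zeta} \Psi \norm{\b v}\norm{\b w})^n$. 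By polarization it suffices to treat $\b v = \b w$, and by separating real and imaginary parts we may focus on $\re(G_{\b v\b v} - \msc)$ as in Section \ref{section 3.4}. Note that the rough bound of Lemma \ref{lemma: three moment bound}, whose proof makes no use of the vanishing of third moments, is already available for both $H$ and $\wh H$.

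Using the bijection $\phi$ from \eqref{ordering map} we interpolate between $\wh H = H_0$ and $H = H_{\gamma_{\max}}$, with $H_\gamma$ and $H_{\gamma-1}$ differing only in the entry $(a,b) = \phi^{-1}(\gamma)$. Writing $S, T, R$ for the resolvents as in \eqref{defG} and expanding via the resolvent identities \eqref{resolvent exp} up to some $K = 4$, we decompose the single-step error into contributions at each order $k$ in the number of $V$-factors, exactly as in the decomposition of $A_{m,k}$ (resp.\ $B_{m,k}$) used in the proofs of Lemmas \ref{lemma: three moment bound} and \ref{lemma: three moment error}. Orders $k = 1, 2$ cancel since the first two moments of $V_{ab}$ and $W_{ab}$ match; orders $k \geq 4$ are handled almost verbatim as in the derivation of \eqref{main bound for error}, yielding after summation over $\gamma$ a contribution bounded by $\cal O(1/\log N)$ times the target $(\varphi^{C_\zeta} \Psi)^n$, using only Lemma \ref{lemma: Gvi Gvv} and the rough bound.

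The new and delicate contribution is the $k = 3$ term, which is absent under \eqref{3rd moment vanishes} and present here. At entry $(a,b)$ its schematic form is $N^{-3/2} \bkt{\E V_{ab}^p \ol{V_{ab}}^q - \E W_{ab}^p \ol{W_{ab}}^q} P_{ab}(R)$ with $p+q=3$, where $P_{ab}(R)$ is a sum of at most $2^3 = 8$ products of four resolvent entries of $R$: two "boundary" factors $R_{\b v i}, R_{j\b v}$ with $i, j \in \{a,b\}$, and two "internal" factors $R_{kl}$ with $k, l \in \{a, b\}$. The boundary factors are controlled by Lemma \ref{lemma: Gvi Gvv}, giving $|R_{\b v i}| \leq \varphi^{C_\zeta} \sqrt{\im R_{\b v\b v}/(N\eta)} + C|v_i| \leq \varphi^{C_\zeta} \Psi + C|v_i|$ w.h.p.; the internal factors are at most $C$ by Theorem \ref{LSCTHM}, with the improved off-diagonal bound $|R_{kl}| \leq \varphi^{C_\zeta} \Psi$ for $k \neq l$. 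Combining these estimates, each individual product is bounded by a sum of monomials $\Psi^{s_0} |v_a|^{s_1} |v_b|^{s_2}$ with $s_0 + s_1 + s_2 \geq 2$, and integrating against $(R - \msc)^{n-1}$ (whose $L^{n-1}$ norm is controlled via Case A for the "base" Wigner matrix $Q$, combined with \eqref{Rij estimate}) and summing over the $\gamma_{\max} \asymp N^2$ entries yields a total $k = 3$ error bounded by $\varphi^{C_\zeta} N^{1/2} \Psi^3 \cdot (\varphi^{C_\zeta} \Psi)^{n-1}$ plus strictly lower-order contributions. Under assumption \eqref{main assumption on Psi} one has $N^{1/2} \Psi^3 \leq \varphi^{-C_0}$, so this reduces to $\varphi^{-C_0 + C_\zeta n}\Psi^n$, which, for $C_0$ sufficiently large depending on $\zeta$, is absorbed into the target $(\varphi^{C_\zeta} \Psi)^n$.

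The main obstacle is the bookkeeping for the $k = 3$ term: without vanishing third moments it is genuinely present, and the assumption \eqref{main assumption on Psi} is precisely the threshold at which it can be absorbed. The critical step is to avoid the crude bound $|R_{kl}| \leq C$ for the internal factors when $k \neq l$ and instead retain $|R_{kl}| \leq \varphi^{C_\zeta} \Psi$, because only then do enough powers of $\Psi$ appear in each summand to make $N^{1/2} \Psi^3$ (and not $N^{1/2}\Psi^2$) the dominant summed quantity. Once this sharper accounting is in place, the rest of the proof mirrors that of Lemma \ref{lemma: three moment error}: one iterates the per-$\gamma$ estimate across the telescoping sum, using the fact that $\sum_\gamma \cal E_{\phi^{-1}(\gamma)} = \cal O(1)$ (the same absorbing-inequality argument as in Section \ref{section 3.3}), and invokes Case A at $\wh H = H_0$ to start the induction.
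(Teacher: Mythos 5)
Your overall strategy (Lindeberg swapping to a Gaussian ensemble, noting that orders $k=1,2$ cancel and $k\geq 4$ are handled as in Section \ref{section 3.3}, and isolating the $k=3$ term as the new difficulty) is exactly the paper's strategy, and your identification of \eqref{main assumption on Psi} as the threshold for absorbing the third-moment term is correct. However, the way you propose to estimate the $k=3$ term does not close, and the gap is not cosmetic.

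The claim that pure absolute-value bounds on the resolvent factors yield a total $k=3$ error of order $N^{1/2}\Psi^3(\varphi^{C_\zeta}\Psi)^{n-1}$ fails for two reasons. First, not every $k=3$ term contains an off-diagonal internal factor: for $m=1$ the term $N^{-3/2}R_{\b v a}R_{bb}R_{aa}R_{b\b v}$ has \emph{both} internal factors diagonal, hence $O(1)$, so its pure-$\Psi$ monomial is only $N^{-3/2}\Psi^2$; summed over the $N^2$ entries this gives $N^{1/2}\Psi^2 = (N^{1/2}\Psi^3)/\Psi \leq \varphi^{-C_0}\Psi^{-1}$, which is not small (it can be as large as $\varphi^{-C_0}N^{1/2}$). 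The same problem occurs for the $m=2$, $s=3$, $t=1$ terms. Second, even for the terms that genuinely carry three powers of $\Psi$, your own arithmetic is off by a factor of $\Psi$: $N^{1/2}\Psi^3(\varphi^{C_\zeta}\Psi)^{n-1}\leq \varphi^{-C_0}(\varphi^{C_\zeta}\Psi)^{n-1}$ is \emph{not} bounded by $(\varphi^{C_\zeta}\Psi)^{n}$, since \eqref{main assumption on Psi} does not imply $\varphi^{-C_0}\lesssim\Psi$ (e.g.\ $\Psi$ may be of order $N^{-1/3}$). What rescues these terms in the paper is a cancellation mechanism absent from your proposal: one splits the diagonal entries as $R_{aa}=\msc+(R_{aa}-\msc)$, expands $R_{\b v a}=\msc\,\cal R_{\b v a}+\cal R'_{\b v a}$ as in \eqref{splitting of R b v}, and exploits $\E_a\,\cal R_{\b v a}=0$ via a telescoping/graded-minor expansion of $(\im R_{\b v\b v})^{n-1}$ (see \eqref{strong estimate step 3} and the discussion of \eqref{category 1}--\eqref{category 3}); this gains an extra factor of order $(N\eta)^{-1}\leq\Psi$, which is precisely what converts $\Psi^2$ into $\Psi^3\leq\varphi^{-C_0}N^{-1/2}$. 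In addition, the monomials carrying $\abs{v_a},\abs{v_b}$ cannot all be summed over the $N^2$ entries with a single uniform bound: the paper needs a dichotomy between a weak bound valid for all $(a,b)$ (containing a non-summable term $N^{-3/2}(N\eta)^{-1}\varphi^{C_1}$) and a strong bound valid only when $\abs{v_a}+\abs{v_b}$ is small, combined with a count of the exceptional pairs; your proposal contains neither ingredient.
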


The rest of this section is devoted to the proof of Proposition \ref{prop: two moment case}.
We take over the notation of Section \ref{section: three moments}, which we use throughout this section without further comment.

\subsection{Estimate of $\im G_{\b v \b v}$}
In this section we derive an apriori bound on $\im G_{\b v \b v}$ by proving the following result.

\begin{lemma} \label{lemma: final size for 2 moments}
Fix $\zeta > 0$. Then there are large enough constants $C_0$ and $C_\zeta$, both depending on $\zeta$, such that the following holds. Assume that $z \in \b S(C_\zeta)$ satisfies \eqref{main assumption on Psi} with constant $C_0$. Then we have, for all $n \leq \varphi^{\zeta}$ and all deterministic and normalized $\b v \in \C^N$, that
\begin{equation} \label{rough bound for 2 moments}
\E \pb{\im G_{\b v \b v}(z)}^n \;\leq\; \pb{\varphi^{C_\zeta} \Phi(z)}^n\,.
\end{equation}
\end{lemma}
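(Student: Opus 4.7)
The plan is to imitate the proof of Lemma \ref{lemma: three moment bound}, trading the vanishing third moment assumption for the quantitative size control $\Psi^3 \leq \varphi^{-C_0} N^{-1/2}$. Once again I will compare $H$ to a GOE/GUE matrix $H_0$ via the one-entry-at-a-time telescoping sum, defining the intermediate matrices $H_\gamma$ as in \eqref{defHg1} and the resolvents $R$, $S$, $T$ as in \eqref{defG}. The base case $H_0$ obeys \eqref{rough bound for 2 moments} by unitary invariance and Theorem \ref{LSCTHM}, exactly as before, so the task reduces to estimating the individual differences $\E(\im S_{\b v \b v})^n - \E(\im R_{\b v \b v})^n$.

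For the comparison step at position $(a,b)$, I expand $S - R$ (and likewise $T - R$) through order $K = 4$ using \eqref{resolvent exp}, and define the quantities $A_{m,k}$ exactly as in \eqref{Amk def}. The contributions with $k \in \{0,1,2\}$ cancel between the $S$ and $T$ sides after taking expectation, because the first two moments of $V_{ab}$ and $W_{ab}$ agree. The contributions with $k \geq 4$ obey the bound \eqref{main bound for rough estimate}, whose derivation (as explicitly noted in the proof of Lemma \ref{lemma: three moment bound}) never invoked the vanishing of third moments. The only genuinely new ingredient is therefore the $k = 3$ contribution, which survives in the telescope because the third moments of $V_{ab}$ and $W_{ab}$ generally differ.

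A generic $k = 3$ term is a product of four resolvent entries whose indices lie in $\{\b v, a, b\}$, with exactly two factors touching $\b v$ and two ``middle'' factors carrying indices in $\{a,b\}$. I control the two $\b v$-touching factors via Lemma \ref{lemma: Gvi Gvv} (so each is bounded by $\varphi^{C_\zeta}\sqrt{\im G_{\b v \b v}/(N\eta)} + \abs{v_a} + \abs{v_b}$) and the two middle factors via Theorem \ref{LSCTHM} through $\abs{R_{ij}} \leq \abs{\msc}\delta_{ij} + \varphi^{C_\zeta}\Psi$; the $s,t$-indexed bookkeeping and Lemma \ref{lemma: expanding estimate} are used in direct analogy with \eqref{estimate of powers of s and t}. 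Summing this over all $\gamma_{\max} \asymp N^2$ comparison pairs $(a,b)$ produces, besides the familiar $\cal E_{ab}$-weighted error, an extra contribution of order $\varphi^{C_\zeta}\pb{N^{1/2}\Psi^3 + N^{-1/2}}\pb{\E(\im S_{\b v \b v})^n + (\varphi^{C_\zeta}\Phi)^n}$. The hypothesis \eqref{main assumption on Psi} is exactly what forces $N^{1/2}\Psi^3 \leq \varphi^{-C_0}$, and after enlarging the expansion order $K$ to harvest additional $\varphi^{-D}$ factors (as in the proof of Lemma \ref{lemma: three moment bound}), this additional contribution can be folded into a Gr\"onwall-type inequality $X_\gamma \leq (1 + \varepsilon_\gamma) X_{\gamma-1} + \varepsilon_\gamma (\varphi^{C_\zeta}\Phi)^n$ with $\sum_\gamma \varepsilon_\gamma$ bounded. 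Iterating from the GOE/GUE base case then yields \eqref{rough bound for 2 moments}.

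The main obstacle is precisely the third-moment term. Unlike the $k \geq 4$ contributions, whose $N^{-k/2}$ prefactor easily beats the $N^2$ from summation over entries, the $k = 3$ term carries only $N^{-3/2}$, so summing over pairs $(a,b)$ leaves an inflated factor $N^{1/2}$. The only way to close the estimate is to extract three additional powers of $\Psi$ from the resolvent entries and then invoke $N^{1/2}\Psi^3 \leq \varphi^{-C_0}$. The delicate bookkeeping lies in showing that the ``diagonal'' configurations of $(RVRVRVR)_{\b v \b v}$ -- those in which the two middle entries are $R_{aa}$ or $R_{bb}$ and hence of order one rather than $\Psi$ -- do not spoil this cubic smallness; here one exploits the sum structure $\sum_{a,b} R_{\b v a}R_{aa}R_{bb}R_{b\b v}$, Cauchy--Schwarz in the indices $a,b$, and the apriori inequality $\im G_{\b v \b v} \leq \varphi^{C_\zeta}\Phi$ coming from Theorem \ref{LSCTHM} to reconvert the missing $\Psi$ factors into admissible quantities. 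Once this delicate $k = 3$ estimate is in hand, the remaining arguments are structurally identical to those in Section \ref{section: three moments}.
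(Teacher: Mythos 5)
Your overall architecture is the paper's: telescope from GOE/GUE, reuse \eqref{main bound for rough estimate} for $k\geq 4$ (correctly noting it needs no third-moment condition), and isolate the surviving $k=3$ contribution, which carries only $N^{-3/2}$ and therefore must produce three powers of $\Psi$ so that \eqref{main assumption on Psi} can absorb the $N^{1/2}$ left over after summing over pairs. You also correctly point to the fully diagonal configuration $R_{\b v a}R_{aa}R_{bb}R_{b\b v}$ (the $m=1$ term) as the main obstacle. However, the tools you propose for it do not close the estimate, and the essential idea is missing. First, the ``apriori inequality $\im G_{\b v\b v}\leq \varphi^{C_\zeta}\Phi$ coming from Theorem \ref{LSCTHM}'' does not exist: Theorem \ref{LSCTHM} controls only the entries $G_{ij}$, and the bound on $\im G_{\b v\b v}$ for general $\b v$ is precisely the statement of the present lemma (it can only enter self-consistently through the Gr\"onwall structure, as $\E(\im S_{\b v\b v})^n$ on the right-hand side). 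Second, Cauchy--Schwarz in $(a,b)$ — besides the issue that $R$ itself changes with $\gamma=\phi(a,b)$ — yields at best $\sum_{a,b}N^{-3/2}\abs{R_{\b v a}}\abs{R_{b\b v}}\lesssim N^{1/2}\,\im R_{\b v\b v}/(N\eta)$, i.e.\ a relative factor of order $N^{1/2}\Psi^2$. Under \eqref{main assumption on Psi} one only has $\Psi\lesssim \varphi^{-C_0/3}N^{-1/6}$, so $N^{1/2}\Psi^2$ can be as large as $N^{1/6}$: the size estimate falls short by exactly one power of $\Psi$, and no amount of bookkeeping of the $\abs{v_a},\abs{v_b}$ contributions recovers it.

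The missing mechanism in the paper is a centering argument exploiting independence, not a size estimate. One splits $R_{\b v a}=\msc\,\cal R_{\b v a}+\cal R'_{\b v a}$ (and similarly for $R_{b\b v}$) with $\E_a\,\cal R_{\b v a}=0$, performs a graded expansion of $(\im R_{\b v\b v})^{n-1}$ into pieces independent of the columns $a$ and/or $b$, and uses the vanishing partial expectations to replace $\cal R_{\b v a}$ and $\cal R_{b\b v}$ by the differences $\cal R_{\b v a}-\cal R^{(b)}_{\b v a}$ and $\cal R_{b\b v}-\cal R^{(a)}_{b\b v}$, each of which is smaller by a factor $\asymp\Psi$ (see \eqref{cal R minor estimate}); this supplies the third (and fourth) power of $\Psi$ that the bare estimate lacks. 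The same device is needed for the $m=2$, $(s,t)=(3,1)$ configuration via $\E_b\,\cal R_{b\b v}=0$. A further structural point you omit: the input bounds of the form $\abs{R_{\b v a}}\lesssim\varphi^{C_\zeta}\Psi$ require the smallness condition \eqref{smallness on va and vb} on $\abs{v_a}+\abs{v_b}$; the paper therefore proves a strong bound under that condition and a weaker bound in general, and controls the weaker bound by counting the pairs violating \eqref{smallness on va and vb} (at most $N^{3/2}(N\eta)\varphi^{-C_1}$ of them, by normalization of $\b v$). Without the centering step and this dichotomy, the argument as proposed does not go through.
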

 
The following (trivial) observation will be needed in the next section: The constant $C_0$ may be increased at will without changing $C_\zeta$ in \eqref{rough bound for 2 moments}. 

The main technical estimate behind the proof of Lemma \ref{lemma: final size for 2 moments} is the following lemma. Recall the setup \eqref{ordering map} of the Green function comparison, and in particular the definitions \eqref{defG}.

\begin{lemma} \label{lemma: size estimate for 2 moments}
Fix $\zeta > 0$. Then there are constants $C_0$ and $C_1$, both depending on $\zeta$, such that if \eqref{main assumption on Psi} holds with constant $C_0$ then we have the following.
For any $a,b$ we have
\begin{equation} \label{weak bound for 2 moments final}
\absBB{\sum_{m = 1}^n \sum_{k = \max\h{3,m}}^{4m} \E A_{m,k} (\im R_{\b v \b v})^{n - m}} \;\leq\; \frac{C}{\log N} \pbb{\wt {\cal E}_{ab} + N^{-3/2} \frac{\varphi^{C_1}}{N \eta}}
\pB{\E \pb{\im S_{\b v \b v}}^n + (\varphi^{C_1} \Phi)^n}\,,
\end{equation}
where
\begin{equation*}
\wt {\cal E}_{ab} \;\deq\; \cal E_{ab} + \delta_{ab} \pb{\abs{v_a}^2 + N^{-3/2}} \;=\; \sum_{\sigma, \tau = 0}^2 N^{-2 + \sigma/2 + \tau/2} \abs{v_a}^\sigma \abs{v_b}^\tau + \delta_{ab} \pb{\abs{v_a}^2 + N^{-3/2}}\,.
\end{equation*}
Moreover, if
\begin{equation} \label{smallness on va and vb}
\abs{v_a} + \abs{v_b} \;\leq\; N^{-1/4} \sqrt{\frac{\varphi^{C_1}}{N \eta}}
\end{equation}
then we have the stronger bound
\begin{equation} \label{strong bound for 2 moments final}
\absBB{\sum_{m = 1}^n \sum_{k = \max\h{3,m}}^{4m} \E A_{m,k} (\im R_{\b v \b v})^{n - m}} \;\leq\; \frac{C}{\log N} \, \wt{\cal E}_{ab} \,
\pB{\E \pb{\im S_{\b v \b v}}^n + (\varphi^{C_1} \Phi)^n}\,.
\end{equation}
\end{lemma}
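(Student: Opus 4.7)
The plan is to decompose the sum over $k$ into its contributions from $k\ge 4$ and $k=3$, and to treat each part by a direct adaptation of the proof of Lemma~\ref{lemma: three moment bound}. For the $k\ge 4$ part no new ideas are needed: the derivation of \eqref{main bound for rough estimate} in that proof was deliberately carried out without invoking the vanishing third moment condition \eqref{3rd moment vanishes} (as the authors explicitly note in the text immediately following its statement). Applying that estimate verbatim, and observing that $\cal E_{ab}\le\wt{\cal E}_{ab}$, yields a bound of the form of the right-hand side of \eqref{weak bound for 2 moments final} on the $k\ge 4$ contribution, with no $N^{-3/2}\varphi^{C_1}/(N\eta)$ error at all.

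The new work lies in the $k=3$ terms, corresponding to $m\in\{1,2,3\}$. My plan is to mirror the decomposition \eqref{def Amkst} and write $A_{m,3}=\sum_{s+t=2m,\;\max\{s,t\}\le 3}A_{m,3,s,t}$. A useful simplification is that each $Y_{k_i}$ with $k_i\le 3$ is a word in $R$ and $V$ only (the $S$-dependence only enters through $Y_4$), so each $A_{m,3,s,t}$ is the product of a prefactor $N^{-k/2}=N^{-3/2}$, a cubic monomial in $(V_{ab},V_{ba})$ whose expectation is bounded by \eqref{subexp for h}, at most $2m$ matrix elements of $R$ of the form $R_{\b v\ast}$ or $R_{\ast\b v}$, and a bounded number of $R$-entries of absolute value $O(1)$. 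Controlling the $\b v$-indexed entries via Lemma~\ref{lemma: Gvi Gvv} and \eqref{Rva estimate}, and bounding the $(\im R_{\b v\b v})^{n-m}$ factor via \eqref{estimate on im R vv}, I expect to obtain, in direct analogy with \eqref{A im R 1},
\begin{equation*}
\E\abs{A_{m,3,s,t}}\,(\im R_{\b v\b v})^{n-m}
\;\le\; \varphi^{-Dm}N^{-3/2}\,\cal F_{ab}(s,t)\,\pB{\E(\im S_{\b v\b v})^n+(\varphi^{C_1}\Phi)^n}
\end{equation*}
for arbitrarily large $D$, where $\cal F_{ab}(s,t)=1+N^{s/4}\abs{v_a}^s+N^{t/4}\abs{v_b}^t+N^{(s+t)/4}\abs{v_a}^s\abs{v_b}^t$ as in the proof of Lemma~\ref{lemma: three moment bound}.

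It then remains to verify the deterministic inequality
\begin{equation*}
N^{-3/2}\cal F_{ab}(s,t)\;\le\;C\wt{\cal E}_{ab}+\frac{CN^{-3/2}\varphi^{C_1}}{N\eta}
\end{equation*}
for each admissible $(s,t)$ with $2\le s+t\le 6$ and $\max\{s,t\}\le 3$, by a case check analogous to \eqref{bound using cal E}. The supplementary term $\delta_{ab}(\abs{v_a}^2+N^{-3/2})$ in $\wt{\cal E}_{ab}$ is designed to absorb the diagonal contributions $a=b$ (where $s$ and $t$ collapse under the symmetric convention), while the additional error $N^{-3/2}\varphi^{C_1}/(N\eta)$ on the right-hand side of \eqref{weak bound for 2 moments final} absorbs the ``constant'' summand $1$ in $\cal F_{ab}(s,t)$. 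For the stronger bound \eqref{strong bound for 2 moments final} I would use the smallness \eqref{smallness on va and vb}: by Lemma~\ref{lemma: Gvi Gvv}, every $\b v$-indexed entry then acquires an additional factor of order $N^{-1/4}\sqrt{\varphi^{C_1}/(N\eta)}$, and raising this to the power $s+t=2m\ge 2$ produces enough smallness to absorb the leftover ``constant'' into $C\wt{\cal E}_{ab}/\log N$, eliminating the extra error.

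The step I expect to be most delicate is the term $A_{3,3,3,3}$, in which three copies of $\im Y_1=N^{-1/2}\im(RVR)_{\b v\b v}$ appear simultaneously. Each such factor contributes two $\b v$-indexed entries, each of size $\sqrt{\im S_{\b v\b v}/(N\eta)}+\abs{v_a}+\abs{v_b}$; the ``delocalized'' piece of the resulting product is of order $\Psi^6$, which must be dominated by the $N^{-3/2}$ prefactor. This is precisely the purpose of the hypothesis \eqref{main assumption on Psi}, which yields $\Psi^6\le\varphi^{-2C_0}N^{-1}$ and hence $N^{-3/2}\Psi^6\le\varphi^{-2C_0}N^{-5/2}$, bringing the cubic $\Psi$-expansion under the size required by $\wt{\cal E}_{ab}/\log N+N^{-3/2}\varphi^{C_1}/(N\eta\log N)$; tracking through this single term is where the sharp cubic threshold in \eqref{main assumption on Psi} is used.
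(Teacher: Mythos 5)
Your treatment of the $k\ge 4$ terms is correct and identical to the paper's (the paper indeed reuses \eqref{main bound for rough estimate} verbatim), and your reduction of the new work to the $k=3$ terms is the right first step. But the way you propose to handle those terms has two genuine gaps. The first concerns the weak bound \eqref{weak bound for 2 moments final}: the deterministic inequality $N^{-3/2}\cal F_{ab}(s,t)\le C\wt{\cal E}_{ab}+CN^{-3/2}\varphi^{C_1}/(N\eta)$ is false. Take $a\ne b$, $s=t=1$ (so $m=1$) and $v_a=v_b=0$: the left-hand side contains the constant term $N^{-3/2}$, while the right-hand side is of order $N^{-2}+N^{-3/2}\varphi^{C_1}(N\eta)^{-1}$, which is $\ll N^{-3/2}$ whenever $N\eta\gg\varphi^{C_1}$. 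The "constant" summand $1$ in $\cal F_{ab}(s,t)$ is an artifact of the crude step \eqref{estimate on Eva Rvb power}, which discards the factor $(N\eta)^{-m}$ coming from the fluctuation bound $\abs{R_{\b v a}}\lesssim\sqrt{(\im S_{\b v\b v}+\Phi)/(N\eta)}$; that loss is harmless when the prefactor is $N^{-k/2}\le N^{-2}$ but fatal when it is $N^{-3/2}$. The correct bookkeeping (as in \eqref{naive estimate of Rva} and \eqref{proof of rough bound}) retains the factors $(\varphi^{C_\zeta}/(N\eta))^{s/2+t/2}$, and it is precisely the resulting term $N^{-3/2}(\varphi^{C_\zeta}/(N\eta))^m$ with $m=1,2$ that the extra error $N^{-3/2}\varphi^{C_1}/(N\eta)$ in \eqref{weak bound for 2 moments final} is built to absorb. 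This part of your argument is repairable, but as written the absorption step does not go through.

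The second gap is more serious and is not repairable by size estimates at all: the strong bound \eqref{strong bound for 2 moments final} for $m=1$ and for $m=2$, $\h{s,t}=\h{3,1}$. Under \eqref{smallness on va and vb} the entries $R_{\b v a}$ do \emph{not} acquire an extra factor $N^{-1/4}\sqrt{\varphi^{C_1}/(N\eta)}$ as you claim; the hypothesis only guarantees that the $\abs{v_a},\abs{v_b}$ contributions are dominated by the intrinsic fluctuation size, so the best pointwise bound remains \eqref{stronger bound on Rva}. For $m=1$ this yields $N^{-3/2}(N\eta)^{-1}(\im S_{\b v\b v}+\Phi)$, and since \eqref{main assumption on Psi} and \eqref{lower bound on eta} only give $\eta\ge\varphi^{C_0/3}N^{-5/6}$, this can be as large as $N^{-5/3}$, far above the target $\varphi^{-1}\wt{\cal E}_{ab}\asymp\varphi^{-1}N^{-2}$. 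Closing this gap requires exploiting cancellation, not size: one splits $R_{b\b v}=\msc\,\cal R_{b\b v}+\cal R'_{b\b v}$ as in \eqref{splitting of R b v}, uses $\E_b\cal R_{b\b v}=0$ and $\E_a\cal R_{\b v a}=0$ to replace the accompanying resolvent factors by their minors $R^{(a)},R^{(b)},R^{(ab)}$ at the cost of telescoping differences (see \eqref{strong estimate step 2}, \eqref{strong estimate step 3}, and the graded expansion of $R_{\b v\b v}$ used for \eqref{claim for case 4} in the case \eqref{category 3}); each such difference gains a factor $\Psi$ or $(N\eta)^{-1/2}$, which is what brings the estimate below $\wt{\cal E}_{ab}$. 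Your proposal contains no trace of this decoupling mechanism. Relatedly, your identification of $A_{3,3,3,3}$ as the delicate term is inverted: $m=3$ carries six $\b v$-indexed entries and is the \emph{easiest} case, disposed of by pure size estimates; the hard cases are those with few $\b v$-indexed entries, namely $m=1$ and $m=2$.
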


Before proving Lemma \ref{lemma: size estimate for 2 moments}, we use it to complete the proof of Lemma \ref{lemma: final size for 2 moments}.

\begin{proof}[Proof of Lemma \ref{lemma: final size for 2 moments}]
Let $B \subset \{1, \dots, N\}^2$ denote the subset
\begin{equation*}
B \;\deq\; \hBB{(a,b) \,\col\, \abs{v_a} + \abs{v_b} > N^{-1/4} \sqrt{\frac{\varphi^{C_1}}{N \eta}}}\,.
\end{equation*}
Since $\norm{\b v} = 1$, the number of indices $a$ such that $\abs{v_a} \geq \epsilon$ is bounded by $\epsilon^{-2}$. Therefore
\begin{equation*}
\abs{B} \;\leq\; N^{3/2} \pbb{\frac{\varphi^{C_1}}{N \eta}}^{-1}\,.
\end{equation*}
Therefore we have
\begin{equation*}
\sum_{(a,b) \in B} \frac{C}{\log N} \pbb{\wt {\cal E}_{ab} + N^{-3/2} \frac{\varphi^{C_1}}{N \eta}}
+ \sum_{(a,b) \in B^c} \frac{C}{\log N} \, \wt{\cal E}_{ab} \;\leq\; \frac{C}{\log N}\,.
\end{equation*}
Now \eqref{rough bound for 2 moments} follows from \eqref{weak bound for 2 moments final} and \eqref{strong bound for 2 moments final}, by repeating the argument after \eqref{def Eab}.
\end{proof}

Before proving Lemma \ref{lemma: size estimate for 2 moments}, we record the following lower bound on $\eta$.

\begin{lemma}
Let $C_0 > 0$. If \eqref{main assumption on Psi} holds then
\begin{equation} \label{lower bound on eta}
\eta \;\geq\; \varphi^{C_0 / 3} N^{-5/6}\,.
\end{equation}
\end{lemma}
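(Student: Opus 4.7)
The plan is straightforward: I would extract the lower bound on $\eta$ directly from the definition of $\Psi(z)$ together with hypothesis \eqref{main assumption on Psi}. Recall from \eqref{definition of Phi and Psi} that
\begin{equation*}
\Psi(z) \;=\; \sqrt{\frac{\im \msc(z)}{N\eta}} + \frac{1}{N\eta}\,,
\end{equation*}
so in particular $\Psi(z) \geq (N\eta)^{-1}$ trivially, regardless of the sign or size of $\im \msc(z)$.

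First I would cube this trivial bound to get $\Psi(z)^3 \geq (N\eta)^{-3}$. Combining with the assumption $\Psi(z)^3 \leq \varphi^{-C_0} N^{-1/2}$ then gives
\begin{equation*}
(N\eta)^{-3} \;\leq\; \varphi^{-C_0} N^{-1/2}\,,
\end{equation*}
which rearranges to $(N\eta)^3 \geq \varphi^{C_0} N^{1/2}$, hence $N\eta \geq \varphi^{C_0/3} N^{1/6}$, and dividing by $N$ produces the claimed bound $\eta \geq \varphi^{C_0/3} N^{-5/6}$. There is no obstacle; the argument uses only the elementary term $(N\eta)^{-1}$ in $\Psi$ and does not require any information about $\im \msc(z)$.
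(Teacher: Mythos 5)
Your proof is correct and is essentially the paper's own argument: the paper likewise combines the trivial bound $(N\eta)^{-1} \leq \Psi$ with the cube root of \eqref{main assumption on Psi} to get $(N\eta)^{-1} \leq \varphi^{-C_0/3} N^{-1/6}$ and rearranges. Cubing rather than taking cube roots is an immaterial difference.
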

\begin{proof}
The claim follows immediately from $(N\eta)^{-1} \leq \Psi \leq \varphi^{-C_0/3}N^{-1/6}$.
\end{proof}

\begin{proof}[Proof of Lemma \ref{lemma: size estimate for 2 moments}]
Note that the proof of \eqref{main bound for rough estimate} did not use the assumption \eqref{3rd moment vanishes}. In particular, all statements in the proof of Lemma \ref{lemma: three moment bound} after \eqref{conditions on s and t} remain true in the case {\bf B}. By \eqref{main bound for rough estimate}, it is enough to prove
\begin{equation} \label{weak bound for 2 moments}
\absB{\E A_{m,3} (\im R_{\b v \b v})^{n - m}} \;\leq\; \frac{1}{\log N} \pbb{\wt {\cal E}_{ab} + N^{-3/2} \frac{\varphi^{C_\zeta}}{N \eta}}
\pB{\E \pb{\im S_{\b v \b v}}^n + (\varphi^{C_\zeta} \Phi)^n}
\end{equation}
for $m = 1,2,3$
as well as, assuming \eqref{smallness on va and vb},
\begin{equation} \label{strong bound for 2 moments}
\absB{\E A_{m,3} (\im R_{\b v \b v})^{n - m}} \;\leq\; \frac{1}{\log N} \, \wt{\cal E}_{ab} \,
\pB{\E \pb{\im S_{\b v \b v}}^n + (\varphi^{C_\zeta} \Phi)^n}
\end{equation}
for $m = 1,2,3$.
In order to prove \eqref{weak bound for 2 moments} and \eqref{strong bound for 2 moments}, we distinguish four cases depending on $m$ and whether $a = b$. Recall from \eqref{conditions on s and t} that
\begin{equation} \label{conditions for k=3}
s + t \;=\; 2m \,, \qquad s \;\leq\; 3\,, \qquad t \;\leq\; 3\,.
\end{equation}

\textbf{Case (i): $a = b$ and $m \leq 3$.} Similarly to \eqref{estimate of powers of s and t}, we find
\begin{equation*}
\pB{\abs{R_{\b v a}} + \abs{R_{a \b v}}}^{2m} \;\leq\; \varphi^{-D m} \pB{\im S_{\b v \b v} + \varphi^{C_{\zeta, D}} \Phi}^m \pb{1 + N^{m/2} \abs{v_a}^{2m}}
\end{equation*}
\hp{2 \zeta},
for any constant $D > 0$ and $z \in \b S(C_{\zeta, D})$. Therefore \eqref{estimate on im R vv} yields
\begin{align*}
\abs{A_{m,3}} (\im R_{\b v \b v})^{n - m} &\;\leq\; \varphi^{C_\zeta - D m} N^{-3/2} \pB{\im S_{\b v \b v} + \varphi^{C_{\zeta, D}} \Phi}^n \pb{1 + N^{m/2} \abs{v_a}^{2m}}
\\
&\;\leq\; \varphi^{-1} \pB{\im S_{\b v \b v} + \varphi^{C_{\zeta}} \Phi}^n \pb{N^{-3/2} + \abs{v_a}^2}
\end{align*}
\hp{2 \zeta}, where we used that $1 \leq m \leq 3$. Therefore Lemma \ref{lemma: expanding estimate} yields
\begin{equation} \label{case (i)}
\E \abs{A_{m,3}} (\im R_{\b v \b v})^{n - m} \;\leq\; C \varphi^{-1} \pB{\E \pb{\im S_{\b v \b v}}^n + (\varphi^{C_\zeta} \Phi)^n} \pb{N^{-3/2} + \abs{v_a}^2}\,,
\end{equation}
which is \eqref{strong bound for 2 moments}. In particular, we have also proved \eqref{weak bound for 2 moments}.
Here we omit the details of the estimate on the event of low probability, which are analogous to those following \eqref{A im R 1}.

\textbf{Case (ii): $a \neq b$ and $m = 3$.} By \eqref{conditions for k=3}, we have $s = t = 3$. From \eqref{estimate of powers of s and t} we get
\begin{multline} \label{estimate for mst3}
\pB{\abs{R_{\b v a}} + \abs{R_{a \b v}}}^s
\pB{\abs{R_{\b v b}} + \abs{R_{b \b v}}}^t
\;\leq\; \pBB{\varphi^{C_\zeta} \frac{\im S_{\b v \b v}}{N \eta} + \varphi^{C_\zeta} \Psi^2}^m
+ \pBB{\varphi^{C_\zeta} \frac{\im S_{\b v \b v}}{N \eta} + \varphi^{C_\zeta} \Psi^2}^{m - s/2} (C \abs{v_a})^s
\\
+ \pBB{\varphi^{C_\zeta} \frac{\im S_{\b v \b v}}{N \eta} + \varphi^{C_\zeta} \Psi^2}^{m - t/2} (C \abs{v_b})^t + (C \abs{v_a})^s (C \abs{v_b})^t
\end{multline}
\hp{2 \zeta}. Together with \eqref{definition of Phi and Psi} and \eqref{estimate on im R vv}, this yields
\begin{multline} \label{naive estimate of Rva}
\pB{\abs{R_{\b v a}} + \abs{R_{a \b v}}}^s
\pB{\abs{R_{\b v b}} + \abs{R_{b \b v}}}^t (\im R_{\b v \b v})^{n - m}
\;\leq\; \pb{\im S_{\b v \b v} + \varphi^{C_{\zeta, D}} \Phi}^n
\\
\times \qBB{\pbb{\frac{\varphi^{C_\zeta}}{N \eta}}^m + \pbb{\frac{\varphi^{C_\zeta}}{N \eta}}^{s/2} \pb{\varphi^D \Phi}^{-t/2} \abs{v_b}^t + \pbb{\frac{\varphi^{C_\zeta}}{N \eta}}^{t/2} \pb{\varphi^D \Phi}^{-s/2} \abs{v_a}^s + \pb{\varphi^D \Phi}^{-s/2 - t/2} \abs{v_a}^s \abs{v_b}^t}
\end{multline}
\hp{2 \zeta} and for any $D > 0$. Choosing $D$ and $C_0$ in \eqref{main assumption on Psi} large enough, we get from \eqref{subexp for h}, \eqref{lower bound on eta}, Lemma \ref{lemma: expanding estimate}, and $N^{-1/2} \leq \Phi$ that
\begin{equation*}
\abs{A_{3,3}} (\im R_{\b v \b v})^{n - 3} \;\leq\; \varphi^{-1} N^{-3/2} \pB{(\im S_{\b v \b v})^n + (\varphi^{C_\zeta} \Phi)^n} \pB{N^{-1/2} + N^{1/2} \abs{v_b}^2 + N^{1/2} \abs{v_a}^2 + N^{3/2} \abs{v_a}^2 \abs{v_b}^2}
\end{equation*}
\hp{2 \zeta}. Now \eqref{strong bound for 2 moments}, and hence also \eqref{weak bound for 2 moments}, follows easily (we omit the details of the analysis on the low-probability event).

\textbf{Case (iii): $a \neq b$ and $m = 2$.} Consider first the case $s = t = 2$. Then $A_{2,3,2,2}$ (see \eqref{def Amkst} and \eqref{Amk def}) is a finite sum of $O(1)$ terms of the form
\begin{equation} \label{definition X1}
X_1 \;\deq\; R_{\b v a} h_{ab} R_{b \b v} \, R_{\b v a} h_{ab} R_{ba} h_{ab} R_{b \b v}\,.
\end{equation}
(The other terms can be obtained from \eqref{definition X1} by permutation of indices and complex conjugation of factors.)
We shall estimate the contribution of $X_1$; the other terms are dealt with in exactly the same way.
Note the presence of an off-diagonal resolvent matrix element $R_{ba}$, as required by the condition $s = t = 2$. From \eqref{Rij estimate} and \eqref{naive estimate of Rva} we get, with $m = s = t = 2$, that
\begin{multline*}
\abs{X_1} \, (\im R_{\b v \b v})^{n - 2} \;\leq\; \varphi^{C_\zeta} \, \Psi\,N^{-3/2}
\, \pb{\im S_{\b v \b v} + \varphi^{C_{\zeta, D}} \Phi}^n
\\
\times \qBB{\pbb{\frac{\varphi^{C_\zeta}}{N \eta}}^2 + \frac{\varphi^{C_\zeta}}{N \eta} \pb{\varphi^D \Phi}^{-1} \abs{v_b}^2 + \frac{\varphi^{C_\zeta}}{N \eta} \pb{\varphi^D \Phi}^{-1} \abs{v_a}^2 + \pb{\varphi^D \Phi}^{-2} \abs{v_a}^2 \abs{v_b}^2}
\end{multline*}
\hp{2 \zeta}.
Note the factor $\Psi$ arising from the estimate of $R_{ba}$. Choosing $D$ and $C_0$ large enough, and
recalling \eqref{main assumption on Psi}, we find using Lemma \ref{lemma: expanding estimate} that
\begin{equation*}
\abs{X_1} \, (\im R_{\b v \b v})^{n - 2} \;\leq\; \varphi^{-1} \pB{(\im S_{\b v \b v})^n + (\varphi^{C_\zeta} \Phi)^n} \cal E_{ab}
\end{equation*}
\hp{2 \zeta}.
This yields \eqref{strong bound for 2 moments} and hence also \eqref{weak bound for 2 moments}.

Let us therefore consider the case $s = 3$ and $t = 1$. (The case $s = 1$ and $t = 3$ is estimated in the same way.) Using the bounds $\Phi \geq (N \eta)^{-1}$ and $\Phi \geq N^{-1/2}$, we find
\begin{align}
&\mspace{-40mu}\abs{A_{2,3,3,1}} \, (\im R_{\b v \b v})^{n - 2}
\\
&\;\leq\; \varphi^{C_\zeta} \, N^{-3/2}
\, \pb{\im S_{\b v \b v} + \varphi^{C_{\zeta, D}} \Phi}^n
\notag \\
&\mspace{30mu} \times \qBB{\pbb{\frac{\varphi^{C_\zeta}}{N \eta}}^2 + \pbb{\frac{\varphi^{C_\zeta}}{N \eta}}^{3/2} \pb{\varphi^D \Phi}^{-1/2} \abs{v_b} + \pbb{\frac{\varphi^{C_\zeta}}{N \eta}}^{1/2} \pb{\varphi^D \Phi}^{-3/2} \abs{v_a}^2 + \pb{\varphi^D \Phi}^{-2} \abs{v_a}^2 \abs{v_b}}
\notag \\ \label{proof of rough bound}
&\;\leq\; \varphi^{-1} \pB{(\im S_{\b v \b v})^n + (\varphi^{C_\zeta} \Phi)^n} \qBB{N^{-3/2} \frac{\varphi^{C_\zeta}}{N \eta} + N^{-3/2} \abs{v_b} + N^{-1} \abs{v_a}^2 + N^{-1/2} \abs{v_a}^2 \abs{v_b}}
\end{align}
\hp{2 \zeta}, for $D$ and $C_0$ large enough. This yields \eqref{weak bound for 2 moments} in the case $s = 3$ and $t = 1$.

In order to prove the stronger bound \eqref{strong bound for 2 moments} in the case $s = 3$ and $t = 1$, we note that \eqref{Rva estimate}, \eqref{definition of Phi and Psi}, \eqref{1.5}, and the assumption \eqref{smallness on va and vb} yield
\begin{equation} \label{stronger bound on Rva}
\abs{R_{\b v a}} \;\leq\; \varphi^{C_\zeta} \sqrt{\frac{\im S_{\b v \b v} + \Phi}{N \eta}}\,.
\end{equation}
The same bound holds for $R_{a \b v}$, $R_{\b v b}$, and $R_{b \b v}$. Now $A_{2,3,3,1}$ is a finite sum of $O(1)$ terms of the form
\begin{equation*}
X_2 \;\deq\; R_{\b v a} h_{ab} R_{b \b v} \, R_{\b v a} h_{ab} R_{bb} h_{ba} R_{a \b v}\,.
\end{equation*}
(Again, the other terms can be obtained from $X_2$ by permutation of indices and complex conjugation of factors.)
We shall show that
\begin{equation} \label{strong estimate for 2,3,3,1}
\absb{\E X_2 (\im R_{\b v \b v})^{n - 2}} \;\leq\; C \varphi^{-1} \cal E_{ab} \pB{\E (\im S_{\b v \b v})^n + (\varphi^{C_\zeta} \Phi)^n}\,.
\end{equation}
We split $R_{bb} = (R_{bb} - \msc) + \msc$ in the definition of $X_2$. The first resulting term is estimated, using \eqref{Rij estimate}, by
\begin{equation*}
\varphi^{C_\zeta} \, \Psi\, N^{-3/2} \absb{R_{\b v a} R_{b \b v} R_{\b v a} R_{a \b v}} (\im R_{\b v \b v})^{n - 2}\,.
\end{equation*}
The estimate of $\abs{X_1} (\im S_{\b v \b v})^{n - 2}$ above may now be applied verbatim. What remains is the second term resulting from the above splitting of $X_2$. Since $\abs{\msc} \leq C$ and $h_{ab}$ is independent of $R$, we therefore have to show that
\begin{equation} \label{strong estimate step 2}
C N^{-3/2} \absB{\E R_{\b v a} R_{b \b v} R_{\b v a} R_{a \b v} \, (\im R_{\b v \b v})^{n - 2}} \;\leq\; \varphi^{-1} \cal E_{ab} \pB{\E (\im S_{\b v \b v})^n + (\varphi^{C_\zeta} \Phi)^n}\,.
\end{equation}

Using \eqref{sq root formula}, we expand
\begin{equation} \label{splitting of R b v}
R_{b \b v}\;=\; \msc \cal R_{b \b v} + \cal R_{b \b v}'\,,
\end{equation}
where we defined (see also \eqref{def G prime})
\begin{equation} \label{def R prime R double prime}
\cal R_{b \b v} \;\deq\; - \sum_k^{(b)} h_{b k} R^{(b)}_{k \b v} \,, \qquad
\cal R'_{b \b v} \;\deq\; v_b R_{bb} + (R_{bb} - \msc) \cal R_{b \b v}\,.
\end{equation}
Now we observe that, using the bound \eqref{Rij estimate}, we may repeat the proof of Lemma \ref{lemma: Gvi Gvv} to the letter to find that its statement holds with $(G, \cal G)$ replaced with $(R, \cal R)$. Thus we find
\begin{equation} \label{estimate of R prime}
\abs{\cal R_{b \b v}} \;\leq\; \varphi^{C_\zeta} \sqrt{\frac{\im R_{\b v \b v}}{N \eta}} + C \abs{v_b} \;\leq\; \varphi^{C_\zeta} \sqrt{\frac{\im S_{\b v \b v} + \Phi}{N \eta}} + \varphi^{C_\zeta} N^{-1/4} (N \eta)^{-1/2} \;\leq\; \varphi^{C_\zeta} \sqrt{\frac{\im S_{\b v \b v} + \Phi}{N \eta}}
\end{equation}
\hp{2 \zeta},
where in the second step we used \eqref{estimate on im R vv} and \eqref{smallness on va and vb}, and in the last step \eqref{1.5}. Using \eqref{Rij estimate}, \eqref{smallness on va and vb}, and $\Phi \geq (N \eta)^{-1}$, we therefore find
\begin{equation} \label{estimate of R double prime}
\abs{\cal R'_{b \b v}} \;\leq\; \pbb{\varphi^{C_\zeta} \frac{\Psi}{\sqrt{N \eta}} + \varphi^{- D} N^{-1/4}} (\im S_{\b v \b v} + \varphi^{C_{\zeta,D}}\Phi)^{1/2}
\end{equation}
\hp{2 \zeta}, for any $D \geq 0$.
Therefore \eqref{estimate on im R vv} and \eqref{stronger bound on Rva} yield
\begin{equation*}
C N^{-3/2} \absB{\E R_{\b v a} \cal R'_{b \b v} R_{\b v a} R_{a \b v} \, (\im R_{\b v \b v})^{n - 2}} \;\leq\; N^{-3/2} \varphi^{C_\zeta} (N \eta)^{-3/2} \pbb{\frac{\Psi}{\sqrt{N \eta}} + N^{-1/4}} \, \pb{\im S_{\b v \b v} + \varphi^{C_\zeta} \Phi}^n
\end{equation*}
\hp{2 \zeta}. Using \eqref{main assumption on Psi}, \eqref{lower bound on eta}, and Lemma \ref{lemma: expanding estimate}, we find that the right-hand side is bounded by
\begin{equation*}
\varphi^{-1} N^{-2} \pB{(\im S_{\b v \b v})^n + (\varphi^{C_\zeta} \Phi)^n}
\end{equation*}
\hp{2 \zeta}. Combined with the usual estimate on the complementary low-probability event, this concludes the estimate of the $\cal R'_{b \b v}$-term. What remains is to prove that
\begin{equation} \label{strong estimate step 3}
C N^{-3/2} \absB{\E R_{\b v a} \cal R_{b \b v} R_{\b v a} R_{a \b v} \, (\im R_{\b v \b v})^{n - 2}} \;\leq\; \varphi^{-1} \cal E_{ab} \pB{\E (\im S_{\b v \b v})^n + (\varphi^{C_\zeta} \Phi)^n}\,,
\end{equation}
The key observation behind the estimate of \eqref{strong estimate step 3} is that $\E_b \cal R_{b \b v} = 0$, where $\E_b$ denotes partial expectation with respect to the $b$-th column of $Q$. Thus we have
\begin{equation*}
\E R_{\b v a} \cal R_{b \b v} R_{\b v a} R_{a \b v} \, (\im R_{\b v \b v})^{n - 2} \;=\; \E \qB{R_{\b v a} R_{\b v a} R_{a \b v} \, (\im R_{\b v \b v})^{n - 2} - R^{(b)}_{\b v a} R^{(b)}_{\b v a} R^{(b)}_{a \b v} \, (\im R^{(b)}_{\b v \b v})^{n - 2}} \cal R_{b \b v}\,.
\end{equation*}
In order to compare the quantities in the brackets, we use \eqref{Gij Gijk}, \eqref{Rij estimate}, and \eqref{stronger bound on Rva} to get
\begin{align}
R_{\b v a} &\;=\; R_{\b v a}^{(b)} + \frac{R_{\b v b} R_{b a}}{R_{bb}} \;=\; R_{\b v a}^{(b)} + O(\varphi^{C_\zeta} \Psi R_{\b v b})\,,
\label{Rva marginal}
\\ \label{Rvv marignal}
R_{\b v \b v} &\;=\; R_{\b v \b v}^{(b)} + \frac{R_{\b v b} R_{b \b v}}{R_{bb}} \;=\; R_{\b v \b v}^{(b)} + O \pbb{\varphi^{C_\zeta}\, \frac{\im S_{\b v \b v} + \Phi}{N \eta}}
\end{align}
\hp{2 \zeta}. In particular, we get from \eqref{estimate on im R vv} and \eqref{stronger bound on Rva} that
\begin{equation} \label{estimate on minor R v a}
\im R_{\b v \b v}^{(b)} \;\leq\; (1 + \varphi^{-\zeta}) \im S_{\b v \b v} + \varphi^{C_\zeta} \Phi \,, \qquad \abs{R_{\b v a}^{(b)}} \;\leq\; \varphi^{C_\zeta} \sqrt{\frac{\im S_{\b v \b v} + \Phi}{N \eta}}
\end{equation}
\hp{2 \zeta}, for $z \in \b S(C'_{\zeta})$ with some large enough $C'_\zeta$. A telescopic estimate of the form
\begin{equation*}
\prod_{i = 1}^k (x_i + y_i) - \prod_{i = 1}^k x_i \;=\; \sum_{j = 1}^k \pBB{\prod_{i = 1}^{j - 1} x_i} y_j \pBB{\prod_{i = j + 1}^k (x_i + y_i)}
\end{equation*}
therefore gives
\begin{align*}
&\mspace{-40mu} C N^{-3/2} \, \absB{R_{\b v a} R_{\b v a} R_{a \b v} \, (\im R_{\b v \b v})^{n - 2} - R^{(b)}_{\b v a} R^{(b)}_{\b v a} R^{(b)}_{a \b v} \, (\im R^{(b)}_{\b v \b v})^{n - 2}} \, \absb{\cal R_{b \b v}}
\\
&\leq\; \varphi^{C_\zeta} N^{-3/2} \abs{\cal R_{b \b v}} \pbb{\frac{\im S_{\b v \b v} + \Phi}{N \eta}}^{3/2} \, \Psi \, \pb{\im S_{\b v \b v} + \varphi^{C_\zeta} \Phi}^{n - 2}
\\
&\mspace{40mu} + n \varphi^{C_\zeta} N^{-3/2} \abs{\cal R_{b \b v}} \pbb{\frac{\im S_{\b v \b v} + \Phi}{N \eta}}^{5/2} \pb{\im S_{\b v \b v} + \varphi^{C_\zeta} \Phi}^{n - 3}
\\
&\leq\; \varphi^{C_\zeta} N^{-3/2} \sqrt{\frac{\im S_{\b v \b v} + \Phi}{N \eta}} \, \pBB{\frac{\Psi}{(N \eta)^{3/2}} + \frac{1}{(N \eta)^{5/2}}} \pb{\im S_{\b v \b v} + \varphi^{C_\zeta} \Phi}^{n - 1/2}
\end{align*}
\hp{2 \zeta}, where in the last step we used \eqref{estimate of R prime} and $n \leq \varphi^\zeta$. Now \eqref{strong estimate step 3} follows easily for large enough $C_0$ in \eqref{main assumption on Psi}, using \eqref{main assumption on Psi} and \eqref{lower bound on eta}.
This concludes the proof of \eqref{strong estimate step 2} and hence of \eqref{strong estimate for 2,3,3,1}.

\textbf{Case (iv): $a \neq b$ and $m = 1$.} Similarly to \eqref{proof of rough bound}, one easily finds the weak bound \eqref{weak bound for 2 moments}. Let us therefore assume \eqref{smallness on va and vb} and prove \eqref{strong bound for 2 moments}. It suffices to prove that
\begin{equation} \label{claim for case 4}
N^{-3/2} \absb{\E \, X_3  (\im R_{\b v \b v})^{n - 1}} \;\leq\; \varphi^{-1} N^{-2} \pB{\E \pb{\im S_{\b v \b v}}^n + (\varphi^{C_\zeta} \Phi)^n}\,,
\end{equation}
where $X_3$ stands for any of the following expressions:
\begin{equation*}
R_{\b v a} R_{ba} R_{ba} R_{b \b v} \,, \qquad
R_{\b v a} R_{bb} R_{ab} R_{a \b v} \,, \qquad
R_{\b v a} R_{bb} R_{aa} R_{b \b v} \,.
\end{equation*}
Here we used that $h_{ab}$ and $h_{ba}$ are independent of $R$.
(Up to an immaterial renaming of indices and complex conjugation, all terms in $A_{1,3}$ are covered by one of these three cases.) Applying the splittings $R_{aa} = \msc + (R_{aa} - \msc)$ and  $R_{bb} = \msc + (R_{bb} - \msc)$, we find that it suffices to prove \eqref{claim for case 4} for $X_3$ being any of
\begin{gather*}
R_{\b v a} R_{ba} R_{ba} R_{b \b v} \,, \qquad
R_{\b v a} (R_{bb} - \msc) R_{ab} R_{a \b v}\,, \qquad
R_{\b v a} (R_{bb} - \msc) (R_{aa} - \msc) R_{b \b v}\,,
\\
R_{\b v a} R_{ab} R_{a \b v}\,, \qquad
R_{\b v a} (R_{bb} - \msc) R_{b \b v}\,, \qquad
R_{\b v a} (R_{aa} - \msc) R_{b \b v}\,,
\\
R_{\b v a}  R_{b \b v}\,.
\end{gather*}
Next, applying the splitting \eqref{splitting of R b v} to the last line, we find that it suffices to prove \eqref{claim for case 4} for $X_3$ being any of
\begin{subequations}
\begin{gather} \label{category 1}
R_{\b v a} R_{ba} R_{ba} R_{b \b v} \,, \qquad
R_{\b v a} (R_{bb} - \msc) R_{ab} R_{a \b v}\,, \qquad
R_{\b v a} (R_{bb} - \msc) (R_{aa} - \msc) R_{b \b v}\,, \qquad
\cal R_{\b v a}' \cal R_{b \b v}'\,,
\\ \label{category 2}
R_{\b v a} R_{ab} R_{a \b v}\,, \qquad
R_{\b v a} (R_{bb} - \msc) R_{b \b v}\,, \qquad
R_{\b v a} (R_{aa} - \msc) R_{b \b v}\,, \qquad
\cal R_{\b v a}'  \cal R_{b \b v}\,, \qquad
\cal R_{\b v a}  \cal R_{b \b v}'\,,
\\ \label{category 3}
\cal R_{\b v a}  \cal R_{b \b v}\,.
\end{gather}
\end{subequations}

For $X_3$ in \eqref{category 1}, we find from \eqref{Rij estimate}, \eqref{stronger bound on Rva}, and \eqref{estimate of R double prime} that
\begin{equation*}
\abs{X_3} \;\leq\; \varphi^{C_\zeta} \pbb{\frac{\Psi^2}{N \eta} + \varphi^{-D} N^{-1/2}} \pb{\im S_{\b v \b v} + \varphi^{C_{\zeta, D}} \Phi}
\end{equation*}
\hp{2 \zeta},
from which \eqref{claim for case 4} easily follows using \eqref{main assumption on Psi}, \eqref{lower bound on eta}, \eqref{estimate on im R vv}, and Lemma \ref{lemma: expanding estimate}, having chosen $D$ and $C_0$ in \eqref{main assumption on Psi} large enough.

Let us now consider $X_3 = R_{\b v a} R_{ab} R_{a \b v}$. Using \eqref{sq root formula}, we split, similarly to \eqref{splitting of R b v},
\begin{equation*}
R_{ab} \;=\; \msc \cal R_{ab} + (R_{bb} - \msc) \cal R_{ab}\,, \qquad \cal R_{ab} \;\deq\; - \sum_{k}^{(b)} R^{(b)}_{ak} h_{kb}\,.
\end{equation*}
Using \eqref{LDE}, \eqref{definition of Phi and Psi}, \eqref{Gij Gijk}, and \eqref{Rij estimate}, we find
\begin{equation} \label{def R and cal R}
\abs{\cal R_{ab}} \;\leq\; \varphi^{C_\zeta} \pBB{\frac{1}{N} \sum_k^{(b)} \abs{R_{ak}^{(b)}}^2}^{1/2} \;=\; \varphi^{C_\zeta} \pbb{\frac{1}{N \eta} \im R_{aa}^{(b)}}^{1/2} \;\leq\; \varphi^{C_\zeta} \pbb{\frac{\im \msc + \Psi}{N \eta}}^{1/2} \;\leq\; \varphi^{C_\zeta} \Psi
\end{equation}
\hp{2 \zeta}. For the second part of $X_3$ resulting from the splitting of $R_{ab}$, we therefore get the estimate
\begin{equation*}
\absb{R_{\b v a} (R_{bb} - \msc) \cal R_{ab} R_{a \b v}} \;\leq\; \varphi^{C_\zeta} \frac{\Psi^2}{N \eta} \pb{\im S_{\b v \b v} + \varphi^{C_\zeta} \Phi} \;\leq\; \varphi^{-1} N^{-1/2} \pb{\im S_{\b v \b v} + \varphi^{C_\zeta} \Phi}
\end{equation*}
\hp{2 \zeta}. For the first part, we use $\E_b \cal R_{ab}$ to write
\begin{equation*}
\E R_{\b v a} \cal R_{ab} R_{a \b v} (\im R_{\b v \b v})^{n - 1} \;=\; \E \qB{R_{\b v a} R_{a \b v} (\im R_{\b v \b v})^{n - 1} - R_{\b v a}^{(b)} R_{a \b v}^{(b)} (\im R_{\b v \b v}^{(b)})^{n - 1}} \cal R_{ab}\,.
\end{equation*}
This may be estimated using a telescopic sum, exactly as \eqref{strong bound for 2 moments}; we omit the details.  This completes the proof of \eqref{claim for case 4} in the case $X_3 = R_{\b v a} R_{ab} R_{a \b v}$.
The second and third terms of \eqref{category 2} are estimated similarly.

For the choice $X_3 = \cal R_{\b v a} \cal R_{b \b v}'$, we use $\E_a \cal R_{\b v a} = 0$ to write
\begin{equation} \label{telescopic for second category}
\E \cal R_{\b v a}  \cal R_{b \b v}' (\im R_{\b v \b v})^{n - 1} \;=\; \E \qB{\cal R_{b \b v}' (\im R_{\b v \b v})^{n - 1} - \pb{\cal R_{b \b v}'}^{(a)} (\im R_{\b v \b v}^{(a)})^{n - 1}}  \cal R_{\b v a}\,,
\end{equation}
where we defined
\begin{equation*}
\pb{\cal R'_{b \b v}}^{(a)} \;\deq\; v_b R_{bb}^{(a)} + (R_{bb}^{(a)} - \msc) \cal R_{b \b v}^{(a)}\,, \qquad \cal R_{b \b v}^{(a)} \;\deq\; - \sum_k^{(ab)} h_{b k} R^{(ab)}_{k \b v}\,.
\end{equation*}
We find
\begin{multline} \label{cal R minor estimate}
\absb{\cal R_{b \b v}^{(a)} - \cal R_{b \b v}} \;\leq\; \abs{h_{ba}} \abs{R_{a \b v}^{(b)}} + \absBB{\sum_{k}^{(ab)} h_{bk} \pb{R^{(b)}_{k \b v} - R^{(ab)}_{k \b v}}}
\\
\leq\; \varphi^{C_\zeta} N^{-1/2} \sqrt{\frac{\im S_{\b v \b v} + \Phi}{N \eta}} + \varphi^{C_\zeta} \abs{R_{a \b v}^{(b)}} \pBB{\frac{1}{N} \sum_{k} \abs{R_{k a}^{(b)}}^2}^{1/2} \;\leq\; \varphi^{C_\zeta} \frac{\Psi}{\sqrt{N \eta}} \pb{\im S_{\b v \b v} + \Phi}^{1/2}
\end{multline}
\hp{2 \zeta}, where we used \eqref{estimate on minor R v a}, \eqref{subexp for h},  \eqref{LDE}, \eqref{Gij Gijk}, and \eqref{Rva marginal}. Together with \eqref{Gij Gijk}, \eqref{Rij estimate}, \eqref{stronger bound on Rva}, and \eqref{smallness on va and vb}, we therefore find
\begin{align}
\absb{\pb{\cal R'_{b \b v}}^{(a)} - \cal R'_{b \b v}} &\;\leq\; \pb{\abs{v_b} + \abs{\cal R_{b \b v}}} \abs{R_{bb}^{(a)} - R_{bb}} + \abs{R_{bb}^{(b)} - \msc} \varphi^{C_\zeta} \frac{\Psi}{\sqrt{N \eta}} \pb{\im S_{\b v \b v} + \Phi}^{1/2}
\notag \\ \label{estimate on R double prime}
&\;\leq\; \varphi^{C_\zeta} \frac{\Psi^2}{\sqrt{N \eta}} \pb{\im S_{\b v \b v} + \Phi}^{1/2}
\end{align}
\hp{2 \zeta}.
Recalling \eqref{estimate of R prime}, \eqref{Rvv marignal}, Lemma \ref{lemma: expanding estimate}, and the usual rough estimate on the complementary low-probability event, a telescopic estimate in \eqref{telescopic for second category} therefore gives
\begin{equation*}
\E \cal R_{\b v a}  \cal R_{b \b v}' (\im R_{\b v \b v})^{n - 1} \;\leq\; \varphi^{C_\zeta} \pbb{\frac{\Psi^2}{N \eta} + \frac{\Psi}{(N \eta)^2}} \pB{\E (\im S_{\b v \b v})^n + \pb{\varphi^{C_\zeta} \Phi}^n}\,.
\end{equation*}
Now \eqref{claim for case 4} follows.

Now we prove \eqref{claim for case 4} for $X_3$ as in \eqref{category 3}. We begin with a graded expansion of $R_{\b v \b v}$. Using \eqref{Gvw minor} we find
\begin{equation*}
R_{\b v \b v} \;=\; R_{\b v \b v}^{(a)} + \frac{R_{\b v a} R_{a \b v}}{R_{aa}} \;=\; R_{\b v \b v}^{(ab)} + \frac{R_{\b v b}^{(a)} R_{b \b v}^{(a)}}{R_{bb}^{(a)}} + \frac{R_{\b v a} R_{a \b v}}{R_{aa}}\,.
\end{equation*}
We deal with the last term by applying \eqref{Gij Gijk} twice, followed by
\begin{equation*}
\frac{1}{R_{aa}} \;=\; \frac{1}{R_{aa}^{(b)}} - \frac{R_{ab} R_{ba}}{R_{aa} R_{bb} R_{aa}^{(b)}}\,,
\end{equation*}
itself an immediate consequence of \eqref{Gij Gijk}. This gives the graded expansion
\begin{equation*}
R_{\b v \b v} \;=\; R_{\b v \b v}^{[ab]} + R_{\b v \b v}^{[a]} + R_{\b v \b v}^{[b]} + R_{\b v \b v}^{[\emptyset]}\,,
\end{equation*}
where
\begin{gather*}
R_{\b v \b v}^{[ab]} \;\deq\; R_{\b v \b v}^{(ab)}\,, \qquad
R_{\b v \b v}^{[a]} \;\deq\; \frac{R_{\b v b}^{(a)} R_{b \b v}^{(a)}}{R_{bb}^{(a)}}\,, \qquad
R_{\b v \b v}^{[b]} \;\deq\; \frac{R_{\b v a}^{(b)} R_{a \b v}^{(b)}}{R_{aa}^{(b)}}
\\
R_{\b v \b v}^{[\emptyset]} \;\deq\; \frac{R_{\b v b} R_{b a} R_{a \b v}}{R_{aa} R_{bb}} + \frac{R_{\b v a}^{(b)} R_{ab} R_{b \b v}}{R_{aa} R_{bb}} - \frac{R_{\b v a}^{(b)} R_{a \b v}^{(b)} R_{ab} R_{ba}}{R_{aa} R_{bb} R_{aa}^{(b)}}\,.
\end{gather*}
Note that $R_{\b v}^{[\bb T]}$ is independent of the columns of $H$ indexed by $\bb T$. Moreover, by \eqref{Rij estimate}, Lemma \ref{lemma: msc}, \eqref{Gij Gijk}, \eqref{stronger bound on Rva}, and \eqref{estimate on minor R v a}, we have
\begin{equation} \label{estimates on graded terms}
\absb{R_{\b v \b v}^{[a]}} + \absb{R_{\b v \b v}^{[b]}} \;\leq\; \varphi^{C_\zeta} \frac{\im S_{\b v \b v} + \Phi}{N \eta}\,, \qquad
\absb{R_{\b v \b v}^{[\emptyset]}} \;\leq\; \varphi^{C_\zeta} \Psi \frac{\im S_{\b v \b v} + \Phi}{N \eta}
\end{equation}
\hp{2 \zeta}. Thus we write
\begin{equation} \label{sum over A}
\E \, \cal R_{\b v a} \cal R_{b \b v} (\im R_{\b v \b v})^{n - 1} \;=\; \sum_{\b A} \E \, \cal R_{\b v a} \cal R_{b \b v} \prod_{i = 1}^{n - 1} \im R_{\b v \b v}^{[A_i]}
\end{equation}
where $\b A = (A_i)_{i = 1}^{n - 1}$ and $A_i \in \{\emptyset, a, b, ab\}$ for $i = 1, \dots, n - 1$. In order to keep track of the terms in the summation over $\b A$, we introduce the counting functions
\begin{equation*}
r_1(\b A) \;\deq\; \sum_{i = 1}^{n - 1} \pb{\ind{A_i = a} + \ind{A_i = b}}\,, \qquad
r_2(\b A) \;\deq\; \sum_{i = 1}^{n - 1} \ind{A_i = \emptyset}\,.
\end{equation*}
We partition the sum in \eqref{sum over A} as
\begin{equation} \label{split sum over A}
\sum_{\b A} \;=\; \sum_{\b A} \indb{r_2(\b A) = 0} \indb{r_1(\b A) = 0} + \sum_{\b A} \indb{r_2(\b A) = 0} \indb{r_1(\b A) = 1} + \sum_{\b A} \indb{r_2(\b A) \geq 1 \text{ or } r_1(\b A) \geq 2}\,.
\end{equation}
Let us concentrate on the first summand; its condition is equivalent to $A_i = ab$ for all $i$. Using $\E_a \cal R_{\b v a} = 0$ and $\E_b \pb{\cal R_{b \b v} - \cal R_{b \b v}^{(a)}} = 0$ we get
\begin{equation*}
\E \, \cal R_{\b v a} \cal R_{b \b v} (\im R^{[ab]}_{\b v \b v})^{n - 1} \;=\; \E \, \pb{\cal R_{\b v a} - \cal R_{\b v a}^{(b)}} \pb{\cal R_{b \b v} - \cal R_{b \b v}^{(a)}}(\im R^{[ab]}_{\b v \b v})^{n - 1}\,.
\end{equation*}
From \eqref{cal R minor estimate}, \eqref{estimates on graded terms}, \eqref{estimate on im R vv}, and Lemma \ref{lemma: expanding estimate} we therefore get
\begin{equation*}
\absb{\E \, \cal R_{\b v a} \cal R_{b \b v} (\im R^{[ab]}_{\b v \b v})^{n - 1}} \;\leq\; \varphi^{C_\zeta} \frac{\Psi^2}{N \eta} \pb{\E (\im S_{\b v \b v})^n + (\varphi^{C_\zeta} \Phi)^n} \;\leq\; \varphi^{-1} N^{-1/2} \pb{\E (\im S_{\b v \b v})^n + (\varphi^{C_\zeta} \Phi)^n}
\end{equation*}
for large enough $C_0$.

The second summand of \eqref{split sum over A} consists of $n$ terms of the form
\begin{equation*}
\E \, \cal R_{\b v a} \cal R_{b \b v} (\im R^{[a]}_{\b v \b v}) (\im R^{[ab]}_{\b v \b v})^{n - 2} \;=\;
\E \, \cal R_{\b v a} \pb{\cal R_{b \b v} - \cal R_{b \b v}^{(a)}} (\im R^{[a]}_{\b v \b v}) (\im R^{[ab]}_{\b v \b v})^{n - 2}\,.
\end{equation*}
Recalling \eqref{estimates on graded terms}, we estimate this as above by
\begin{equation*}
\varphi^{C_\zeta} \frac{\Psi}{(N \eta)^2} \pb{\E (\im S_{\b v \b v})^n + (\varphi^{C_\zeta} \Phi)^n} \;\leq\; \varphi^{-1} N^{-1/2} \pb{\E (\im S_{\b v \b v})^n + (\varphi^{C_\zeta} \Phi)^n}
\end{equation*}
for large enough $C_0$.

What remains is to estimate the third summand in \eqref{split sum over A}. From \eqref{estimates on graded terms} and \eqref{cal R minor estimate} we get
\begin{multline*}
\sum_{\b A} \indb{r_2(\b A) \geq 1 \text{ or } r_1(\b A) \geq 2} \absb{\cal R_{\b v a} \cal R_{b \b v}} \prod_{i = 1}^{n - 1} \absb{\im R_{\b v \b v}^{[A_i]}}
\\
\leq\; \varphi^{C_\zeta} \pBB{\frac{1}{(N \eta)^3} + \frac{\Psi}{(N \eta)^2}} \pb{\im S_{\b v \b v} + \varphi^{C_\zeta}\Phi}^n \;\leq\;
\varphi^{-1} N^{-1/2} \pb{(\im S_{\b v \b v})^n + (\varphi^{C_\zeta}\Phi)^n}
\end{multline*}
\hp{2 \zeta}. This completes the proof of \eqref{claim for case 4} for $X_3 = \cal R_{\b v a} \cal R_{b \b v}$.
\end{proof}

\subsection{Estimate of $G_{\b v \b v} - \msc$}
We now conclude the proof of Proposition \ref{prop: two moment case}. By polarization and linearity, it is enough to prove the following result.

\begin{lemma} \label{lemma: two moment error}
Fix $\zeta > 0$. Then there are constants $C_0$ and $C_\zeta$, both depending on $\zeta$, such that the following holds. Assume that $z \in \b S(C_\zeta)$ satisfies \eqref{main assumption on Psi} with constant $C_0$. Then we have, for all $n \leq \varphi^{\zeta}$ and all deterministic and normalized $\b v \in \C^N$, that
\begin{equation} \label{two moment claim}
\E \absb{G_{\b v \b v}(z) - \msc(z)}^n \;\leq\; \pb{\varphi^{C_\zeta} \Psi(z)}^n\,.
\end{equation}
\end{lemma}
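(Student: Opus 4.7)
The proof mirrors the structure of Lemma \ref{lemma: three moment error} in Section \ref{section 3.4}, with two essential modifications: we must handle the $k=3$ comparison terms (which no longer vanish since the third moments of $H$ need not match those of GOE/GUE), and we must substitute the apriori bound from Lemma \ref{lemma: final size for 2 moments} for the one from Lemma \ref{lemma: three moment bound} wherever $\im S_{\b v \b v}$ was controlled through $\Phi$. As before, we estimate the real and imaginary parts of $G_{\b v \b v} - \msc$ separately for even $n \leq \varphi^\zeta$; by polarization this reduces \eqref{two moment claim} to the case $\b w = \b v$.

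First I would observe that for the GOE/GUE matrix $H_0$, the bound $\E (\re G^0_{\b v \b v} - \re \msc)^n \leq (\varphi^{C_\zeta} \Psi)^n$ follows from Theorem \ref{LSCTHM} exactly as in \eqref{error for GUE}, using the unitary invariance of GOE/GUE to reduce to $\E (\re G^0_{11}(z) - \re\msc(z))^n$. Then I set up the telescopic comparison \eqref{telescopic sum} along the interpolating sequence $H_0, H_1, \dots, H_{\gamma_{\rm max}} = H$, writing
\begin{equation*}
\E(\re S_{\b v \b v} - \re \msc)^n - \E(\re R_{\b v \b v} - \re \msc)^n \;=\; \cal B + \sum_{m=1}^{n} \sum_{k = \max\{3,m\}}^{4m} \E B_{m,k} (\re R_{\b v \b v} - \re \msc)^{n-m},
\end{equation*}
where now only the first \emph{two} moments of $V_{ab}$ and $W_{ab}$ match, so $\cal B$ (the ``matching part'') collects terms where $k \leq 2$, and the comparison sum starts at $k = \max\{3,m\}$ instead of $\max\{4,m\}$.

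The main obstacle, and the reason we need Case {\bf B}'s apriori estimate, lies in the $k=3$ contributions. These are controlled by Lemma \ref{lemma: size estimate for 2 moments}: after replacing $(\im S_{\b v \b v})^m$ factors in the analogue of \eqref{estimate of S S for error} by the apriori bound \eqref{rough bound for 2 moments} from Lemma \ref{lemma: final size for 2 moments}, the very same case analysis (i)--(iv) carried out for $A_{m,3}$ establishes
\begin{equation*}
\absB{\E B_{m,3} (\re R_{\b v \b v} - \re\msc)^{n-m}} \;\leq\; \frac{C}{\log N}\pbb{\wt{\cal E}_{ab} + N^{-3/2}\tfrac{\varphi^{C_\zeta}}{N\eta}} \E\qB{(\re S_{\b v \b v} - \re\msc)^n + (\varphi^{C_\zeta}\Psi)^n},
\end{equation*}
provided the smallness criterion \eqref{smallness on va and vb} is used to obtain the strong bound on the ``good'' index set and the weak bound on its complement, exactly as in the proof of Lemma \ref{lemma: final size for 2 moments}. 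The point is that once the prefactor $(\im S_{\b v \b v})^m$ is absorbed into $(\re S_{\b v \b v} - \re\msc)^{n-m}$ via Lemma \ref{lemma: expanding estimate} and bounded using the apriori estimate \eqref{rough bound for 2 moments}, every right-hand side in the four cases produces either $\Psi$ or $\Phi \cdot (N\eta)^{-1/2}$, both of which are $\leq \Psi$ by \eqref{definition of Phi and Psi}.

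For $k \geq 4$, the bound \eqref{main bound for error} from Lemma \ref{lemma: three moment error} applies verbatim, since its derivation via \eqref{estimate of S S for error} never used the vanishing of third moments; once again the $\im S_{\b v \b v}$-factors are reduced using Lemma \ref{lemma: final size for 2 moments} rather than Lemma \ref{lemma: three moment bound}. Summing the $k=3$ and $k\geq 4$ contributions, dividing the index set $\{(a,b)\}$ along the criterion \eqref{smallness on va and vb} as in the proof of Lemma \ref{lemma: final size for 2 moments}, and telescoping along $\gamma = 0,1,\dots,\gamma_{\rm max}$ by the iteration argument following \eqref{main bound for rough estimate} yields
\begin{equation*}
\E(\re G_{\b v \b v} - \re\msc)^n \;\leq\; (\varphi^{C_\zeta}\Psi)^n,
\end{equation*}
with an identical argument for the imaginary part. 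Combining the two and invoking polarization to recover general $\b v, \b w$ completes the proof. The main technical delicacy is ensuring that $C_0$ in \eqref{main assumption on Psi} is chosen large enough so that all the error factors $\varphi^{C_\zeta}\Psi^3 N^{1/2}, \varphi^{C_\zeta}(N\eta)^{-1}N^{-1/2}$, etc., are absorbed into the logarithmic smallness $(\log N)^{-1}$ required for the telescopic iteration to close.
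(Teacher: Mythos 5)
Your plan follows the paper's proof of this lemma essentially step for step: GOE/GUE input via Theorem \ref{LSCTHM}, Lindeberg telescoping starting at $k=\max\{3,m\}$, reduction of the third-order terms by converting the moment bound \eqref{rough bound for 2 moments} of Lemma \ref{lemma: final size for 2 moments} into the high-probability apriori bound $\im S_{\b v \b v}\leq\varphi^{C_\zeta}\Phi$ (this is \eqref{apriori bound for Phi}), a weak/strong dichotomy in $(a,b)$, the same four-case analysis for $m=1,2,3$, and \eqref{main bound for error} verbatim for $k\geq 4$. That is exactly the architecture of the paper's argument.

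There is, however, one step that fails as written: you carry over the \emph{imaginary-part} versions of the weak bound and of the splitting criterion to the real-part setting. Your displayed bound for $\E B_{m,3}(\re R_{\b v\b v}-\re\msc)^{n-m}$ has the extra term $N^{-3/2}\varphi^{C_\zeta}(N\eta)^{-1}$, copied from \eqref{weak bound for 2 moments final}; but in the real-part case the third-order terms (e.g.\ $B_{1,3}$ in case (iv)) genuinely produce an error of size $N^{-3/2}\varphi^{C}\Psi$ after one factor of $\Psi$ is absorbed into the weight, and $\Psi$ can be much larger than $(N\eta)^{-1}$ (namely when $N\eta\im\msc\gg 1$), so the bound you state is not provable. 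Correspondingly, if you split the pairs $(a,b)$ using \eqref{smallness on va and vb}, the bad set has cardinality of order $N^{3/2}(N\eta)\varphi^{-C_1}$, and summing the correct weak-bound error $N^{-3/2}\varphi^{C_1}\Psi$ over it gives $(N\eta)\Psi\asymp\sqrt{N\eta\,\im\msc}+1$, which is unbounded; the telescoping then does not close. The fix is what the paper does: replace the threshold by $\abs{v_a}+\abs{v_b}\leq N^{-1/4}\varphi^{C_1/2}\sqrt{\Psi}$ (i.e.\ \eqref{smallness on va and vb 2}) and state the weak bound with the extra term $N^{-3/2}\varphi^{C_1}\Psi$ as in \eqref{weak bound for 2 moments error}; with these $\Psi$-scaled choices the bad set has size $O(N^{3/2}\varphi^{-C_1}\Psi^{-1})$ and the sum over all $(a,b)$ is again $O(1/\log N)$. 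Everything else in your plan, including citing the case analysis of Lemma \ref{lemma: size estimate for 2 moments} as a template rather than as a directly applicable statement, is sound.
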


\begin{proof}
As in the proof of Lemma \ref{lemma: three moment error}, we focus on $\re G_{\b v \b v} - \re \msc$.
Assume without loss of generality that $n$ is even.
We shall prove that
\begin{equation} \label{weak bound for 2 moments error}
\absB{\E \pB{B_{m,3} \pb{\re R_{\b v \b v} - \re \msc}^{n - m}}}
\;\leq\; \frac{1}{\log N} \pbb{\wt {\cal E}_{ab} + N^{-3/2} \varphi^{C_1} \Psi}
\qBB{\E \pb{\re S_{\b v \b v} - \re \msc}^n + (\varphi^{C_\zeta} \Psi)^n}
\end{equation}
for $m = 1,2,3$ as well as, assuming
\begin{equation} \label{smallness on va and vb 2}
\abs{v_a} + \abs{v_b} \;\leq\; N^{-1/4} \varphi^{C_1/2} \sqrt{\Psi}\,,
\end{equation}
that
\begin{equation} \label{strong bound for 2 moments error}
\absB{\E \pB{B_{m,3} \pb{\re R_{\b v \b v} - \re \msc}^{n - m}}}
\;\leq\; \frac{1}{\log N} \wt {\cal E}_{ab}
\qBB{\E \pb{\re S_{\b v \b v} - \re \msc}^n + (\varphi^{C_\zeta} \Psi)^n}
\end{equation}
for $m = 1,2,3$. Here $C_1$ is a large enough constant depending on $\zeta$.

Assuming that \eqref{weak bound for 2 moments error} and \eqref{strong bound for 2 moments error} have been proved, we get the claim \eqref{two moment claim} from \eqref{main bound for error} and Lemma \ref{lemma: size estimate for 2 moments} applied to $S$; the detains are identical to those of the proof of Lemma \ref{lemma: final size for 2 moments} and the argument following \eqref{def Eab}.

The proof of \eqref{weak bound for 2 moments error} and \eqref{strong bound for 2 moments error} is similar to the proof of \eqref{weak bound for 2 moments} and \eqref{strong bound for 2 moments}. The key input is the apriori bound
\begin{equation} \label{apriori bound for Phi}
\im S_{\b v \b v} \;\leq\; \varphi^{C_\zeta} \Phi
\end{equation}
\hp{2 \zeta}, which follows from \eqref{rough bound for 2 moments} and Markov's inequality. Throughout the proof, we shall consistently (and without further mention) make use of the inequality
\begin{equation*}
\Psi^m \absb{\re R_{\b v \b v} - \re \msc}^{n - m} \;\leq\; \varphi^{-D} \pB{\absb{\re S_{\b v \b v} - \re \msc}^n + (\varphi^{C_{\zeta, D}} \Psi)^n}\,,
\end{equation*}
which follows from the elementary inequality $x^{m} y^{n - m} \leq x^n + y^n$ for $x,y \geq 0$, Lemma \ref{lemma: expanding estimate}, and the estimate
\begin{equation*}
\abs{R_{\b v \b v} - S_{\b v \b v}} \;\leq\; \varphi^{C_\zeta} \Psi
\end{equation*}
\hp{2 \zeta} (as follows from \eqref{Svv Rvv}). Moreover, as in \eqref{stronger bound on Rva}, we find that \eqref{smallness on va and vb 2} implies
\begin{equation} \label{stronger bound on Rva 2}
\abs{R_{\b v a}} \;\leq\; \varphi^{C_\zeta} \Psi\,.
\end{equation}
The same bound holds for $R_{a \b v}$, $R_{\b v b}$, and $R_{b \b v}$.

As in the proof of Lemma \ref{lemma: size estimate for 2 moments}, we consider four cases.

\textbf{Case (i): $a = b$ and $m \leq 3$.} This is easily dealt with using \eqref{estimate of S S for error}; we omit further details.

\textbf{Case (ii): $a \neq b$ and $m = 3$.} Recall that in this case we have $t = s = 3$. From \eqref{estimate for mst3} we get
\begin{multline*}
\pB{\abs{R_{\b v a}} + \abs{R_{a \b v}}}^3
\pB{\abs{R_{\b v b}} + \abs{R_{b \b v}}}^3
\\
\leq\; \varphi^{C_\zeta} \pBB{\frac{\im S_{\b v \b v}}{N \eta} + \Psi^2}^{3/2} \qBB{\pBB{\frac{\im S_{\b v \b v}}{N \eta} + \Psi^2}^{3/2} + \abs{v_a}^2 + \abs{v_b}^2 + \Psi^{-3} \abs{v_a}^2 \abs{v_b}^2}
\end{multline*}
\hp{2 \zeta}.
Therefore using \eqref{apriori bound for Phi}, \eqref{definition of Phi and Psi}, and $\Psi \geq c N^{-1/2}$ we get
\begin{align*}
\abs{B_{3,3}} \absb{\re R_{\b v \b v} - \re \msc}^{n - 3} &\;\leq\; \varphi^{C_\zeta} N^{-3/2} \Psi^3 \qB{\Psi^3 + \abs{v_a}^2 + \abs{v_b}^2 + N^{3/2} \abs{v_a}^2 \abs{v_b}^2} \, \absb{\re R_{\b v \b v} - \re \msc}^{n - 3}
\\
&\;\leq\; \varphi^{C_\zeta - D} \pB{\pb{\re R_{\b v \b v} - \re \msc}^n + (\varphi^{3D} \Psi)^n} \wt {\cal E}_{ab}
\end{align*}
\hp{2 \zeta}, where in the last step we used \eqref{main assumption on Psi}. Choosing $D$ large enough yields \eqref{strong bound for 2 moments error}, and hence also \eqref{weak bound for 2 moments error}.

\textbf{Case (iii): $a \neq b$ and $m = 2$.} In the case $s = t = 2$, the estimate is similar to the estimate of $X_1$ in \eqref{definition X1}. Using \eqref{apriori bound for Phi}, \eqref{definition of Phi and Psi}, and $\Psi \geq c N^{-1/2}$ we get
\begin{equation*}
\abs{X_1} \;\leq\; \varphi^{C_\zeta} \Psi \, \Psi^2 \pb{\Psi^2 + \abs{v_a}^2 + \abs{v_b}^2 + N \abs{v_a}^2 \abs{v_b}^2}
\end{equation*}
\hp{2 \zeta},
from which \eqref{strong bound for 2 moments error}, and hence also \eqref{weak bound for 2 moments error}, easily follows.

Next, consider the case $s = 3$ and $t = 1$. In order to prove \eqref{weak bound for 2 moments error}, we estimate using \eqref{apriori bound for Phi} and \eqref{Rva estimate}, similarly to \eqref{proof of rough bound},
\begin{align*}
\abs{B_{2,3,3,1}} &\;\leq\; \varphi^{C_\zeta} N^{-3/2} \pb{\Psi + \abs{v_a}}^3 \pb{\Psi + \abs{v_b}}
\\
&\;\leq\; \varphi^{C_\zeta} \Psi^2 \qbb{N^{-3/2} \frac{\Phi}{N \eta} + N^{-3/2} \abs{v_b} + N^{-1} \abs{v_a}^2 + N^{-1/2} \abs{v_a}^2 \abs{v_b}}
\end{align*}
\hp{2 \zeta}
from which \eqref{weak bound for 2 moments error} follows. Let us therefore prove \eqref{strong bound for 2 moments error}, assuming \eqref{smallness on va and vb 2}. Using \eqref{stronger bound on Rva 2} and \eqref{apriori bound for Phi}, we find
\begin{equation}
\abs{R_{\b v a}} + \abs{R_{a \b v}} + \abs{R_{\b v b}} + \abs{R_{b \b v}}\;\leq\; \varphi^{C_\zeta} \Psi
\end{equation}
\hp{2 \zeta}. We need to prove that
\begin{equation} \label{Rva bound for error}
N^{-3/2} \absB{\E R_{\b v a} R_{bb} R_{a\b v} R_{a \b v} R_{b \b v} \pb{\re R_{\b v \b v} - \re \msc}^{n - 2}} \;\leq\; \varphi^{-1}
\wt {\cal E}_{ab}
\qBB{\E \pb{\re S_{\b v \b v} - \re \msc}^n + (\varphi^{C_\zeta} \Psi)^n}\,.
\end{equation}
As for \eqref{strong estimate step 2}, by splitting $R_{bb} = (R_{bb} - \msc) + \msc$ and using \eqref{Rij estimate}, we find that it is enough to prove
\begin{equation} \label{2 moment error step 1}
N^{-3/2} \absB{\E R_{\b v a} R_{a\b v} R_{a \b v} R_{b \b v} \pb{\re R_{\b v \b v} - \re \msc}^{n - 2}} \;\leq\; \varphi^{-1}
\wt {\cal E}_{ab}
\qBB{\E \pb{\re S_{\b v \b v} - \re \msc}^n + (\varphi^{C_\zeta} \Psi)^n}\,.
\end{equation}
As for \eqref{strong estimate step 2}, we use the splitting \eqref{splitting of R b v}. Using \eqref{Rij estimate}, \eqref{apriori bound for Phi}, and \eqref{lower bound on eta}, we find that the bounds
\begin{equation}
\abs{\cal R_{b \b v}} \;\leq\; \varphi^{C_\zeta} \Psi \;\leq\; \varphi^{C_\zeta} N^{-1/6} \,, \qquad
\abs{\cal R_{b \b v}'} \;\leq\; \varphi^{C_\zeta} \pbb{\frac{N^{-1/4}}{\sqrt{N \eta}} + \Psi^2} \;\leq\; \varphi^{C_\zeta} N^{-1/3}
\end{equation}
hold \hp{2 \zeta}. Thus we get \eqref{2 moment error step 1} with $R_{b \b v}$ replaced with $\cal R_{b \b v}'$. The remaining term with $\cal R_{b \b v}$ is estimated exactly as \eqref{strong estimate step 3}; we omit the details.

\textbf{Case (iv): $a \neq b$ and $m = 1$.} In order to prove \eqref{weak bound for 2 moments error}, we use \eqref{apriori bound for Phi} to get
\begin{equation*}
\abs{B_{1,3}}
\;\leq\; \varphi^{C_\zeta} \Psi \pB{\Psi + \abs{v_a} + \abs{v_b} + \Psi^{-1} \abs{v_a} \abs{v_b} + \Psi^{-1} \abs{v_a}^2 + \Psi^{-1} \abs{v_b}^2}
\end{equation*}
\hp{2 \zeta}, from which \eqref{weak bound for 2 moments error} easily follows using $\Psi \geq N^{-1/2}$.

As for \eqref{claim for case 4}, in order to prove \eqref{weak bound for 2 moments error} and \eqref{strong bound for 2 moments error} it suffices to prove the following claim. For $X_3$ being any expression in \eqref{category 1} -- \eqref{category 3}, we have
\begin{equation} \label{weak bound case 4}
N^{-3/2} \absb{\E \, X_3  (\re R_{\b v \b v} - \re \msc)^{n - 1}} \;\leq\; \varphi^{-1} \pbb{\wt {\cal E}_{ab} + N^{-3/2} \varphi^{C_\zeta} \Psi} \pB{\E \pb{\re S_{\b v \b v} - \re \msc}^n + (\varphi^{C_\zeta} \Psi)^n}\,,
\end{equation}
as well as, assuming \eqref{smallness on va and vb 2},
\begin{equation} \label{strong bound case 4}
N^{-3/2} \absb{\E \, X_3  (\re R_{\b v \b v} - \re \msc)^{n - 1}} \;\leq\; \varphi^{-1} \wt{\cal E}_{ab} \pB{\E \pb{\re S_{\b v \b v} - \re \msc}^n + (\varphi^{C_\zeta} \Psi)^n}\,.
\end{equation}

Note that from \eqref{def R prime R double prime} and \eqref{apriori bound for Phi} we get that
\begin{equation} \label{general bound on R double prime}
\abs{\cal R_{b \b v}'} \;\leq\; C \abs{v_b} + \varphi^{C_\zeta} \Psi^2\,.
\end{equation}
If $X_3$ is any expression in \eqref{category 1}, we get from Lemma \ref{lemma: Gvi Gvv}, \eqref{Rij estimate}, \eqref{apriori bound for Phi}, and \eqref{general bound on R double prime} that
\begin{equation*}
\abs{X_3} \;\leq\; \varphi^{C_\zeta} \Psi^2 \pb{\Psi + \abs{v_a}} \pb{\Psi + \abs{v_b}} + \varphi^{C_\zeta} \pb{\Psi^2 + \abs{v_a}} \pb{\Psi^2 + \abs{v_b}}
\end{equation*}
\hp{2 \zeta}. Now \eqref{strong bound case 4}, and in particular \eqref{weak bound case 4}, follows easily (note that we did not assume \eqref{smallness on va and vb 2}).

Next, let $X_3$ be an expression in \eqref{category 2}. From Lemma \ref{lemma: Gvi Gvv}, \eqref{Rij estimate}, \eqref{apriori bound for Phi}, and \eqref{general bound on R double prime} we get
\begin{equation*}
\abs{X_3} \;\leq\; \varphi^{C_\zeta} \Psi \pb{\Psi + \abs{v_a}} \pb{\Psi + \abs{v_a} + \abs{v_b}} + \varphi^{C_\zeta} \pb{\Psi^2 + \abs{v_a}} \pb{\Psi + \abs{v_b}} + \varphi^{C_\zeta} \pb{\Psi + \abs{v_a}} \pb{\Psi^2 + \abs{v_b}}
\end{equation*}
\hp{2 \zeta}. Now \eqref{weak bound case 4} follows easily. Moreover, \eqref{strong bound case 4} under the assumption \eqref{smallness on va and vb 2} follows exactly like in paragraphs of \eqref{def R and cal R} and \eqref{telescopic for second category}, using the bound $\absb{\pb{\cal R'_{b \b v}}^{(a)} - \cal R'_{b \b v}} \leq \varphi^{C_\zeta} \Psi^3$ \hp{2 \zeta}, as follows from \eqref{estimate on R double prime} and \eqref{apriori bound for Phi}.

Finally, we consider the case \eqref{category 3}, i.e.\ $X_3 = \cal R_{\b v a} \cal R_{b \b v}$. Under the assumption \eqref{smallness on va and vb 2}, we find from \eqref{apriori bound for Phi}, \eqref{estimates on graded terms}, and \eqref{cal R minor estimate},
\begin{equation*}
\absb{\cal R_{b \b v}^{(a)} - \cal R_{b \b v}} \;\leq\; \varphi^{C_\zeta} \Psi^2 \,, \qquad
\absb{R_{\b v \b v}^{[a]}} + \absb{R_{\b v \b v}^{[b]}} \;\leq\; \varphi^{C_\zeta} \Psi^2\,, \qquad
\absb{R_{\b v \b v}^{[\emptyset]}} \;\leq\; \varphi^{C_\zeta} \Psi^3
\end{equation*}
\hp{2 \zeta}. Then the argument from the proof of Lemma \ref{lemma: final size for 2 moments} can be applied almost unchanged, and we get \eqref{strong bound case 4} assuming \eqref{smallness on va and vb 2}.
\end{proof}

\section{Proof of Theorems \ref{theorem: strong estimate} and \ref{theorem: delocalization}} \label{section: deloc and exterior}

By Lemma \ref{lemma: msc}, if $\eta \leq \kappa$ and $\abs{E} > 2$ then the control parameter on the right-hand side of \eqref{strong estimate} can also be expressed as
\begin{equation} \label{control parameter outside}
\sqrt{\frac{\im \msc(z)}{N \eta}} \;\asymp\; N^{-1/2} \kappa^{-1/4}\,,
\end{equation}
where $\kappa \equiv \kappa_E$ was defined in \eqref{def kappa}.

\begin{proof}[Proof of Theorem \ref{theorem: strong estimate}]
By polarization and linearity, it is enough to prove that
\begin{equation} \label{strong estimate 2}
\absb{G_{\b v \b v}(z) - \msc(z) } \;\leq\; \varphi^{C_\zeta} \sqrt{\frac{\im \msc(z)}{N \eta}}
\end{equation}
\hp{\zeta}, for all normalized $\b v$. Moreover, by symmetry it suffices to consider the case $2 + \varphi^{C_1} N^{-2/3} \leq E \leq \Sigma$. In particular, $\kappa \geq \varphi^{C_1} N^{-2/3}$. Using Lemma \ref{lemma: msc} we find that Theorem \ref{thm: ILSC} implies \eqref{strong estimate 2} if $\eta \geq \eta_0$, where we defined
\begin{equation*}
\eta_0 \;\deq\; N^{-1/2} \kappa^{1/4}\,.
\end{equation*}
Note that $\eta_0 \leq \kappa$.

It remains therefore to establish \eqref{strong estimate 2} when $0 \leq \eta \leq \eta_0$. Define
\begin{equation*}
z \;\deq\; E + \ii \eta\,, \qquad z_0 \;\deq\; E + \ii \eta_0\,.
\end{equation*}
By \eqref{control parameter outside} and \eqref{strong estimate 2} at $z_0$, it is enough to prove that
\begin{equation} \label{z z0 comparison 1}
\absb{\msc(z) - \msc(z_0)} \;\leq\; C N^{-1/2} \kappa^{-1/4}
\end{equation}
and
\begin{equation} \label{z z0 comparison 2}
\absb{G_{\b v \b v}(z) - G_{\b v \b v}(z_0)} \;\leq\; \varphi^{C_\zeta} N^{-1/2} \kappa^{-1/4}
\end{equation}
\hp{\zeta}.

Differentiating \eqref{identity for msc}, we find
\begin{equation} \label{diff msc}
\msc' \;=\; \frac{\msc^2}{1 - \msc^2}\,,
\end{equation}
which, by Lemma \ref{lemma: msc}, implies that $\msc' \asymp (\kappa + \eta)^{-1/2} = O(\kappa^{-1/2})$. Therefore we get
\begin{equation*}
\absb{\msc(z) - \msc(z_0)} \;\leq\; C \kappa^{-1/2} \eta_0 \;=\; C N^{-1/2} \kappa^{-1/4}\,,
\end{equation*}
which is \eqref{z z0 comparison 1}.

Next, by Theorem \ref{theorem: rigidity} we have $E \geq \lambda_N + \eta_0$ \hp{\zeta} provided $C_1$ is large enough. Therefore, since $\eta \leq \eta_0 \leq E - \lambda_N \leq E - \lambda_\alpha$ \hp{\zeta} for all $\alpha \leq N$, we get
\begin{equation} \label{strong 4}
\im G_{\b v \b v}(z) \;=\; \sum_\alpha \frac{\abs{\scalar{\b v}{\b u^{(\alpha)}}}^2 \eta}{(E - \lambda_\alpha)^2 + \eta^2} \;\leq\;
2 \sum_\alpha \frac{\abs{\scalar{\b v}{\b u^{(\alpha)}}}^2 \eta_0}{(E - \lambda_\alpha)^2 + \eta_0^2} \;=\; 2 \im G_{\b v \b v}(z_0) \;\leq\; \varphi^{C_\zeta} N^{-1/2} \kappa^{-1/4}
\end{equation}
\hp{\zeta}, by \eqref{strong estimate 2} at $z_0$ and the estimate $\im \msc(z_0) \leq C N^{-1/2} \kappa^{-1/4}$. Finally, we estimate the real part from
\begin{multline} \label{strong 5}
\absb{\re G_{\b v \b v}(z) - \re G_{\b v \b v}(z_0)} \;=\; \sum_\alpha \frac{(E - \lambda_\alpha)(\eta_0^2 - \eta^2) \abs{\scalar{\b u^{(\alpha)}}{\b v}}^2}{\pb{(E - \lambda_\alpha)^2 + \eta^2} \pb{(E - \lambda_\alpha)^2 + \eta^2_0}}
\\
\leq\; \frac{\eta_0}{E - \lambda_N} \sum_\alpha \frac{\eta_0 \abs{\scalar{\b u^{(\alpha)}}{\b v}}^2}{(E - \lambda_\alpha)^2 + \eta^2_0} \;\leq\; \im G_{\b v \b v}(z_0)
\end{multline}
\hp{\zeta}, where in the last step we used that $\eta_0 \leq E - \lambda_N$. Combining \eqref{strong 4} and \eqref{strong 5} completes the proof of \eqref{z z0 comparison 2}.
\end{proof}

\begin{proof}[Proof of Theorem \ref{theorem: delocalization}]
We begin with \eqref{three moment delocalization}, whose proof is immediate. Using Theorem \ref{thm: ILSC} with Condition {\bf A} and Remark \ref{remark: lattice}, we find
\begin{equation*}
C \;\geq\; \im G_{\b v \b v}(\lambda_\alpha + \ii \eta) \;=\; \sum_{\beta} \frac{\eta \abs{\scalar{\b u^{(\beta)}}{\b v}}^2}{(\lambda_\alpha - \lambda_\beta)^2 + \eta^2} \;\geq\; \eta^{-1} \abs{\scalar{\b u^{(\alpha)}}{\b v}}^2
\end{equation*}
\hp{\zeta}, where we used Theorem \ref{theorem: rigidity} to ensure that $\lambda_\alpha \in [-\Sigma, \Sigma]$ \hp{\zeta}. Choosing $\eta = \varphi^{\zeta} N^{-1}$ yields \eqref{three moment delocalization}.

In order to prove \eqref{two moment delocalization}, we set
\begin{equation*}
\eta \;\deq\; \gamma_b - \gamma_a \,, \qquad E \;\deq\; \gamma_a\,,
\end{equation*}
where $\gamma_\alpha$ is the classical location of the $\alpha$-th eigenvalue defined in \eqref{classical location}. Then we get
\begin{equation} \label{first step in delocalization}
\sum_{\alpha = a}^b \abs{\scalar{\b u^{(\alpha)}}{\b v}}^2 \;\leq\; \varphi^{C_\zeta} \sum_{\alpha = a}^b \frac{\eta^2 \abs{\scalar{\b u^{(\alpha)}}{\b v}}^2}{(\lambda_\alpha - E)^2 + \eta^2} \;\leq\; \varphi^{C_\zeta} \eta \im G_{\b v \b v}(E + \ii \eta)\,,
\end{equation}
where in the first step we used Theorem \ref{theorem: rigidity} to conclude that $(\lambda_\alpha - E)^2 \leq \varphi^{C_\zeta} \eta^2$ for $a \leq \alpha \leq b$. In order to invoke Theorem \ref{thm: ILSC} with Condition {\bf B}, we have to satisfy \eqref{main assumption on Psi}. Recalling Lemma \ref{lemma: msc}, we find that \eqref{main assumption on Psi} holds provided that
\begin{equation} \label{condition for delocalization}
\eta \;\geq\; \varphi^{C_0} N^{-5/6}\,, \qquad \kappa \;\leq\; \varphi^{- 2 C_0} \eta^2 N^{4/3}\,,
\end{equation}
where we abbreviated $\kappa \equiv \kappa_E$. From \eqref{classical location} we get
\begin{equation} \label{position of gamma alpha}
\gamma_\alpha + 2 \;\asymp\; \alpha^{2/3} N^{-2/3}
\end{equation}
for $\alpha \leq N/2$,
from which we deduce, recalling $E = \gamma_\alpha$,
\begin{equation*}
\kappa \;\asymp\; a^{2/3} N^{-2/3} \,, \qquad \eta \;\asymp\; (b^{2/3} - a^{2/3}) N^{-2/3}\,.
\end{equation*}
Hence \eqref{condition for delocalization} is satisfies provided that
\begin{equation*}
b^{2/3} - a^{2/3} \;\geq\; \varphi^{C_0} N^{-1/6} + \varphi^{C_0} a^{1/3} N^{-1/3}\,.
\end{equation*}
Since $b^{2/3} - a^{2/3} \geq b^{-1/3}(b - a) / 2$, we find that \eqref{condition for delocalization}, and hence \eqref{main assumption on Psi}, holds under the condition \eqref{main condition for delocalization}.

Therefore we may apply Theorem \ref{thm: ILSC} to the right-hand side of \eqref{first step in delocalization} to get
\begin{equation*}
\sum_{\alpha = a}^b \abs{\scalar{\b u^{(\alpha)}}{\b v}}^2  \;\leq\; \varphi^{C_\zeta} \eta \pbb{\frac{1}{N \eta} + \im \msc(E + \ii \eta)} \;\leq\; \varphi^{C_\zeta} N^{-1} \pB{(b^{2/3} - a^{2/3})^{3/2} + a^{1/3} (b^{2/3} - a^{2/3})}
\end{equation*}
\hp{\zeta},
where we used Lemma \ref{lemma: msc}. The claim now follows from the elementary inequalities
\begin{equation*}
b^{2/3} - a^{2/3} \;\leq\; (b - a)^{2/3} \,, \qquad b^{2/3} - a^{2/3} \;\leq\; a^{-1/3} (b - a)\,. \qedhere
\end{equation*}
\end{proof}

For future use, we record the following consequence of Theorem \ref{theorem: delocalization} which is useful in combination with dyadic decompositions. For any integer $K \leq N/4$ we have
\begin{equation} \label{dyadic delocalization}
\sum_{\alpha = K}^{2 K} \abs{\scalar{\b u^{(\alpha)}}{\b v}}^2 \;\leq\; \varphi^{C_\zeta} K N^{-1}
\end{equation}
\hp{\zeta}.

\section{Eigenvalue locations: proof of Theorem \ref{theorem: rank-k lde}} \label{section: eigenvalue locations}

\subsection{Basic facts from linear algebra} \label{sect: basic linalg}
We begin by collecting a few well-known tools from linear algebra, on which our analysis of the deformed spectrum relies.

We use the following representation of the eigenvalues of $\wt H$, which was already used in several papers on finite-rank deformations of random matrices \cite{SoshPert, BGGM1, BGGM2, BGN}.

\begin{lemma} \label{lemma: det identity}
If $\mu \in \R \setminus \sigma(H)$ and $\det(D) \neq 0$ then $\mu \in \sigma(\wt H)$ if and only if
\begin{equation*}
\det \pb{V^* G(\mu) V + D^{-1}}\;=\;0\,.
\end{equation*}
\end{lemma}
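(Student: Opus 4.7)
The plan is to reduce the eigenvalue equation $\det(\wt H - \mu) = 0$ to a $k \times k$ determinantal equation via the Sylvester determinant identity. First I would use the assumption $\mu \notin \sigma(H)$ to factor
\begin{equation*}
\wt H - \mu \;=\; (H - \mu) + VDV^* \;=\; (H - \mu)\pb{\umat_N + G(\mu) V D V^*}\,,
\end{equation*}
which yields $\det(\wt H - \mu) = \det(H - \mu) \det\pb{\umat_N + G(\mu) V D V^*}$. Since $\det(H - \mu) \neq 0$, the condition $\mu \in \sigma(\wt H)$ is equivalent to $\det\pb{\umat_N + G(\mu) V D V^*} = 0$.

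Next, I would apply the Sylvester (Weinstein--Aronszajn) identity $\det(\umat_N + AB) = \det(\umat_k + BA)$ with $A \deq G(\mu) V$ of size $N \times k$ and $B \deq D V^*$ of size $k \times N$, obtaining
\begin{equation*}
\det\pb{\umat_N + G(\mu) V D V^*} \;=\; \det\pb{\umat_k + D V^* G(\mu) V}\,.
\end{equation*}
Finally, using $\det(D) \neq 0$ I would factor out $D$ on the left,
\begin{equation*}
\det\pb{\umat_k + D V^* G(\mu) V} \;=\; \det(D) \, \det\pb{V^* G(\mu) V + D^{-1}}\,,
\end{equation*}
and conclude that $\mu \in \sigma(\wt H)$ if and only if $\det\pb{V^* G(\mu) V + D^{-1}} = 0$.

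This is essentially bookkeeping with determinants and involves no real obstacle; the only point requiring a touch of care is verifying that the two hypotheses $\mu \notin \sigma(H)$ and $\det(D) \neq 0$ are used in the correct places (the former to factor $(H - \mu)$ out and to ensure $G(\mu)$ is well defined; the latter to pass from $\umat_k + D V^* G(\mu) V$ to $V^* G(\mu) V + D^{-1}$).
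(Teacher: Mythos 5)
Your proposal is correct and is essentially identical to the paper's proof: both factor $\det(\wt H - \mu) = \det(H-\mu)\det(\umat + G(\mu)VDV^*)$, apply the identity $\det(\umat + AB) = \det(\umat + BA)$ to reduce to a $k\times k$ determinant, and then extract $\det(D)$. The only cosmetic difference is the side on which $D$ ends up before being factored out, which is immaterial.
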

\begin{proof}
For the convenience of the reader, we give the simple proof. The claim follows from the computation
\begin{align*}
\det \pb{\wt H - \mu} &\;=\; \det(H - \mu) \det \pb{\umat+ (H - \mu)^{-1}V D V^*}
\\
&\;=\; \det(H - \mu)  \det \pb{\umat+ V^* (H - \mu)^{-1}V D}
\\
&\;=\; \det(H - \mu)  \det(D)  \det \pb{D^{-1}+ V^* (H - \mu)^{-1}V}\,,
\end{align*}
where in the second step we used the identity
$\det(\umat + AB) = \det(\umat + BA)$
which is valid for any $n \times m$ matrix $A$ and $m \times n$ matrix $B$.
\end{proof}

We shall also make use of the well-known Weyl's interlacing property, summarized in the following lemma.

\begin{lemma} \label{lemma: interlacing}
If $A$ is an $N \times N$ Hermitian matrix and $B = A + d \, \b v \b v^*$ with some $d > 0$ and $\b v \in \C^N$, then the eigenvalues of $A$ and $B$ are interlaced:
\begin{equation*}
\lambda_1(A) \;\leq\; \lambda_1(B) \;\leq\; \lambda_2(A) \;\leq\; \cdots \;\leq\; \lambda_{N - 1}(B) \;\leq\; \lambda_{N}(A) \;\leq\; \lambda_N(B)\,.
\end{equation*}
\end{lemma}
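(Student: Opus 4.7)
The plan is to prove the two families of inequalities $\lambda_k(A) \leq \lambda_k(B)$ and $\lambda_k(B) \leq \lambda_{k+1}(A)$ separately, both via the Courant--Fischer min-max characterization
\begin{equation*}
\lambda_k(X) \;=\; \max_{\dim S = N-k+1} \min_{x \in S,\, \|x\|=1} \scalar{x}{Xx} \;=\; \min_{\dim T = k} \max_{x \in T,\, \|x\|=1} \scalar{x}{Xx}\,.
\end{equation*}
This is the standard approach; the key is to pick the right test subspace for $B = A + d\,\b v \b v^*$.

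For the lower bound $\lambda_k(A) \leq \lambda_k(B)$, I will use the first (max-min) form. Since $d > 0$, the rank-one perturbation is positive semidefinite, so $\scalar{x}{Bx} = \scalar{x}{Ax} + d\abs{\scalar{\b v}{x}}^2 \geq \scalar{x}{Ax}$ pointwise. Taking the inner minimum and then the outer maximum over subspaces $S$ of dimension $N - k + 1$ preserves the inequality, yielding $\lambda_k(B) \geq \lambda_k(A)$ for every $k = 1, \dots, N$.

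For the upper bound $\lambda_k(B) \leq \lambda_{k+1}(A)$, valid for $k = 1, \dots, N-1$, I will use the second (min-max) form. The natural test subspace is obtained by combining an invariant subspace of $A$ cutting off at index $k+1$ with the orthogonality condition $\b v^\perp$ that kills the perturbation. Concretely, if $V_{k+1}$ denotes the span of eigenvectors of $A$ associated with $\lambda_1(A), \dots, \lambda_{k+1}(A)$, then $\dim V_{k+1} = k+1$ and
\begin{equation*}
\dim\pb{V_{k+1} \cap \b v^\perp} \;\geq\; (k+1) + (N-1) - N \;=\; k\,.
\end{equation*}
Choose any $k$-dimensional subspace $T \subset V_{k+1} \cap \b v^\perp$. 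For $x \in T$ one has $\scalar{\b v}{x} = 0$, so $\scalar{x}{Bx} = \scalar{x}{Ax} \leq \lambda_{k+1}(A)\|x\|^2$ since $x \in V_{k+1}$. The min-max form then gives $\lambda_k(B) \leq \max_{x \in T,\, \|x\|=1} \scalar{x}{Bx} \leq \lambda_{k+1}(A)$.

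The argument has no serious obstacle; the only subtlety is choosing the dimensions so that the test subspace $V_{k+1} \cap \b v^\perp$ is guaranteed to have dimension at least $k$ (which forces the pairing of $V_{k+1}$ with $\b v^\perp$ rather than, say, $V_k$). The case $\b v = 0$ is trivial since then $B = A$, so I may assume $\b v \neq 0$, making $\b v^\perp$ a genuine hyperplane.
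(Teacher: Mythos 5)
Your proof is correct. The paper states this lemma as a well-known fact (Weyl's interlacing) and supplies no proof of its own, so there is nothing to compare against; your Courant--Fischer argument --- monotonicity under the positive semidefinite perturbation for the lower bounds, and the test subspace $V_{k+1} \cap \b v^\perp$ of dimension at least $k$ for the upper bounds --- is the standard proof, and both halves, including the dimension count and the trivial case $\b v = 0$, are handled correctly.
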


We shall occasionally need the eigenvalues of $H$ to be distinct. To that end, we assume without loss of generality that the law of $H$ is absolutely continuous; otherwise consider the matrix $H + \me^{-N} V$ where $V$ is a GOE/GUE matrix independent of $H$. It is immediate that this perturbation does not change any of $H$'s spectral statistics. Moreover, any Hermitian matrix with an absolutely continuous law has almost surely distinct eigenvalues.

\subsection{Warmup: the rank-one case}
In order to illustrate our method, we first present a much simplified proof which deals with the case $k = 1$.  Let $\b v \in \C^N$ be normalized and deterministic, and $d \in \R$ be deterministic (and possibly $N$-dependent). Define the deformed matrix
\begin{equation*}
\wt H \;\deq\; H + d \, \b v \b v^*\,.
\end{equation*}
For the following we note the elementary estimate
\begin{equation} \label{kappa sim d - 1}
\theta(d) - 2 \;\asymp\; (d - 1)^2\,,
\end{equation}
as follows from \eqref{def theta}.

\begin{theorem}\label{theorem: rank-one lde}
Fix $\zeta > 0$.
Then there is a constant $C_\zeta$ such that the following holds. For $0 \leq d \leq 1$ we have
\begin{equation*}
0 \;\leq\; \mu_N - \lambda_N \;\leq\; \varphi^{C_\zeta} \frac{d}{N (1 - d + N^{-1/3})}
\end{equation*}
\hp{\zeta}. For $1 \leq d \leq \Sigma - 1$ we have
\begin{equation*}
\abs{\mu_N - \theta(d)} \;\leq\; \varphi^{C_\zeta} \sqrt{\frac{d - 1 + N^{-1/3}}{N}}
\end{equation*}
\hp{\zeta}.

By symmetry, an analogous result holds for $d \leq 0$.
\end{theorem}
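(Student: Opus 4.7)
The plan is to convert the eigenvalue problem for $\wt H$ into a scalar equation. In the rank-one case, Lemma \ref{lemma: det identity} applied with $k = 1$, $D = (d)$, $V = \b v$ identifies $\mu \in \R \setminus \sigma(H)$ as an eigenvalue of $\wt H$ if and only if $G_{\b v \b v}(\mu) = -1/d$. Weyl's interlacing (Lemma \ref{lemma: interlacing}) combined with the absolute continuity convention from Section \ref{sect: basic linalg} ensures $\mu_N > \lambda_N$. Since
\begin{equation*}
E \;\mapsto\; G_{\b v \b v}(E) \;=\; \sum_{\alpha = 1}^N \frac{\abs{\scalar{\b u^{(\alpha)}}{\b v}}^2}{\lambda_\alpha - E}
\end{equation*}
is continuous and strictly increasing on $(\lambda_N, \infty)$, running from $-\infty$ at $\lambda_N^+$ to $0$ at $+\infty$, $\mu_N$ is the unique solution there of $G_{\b v \b v}(\mu_N) = -1/d$. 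All upper and lower bounds on $\mu_N$ therefore reduce to checking the sign of $G_{\b v \b v}(\mu_*) + 1/d$ at suitably chosen test points $\mu_*$.

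For the case $d \in [1, \Sigma - 1]$, I would exploit that \eqref{identity for msc} gives $\msc(\theta(d)) = -1/d$, while \eqref{diff msc} gives $\msc'(\theta(d)) = 1/(d^2 - 1) \asymp 1/(d-1)$. When $d - 1 \geq \varphi^{C_0} N^{-1/3}$, the quantity $\theta(d) - 2 \asymp (d-1)^2$ exceeds $\varphi^{C_1} N^{-2/3}$ (by \eqref{kappa sim d - 1}), so Theorem \ref{theorem: strong estimate} (extended to $\eta = 0$ via \eqref{control parameter outside} and Remark \ref{remark: lattice}) yields
\begin{equation*}
\absb{G_{\b v \b v}(E) - \msc(E)} \;\leq\; \varphi^{C_\zeta} N^{-1/2} \kappa_E^{-1/4}
\end{equation*}
simultaneously for all $E \geq 2 + \varphi^{C_1} N^{-2/3}$. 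Testing at $\mu_\pm \deq \theta(d) \pm C \varphi^{C_\zeta} N^{-1/2}(d-1)^{1/2}$ and linearizing $\msc$ around $\theta(d)$ yields $\msc(\mu_\pm) + 1/d = \pm c C \varphi^{C_\zeta} N^{-1/2}(d-1)^{-1/2}$, which, for $C$ large enough, dominates the ILSC error in the right direction and pins $\mu_N$ within the claimed window. In the near-edge subcase $d - 1 < \varphi^{C_0} N^{-1/3}$, both $\theta(d)$ and $\lambda_N$ lie within $\varphi^{C} N^{-2/3}$ of $2$ (by \eqref{kappa sim d - 1} and Theorem \ref{theorem: rigidity}): the lower bound then follows from $\mu_N \geq \lambda_N$, while the upper bound reduces to showing $\mu_N \leq \lambda_N + \varphi^{C} N^{-2/3}$, which is the $d \leq 1$ argument below at $\delta \asymp \varphi^{C} N^{-2/3}$.

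For the case $d \in [0, 1]$, interlacing gives $\mu_N \geq \lambda_N$, so only the upper bound is at issue. Set $\delta \deq \varphi^{C_\zeta} d / \pb{N(1 - d + N^{-1/3})}$ and $\mu_* \deq \lambda_N + \delta$. The goal is $G_{\b v \b v}(\mu_*) \geq -1/d$. Decompose
\begin{equation*}
G_{\b v \b v}(\mu_*) \;=\; -\frac{\abs{\scalar{\b u^{(N)}}{\b v}}^2}{\delta} \;+\; \sum_{\alpha < N} \frac{\abs{\scalar{\b u^{(\alpha)}}{\b v}}^2}{\lambda_\alpha - \mu_*}\,.
\end{equation*}
By isotropic delocalization (Theorem \ref{theorem: delocalization}), $\abs{\scalar{\b u^{(N)}}{\b v}}^2 \leq \varphi^{C_\zeta}/N$, so the first summand is at least $-(1 - d + N^{-1/3})/d$. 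For the second summand, a direct application of Theorem \ref{theorem: strong estimate} at $\mu_*$ is unavailable when $\mu_*$ lies near or inside the classical spectrum, so I would pass to a reference point $\mu_*' \deq \lambda_N + 2\varphi^{C_1} N^{-2/3}$ that sits safely outside the spectrum; there Theorem \ref{theorem: strong estimate} yields $G_{\b v \b v}(\mu_*') = -1 + O(\varphi^{C_\zeta} N^{-1/3})$ and hence $\sum_{\alpha < N} \abs{\scalar{\b u^{(\alpha)}}{\b v}}^2/(\lambda_\alpha - \mu_*') = -1 + O(\varphi^{C_\zeta} N^{-1/3})$. A telescoping step then controls
\begin{equation*}
\sum_{\alpha < N} \abs{\scalar{\b u^{(\alpha)}}{\b v}}^2 \pbb{\frac{1}{\lambda_\alpha - \mu_*} - \frac{1}{\lambda_\alpha - \mu_*'}}
\end{equation*}
by $\abs{\mu_* - \mu_*'} \sum_{\alpha < N} \abs{\scalar{\b u^{(\alpha)}}{\b v}}^2 / \abs{\lambda_\alpha - \mu_*}^2$, which a dyadic decomposition over shells of size $2^k$ around $\lambda_N$ at distance $\asymp 2^{2k/3} N^{-2/3}$ (combining \eqref{dyadic delocalization} and Theorem \ref{theorem: rigidity}) bounds by $O(\varphi^{C_\zeta} N^{-1/3})$. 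Summing the pieces gives $G_{\b v \b v}(\mu_*) \geq -1 - (1-d+N^{-1/3})/d - O(\varphi^{C_\zeta} N^{-1/3})$, which is $\geq -1/d$ for $C_\zeta$ large; the regularization $+N^{-1/3}$ in the definition of $\delta$ absorbs the edge-scale error when $1 - d$ becomes comparable to $N^{-1/3}$.

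The main obstacle will be precisely this dyadic comparison: controlling $\sum_{\alpha<N}\abs{\scalar{\b u^{(\alpha)}}{\b v}}^2/(\lambda_\alpha - \mu_*)$ when $\mu_* \in (\lambda_N, \lambda_N + \varphi^{C_1} N^{-2/3}]$ sits at or inside the spectrum, with an error of order $\varphi^{C_\zeta} N^{-1/3}$. Theorem \ref{theorem: strong estimate} breaks down at this scale, so the plan relies on combining pointwise rigidity, dyadic isotropic delocalization \eqref{dyadic delocalization}, and careful telescoping to the safe reference point $\mu_*'$ where ILSC does apply.
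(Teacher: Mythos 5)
Your skeleton --- the determinant identity reducing the problem to $G_{\b v \b v}(\mu_N) = -1/d$, the monotonicity of $E \mapsto G_{\b v \b v}(E)$ on $(\lambda_N,\infty)$, the strong isotropic law outside the spectrum for $d>1$, and delocalization plus a dyadic decomposition for $d<1$ --- is the paper's, and your treatment of the regime $d - 1 \geq \varphi^{C_0}N^{-1/3}$ is essentially correct. But the $d\leq 1$ argument contains a sign error that propagates into a genuine gap. The best lower bound you can extract for $\sum_{\alpha<N}\abs{\scalar{\b u^{(\alpha)}}{\b v}}^2/(\lambda_\alpha-\mu_*)$ is $-1-\varphi^{C''}N^{-1/3}$ for some $C''$ determined by rigidity, delocalization and the isotropic law at the reference point; this error has the \emph{unfavourable} sign. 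Writing $-1/d = -1-(1-d)/d$, the target inequality $G_{\b v\b v}(\mu_*)\geq -1/d$ therefore requires $\varphi^{C_\zeta}N^{-1}/\delta \leq (1-d)/d - \varphi^{C''}N^{-1/3}$, whose right-hand side is positive only when $1-d \gg \varphi^{C''}N^{-1/3}$. The $+N^{-1/3}$ regularization in your $\delta$ sits in the denominator and only \emph{weakens} the bound you are trying to prove; it cannot absorb an error term that makes the required signal negative. Consequently your reduction of the window $\absb{d-1}<\varphi^{C_0}N^{-1/3}$ to ``the $d\leq1$ argument at $\delta\asymp\varphi^C N^{-2/3}$'' fails outright: there $1/d-1$ is at most of order $\varphi^{C_0}N^{-1/3}$, and is zero or negative for $d\geq 1$, so no test point anchored at $\lambda_N$ can certify the bound. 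The paper disposes of this window at the very outset via the monotonicity of $d\mapsto\lambda_N(H+d\,\b v\b v^*)$ (Lemma \ref{lemma: interlacing}) together with rigidity and $\theta(1+\epsilon)=2+\epsilon^2+O(\epsilon^3)$, reducing everything to the two boundary values $d=1\pm\varphi^D N^{-1/3}$; alternatively one may anchor the test point at $2+\varphi^{C_1}N^{-2/3}$ rather than at $\lambda_N$ and use $\msc(x)+1\asymp\sqrt{\kappa_x}$ as the dominant signal, which is precisely the first step of the paper's $d<1$ analysis.

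A second, more technical gap: the telescoping bound $\abs{\mu_*-\mu_*'}\sum_{\alpha<N}\abs{\scalar{\b u^{(\alpha)}}{\b v}}^2/\abs{\lambda_\alpha-\mu_*}^2$ diverges. Rigidity provides no lower bound on $\lambda_N-\lambda_\alpha$ for $\alpha$ near $N$, so for the top $O(\varphi^{C})$ indices the only available lower bound on $\mu_*-\lambda_\alpha$ is $\delta$ itself; a single such term then contributes up to $\varphi^{C_1}N^{-2/3}\cdot\varphi^{C_\zeta}N^{-1}\delta^{-2}$, which for $\delta\asymp N^{-1}$ (i.e.\ $1-d\asymp 1$) is of order $N^{1/3}$, not $N^{-1/3}$. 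You must split off the top $q=\varphi^{C_2}$ eigenvalues together with $\alpha=N$ and bound their total contribution crudely by $\delta^{-1}\sum_{\alpha>N-q}\abs{\scalar{\b u^{(\alpha)}}{\b v}}^2\leq\varphi^{C_\zeta+C_2}N^{-1}\delta^{-1}$, exactly as the paper does; only the remaining indices $\alpha\leq N-q$ may be compared to the reference point by the dyadic argument.
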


\begin{proof}
First we note that it is enough to consider $d \in \R_+ \setminus \qb{1 - \varphi^D N^{-1/3}, 1 + \varphi^D N^{-1/3}}$ for some arbitrary but fixed $D>0$. This follows from $\abs{\lambda_N - 2} \leq \varphi^{C_\zeta} N^{-2/3}$ \hp{\zeta} (see Theorem \ref{theorem: rigidity}), the monotonicity of the map $d \mapsto \lambda_N(H + d \, \b v \b v^*)$ (see Lemma \ref{lemma: interlacing}), and the observation that $\theta(1 + \epsilon) = 1 + \epsilon^2 + O(\epsilon^3)$ as $\epsilon \to 0$ (which implies that $\abs{\theta(d) - 2} \leq \varphi^{2D + 1} N^{-2/3}$ for $d \in \qb{1 - \varphi^D N^{-1/3}, 1 + \varphi^D N^{-1/3}}$).

The key identity\footnote{Here we ignore the possibility that $\mu_N \in \sigma(H)$. Since the law of $H$ is absolutely continuous, it is easy to check that the interlacing inequalities in Lemma \ref{lemma: interlacing} are strict with probability one; see e.g.\ the proof of Lemma \ref{lemma: limit rank one}.} for the proof is
\begin{equation*}
G_{\b v \b v}(\mu_N) \;=\; - \frac{1}{d}\,,
\end{equation*}
as follows from Lemma \ref{lemma: det identity}. Let us begin with the case $d \geq 1 + \varphi^D N^{-1/3}$. Since $\msc : \R \setminus (-2,2) \to [-1,1] \setminus \{0\}$ is bijective, we find from \eqref{identity for msc} that $\theta(d)$ is uniquely characterized by
\begin{equation} \label{identity for gamma}
\msc(\theta(d)) \;=\; - \frac{1}{d}\,.
\end{equation}
We therefore have to solve the equation
$\msc(\theta(d)) = G_{\b v \b v}(x)$
for $x \in [2 + \varphi^{C_1} N^{-2/3}, \infty)$, where $C_1$ the constant from Theorem \ref{theorem: strong estimate}. By Theorem \ref{theorem: strong estimate}, we have
\begin{equation} \label{Gvv msc for outliers}
G_{\b v \b v}(x) \;=\; \msc(x) + O\pb{\varphi^{C_\zeta} N^{-1/2} \kappa_x^{-1/4}}
\end{equation}
\hp{\zeta}.

Next, define the interval
\begin{equation*}
I_d \;\deq\; [x_-(d), x_+(d)] \,, \qquad x_\pm(d) \;\deq\; \theta(d) \pm \varphi^D N^{-1/2} (d - 1)^{1/2} \,.
\end{equation*}
We claim that
\begin{equation} \label{derivative of m estimated}
\kappa_x \;\asymp\; (d - 1)^2\,, \qquad m'(x) \;\asymp\; (d - 1)^{-1} \qquad (x \in I_d) \qquad
\end{equation}
The first relation of \eqref{derivative of m estimated} follows from
\begin{equation*}
\abs{x - \theta(d)} \;\leq\; \varphi^D N^{-1/2} (d - 1)^{1/2} \qquad \text{ and } \qquad  \theta(d) - 2 \;\geq\; c (d - 1)^2 \;\geq\; c \varphi^{3 D/2} N^{-1/2} (d - 1)^{1/2}\,,
\end{equation*}
where in the last step we used $d \geq 1 + \varphi^D N^{-1/3}$.
In order to prove the second relation of \eqref{derivative of m estimated}, we differentiate \eqref{diff msc} and use Lemma \ref{lemma: msc} to get
\begin{equation} \label{msc prime}
\msc'(x) \;\asymp\; \kappa_x^{-1/2}\,, \qquad  \msc''(x) \;\asymp\; \kappa_x^{-3/2}\,.
\end{equation}
Therefore we get from \eqref{msc prime} and the mean value theorem applied to $\msc'$ that
\begin{equation*}
\abs{\msc'(x) - \msc'(\theta(d))} \;\leq\; C \varphi^D N^{-1/2} (d - 1)^{1/2} (d - 1)^{-3} \;\leq\; C \varphi^{-D/2} (d - 1)^{-1} \,.
\end{equation*}
Therefore \eqref{derivative of m estimated} follows from $\msc'(\theta(d)) \asymp (d - 1)^{-1}$.

Now choose $D$ large enough that $x_-(d) \geq 2 + \varphi^{C_1} N^{-2/3}$ for $d \geq \varphi^D N^{-2/3}$. Thus \eqref{Gvv msc for outliers} and \eqref{derivative of m estimated} yield
\begin{equation} \label{exclusion for rank-one}
G_{\b v \b v}(x_-(d)) \;<\; \msc(\theta(d)) \;<\; G_{\b v \b v}(x_+(d))
\end{equation}
\hp{\zeta}, provided $D$ is chosen larger than the constant $C_\zeta$ in \eqref{Gvv msc for outliers}. Finally we observe that, by Theorem \ref{theorem: rigidity}, \hp{\zeta} the function $x \mapsto G_{\b v \b v}(x)$ is continuous and increasing on $[2 + \varphi^{C_1} N^{-2/3},\infty)$. It follows that \hp{\zeta} the equation $G_{\b v \b v}(x) = \msc(\theta(d))$ has precisely one solution, $x = \mu_N$, in $[2 + \varphi^{C_1} N^{-2/3},\infty)$. Moreover, this solution lies in $I_d$, which implies that it satisfies the claim of Theorem \ref{theorem: rank-one lde} for $d > 1$.

What remains is the case $d \leq 1 - \varphi^{D} N^{-1/3}$. Choose $x \deq 2 + \varphi^{C_1}N^{-2/3}$ where $C_1$ is a large constant to be chosen later. For large enough $C_1$ we find from Theorem \ref{theorem: strong estimate}
\begin{equation} \label{Gvv simple bulk}
G_{\b v \b v}(x) \;=\; \msc(x) + O\pb{N^{-1/3} \varphi^{-C_1/4}}
\end{equation}
\hp{\zeta}. From \eqref{bounds on msc} we find
\begin{equation} \label{Gvv simple bulk 2}
1 + \msc(x) \;\asymp\; N^{-1/3} \varphi^{C_1 / 2}\,,
\end{equation}
 which yields
\begin{equation*}
1 + G_{\b v \b v}(x) \;\geq\; 0 \;\geq\; 1 - \frac{1}{d}
\end{equation*}
\hp{\zeta}. Choosing $C_1$ large enough, we find as above that $y \mapsto G_{\b v \b v}(y)$ is \hp{\zeta} increasing and continuous for $y \geq x$, from which we deduce that
\begin{equation*}
\lambda_N \;\leq\; \mu_N \;\leq\; x
\end{equation*}
\hp{\zeta}. (The first inequality follows from Lemma \ref{lemma: interlacing}.)

Next, abbreviate $q \deq \varphi^{C_2}$ for some large constant $C_2$ to be chosen later. Using Theorem \ref{theorem: rigidity} we estimate, for $\lambda_N \leq \mu_N \leq x$ and large enough $C_2$,
\begin{align*}
\absBB{\sum_{\alpha \leq N - q} \frac{\abs{\scalar{\b u^{(\alpha)}}{\b v}}^2}{\lambda_\alpha - \mu_N} - \sum_{\alpha \leq N - q} \frac{\abs{\scalar{\b u^{(\alpha)}}{\b v}}^2}{\lambda_\alpha - x}}
&\;\leq\; \varphi^{C_\zeta} N^{-2/3} \sum_{\alpha \leq N - q} \frac{\abs{\scalar{\b u^{(\alpha)}}{\b v}}^2}{(\lambda_\alpha - \mu_N)^2}
\\
&\;\leq\; \varphi^{C_\zeta} N^{-2/3} \sum_{k \geq 1} \frac{2^k N^{-1}}{(2^{2k/3} N^{-2/3})^2} + \varphi^{C_\zeta} N^{-2/3}
\\
&\;\leq\; \varphi^{C_\zeta} N^{-1/3}
\end{align*}
\hp{\zeta}. In the second inequality we estimated the contribution of the eigenvalues $\alpha \geq N/2$ using the dyadic decomposition
\begin{equation*}
U_k \;\deq\; \hb{\alpha \in [N/2\,,\, N - q] \col N - 2^{k + 1} \leq \alpha \leq N - 2^k}
\end{equation*}
combined with Theorem \ref{theorem: rigidity}, the estimate
\begin{equation*}
2 - \gamma_\alpha \;\asymp\; (N - \alpha)^{2/3} N^{-2/3} \qquad (\alpha \geq N/2)\,,
\end{equation*}
and the delocalization estimate \eqref{dyadic delocalization}. A similar (in fact easier) dyadic decomposition works for the remaining eigenvalues $\alpha < N/2$ and yields the last term of the second line. Moreover, we have
\begin{equation*}
\sum_{\alpha > N - q} \frac{\abs{\scalar{\b u^{(\alpha)}}{\b v}}^2}{\abs{\lambda_\alpha - x}} \;\leq\; \varphi^{C_\zeta + C_2} N^{-1/3}
\end{equation*}
\hp{\zeta}, by Theorems \ref{theorem: rigidity} and \ref{theorem: delocalization}. Recalling \eqref{Gvv simple bulk} and \eqref{Gvv simple bulk 2}, we have therefore proved that
\begin{equation*}
-\frac{1}{d} \;=\; G_{\b v \b v}(\mu_N)\;=\; \sum_{\alpha  } \frac{\abs{\scalar{\b u^{(\alpha)}}{\b v}}^2}{\lambda_\alpha - \mu_N}
\;=\; -1 + O\pb{\varphi^{C_\zeta + C_1 + C_2} N^{-1/3}} + \sum_{\alpha > N - q} \frac{\abs{\scalar{\b u^{(\alpha)}}{\b v}}^2}{\lambda_\alpha - \mu_N}
\end{equation*}
\hp{\zeta}. Therefore
\begin{equation*}
\frac{1}{\mu_N - \lambda_N} \sum_{\alpha > N - q} \abs{\scalar{\b u^{(\alpha)}}{\b v}}^2 \;\geq\; \sum_{\alpha > N - q} \frac{\abs{\scalar{\b u^{(\alpha)}}{\b v}}^2}{\mu_N - \lambda_\alpha} \;=\; \frac{1}{d} - 1 + O\pb{\varphi^{C_\zeta + C_1 + C_2} N^{-1/3}}
\end{equation*}
\hp{\zeta}. Theorem \ref{theorem: delocalization} implies $\abs{\scalar{\b u^{(\alpha)}}{\b v}}^2 \leq \varphi^{C_\zeta} N^{-1}$, and the claim follows. This concludes the proof of Theorem \ref{theorem: rank-one lde}.
\end{proof}

\subsection{The permissible region}
The rest of this section is devoted to the proof of Theorem \ref{theorem: rank-k lde}.

\begin{definition}
We choose an event, denoted by $\Xi$, of $\zeta$-high probability on which the following statements hold.
\begin{enumerate}
\item
The eigenvalues of $H$ are distinct.
\item
For all $i = 1, \dots, k$ and $\alpha = 1, \dots, N$ we have $\scalar{\b v^{(i)}}{\b u^{(\alpha)}} \neq 0$\,.
\item
All statements of Theorems \ref{thm: ILSC}, \ref{theorem: strong estimate}, \ref{theorem: delocalization}, and \ref{theorem: rigidity} hold.
\end{enumerate}
\end{definition}

We note that such a $\Xi$ exists. As explained in Section \ref{sect: basic linalg}, we assume without loss of generality that the law of $H$ is absolutely continuous. Then conditions (i) and (ii) hold almost surely; we omit the standard proof. That condition (iii) holds with $\zeta$-high probability is a consequence of Theorems \ref{thm: ILSC}, \ref{theorem: strong estimate}, \ref{theorem: delocalization}, and \ref{theorem: rigidity} (see also Remark \ref{remark: lattice}).

For the whole remainder of the proof of Theorem \ref{theorem: rank-k lde}, we choose and fix an arbitrary realization $H \equiv H^\omega$ with $\omega \in \Xi$. Thus, the randomness of $H$ only comes into play in ensuring that $\Xi$ is of $\zeta$-high probability. The rest of the argument is entirely deterministic.

Fix $k^-, k^+ \in \N$ and define $k^0 \deq k - k^+ - k^- = \# \h{i \col \abs{d_i} \leq 1}$. Write
\begin{equation*}
\b d \;=\; (d_1, \dots, d_k) \;=\; (\b d^-, \b d^0, \b d^+) \, \qquad \b d^\sigma \;=\; (d^\sigma_1, \dots, d^\sigma_{k^\sigma}) \qquad (\sigma \;=\; -,0,+)\,.
\end{equation*}
We adopt the convention that
\begin{equation} \label{ordering of ds}
d_1^- \;\leq\; \cdots \;\leq\; d_{k^-}^- \;<\; -1 \;\leq\; d^0_1 \;\leq\; \cdots \;\leq\; d^0_{k^0} \;\leq\; 1 < d^+_1 \;\leq\; \cdots \;\leq\; d^+_{k^+}\,.
\end{equation}
Abbreviate
\begin{equation} \label{def of psi tilde}
\wt \psi_N \;\equiv\; \wt \psi \;\deq\; 2k \psi\,.
\end{equation}
For $\wt C_2 > 0$ define the sets
\begin{align*}
\cal D^-(\wt C_2) &\;\deq\; \hB{\b d^- \col - \Sigma + 1 \leq d^-_i \leq -1 - \varphi^{\wt C_2} \wt \psi N^{-1/3} \,,\, i =1, \dots, k^-}\,,
\\
\cal D^+(\wt C_2) &\;\deq\; \hB{\b d^+ \col 1 + \varphi^{\wt C_2} \wt \psi N^{-1/3} \leq d^+_i \leq \Sigma - 1 \,,\, i =1, \dots, k^+}\,,
\\
\cal D^0(\wt C_2) &\;\deq\; \hB{\b d^0 \col -1 + \varphi^{\wt C_2} \wt \psi N^{-1/3} \leq d^0_i \leq 1 - \varphi^{\wt C_2} \wt \psi N^{-1/3} \,,\, i =1, \dots, k^0}\,,
\end{align*}
the set of allowed $\b d$'s,
\begin{equation*}
\cal D(\wt C_2) \;\deq\; \hb{(\b d^-, \b d^0, \b d^+) \col \b d^\sigma \in \cal D^\sigma(\wt C_2)\,,\, \sigma = -,0,+}\,,
\end{equation*}
and the subset
\begin{equation*}
\cal D^*(\wt C_2) \;\deq\; \hb{\b d \in \cal D(\wt C_2) \col d_i \neq 0 \text{ for } i = 1, \dots, k}\,.
\end{equation*}

Let $\wt K > 0$ denote a constant to be chosen later, and define
\begin{equation*}
S(\wt K) \;\deq\; \pB{-\infty \,,\, -2 + \varphi^{\wt K} N^{-2/3}} \cup \pB{2 - \varphi^{\wt K} N^{-2/3} \,,\, \infty}\,.
\end{equation*}
We shall only consider eigenvalues of $\wt H$ in $S(\wt K)$ for some large but fixed $\wt K$.

Let $\wt C_3 > 0$ denote some large constant to be chosen later.  Define the intervals
\begin{align*}
I_i^-(\b d) &\;\deq\; \qbb{\theta(d_i^-) - \varphi^{\wt C_3} N^{-1/2} (-d_i^- - 1)^{1/2}\,,\, \theta(d_i^-) + \varphi^{\wt C_3} N^{-1/2} (-d_i^- - 1)^{1/2}} \qquad (i = 1, \dots, k^-)\,,
\\
I_i^+(\b d) &\;\deq\; \qbb{\theta(d_i^+) - \varphi^{\wt C_3} N^{-1/2} (d_i^+ - 1)^{1/2}\,,\, \theta(d_i^+) + \varphi^{\wt C_3} N^{-1/2} (d_i^+ - 1)^{1/2}} \qquad (i = 1, \dots, k^+)\,,
\\
I^0 &\;\deq\; \hB{x \in \R \col \dist(x, \sigma(H)) \leq N^{-2/3} \wt \psi^{-1}} \cap S(\wt K)\,.
\end{align*}
For $\b d \in \cal D(\wt C_2)$ define
\begin{equation*}
\Gamma(\b d) \;\deq\; I^0 \cup \pBB{\bigcup_{i = 1}^{k^-} I^-_i(\b d)} \cup \pBB{\bigcup_{i = 1}^{k^+} I^+_i(\b d)}\,.
\end{equation*}

The following proposition states that $\Gamma(\b d)$ is the ``permissible region'' for the eigenvalues of $\wt H$. Roughly, the allowed region consists of a small neighbourhood of each $\theta(d_i)$ for $i \in O$, as well as of small neighbourhoods of the eigenvalues of $H$. The latter regions house the sticking eigenvalues. Proposition \eqref{prop: permissible region} only establishes where the eigenvalues are allowed to lie; it gives no other information on their locations (such as the number of eigenvalues in each interval). Note that, by definition of $S(\wt K)$, the set $\Gamma(\b d)$ only keeps track of eigenvalues outside of the interval $\qb{-2 + \varphi^{\wt K}N^{-2/3},2 - \varphi^{\wt K}N^{-2/3}}$. This will eventually suffice for the statement \eqref{main result: bulk evs} thanks to the eigenvalue rigidity estimate for $H$, Theorem \ref{theorem: rigidity}, combined with eigenvalue interlacing; see \eqref{rank k interlacing} below.

\begin{proposition} \label{prop: permissible region}
For $\wt C_3$ and $\wt C_2(\wt C_3)$ large enough (depending on $\zeta$, $\wt K$, and the constant $C_1$ from Theorem \ref{theorem: strong estimate}) the following holds.
For any $\b d \in \cal D(\wt C_2)$ and $H \equiv H^\omega$ with $\omega \in \Xi$ we have
\begin{equation} \label{intervals are disjoint}
I^\pm_i(\b d) \cap I^0 \;=\; \emptyset \quad \text{for all} \quad i \;=\; 1, \dots, k^\pm
\end{equation}
as well as
\begin{equation} \label{permissible region statement}
\sigma(\wt H) \cap S(\wt K) \;\subset\; \Gamma(\b d)\,.
\end{equation}
\end{proposition}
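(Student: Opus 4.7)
The proof rests on the determinant identity of Lemma \ref{lemma: det identity}: for $\mu \notin \sigma(H)$, we have $\mu \in \sigma(\wt H)$ if and only if $\det M(\mu) = 0$, where $M(\mu) \deq V^* G(\mu) V + D^{-1}$. By a short continuity argument (approximating $0$-eigenvalues of $D$ by nonzero ones and using continuity of the spectrum of $\wt H$) we may assume $\b d \in \cal D^*(\wt C_2)$ so that $D$ is invertible. The disjointness assertion \eqref{intervals are disjoint} is a direct size comparison: on $\Xi$, Theorem \ref{theorem: rigidity} gives $\sigma(H) \subset [-2,2] + O(\varphi^{C_\zeta} N^{-2/3})$, so $I^0 \subset \{|\mu \mp 2| \lesssim (\varphi^{C_\zeta} + \wt\psi^{-1}) N^{-2/3}\}$, whereas $I_i^\pm(\b d)$ is centered at $\theta(d_i)$ with $|\theta(d_i) \mp 2| \asymp (|d_i|-1)^2 \gtrsim \varphi^{2\wt C_2} \wt\psi^2 N^{-2/3}$ and radius $\varphi^{\wt C_3} N^{-1/2} (|d_i|-1)^{1/2}$; choosing $\wt C_2$ large in terms of $\wt C_3$, $C_\zeta$, and $\wt K$ makes these intervals disjoint.

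To prove \eqref{permissible region statement}, fix $\mu \in S(\wt K) \setminus \Gamma(\b d)$. Since $\mu \notin I^0$ we have $\dist(\mu, \sigma(H)) > \wt\psi^{-1} N^{-2/3}$, so $G(\mu)$ is defined, and it suffices to show $\det M(\mu) \neq 0$. The key estimate is
\begin{equation*}
\normb{V^* G(\mu) V - m_\mu \umat_k}_{\mathrm{op}} \;\leq\; k \varphi^{C_\zeta} N^{-1/2} (\kappa_\mu + \eta_0)^{-1/4},
\end{equation*}
where $\eta_0 \deq \wt\psi^{-1} N^{-2/3}$ and $m_\mu \deq \msc(\mu)$ if $|\mu| \geq 2$, $m_\mu \deq -\mu/2$ otherwise. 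For $|\mu| \geq 2 + \varphi^{C_1} N^{-2/3}$ this is immediate from Theorem \ref{theorem: strong estimate} and Remark \ref{remark: lattice}. In the edge region $|\mu| \in (2 - \varphi^{\wt K} N^{-2/3}, 2 + \varphi^{C_1} N^{-2/3})$ we instead apply Theorem \ref{thm: ILSC} at $z = \mu + \ii \eta_0$ and transfer to real $\mu$ via the spectral decomposition of $G$, exploiting $\dist(\mu, \sigma(H)) > \eta_0$ and dyadic decomposition together with the isotropic delocalization bound \eqref{dyadic delocalization} on sums $\eta_0 \sum_\alpha |\scalar{\b u^{(\alpha)}}{\b v^{(i)}}|^2/|\lambda_\alpha - \mu|^2$ --- exactly as in the derivation of \eqref{z z0 comparison 2} in the proof of Theorem \ref{theorem: strong estimate}.

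Since $\mathrm{diag}(m_\mu + d_i^{-1})$ is normal, the Bauer--Fike theorem gives $\det M(\mu) \neq 0$ provided $\min_i |m_\mu + d_i^{-1}|$ exceeds the operator-norm error above. We verify this in two cases. (a) For $|d_i| \leq 1 - \varphi^{\wt C_2} \wt\psi N^{-1/3}$ we have $|d_i^{-1}| \geq 1 + c \varphi^{\wt C_2} \wt\psi N^{-1/3}$ and $|m_\mu| \leq 1 + O(\sqrt{\kappa_\mu + \eta_0})$; a sign analysis based on which component of $\R \setminus [-2,2]$ contains $\mu$ and the sign of $d_i$ then yields $|m_\mu + d_i^{-1}| \geq c \varphi^{\wt C_2} \wt\psi N^{-1/3}$, which dominates the error for $\wt C_2$ large enough in terms of $\wt K$ and $C_\zeta$. (b) For $|d_i| \geq 1 + \varphi^{\wt C_2} \wt\psi N^{-1/3}$, the identity $\msc(\theta(d_i)) = -d_i^{-1}$ (cf.\ \eqref{identity for gamma}) combined with the derivative estimate \eqref{msc prime} and $\kappa_{\theta(d_i)} \asymp (|d_i|-1)^2$ gives, via the mean value theorem and the assumption $\mu \notin I_i^\pm(\b d)$, the bound $|m_\mu + d_i^{-1}| \geq c \varphi^{\wt C_3} N^{-1/2}(|d_i|-1)^{-1/2}$ when $\mu$ lies within $c \kappa_{\theta(d_i)}$ of $\theta(d_i)$ on the same side of the spectrum; for $\mu$ farther from $\theta(d_i)$ (or on the other side), monotonicity of $\msc$ on each component of $\R \setminus [-2,2]$ and the bound $|m_\mu| \leq 1$ yield an $\Omega(1)$ bound. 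Since the error in this regime is $\asymp \varphi^{C_\zeta} N^{-1/2}(|d_i|-1)^{-1/2}$, it suffices to take $\wt C_3 > C_\zeta$; after that, $\wt C_2$ is fixed large enough to guarantee (a).

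The principal technical hurdle is the edge-region transfer in the second paragraph. There $\msc(\mu + \ii 0^+)$ has non-vanishing imaginary part $\asymp \sqrt{\kappa_\mu}$, whereas $V^* G(\mu) V$ is real symmetric, so we must approximate its real part only. The accompanying spectral-decomposition error must be small relative to the non-outlier gap $\varphi^{\wt C_2} \wt\psi N^{-1/3}$; this is what forces the hierarchy of constants $\wt K \ll \wt C_2$ and the use of the full strength of delocalization (rather than eigenvalue counting alone) to exploit the separation $\dist(\mu, \sigma(H)) > \eta_0$. Once this estimate is in place, the rest is elementary.
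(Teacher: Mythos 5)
Your proposal follows essentially the same route as the paper: reduce to invertible $D$ via Lemma \ref{lemma: det identity}, prove \eqref{intervals are disjoint} by comparing scales using rigidity, and show that $V^* G(\mu) V + D^{-1}$ is a small perturbation of the diagonal matrix $m_\mu \umat + D^{-1}$, invoking Theorem \ref{theorem: strong estimate} away from $[-2,2]$ and Theorem \ref{thm: ILSC} at $z = \mu + \ii \eta_0$ together with a spectral-decomposition transfer in the edge window. Your Bauer--Fike step is exactly the paper's use of $\dist(\lambda,\sigma(A)) \leq \norm{B}$, and your case analysis (a)/(b) matches the paper's treatment of $\b d^0,\b d^-$ versus $\b d^+$.

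Two quantitative claims should be corrected, though neither derails the argument. First, the displayed ``key estimate'' $\normb{V^* G(\mu) V - m_\mu \umat}_{\rm op} \leq k \varphi^{C_\zeta} N^{-1/2} (\kappa_\mu + \eta_0)^{-1/4}$ is not valid in the edge window: there the error coming from Theorem \ref{thm: ILSC} at $\eta_0 = \wt\psi^{-1} N^{-2/3}$ plus the transfer to real $\mu$ is of order $\varphi^{C_\zeta} N^{-1/3} \pb{\wt\psi + \varphi^{\wt K/2} + \varphi^{\wt C_2/2}}$ (cf.\ \eqref{M in gap}), which exceeds $N^{-1/2}(\kappa_\mu+\eta_0)^{-1/4} \lesssim \wt\psi^{1/4} N^{-1/3}$ once $\wt\psi$ is large (recall $\psi$ may be as large as $N^{\fra b}$). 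Since in that window you compare against the gap $\tfrac12 \varphi^{\wt C_2} \wt\psi N^{-1/3}$, the corrected error is still dominated for $\wt C_2$ large, so the conclusion stands; but the estimate as written is false. Second, in case (b) the assertion that $\abs{m_\mu + d_i^{-1}}$ is $\Omega(1)$ whenever $\mu$ is far from $\theta(d_i)$ is wrong when $\mu$ lies near the edge and $\abs{d_i} - 1$ is close to its minimal value: there $\abs{\msc(\mu) + d_i^{-1}} \asymp \abs{d_i} - 1 \geq \varphi^{\wt C_2} \wt\psi N^{-1/3}$, not order one. Again this lower bound still beats the relevant error $\varphi^{C_\zeta} N^{-1/2} \kappa_\mu^{-1/4}$, so the proof goes through after replacing ``$\Omega(1)$'' by this bound.
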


\begin{proof}
Clearly, it is enough to prove the claim for $\b d \in \cal D^*(\wt C_2)$. We shall choose the constants $\wt C_3(\zeta,C_1)$ and $\wt C_2(\zeta,\wt K,C_1,\wt C_3)$ to be large enough during the proof. (Here $C_1$ is the constant from Theorem \ref{theorem: strong estimate}.)

First we prove \eqref{intervals are disjoint}. By definition of $\Xi$ (see Theorem \ref{theorem: rigidity}), we find that \eqref{intervals are disjoint} holds if
\begin{equation*}
2 + \varphi^{2 \wt C_2} N^{-2/3} - \varphi^{\wt C_3 + \wt C_2 / 2} N^{-2/3} \;>\; 2 + 2 \varphi^{C_\zeta} N^{-2/3} \;\geq\; \lambda_N + N^{-2/3} \wt \psi^{-1}\,,
\end{equation*}
which is satisfied provided that
\begin{equation} \label{C_2 geq C_3}
2 \wt C_2 \;\geq\; \wt C_3 + \wt C_2 / 2 + C_\zeta\,.
\end{equation}

In order to prove \eqref{permissible region statement}, we define, for each $z \in \C \setminus \sigma(H)$, the $k \times k$ matrix $M(z)$ through
\begin{equation} \label{definition of M}
M_{ij}(z) \;\deq\; G_{\b v^{(i)} \b v^{(j)}}(z) + \delta_{ij} d_i^{-1}\,.
\end{equation}
From Lemma \ref{lemma: det identity} we find that $x \in \sigma(\wt H) \setminus \sigma(H)$ if and only if $M(x)$ is singular. The proof therefore consists in locating $x \in \R \setminus \sigma(H)$ for which $M(x)$ is singular.

First we consider the case $x \geq 2 + \varphi^{\wt C_2} N^{-2/3}$.
On $\Xi$ we have
\begin{equation} \label{gap at edge}
\lambda_N \;\leq\; 2 + \varphi^{\wt C_2 - 1}  N^{-2/3} \qquad \text{and} \qquad \lambda_1 \;\geq\; -2 - \varphi^{\wt C_2 - 1} N^{-2/3}
\end{equation}
provided $\wt C_2$ is large enough (see Theorem \ref{theorem: rigidity}). In particular, by \eqref{gap at edge} and the definition of $\Xi$, we have $x \notin \sigma(H)$. By increasing $\wt C_2$ if necessary we may assume that $\wt C_2 \geq C_1$, where $C_1$ is the constant from Theorem \ref{theorem: strong estimate}. Therefore we get from Theorem \ref{theorem: strong estimate} and Lemma \ref{lemma: msc} that
\begin{equation} \label{M outside}
M(x + \ii y) \;=\; \msc(x + \ii y) + D^{-1} + O \pb{\varphi^{C_\zeta} N^{-1/2} \kappa_x^{-1/4}}
\end{equation}
for all $y \in [-\Sigma, \Sigma]$. (We include an imaginary part $y \neq 0$ for later applications of \eqref{M outside}; for the purposes of this proof we set $y = 0$.)

Let $i \in \{1, \dots, k^+\}$. Then we may repeat to the letter
the argument in the proof of Theorem \ref{theorem: rank-one lde} leading to \eqref{derivative of m estimated}.
Provided that $\wt C_3 \geq C_\zeta + 2$, where $C_\zeta$ is the constant in \eqref{M outside}, we therefore get that
\begin{equation*}
\absbb{\msc(x) + \frac{1}{d_i^+}} \;\geq\; \varphi^{C_\zeta + 1} N^{-1/2} \kappa_x^{-1/4} \qquad \text{if} \quad x \notin I_i^+(\b d)\,.
\end{equation*}
This takes care of the components $\b d^+$ in $D^{-1}$. In order to deal with the remaining components, $\b d^0$ and $\b d^-$, we observe that
\begin{equation*}
\msc(x) \;\in\; \qb{-1, -c}
\end{equation*}
for some $c > 0$ depending on $\Sigma$. It is now easy to put all the estimates associated with $i = 1, \dots, k$ together. Recalling \eqref{M outside} and choosing $\wt C_2$ large enough yields, for $C_\zeta$ denoting the constant from \eqref{M outside},
\begin{equation*}
\absbb{\msc(x) + \frac{1}{d_i}} \;\geq\; \varphi^{C_\zeta + 1} N^{-1/2} \kappa_x^{-1/4}
\end{equation*}
for all $i = 1, \dots, k$ provided that
\begin{equation} \label{cont condition +}
x \;\in\; \qb{2 + \varphi^{\wt C_2} N^{-2/3}, \Sigma} \setminus \bigcup_{i = 1}^{k^+} I_i^+(\b d)\,.
\end{equation}
We conclude\footnote{Here we use the well-known fact that if $\lambda \in \sigma(A + B)$ then $\dist(\lambda, \sigma(A)) \leq \norm{B}$.} from \eqref{M outside} that $M(x)$ is regular if \eqref{cont condition +} holds.

An almost identical argument applied to $\b d^-$ yields that $M(x)$ is regular if
\begin{equation} \label{regularity condition outside}
x \;\in\; \qb{-\Sigma, -2 - \varphi^{\wt C_2} N^{-2/3}} \cup \qb{2 + \varphi^{\wt C_2} N^{-2/3}, \Sigma} \setminus \pBB{\bigcup_{i = 1}^{k^-} I_i^-(\b d) \cup \bigcup_{i = 1}^{k^+} I_i^+(\b d)}\,.
\end{equation}

Next, we focus on the case
\begin{equation} \label{regularity condition inside}
x \;\in\; \qB{2 - \varphi^{\wt K} N^{-2/3}, 2 + \varphi^{\wt C_2} N^{-2/3}} \,, \qquad \dist(x, \sigma(H)) \;>\; N^{-2/3} \wt \psi^{-1}\,.
\end{equation}
Our aim is to prove that $M(x)$ is regular for any $x$ satisfying \eqref{regularity condition inside}. Once this is done, the regularity of $M(x)$ for $x$ satisfying \eqref{regularity condition outside} or \eqref{regularity condition inside} will imply \eqref{permissible region statement}. Choose $\eta \deq N^{-2/3} \wt \psi^{-1}$ and estimate
\begin{align*}
\abs{G_{\b v^{(i)} \b v^{(j)}}(x) - G_{\b v^{(i)} \b v^{(j)}}(x + \ii \eta)} &\;\leq\; \sum_\alpha \frac{\abs{\scalar{\b u^{(\alpha)}}{\b v^{(i)}}}^2 + \abs{\scalar{\b u^{(\alpha)}}{\b v^{(j)}}}^2}{2} \absbb{\frac{1}{\lambda_\alpha - x} - \frac{1}{\lambda_\alpha -x - \ii \eta}}
\\
&\;\leq\; \sum_\alpha \pB{\abs{\scalar{\b u^{(\alpha)}}{\b v^{(i)}}}^2 + \abs{\scalar{\b u^{(\alpha)}}{\b v^{(j)}}}^2} \frac{\eta}{(\lambda_\alpha - x)^2 + \eta^2}
\\
&\;=\; \im G_{\b v^{(i)} \b v^{(i)}}(x + \ii \eta) + \im G_{\b v^{(j)} \b v^{(j)}}(x + \ii \eta)\,,
\end{align*}
where in the second step we used \eqref{regularity condition inside}. Therefore, by definition of $\Xi$ (See also Theorem \ref{thm: ILSC}) and Lemma \ref{lemma: msc}, we get (recall that $\wt \psi \geq 1$)
\begin{equation*}
G_{\b v^{(i)} \b v^{(j)}}(x) \;=\; \delta_{ij} \msc(x + \ii \eta) + O \pbb{\varphi^{C_\zeta} \im \msc(x + \ii \eta) + \frac{\varphi^{C_\zeta}}{N \eta}} \;=\; -\delta_{ij} + O \pbb{\varphi^{C_\zeta} N^{-1/3} \pB{\wt \psi + \varphi^{\wt K/2} + \varphi^{\wt C_2/2}}}\,.
\end{equation*}
This implies, for any $x$ satisfying \eqref{regularity condition inside}, that
\begin{equation} \label{M in gap}
M(x) \;=\; -\umat + D^{-1} + O \pbb{\varphi^{C_\zeta} N^{-1/3} \pB{\wt \psi + \varphi^{\wt K/2} + \varphi^{\wt C_2/2}}}\,.
\end{equation}
Since
\begin{equation*}
\absbb{-1 + \frac{1}{d_i}} \;\geq\; \frac{1}{2} \, \varphi^{\wt C_2} \wt \psi N^{-1/3}
\end{equation*}
for all $i$, we find that $M(x)$ is regular provided $\wt C_2$ is chosen large enough that
\begin{equation*}
\wt C_2 -1 \;\geq\; C_\zeta + \wt K/2 + \wt C_2 /2\,.
\end{equation*}
This completes the analysis of the case \eqref{regularity condition inside}. The case
\begin{equation*}
x \;\in\; \qB{-2 - \varphi^{\wt C_2} N^{-2/3}, -2 + \varphi^{\wt K} N^{-2/3}} \,, \qquad \dist(x, \sigma(H)) \;>\; N^{-2/3} \wt \psi^{-1}
\end{equation*}
is handled similarly. This completes the proof.
\end{proof}

\subsection{The initial configuration}
In this section we fix a configuration $\b d(0) \equiv \b d$ that is \emph{independent of $N$}, and satisfies $k^0 = 0$ as well as
\begin{equation} \label{condition for initial data}
-\Sigma + 1 \leq d_1^- \;<\; \cdots \;<\; d_{k^-}^-\;<\; -1\,, \qquad
1 \;<\; d^+_1 \;<\; \cdots \;<\; d^+_{k^+} \;\leq\; \Sigma - 1\,.
\end{equation}
Note that $\b d \in \cal D^*(\wt C_2)$ for large enough $N$.

First we deal with the outliers.

\begin{proposition} \label{prop: initial outliers}
For $N$ large enough, each interval $I_i^-(\b d)$, $i = 1, \dots, k^-$, and $I_i^+(\b d)$, $i = 1, \dots, k^+$, contains precisely one eigenvalue of $\wt H$.
\end{proposition}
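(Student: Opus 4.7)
The plan is to combine the permissible-region result of Proposition \ref{prop: permissible region} with an upper bound on the eigenvalue count coming from Weyl's interlacing inequality (Lemma \ref{lemma: interlacing}), and to establish existence in each interval via a sign change of $\det M$.

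First I would record some $N$-independent geometry. Since $\b d$ is $N$-independent with distinct entries satisfying $\abs{d_i} \in [1 + c_0, \Sigma - 1]$ for some $c_0 > 0$, the classical outlier locations $\theta(d_i^\pm)$ are distinct and separated from $[-2, 2]$ by an $N$-independent positive constant. Because the widths of the intervals $I_i^\pm(\b d)$ shrink like $\varphi^{\wt C_3} N^{-1/2}$, for $N$ large they have pairwise disjoint closures, are contained in $(-\infty, -2 - c_1) \cup (2 + c_1, \infty)$ for some $c_1 > 0$, and, by the rigidity bounds $\abs{\lambda_1(H) + 2} + \abs{\lambda_N(H) - 2} = o(1)$ available on $\Xi$ (Theorem \ref{theorem: rigidity}), they are disjoint from $\sigma(H)$ and lie strictly outside $[\lambda_1(H), \lambda_N(H)]$. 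Iterating Lemma \ref{lemma: interlacing} one rank-one component of $VDV^*$ at a time yields $\lambda_{\alpha - k^-}(H) \leq \mu_\alpha \leq \lambda_{\alpha + k^+}(H)$ for all $\alpha$ (with the convention $\lambda_0 = -\infty$, $\lambda_{N+1} = +\infty$), so at most $k^+$ of the $\mu_\alpha$ exceed $\lambda_N(H)$ and at most $k^-$ fall below $\lambda_1(H)$. Combining these two observations,
\begin{equation*}
\#\pB{\sigma(\wt H) \cap \bigcup\nolimits_i I_i^+(\b d)} \;\leq\; k^+\,, \qquad \#\pB{\sigma(\wt H) \cap \bigcup\nolimits_i I_i^-(\b d)} \;\leq\; k^-\,.
\end{equation*}

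It therefore suffices to show that each $I_i^\pm(\b d)$ contains at least one eigenvalue of $\wt H$. Since $I_i^\pm(\b d) \cap \sigma(H) = \emptyset$, Lemma \ref{lemma: det identity} reduces this to producing a zero of $x \mapsto \det M(x)$ in $I_i^\pm(\b d)$, where $M$ is the $k \times k$ matrix defined in \eqref{definition of M}. Fix an interval $I_i^+(\b d)$. Theorem \ref{theorem: strong estimate} (encoded in $\Xi$) and the bound $\kappa_x \asymp 1$ on $I_i^+(\b d)$ yield $M(x) = D^\ast(x) + E(x)$, where $D^\ast(x) \deq \msc(x) \umat + D^{-1}$ and $\max_{j,l} \abs{E_{jl}(x)} \leq \varphi^{C_\zeta} N^{-1/2}$. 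The distinctness of the $d_j$'s combined with \eqref{identity for gamma} forces $\abs{\msc(x) + d_j^{-1}} \geq c_2 > 0$ on $I_i^+(\b d)$ for every $j \neq i$, uniformly in $N$; meanwhile, by \eqref{identity for gamma} and \eqref{derivative of m estimated}, $\msc(x) + d_i^{-1}$ is continuous and monotone in $x$, vanishes at $\theta(d_i)$, and attains values of order $\pm\,\varphi^{\wt C_3} N^{-1/2}$ at the two endpoints of $I_i^+(\b d)$.

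Expanding $\det M(x)$ by the Leibniz formula, every nonidentity permutation contributes at least two factors of $E$ and is therefore of order $\varphi^{2 C_\zeta} N^{-1}$; the identity contribution equals $\pb{\msc(x) + d_i^{-1}} \prod_{j \neq i} \pb{\msc(x) + d_j^{-1}}\pb{1 + O(\varphi^{C_\zeta} N^{-1/2})}$. Taking $\wt C_3 > 2 C_\zeta$ and $N$ large enough, the sign of $\det M(x)$ at each endpoint of $I_i^+(\b d)$ coincides with that of $\msc(x) + d_i^{-1}$; the intermediate value theorem applied to the continuous function $x \mapsto \det M(x)$ on $I_i^+(\b d)$ then produces the required zero. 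The argument on $I_i^-(\b d)$ is identical. Combined with the counting bound above and the pairwise disjointness of the intervals, this forces exactly one eigenvalue of $\wt H$ in each $I_i^\pm(\b d)$. The only bookkeeping issue, which is mild, is enforcing that $\wt C_3$ be large enough to serve both Proposition \ref{prop: permissible region} and the sign-change estimate above; this merely requires $\wt C_2$ in \eqref{C_2 geq C_3} to be correspondingly larger and poses no substantive obstacle.
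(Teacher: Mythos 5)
Your argument is correct, but it takes a genuinely different route from the paper's. The paper fixes a small $N$-independent closed contour $\cal C$ around each $\theta(d_i)$ and applies Rouch\'e's theorem to $f_N(z) = \det M(z)$ and the explicit comparison function $g(z) = \det\pb{\msc(z)\umat + D^{-1}}$, using \eqref{M outside} to control $\abs{f_N - g}$ on $\cal C$; this yields exactly one zero of $f_N$, hence exactly one eigenvalue of $\wt H$, inside $\cal C$, and Proposition \ref{prop: permissible region} is then invoked to localize that eigenvalue to the tiny interval $I_i^\pm(\b d)$. You instead work entirely on the real line: existence comes from a sign change of $\det M$ across $I_i^\pm(\b d)$ via the Leibniz expansion and the intermediate value theorem (note $\det M(x)$ is real since $M(x)$ is Hermitian), and the matching upper bound comes from iterating Weyl interlacing according to the signs of the $d_i$ (using $k^0 = 0$), which caps the number of eigenvalues of $\wt H$ above $\lambda_N(H)$ at $k^+$ and below $\lambda_1(H)$ at $k^-$. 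Your route is more elementary and localizes the eigenvalue directly, so Proposition \ref{prop: permissible region} is in fact not needed for this step despite your opening sentence; the price is that you use the distinctness of the $d_i$ twice (to bound the non-critical diagonal factors away from zero and to keep the intervals disjoint) and need the separate interlacing count, whereas Rouch\'e counts zeros with multiplicity in one stroke. Two cosmetic points: the sign of $\det M$ at the endpoints agrees with that of $\msc(x)+d_i^{-1}$ only up to the fixed nonzero sign of $\prod_{j\neq i}\pb{\msc(x)+d_j^{-1}}$, which is constant on the interval, so the sign change survives; and the multiplicative form you write for the identity permutation is only valid at the endpoints, where $\absb{\msc(x)+d_i^{-1}} \asymp \varphi^{\wt C_3}N^{-1/2}$ dominates the $\varphi^{C_\zeta}N^{-1/2}$ error --- which is precisely where you use it.
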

\begin{proof}
Let $i \in \{1, \dots,k^+\}$ and pick a small $N$-independent positively oriented closed contour $\cal C \subset \C \setminus [-2,2]$ that encloses $\theta(d_i^+)$ but no other point of the set $\bigcup_{\sigma = \pm} \bigcup_{i = 1}^{k^\sigma} \h{\theta(d_i^\sigma)}$. By Proposition \ref{prop: permissible region}, it suffices to show that the interior of $\cal C$ contains precisely one eigenvalue of $\wt H$. Define
\begin{equation*}
f_N(z) \;\deq\; \det \pb{M(z) + D^{-1}} \,, \qquad g(z) \;\deq\; \det \pb{\msc(z) + D^{-1}}\,.
\end{equation*}
The functions $g$ and $f_N$ are holomorphic on and inside $\cal C$ (for large enough $N$). Moreover, by construction of $\cal C$, the function $g$ has precisely one zero inside $\cal C$, namely at $z = \theta(d_i^+)$. Next, we have
\begin{equation*}
\min_{z \in \cal C} \abs{g(z)} \;\geq\; c \;>\; 0\,, \qquad \abs{g(z) - f_N(z)} \;\leq\; \varphi^{C_\zeta} N^{-1/2}\,,
\end{equation*}
where the second inequality follows from \eqref{M outside}. The claim now follows from Rouch\'e's theorem. The eigenvalues near $\theta(d_i^-)$, $i = 1, \dots, k^-$, are handled similarly.
\end{proof}

Before moving on, we record the following result on rank-one deformations.
\begin{lemma} \label{lemma: limit rank one}
Let $\b v \in \C^k$ be nonzero. Then for all $i = 1, \dots, k - 1$ and all
Hermitian $k \times k$ matrices $A$ we have
\begin{equation*}
\lim_{d \to \infty} \lambda_i(A + d \b v \b v^*) \;=\; \lim_{d \to -\infty} \lambda_{i + 1}(A + d \b v \b v^*)\,.
\end{equation*}
\end{lemma}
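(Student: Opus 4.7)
The plan is to use the rank-one specialization of Lemma \ref{lemma: det identity}: after normalizing so that $\norm{\b v} = 1$, for $\lambda \notin \sigma(A)$ the number $\lambda$ is an eigenvalue of $A + d \, \b v \b v^*$ if and only if
\begin{equation*}
f(\lambda) \;\deq\; \scalarb{\b v}{(A - \lambda)^{-1}\b v} \;=\; -\frac{1}{d}.
\end{equation*}
I will analyse this scalar secular equation and track its roots as $d \to \pm \infty$.

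First I would reduce to the generic case in which $A$ has $k$ distinct eigenvalues $\alpha_1 < \cdots < \alpha_k$ and all the weights $c_j \deq \absb{\scalar{\b e^{(j)}}{\b v}}^2$ (where $\b e^{(j)}$ is the normalized eigenvector of $A$ associated to $\alpha_j$) are strictly positive. Indeed, if some $\b e^{(j)}$ is orthogonal to $\b v$, then the line $\C \b e^{(j)}$ is invariant under $A + d \, \b v \b v^*$ for every $d$, so $\alpha_j$ is a $d$-independent eigenvalue and the problem restricts to the orthogonal complement of $\b e^{(j)}$ in dimension $k-1$, allowing an induction on $k$ (the base case $k = 1$ is vacuous). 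If instead some eigenvalue of $A$ has multiplicity larger than one, one may rotate the orthonormal basis within the corresponding eigenspace so that $\b v$ has vanishing projection on all but one basis vector, producing an eigenvector of $A$ orthogonal to $\b v$; this reduces to the previous case.

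In the generic case, spectral decomposition gives
\begin{equation*}
f(\lambda) \;=\; \sum_{j = 1}^k \frac{c_j}{\alpha_j - \lambda}, \qquad f'(\lambda) \;=\; \sum_{j = 1}^k \frac{c_j}{(\alpha_j - \lambda)^2} \;>\; 0.
\end{equation*}
Hence $f$ is continuous and strictly increasing on each connected component of $\R \setminus \sigma(A)$; it maps $(-\infty, \alpha_1)$ bijectively onto $(0, \infty)$, it maps $(\alpha_k, \infty)$ bijectively onto $(-\infty, 0)$, and for each $j = 1, \dots, k - 1$ it maps $(\alpha_j, \alpha_{j+1})$ bijectively onto $\R$. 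In particular, each such interval contains a unique point $\mu_j^*$ with $f(\mu_j^*) = 0$.

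Finally I will take $d \to \pm\infty$, which correspond to $-1/d \to 0^\mp$. For $d \to +\infty$ the equation $f(\lambda) = -1/d$ has exactly one solution in each of the intervals $(\alpha_1, \alpha_2), \dots, (\alpha_{k-1}, \alpha_k), (\alpha_k, \infty)$ and none in $(-\infty, \alpha_1)$; listed in increasing order these solutions are $\lambda_1(A + d \, \b v \b v^*), \dots, \lambda_k(A + d \, \b v \b v^*)$. Continuity of $f$ shows that the solution in $(\alpha_j, \alpha_{j+1})$ tends to $\mu_j^*$, while the one in $(\alpha_k, \infty)$ tends to $+\infty$ (since $f(\lambda) \sim -\sum_j c_j / \lambda = -1/\lambda$ as $\lambda \to \infty$, the solution behaves like $d$). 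Hence $\lambda_i(A + d \, \b v \b v^*) \to \mu_i^*$ for $i = 1, \dots, k - 1$. The symmetric analysis for $d \to -\infty$ places one solution in $(-\infty, \alpha_1)$ tending to $-\infty$ and one solution in each $(\alpha_j, \alpha_{j+1})$ tending to $\mu_j^*$, which gives $\lambda_{i+1}(A + d \, \b v \b v^*) \to \mu_i^*$ for $i = 1, \dots, k - 1$. Comparing the two limits yields the claim. The main obstacle is the reduction to the generic case; once that is in place, the rest is a routine study of the monotone scalar function $f$.
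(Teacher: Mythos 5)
Your proof is correct, and your treatment of the generic case (all eigenvalues of $A$ simple and all weights $c_j = \absb{\scalar{\b e^{(j)}}{\b v}}^2$ positive) is essentially the same as the paper's: both reduce via Lemma \ref{lemma: det identity} to the monotone secular function $f(\lambda) = \scalar{\b v}{(A - \lambda)^{-1}\b v}$ and track the $k$ roots of $f(\lambda) = -1/d$ as $-1/d \to 0^{\mp}$. Where you genuinely diverge is in handling degenerate $A$. The paper approximates an arbitrary $A$ by matrices in the generic set $E$ and passes to the limit using the Lipschitz continuity of $A \mapsto \lambda_i(A + d\, \b v \b v^*)$, which is uniform in $d$ and therefore justifies interchanging the approximation limit with $d \to \pm\infty$. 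You instead observe that any non-generic $A$ admits an eigenvector orthogonal to $\b v$, split it off as a $d$-independent eigenvalue $\alpha_j$, and induct on the dimension. This is a sound alternative; the one step you elide is the re-insertion of the frozen eigenvalue $\alpha_j$ into the ordered spectrum, since its rank among the moving eigenvalues of the restricted problem depends on $d$. This is routine to repair: by Lemma \ref{lemma: interlacing} each restricted eigenvalue is monotone in $d$ and interlaces with the unperturbed spectrum, hence converges in $[-\infty, +\infty]$ as $d \to \pm\infty$; the ordered eigenvalues of the full matrix therefore converge to the sorted union of $\{\alpha_j\}$ with the restricted limits, and comparing the $i$-th entry of the $d \to +\infty$ list with the $(i+1)$-th entry of the $d \to -\infty$ list gives the claim. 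In exchange for this index bookkeeping, your route is purely algebraic and requires no perturbation estimate, whereas the paper's route avoids the bookkeeping at the cost of a (mild) limit-interchange argument.
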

\begin{proof}
By Lemma \ref{lemma: det identity}, we find that $x \notin \sigma(A)$ is an eigenvalue of $A + d \b v \b v^*$ if and only if
\begin{equation*}
\scalar{\b v}{(A - x)^{-1} \b v} \;=\; -\frac{1}{d}\,.
\end{equation*}
Let
\begin{equation*}
E \;\deq\; \hB{A \col \text{the eigenvalues of $A$ are distinct} \,,\, \scalar{\b v}{\b u^{(i)}(A)} \neq 0 \text{ for all } i}\,,
\end{equation*}
where $\b u^{(i)}(A)$ denotes the eigenvector of $A$ associated with the eigenvalue $\lambda_i(A)$. (Note that $\b u^{(i)}(A)$ is well-defined in $E$, since the eigenvalues are distinct.) It is not hard to see that $E^c$ is dense in the space of Hermitian matrices.

We write the condition $\scalar{\b v}{(A - x)^{-1} \b v} = - d^{-1}$ as
\begin{equation*}
f(x) \;\deq\; \sum_{i} \frac{\abs{\scalar{\b v}{\b u^{(i)}(A)}}^2}{\lambda_i(A) - x} \;=\; -\frac{1}{d}\,,
\end{equation*}
Let $A \in E$. Then $f$ has $k$ singularities at the eigenvalues of $H$, away from which we have $f' > 0$ . Moreover, $f(x) \uparrow 0$ as $x \uparrow \infty$, and $f(x) \downarrow 0$ as $x \downarrow - \infty$. Thus, for any $d \in \R \setminus \{0\}$, the equation $f(x) = - d^{-1}$ has exactly $k$ solutions in $\R \setminus \sigma(A)$. Since $A + d \b v \b v^*$ has at most $k$ distinct eigenvalues, this proves that $\sigma(A + d \b v \b v^*) \cap \sigma(A) = \emptyset$ for all $d \in \R$. Moreover, the equation $f(x) = 0$ has exactly $k - 1$ solutions, $x_1, \dots, x_{k - 1}$. Since $f'(x_i) > 0$ for each $i = 1, \dots, k - 1$, it is easy to see that $x_i = \lim_{d \to \infty} \lambda_i(A + d \b v \b v^*) = \lim_{d \to -\infty} \lambda_{i + 1}(A + d \b v \b v^*)$.

Now the claim follows by approximating an arbitrary matrix $A$ by matrices in $E$, and by using the Lipschitz continuity of the map $A \mapsto \lambda_i(A)$.
\end{proof}

We now deal with the extremal bulk eigenvalues.
\begin{proposition} \label{prop: intial bulk}
Fix $0 < \delta < 1/3$ and $\wt K > 0$. Let $\b d$ be $N$-independent and satisfy \eqref{condition for initial data}. Then for large enough $N$ (depending on $\delta$ and $\wt K$) we have for all $\alpha$ satisfying $\lambda_\alpha \geq 2 - \varphi^{\wt K} N^{-2/3}$ that
\begin{equation*}
\abs{\lambda_\alpha - \mu_{\alpha - k^+}} \;\leq\; N^{-1 + \delta}\,.
\end{equation*}
Similarly, we have for all $\alpha$ satisfying $\lambda_\alpha \leq -2 + \varphi^{\wt K} N^{-2/3}$ that
\begin{equation*}
\abs{\lambda_\alpha - \mu_{\alpha + k^-}} \;\leq\; N^{-1 + \delta}\,.
\end{equation*}
\end{proposition}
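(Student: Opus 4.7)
\emph{Stage 1 (Outliers).} By Proposition~\ref{prop: initial outliers}, each outlier interval $I^\pm_i(\b d)$ contains exactly one eigenvalue of $\wt H$. Since $\b d$ is $N$-independent and satisfies \eqref{condition for initial data} with $\abs{d_i}$ strictly bounded away from $1$, these intervals lie at positive $N$-independent distance from $[-2,2]$. Combining with Theorem~\ref{theorem: rigidity} and Lemma~\ref{lemma: interlacing} then identifies the outliers as precisely $\mu_1, \dots, \mu_{k^-}$ (left) and $\mu_{N - k^+ + 1}, \dots, \mu_N$ (right).

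\emph{Stage 2 (Fixed point near $\lambda_\alpha$).} For each $\alpha$ with $\lambda_\alpha \geq 2 - \varphi^{\wt K} N^{-2/3}$, I will use Lemma~\ref{lemma: det identity} to locate zeros of $\det M(\mu) \deq \det(V^* G(\mu) V + D^{-1})$ near $\lambda_\alpha$. Writing
\begin{equation*}
G(\mu) \;=\; G^{(\alpha)}(\mu) - \frac{\b u^{(\alpha)} (\b u^{(\alpha)})^*}{\mu - \lambda_\alpha}\,, \qquad G^{(\alpha)}(\mu) \;\deq\; \sum_{\gamma \neq \alpha} \frac{\b u^{(\gamma)} (\b u^{(\gamma)})^*}{\lambda_\gamma - \mu}\,,
\end{equation*}
and setting $\b w \deq V^* \b u^{(\alpha)} \in \C^k$, $\cal M(\mu) \deq V^* G^{(\alpha)}(\mu) V + D^{-1}$, I get $M(\mu) = \cal M(\mu) - \b w \b w^* / (\mu - \lambda_\alpha)$. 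The matrix determinant lemma yields
\begin{equation*}
\det M(\mu) \;=\; \frac{\det \cal M(\mu)}{\mu - \lambda_\alpha} \pB{\mu - \lambda_\alpha - \b w^* \cal M(\mu)^{-1} \b w}\,.
\end{equation*}
Granting uniform invertibility of $\cal M(\mu)$ with $\norm{\cal M(\mu)^{-1}} = O(1)$ on $[\lambda_\alpha - N^{-1+\delta}, \lambda_\alpha + N^{-1+\delta}]$, Theorem~\ref{theorem: delocalization} provides $\norm{\b w}^2 \leq k \varphi^{C_\zeta} N^{-1}$, whence $\abs{\b w^* \cal M(\mu)^{-1} \b w} \leq C \varphi^{C_\zeta}/N$. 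The equation $\mu - \lambda_\alpha = \b w^* \cal M(\mu)^{-1} \b w$ is then a contraction on this neighbourhood with unique fixed point $\mu^*_\alpha$ satisfying $\abs{\mu^*_\alpha - \lambda_\alpha} \leq N^{-1+\delta}/2$ for $N$ large.

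\emph{Stage 3 (Identifying labels).} I would apply Proposition~\ref{prop: permissible region} with $\wt\psi = 2 N^{1/3 - \delta/2}$; this is admissible because $\b d \in \cal D(\wt C_2)$ for $N$ large, since $\absb{\abs{d_i} - 1}$ is bounded below by an $N$-independent constant. Every eigenvalue of $\wt H$ in $S(\wt K)$ is then either an outlier (counted in Stage~1) or lies within $N^{-1+\delta/2}/2$ of some $\lambda_\gamma$. Theorem~\ref{theorem: rigidity} ensures that consecutive $\lambda_\gamma$'s in the right edge region are separated by $\gtrsim \varphi^{-\wt K/2} N^{-2/3} \gg N^{-1+\delta/2}$, so the closest-$\lambda$ map is one-to-one on non-outlier eigenvalues. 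Since Stage~2 produces one eigenvalue $\mu^*_\alpha \in \sigma(\wt H)$ near each $\lambda_\alpha$ in the edge region, monotonicity of eigenvalues combined with Stage~1 forces $\mu^*_\alpha = \mu_{\alpha - k^+}$, proving the claim. The left-edge statement follows by symmetry.

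The main obstacle will be establishing the uniform invertibility of $\cal M(\mu)$ assumed in Stage~2. For $\mu = \lambda_\alpha + \ii \eta$ with $\eta > 0$ small, the isotropic local semicircle law (Theorem~\ref{thm: ILSC}) gives $V^* G(\mu) V = \msc(\mu) I + O(\varphi^{C_\zeta} \Psi(\mu))$. Taking the $\eta \to 0^+$ limit of the real part, via $V^* G^{(\alpha)}(\lambda_\alpha) V = \lim_{\eta \to 0^+} \re V^* G(\lambda_\alpha + \ii \eta) V$, yields $\cal M(\lambda_\alpha) \approx -(\lambda_\alpha/2) I + D^{-1}$. Because $\b d$ is $N$-independent with $\abs{d_i}$ bounded away from $1$ and $\lambda_\alpha \approx 2$, the diagonal entries of the limit matrix are bounded away from zero by an $N$-independent constant, giving $\norm{\cal M(\lambda_\alpha)^{-1}} = O(1)$. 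Extending this bound to the full $N^{-1+\delta}$-neighbourhood will follow from boundedness of $\cal M'(\mu)$ there.
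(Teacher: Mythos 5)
Your decomposition $M(\mu) = \cal M(\mu) - \b w \b w^*/(\mu - \lambda_\alpha)$ and the resulting fixed-point equation do produce one eigenvalue of $\wt H$ near $\lambda_\alpha$ \emph{provided $\lambda_\alpha$ is well isolated from the other eigenvalues of $H$}; this is essentially the simplified sketch the paper itself gives at the start of its proof. The genuine gap is that this isolation cannot be guaranteed. Theorem \ref{theorem: rigidity} gives only an \emph{upper} bound on $\abs{\lambda_\alpha - \gamma_\alpha}$; it gives no lower bound on the gap $\lambda_{\alpha+1} - \lambda_\alpha$, so your Stage~3 claim that consecutive eigenvalues in the edge region are separated by $\gtrsim \varphi^{-\wt K/2} N^{-2/3}$ does not follow from anything in the paper, and no level-repulsion estimate is available at this precision. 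If some $\lambda_\beta$, $\beta \neq \alpha$, lies within $N^{-1+\delta}$ (or even within $N^{-1}$) of $\lambda_\alpha$, then $\cal M(\mu)$ has a pole at $\lambda_\beta$ inside your window, the bound $\norm{\cal M(\mu)^{-1}} = O(1)$ fails, the fixed-point map is not a contraction, and the ``closest-$\lambda$'' map of Stage~3 is not injective. The same issue undermines the $\eta \to 0^+$ limit at the end: the isotropic law controls $G$ only for $\eta \geq \varphi^{C} N^{-1}$, and passing from $\eta \sim N^{-1}$ to $\eta = 0$ changes $\re G_{\b v \b v}$ by an uncontrolled amount precisely when another eigenvalue sits within distance $\eta$ of $\lambda_\alpha$.

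This near-degeneracy is exactly the difficulty the paper's proof is organized around: it partitions the edge eigenvalues of $H$ into blocks $A_q$ of consecutive eigenvalues whose mutual gaps are below $N^{-1+\delta'}$, shows each such block has at most $\varphi^{C}$ elements, and then proves --- via a continuity/winding argument for the eigenvalues of $M^\epsilon(x^q_t)$ viewed on the real projective line, combined with interlacing --- that the $N^{-1+\delta'}/3$-fattened hull of each block contains \emph{exactly} $\abs{A_q}$ eigenvalues of $\wt H$. Because the blocks have small diameter, this yields the claimed $N^{-1+\delta}$ bound without ever matching an individual $\mu$ to an individual $\lambda_\alpha$ inside a block. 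To repair your argument you would need to replace the single fixed point by a count of the zeros of $\det M$ over a whole cluster of eigenvalues, which is essentially the paper's argument.
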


\begin{proof}
We only prove the first statement; the proof of the second one is almost identical.
Abbreviate $\delta' \deq \delta / 2$.

Before embarking on the full proof, we first give a sketch of its main idea, under some simplifying assumptions. Let $A \in \N$ be some fixed constant, and assume that, for each $\alpha \geq N - A$,  the neighbours of $\lambda_\alpha$ are further than $N^{-1 + \delta'}$ away from $\lambda_\alpha$. (This assumption in fact holds with probability $1 - o(1)$, a fact we shall neither use nor prove.) We claim that there is \emph{at least} one eigenvalue of $\wt H$ in the interval $[x_-^\alpha, x_+^\alpha]$ surrounding $\lambda_\alpha$, where
\begin{equation*}
x_\pm^\alpha \;\deq\; \lambda_\alpha \pm N^{-1 + \delta'} / 3\,.
\end{equation*}
Before sketching the proof of the above claim, we show how to use it to conclude the argument. By Proposition \ref{prop: initial outliers}, there are at least $k^+$ eigenvalues in $(x_+^N, \infty)$. Recall that by assumption $k^0 = 0$, i.e.\ $\abs{d_i} > 1$ for all $i$. Therefore using interlacing, i.e.\ a repeated application of Lemma \ref{lemma: interlacing}, we conclude that there are exactly $k^+$ eigenvalues in $(x_+^N, \infty)$. From the above claim we find that there is at least one eigenvalue in $[x_-^N, x_+^N]$. Using interlacing we find that there are at most $k^+ + 1$ eigenvalues in $[x_-^N, \infty)$. We conclude that there is exactly one eigenvalue in $[x_-^N,x_+^N]$. We may move on to the $(N - 1)$-th eigenvalue: we have proved that there are (i) at least $k^+ + 1$ eigenvalues in $[x_-^N,\infty)$ (from the previous step), (ii) at least one eigenvalue in $[x_-^{N - 1}, x_+^{N - 1}]$ (from the claim), and (iii) at most $k^+ + 2$ eigenvalues in $[x_-^{N - 1}, \infty)$ (from interlacing); we conclude that there is exactly one eigenvalue in $[x_-^{N - 1}, x_+^{N - 1}]$. Continuing in this fashion concludes the proof.

Let us now complete the sketch of the proof of the above claim. Assume for simplicity that $H$ and $\wt H$ have no common eigenvalues. From Lemma \ref{lemma: det identity} we find that $x$ is an eigenvalue of $\wt H$ if and only if the matrix $M(x)$, defined in \eqref{definition of M}, is singular. Thus, we have to prove that there is an $x \in [x_-^\alpha, x_+^\alpha]$ such that $M(x)$ is singular. The idea of the argument is to do a spectral decomposition of $G$, and resum all terms not associated with $\lambda_\alpha$ to get something close to $\re m(x) \approx -1$. More precisely, we write
\begin{align*}
M_{ij}(x) &\;=\;
\frac{\scalar{\b v^{(i)}}{\b u^{(\alpha)}} \scalar{\b u^{(\alpha)}}{\b v^{(j)}}}{\lambda_\alpha - x}
+ \sum_{\beta \neq \alpha} \frac{\scalar{\b v^{(i)}}{\b u^{(\beta)}} \scalar{\b u^{(\beta)}}{\b v^{(j)}}}{\lambda_\beta - x} + \delta_{ij} d_i^{-1}
\\
&\;\approx\; \frac{\scalar{\b v^{(i)}}{\b u^{(\alpha)}} \scalar{\b u^{(\alpha)}}{\b v^{(j)}}}{\lambda_\alpha - x} + \re m(x) \delta_{ij} + \delta_{ij} d_i^{-1}\,,
\end{align*}
where the sum over $\beta$ was replaced with $\re m(x) \delta_{ij}$ (up to negligible error terms). This approximation will be justified using Theorems \ref{thm: ILSC} and \ref{theorem: delocalization}; it uses that $x \in [x_-^\alpha, x_+^\alpha]$ and consequently all eigenvalues $\lambda_\beta$, $\beta \neq \alpha$, are separated from $x$ by at least $N^{-1 + \delta} / 3$. Introducing the vector $\b y  = (y_i) \in \C^k$, defined by $y_i \deq \scalar{\b v^{(i)}}{\b u^{(\alpha)}}$, we therefore get
\begin{equation} \label{approx M}
M(x) \;\approx\; \frac{\b y \b y^*}{\lambda_\alpha - x} - \umat + D^{-1}\,,
\end{equation}
where we used that $\re m(x) \approx -1$. By assumption, $\abs{d_i} > 1$ for all $i$; therefore the matrix $- \umat + D^{-1}$ is strictly negative. Also, Theorem \ref{theorem: delocalization} implies that $\abs{y_i} \leq \varphi^{C_\zeta} N^{-1/2}$. Thus it is easy to conclude that all eigenvalues of $M(x_-^\alpha)$ are negative. The first term on the right-hand side of \eqref{approx M} is a rank-one matrix. As $x$ approaches $\lambda_\alpha$ from the left, its nonzero eigenvalue tends to $+\infty$. By continuity, there must therefore exist an $x \in [x_-^\alpha, \lambda_\alpha)$ such that $M(x)$ is singular. This concludes the sketch of the proof of the claim.

Now we turn towards the detailed proof in the general case. Since eigenvalues of $H$ may be separated by less than $N^{-1 + \delta'}$, we begin by clumping together eigenvalues of $H$ which are separated by less than $N^{-1 + \delta'}$. More precisely, we construct a partition $\cal A = (A_q)_q$ of $\{1, \dots, N\}$, defined as the finest partition in which $\alpha$ and $\beta$ belong to the same block if $\abs{\lambda_\alpha - \lambda_\beta} \leq N^{-1 + \delta'}$. Thus, each block consists of a sequence of consecutive integers. We order the blocks of $\cal A$ in a ``decreasing'' fashion, in such a way that if $q < r$ then $\lambda_\alpha > \lambda_\beta$ for all $\alpha \in A_q$ and $\beta \in A_r$.

We now derive a bound on the size of the blocks near the edge. Roughly, we shall show that if $\lambda \in A_q$ and $\lambda \geq 2 - \varphi^{C} N^{-2/3}$ then $\abs{A_q} \leq \varphi^{C'}$. Let $C_4$ be a large constant to be chosen later. Now choose $\alpha$ and $\beta$ satisfying $0 \leq \alpha \leq \beta \leq \varphi^{C_4}$ such that $N - \alpha$ and $N - \beta$ belong to the same block. Then by definition of $\Xi$ and $\cal A$ we have
\begin{equation*}
c \qB{(\beta / N)^{2/3} - (\alpha / N)^{2/3}} - \varphi^{C_\zeta} N^{-2/3} \;\leq\; \lambda_{N - \alpha} - \lambda_{N - \beta} \;\leq\; (\beta - \alpha) N^{-1 + \delta'}\,,
\end{equation*}
where we used the statement of Theorem \ref{theorem: rigidity} and the definition \eqref{classical location}. Thus we get the condition
\begin{equation*}
N^{-2/3} \qB{c \beta^{-1/3} (\beta - \alpha) - \varphi^{C_\zeta}} \;\leq\; N^{-1+ \delta'}(\beta - \alpha)\,.
\end{equation*}
We conclude that if $\alpha$ and $\beta$ satisfy $0 \leq \alpha \leq \beta \leq \varphi^{C_4}$ and $N - \alpha$ and $N - \beta$ belong to the same block, then
\begin{equation} \label{size of block}
\beta - \alpha \;\leq\; \varphi^{C_\zeta + C_4/3 + 1}\,.
\end{equation}

Let $\alpha_*$ denote the largest integer such that $\lambda_{N - \alpha_*} \geq 2 - \varphi^{\wt K}N^{-2/3}$. In particular, by definition of $\Xi$ (see Theorem \ref{theorem: rigidity}) we have
\begin{equation} \label{bound on alpha star}
\alpha_* \;\leq\; \varphi^{3\wt K/2 + C_\zeta}\,.
\end{equation}
Now we choose $C_4 \equiv C_4(\zeta,\wt K)$ large enough that
\begin{equation*}
C_4 \;\geq\; \max \pB{3 \wt K/2 + C_\zeta\,,\, C_\zeta + C_4/3 + 1} + 2\,.
\end{equation*}
Next, define $Q$ through $N - \alpha_* \in A_Q$. Therefore we get from \eqref{size of block} and \eqref{bound on alpha star} that any $\alpha \leq \varphi^{C_4}$ such that $N - \alpha \in A_Q$ satisfies
\begin{equation*}
\alpha \;\leq\; \alpha^* + \varphi^{C_\zeta + C_4 / 3 + 1} \;\leq\; \varphi^{C_4 - 1}\,.
\end{equation*}
Since blocks are contiguous, we conclude that
\begin{equation} \label{estimate on size of Aq}
\abs{A_q} \;\leq\; \varphi^{C_4 - 1}\,.
\end{equation}
for each $q = 1, \dots, Q$. Moreover, by definition of $\Xi$ (see Theorem \ref{theorem: rigidity}), we find
\begin{equation*}
\abs{\lambda_{N - \alpha} - 2} \;\leq\; \varphi^{2 C_4 / 3 + C_\zeta} N^{-2/3}\,.
\end{equation*}
for all $q = 1, \dots, Q$ and all $\alpha$ such that $N - \alpha \in A_q$.

Now we are ready for the main argument. Pick $q \in \{1, \dots, Q\}$ and abbreviate
\begin{equation*}
a^q \deq \min_{\alpha \in A_q} \lambda_\alpha \,, \qquad
b^q \deq \max_{\alpha \in A_q} \lambda_\alpha \,.
\end{equation*}
We introduce the path
\begin{equation*}
x_t^q \;\deq\; a^q - N^{-1 + \delta'}/3 + \pb{b^q - a^q + 2 N^{-1 + \delta'}/3} \, t\,, \qquad (t \in [0,1])\,,
\end{equation*}
which will serve to count eigenvalues. (Note that $x_0^q = a^q - N^{-1 + \delta'}/3$ and $x_1^q = b^q + N^{-1 + \delta'}/3$.) The interval $[x_0^q, x_1^t]$ contains precisely those eigenvalues of $H$ that are in $A_q$, and its endpoints $x_0^q$ and $x_1^q$ are at a distance greater than $N^{-1 + \delta'}/3$ from any eigenvalue of $H$. Thus, $[x_0^q, x_1^t]$ is the correct generalization of the interval $[x_-^\alpha, x_+^\alpha]$ from the sketch given at the beginning of this proof.

In order to avoid problems with exceptional events, we add some randomness to $D$. Recall that $D$ satisfies \eqref{condition for initial data}. Let $\Delta$ be a $k \times k$ Hermitian random matrix whose upper triangular entries are independent and have an absolutely continuous law supported in the unit disk. For $\epsilon > 0$ define
\begin{equation*}
\wt H^\epsilon  \;\deq\; H + V (D^{-1} + \epsilon \Delta)^{-1} V^*\,.
\end{equation*}
From now on we use ``almost surely'' to mean almost surely with respect to the randomness of $\Delta$.
Our main goal is to prove that for each $\epsilon > 0$, almost surely, there are at least $\abs{A_q}$ eigenvalues of $\wt H^\epsilon$ in $\q{x_0^q, x_1^q} \setminus \sigma(H)$. (Having done this, we shall deduce, by taking $\epsilon \to 0$, that $\wt H$ has at least $\abs{A_q}$ eigenvalues in $[x_0^q, x_1^q]$.)

For $x \notin \sigma(H)$ define
\begin{equation*}
M_{ij}^\epsilon(x) \;\deq\; G_{\b v^{(i)} \b v^{(j)}}(x) + \delta_{ij} d_i^{-1} + \epsilon \Delta_{ij} \qquad (i,j = 1, \dots, k)\,.
\end{equation*}
Then (assuming $x \notin \sigma(H)$) we know that $x \in \sigma(\wt H^\epsilon)$ if and only if $M^\epsilon(x)$ is singular. Split
\begin{equation*}
G_{\b v^{(i)} \b v^{(j)}}(x) \;=\; \sum_{\alpha \in A_q} \frac{\scalar{\b v^{(i)}}{\b u^{(\alpha)}} \scalar{\b u^{(\alpha)}}{\b v^{(j)}}}{\lambda_\alpha - x} + \sum_{\alpha \notin A_q} \frac{\scalar{\b v^{(i)}}{\b u^{(\alpha)}} \scalar{\b u^{(\alpha)}}{\b v^{(j)}}}{\lambda_\alpha - x}\,.
\end{equation*}
Let $x \in [x_0^q, x_1^q]$.
Similarly to the proof of \eqref{M in gap}, we choose $\eta \deq N^{-1 + \delta'}$ and estimate
\begin{multline*}
\absBB{\sum_{\alpha \notin A_q} \frac{\scalar{\b v^{(i)}}{\b u^{(\alpha)}} \scalar{\b u^{(\alpha)}}{\b v^{(j)}}}{\lambda_\alpha - x}
-
\sum_{\alpha \notin A_q} \frac{\scalar{\b v^{(i)}}{\b u^{(\alpha)}} \scalar{\b u^{(\alpha)}}{\b v^{(j)}}}{\lambda_\alpha - x - \ii \eta}
}
\\
\leq\; 2\pB{\im G_{\b v^{(i)} \b v^{(i)}}(x + \ii \eta) + \im G_{\b v^{(j)} \b v^{(j)}}(x + \ii \eta)}\,,
\end{multline*}
where we used that $\abs{x - \lambda_\alpha} \geq 2 N^{-1 + \delta'}/3$ for $\alpha \notin A_q$. Moreover,
\begin{equation*}
\absBB{\sum_{\alpha \in A_q} \frac{\scalar{\b v^{(i)}}{\b u^{(\alpha)}} \scalar{\b u^{(\alpha)}}{\b v^{(j)}}}{\lambda_\alpha - x - \ii \eta}} \;\leq\; \varphi^{C_\zeta + C_4} N^{- \delta'}\,,
\end{equation*}
where we used \eqref{size of block} and the definition of $\Xi$ (see Theorem \ref{theorem: delocalization}). Estimating $G_{\b v^{(i)} \b v^{(j)}}(x + \ii \eta) - \msc(x + \ii \eta)$ therefore yields, similarly to \eqref{M in gap},
\begin{equation*}
M_{ij}^\epsilon(x) \;=\; \sum_{\alpha \in A_q} \frac{\scalar{\b v^{(i)}}{\b u^{(\alpha)}} \scalar{\b u^{(\alpha)}}{\b v^{(j)}}}{\lambda_\alpha - x} - \delta_{ij} + \delta_{ij} d_i^{-1} + \epsilon \Delta_{ij} + O \pb{\varphi^{C_\zeta + C_4} N^{- \delta'/2}}\,.
\end{equation*}
Introducing the vector
\begin{equation*}
\b y^{(\alpha)} \;=\; (y^{(\alpha)}_i)_{i = 1}^k\,, \qquad y^{(\alpha)}_i \;\deq\; \scalar{\b v^{(i)}}{\b u^{(\alpha)}}\,,
\end{equation*}
we get
\begin{equation}  \label{main estimate for M}
M^\epsilon(x) \;=\; \sum_{\alpha \in A_q} \frac{\b y^{(\alpha)} (\b y^{(\alpha)})^*}{\lambda_\alpha - x} - \umat + D^{-1} + \epsilon \Delta + R(x)\,, \qquad 
R(x) \;=\; O \pb{\varphi^{C_\zeta + C_4} N^{- \delta'/2}}\,,
\end{equation}
where $R(x)$ is continuous in $x$ and independent of $\Delta$. Compare this to \eqref{approx M} in the sketch given at the beginning of the proof. By Theorem \ref{theorem: delocalization}, for $\alpha \in A_q$ we have
\begin{equation} \label{associated estiamte for y}
\abs{y^{(\alpha)}_i} \;=\; O \pb{\varphi^{C_\zeta} N^{-1/2}}\,.
\end{equation}

We may now start the counting of the eigenvalues of $\wt H$ in $[x_0^q, x_1^q]$. We have to prove that there are at least $L \deq \abs{A_q}$ distinct points $x$ in $[x_0^q, x_1^q]$ at which $M^\epsilon(x)$ has a zero eigenvalue. As in the simple continuity argument given in the sketch at the beginning of this proof, we shall make use of continuity. However, having to find $L$ such values $x$ instead of just one is a significant complication\footnote{This complication is also visible in the joint arrangement of the eigenvalues of $H$ and $\wt H$. If all eigenvalues of $H$ are well-separated (by at least $N^{-1 + \delta'}$) then, as outlined in the sketch at the beginning of the proof, each eigenvalue $\lambda_\alpha$ of $H$ has an associated eigenvalue of $\wt H$, which lies in the interval $[\lambda_\alpha - N^{-1 + \delta'}/3, \lambda_\alpha)$. In fact, this eigenvalue typically lies at a distance $N^{-1}$ to the left of $\lambda_\alpha$, as follows from \eqref{approx M} and the fact that the typical size of $\b y$ is $N^{-1/2}$. However, if two eigenvalues of $H$ are closer than $N^{-1}$, this simple ordering breaks down. In general, therefore, all we can say about the eigenvalues of $\wt H$ associated with the eigenvalues of $H$ in $A_q$ is that they are close to the \emph{group} $\{\lambda_\alpha\}_{\alpha \in A_q}$. Since the diameter of this group is small (see \eqref{diameter of the group} below), this will be enough.}. Before coming to the full counting argument, we give a sketch of its main idea. See Figure \ref{fig: beads} for a graphical depiction of this sketch. We extend the real line $\R$, on which the eigenvalues of $M^\epsilon(x)$ reside, to the real projective line $\ol \R = \R \cup \{\infty\} \cong S^1$. One can think of $\ol \R$ as a ring with two distinguished points, $0$ at the bottom and $\infty$ at the top. Thanks to Lemma \ref{lemma: limit rank one}, it is possible to label the $k$ eigenvalues of $M^\epsilon(x_t^q)$ so that they are continuous $\ol \R$-valued functions (denoted by $\tilde e_1^\epsilon(t), \dots, \tilde e_k^\epsilon(t)$ below) on $[0,1]$. Thus, we get a family of $k$ beads moving continuously counterclockwise on a ring. At $t = 0$, the eigenvalues are all strictly negative (and finite), i.e.\ all beads lie in the left half of the ring. As $t$ is continuously increased from $0$ to $1$, the beads move counterclockwise around the ring. Our goal is to count the number of times $0$ is hit by a bead. Thanks to the explicit form of the first term on the right-hand side of \eqref{main estimate for M}, we know that the point $\infty$ is hit exactly $L$ times as $t$ ranges from $0$ to $1$. Since at time $t = 0$ all beads were in the left half of the ring, and since the beads move continuously counterclockwise, we conclude by continuity that $0$ is hit at least $L$ times as $t$ ranges from $0$ to $1$. Below, we denote the times at which $\infty$ is hit by $s_1, \dots, s_L$, and the times at which $0$ is hit by $t_1, \dots, t_L$. One nuisance we have to deal with in the proof is the possibility of several beads crossing one of the two points $0$ or $\infty$ simultaneously. Such events are not admissible for our counting. For instance, if at time $t$ a bead is at $0$ while another is at $\infty$, we cannot conclude that $x_t^q$ is an eigenvalue of $\wt H$; indeed, because there is a bead at $\infty$, we know that $x_t^q$ is an eigenvalue of $H$, and hence Lemma \ref{lemma: det identity} is not applicable. However, such pathological events almost surely do not occur. Avoiding them was the reason for introducing $\Delta$. Note that the final result of the counting argument -- the number of eigenvalues of $\wt H^\epsilon$ in $[x_0^q, x_1^q]$ -- is stable under the limit $\epsilon \to 0$. This will allow us to conclude the proof.

\begin{figure}[ht!]
\begin{center}
\includegraphics{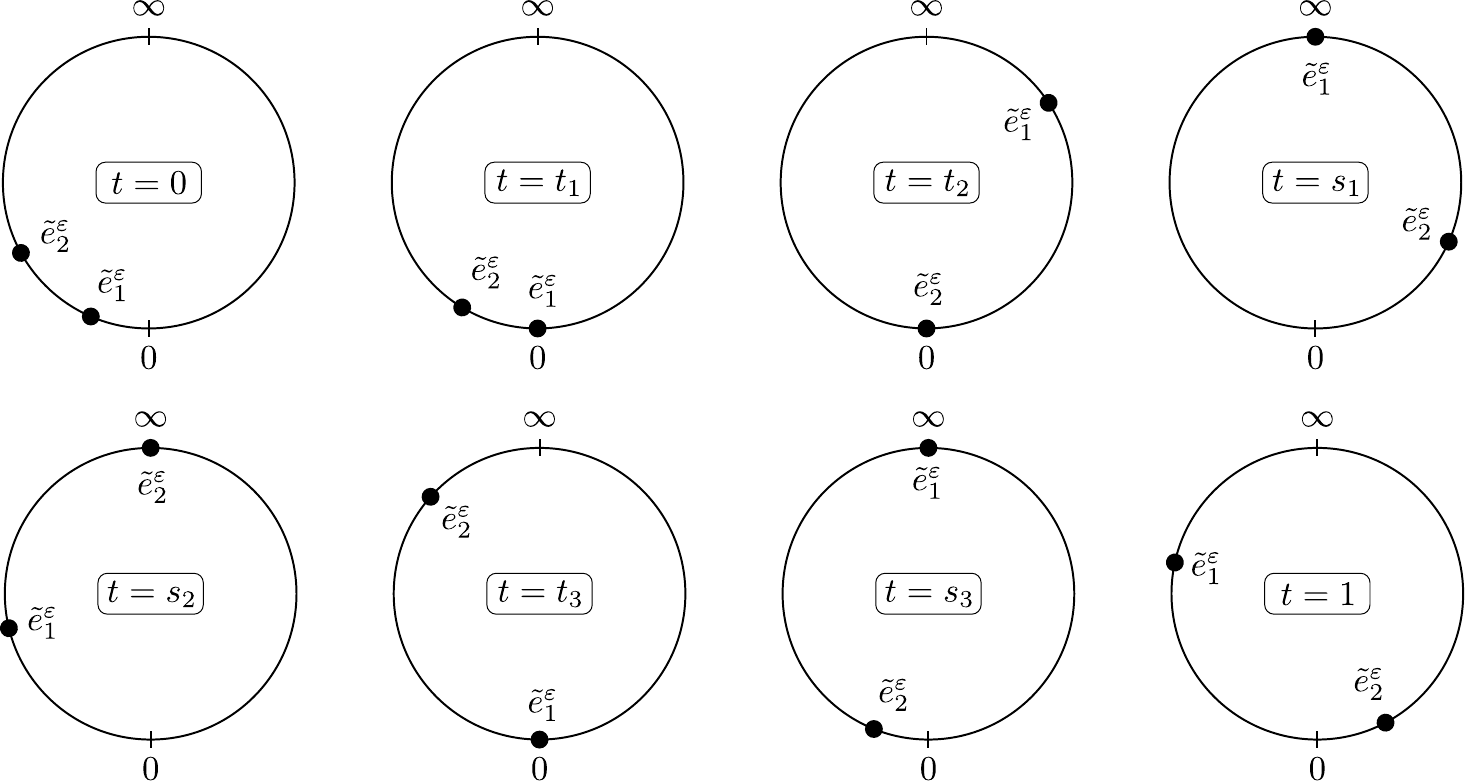}
\end{center}
\caption{A graphical representation of the movement of the eigenvalues (or ``beads'') $\tilde e_1^\epsilon(t), \tilde e_2^\epsilon(t)$ of $M^\epsilon(x^q_t)$ as $t$ ranges from $0$ to $1$. In this example we have $L = 3$, $k = 2$, and $0 < t_1 < t_2 < s_1 < s_2 < t_3 < s_3 < 1$. \label{fig: beads}}
\end{figure}

Now we give the full proof.
Recall that $\abs{d_i} > 1$ is independent of $N$ for all $i$. Thus we get from \eqref{main estimate for M} and \eqref{associated estiamte for y} that, for large enough $N$ and small enough $\epsilon$, all eigenvalues of $M^\epsilon(x_0^q)$ are negative.  (Here we used that $\abs{\lambda_\alpha - x_0^q} \geq N^{-1 + \delta'} / 3$ for $\alpha \in A_q$.) We shall vary $t$ continuously from $0$ to $1$ and count the number of eigenvalues crossing the origin. Let $L \deq \abs{A_q}$ and denote by
\begin{equation*}
0 \;<\; s_1 \;<\; s_2 \;<\; \cdots \;<\; s_L \;<\; 1
\end{equation*}
the values of $t$ at which $x_t^q \in \sigma(H)$. (Recall that the eigenvalues of $H$ are distinct.) It is also convenient to write $s_0 = 0$ and $s_{L + 1} = 1$. For $t \in [0,1] \setminus \{s_1, \dots s_L\}$, let
\begin{equation*}
e_1^\epsilon(t) \;\leq\; e_2^\epsilon(t) \;\leq\; \cdots \;\leq\; e_k^\epsilon(t)
\end{equation*}
denote the ordered eigenvalues of $M^\epsilon(x_t^q)$. We record the following fundamental properties of $e_1^\epsilon(t), \dots, e_k^\epsilon(t)$.
\begin{enumerate}
\item
For all $i = 1, \dots, k$, we have $e_i^\epsilon(0) < 0$ for $N$ large enough and $\epsilon$ small enough (depending on $N$).
\item
For every $\ell = 0, \dots, L$ and $i = 1, \dots, k$, the function $e_i^\epsilon$ is continuous on $(s_\ell, s_{\ell + 1})$.
\item
At each singular point $s_\ell$, $\ell = 1, \dots, L$, we have
\begin{equation*}
e_i^\epsilon (s_\ell^-) \;=\; e_{i+1}^\epsilon (s_\ell^+) \qquad (i = 1, \dots k - 1)\,.
\end{equation*}
(In particular, both one-sided limits exist.)
\end{enumerate}
Property (i) was proved after \eqref{associated estiamte for y}. Property (ii) follows from \eqref{main estimate for M}. Property (iii) follows from Lemma \ref{lemma: limit rank one}, using \eqref{main estimate for M} and the fact that $R(x)$ is continuous.

Moreover, the two following claims are true almost surely.
\begin{itemize}
\item[(a)]
For each $\ell = 1, \dots, L$ and $i = 1, \dots, k - 1$ we have $e^\epsilon_i(s_\ell^-) \neq 0$. (The remaining index $k$ satisfies $e_k^\epsilon(s_\ell^-) = + \infty$.)
\item[(b)]
If $e_i^\epsilon(t) = 0$ for some $t \in [0,1] \setminus \{s_1, \dots, s_L\}$ then $e_j^\epsilon(t) \neq 0$ for all $j \neq i$.
\end{itemize}
In terms of beads $\tilde e_1^\epsilon(t), \dots, \tilde e_k^\epsilon(t) \in \ol R$ (see below), the properties (a) and (b) can be informally summarized as: (a) if a bead is at $\infty$ then there is no bead at $0$, (b) at most one bead is at $0$. We omit the standard\footnote{The ``standard'' arguments rely on the fact that the set of singular Hermitian matrices is an algebraic variety of codimension one. In addition, the proof of (a) requires the following fact. Let $P$ be a rank-one orthogonal projector on $\C^k$ and $A$ a Hermitian $k \times k$ matrix; then, as $x \to \pm \infty$, exactly $k - 1$ eigenvalues of the matrix $A + x P$ converge, and their limits coincide with the eigenvalues of $A$ restricted to a map from $\ker P$ to $\ker P$. The proof of (b) uses that the set of Hermitian matrices with multiple eigenvalues at zero is an algebraic variety of codimension two.} proofs of (a) and (b), which rely on the fact that the law of $\Delta$ is absolutely continuous.

In order to conclude our main argument, it is convenient to regard the eigenvalues $e^\epsilon_1(t), \dots, e^\epsilon_k(t)$ as elements of $\ol \R = \R \cup \{\infty\} \cong S^1$, the real projective line. From properties (ii) - (iii), it is apparent that we may rearrange the eigenvalues of $M^\epsilon(x^q_t)$ as $\tilde e_1^\epsilon(t), \dots, \tilde e_k^\epsilon(t) \in \ol R$ and extend them to functions (``beads'') on whole interval $[0,1]$ in such a way that, almost surely, each $\tilde e_i^\epsilon$ is a continuous $\ol \R$-valued function  on $[0,1]$. 

We now claim the following.
\begin{itemize}
\item[$(*)$]
Almost surely, there are $L$ distinct times $t_1 < t_2 < \cdots < t_L \in [0,1] \setminus \{s_1, \dots, s_L\}$ such that for each $\ell = 1, \dots, L$ there is an $i = 1, \dots, k$ with $\tilde e_i^\epsilon(t_\ell) = 0$.
\end{itemize}
Let us prove $(*)$. Let $n_i \in \N$ denote the number of times that $\tilde e_i^\epsilon$ hits $\infty$ as $t$ ranges from $0$ to $1$. From \eqref{main estimate for M} we find that $\sum_{i = 1}^k n_i = L$ (recall that the eigenvalues of $H$ are distinct). Moreover, again from \eqref{main estimate for M}, we find that each such passage of $\infty$ by $\tilde e_i^\epsilon$ always takes place in the same direction, namely from the positive reals to the negative reals with $t$ increasing. More precisely, if $\tilde e_i^\epsilon(t_*) = \infty$ then there is a neighbourhood $I \ni t_*$ such that for all $t \in I$ we have
\begin{equation*}
\tilde e_i^\epsilon(t) \;\in\; \R_+ \quad \text{for} \quad t < t_* \qquad \text{and} \qquad \tilde e_i^\epsilon(t) \;\in\; \R_- \quad \text{for} \quad t > t_*.
\end{equation*}
Since at time zero we have $\tilde e_i^\epsilon(0) \in \R_-$ (see Property (i) above) we conclude that $\tilde e_i^\epsilon$ has at least $n_i$ distinct zeros. (Recall that $n_i$ was defined as the number of times $\wt e_i^\epsilon$ hits $\infty$.) Moreover, by Property (a), the zeros $\tilde e_i^\epsilon$ are almost surely in $[0,1] \setminus \{s_1, \dots, s_L\}$. By Property (b), the zeros of $e_1^\epsilon, \dots, e_k^\epsilon$ are almost surely disjoint. Since $\sum_{i = 1}^k n_i = L$, the claim $(*)$ follows.

From $(*)$ we conclude that, almost surely, $M^\epsilon(x)$ is singular in at least $L$ points in the set $[x_0^q, x_1^q] \setminus \sigma(H)$. Therefore $\wt H^\epsilon$ has almost surely at least $L$ eigenvalues in $[x_0^q, x_1^q]$. Taking $\epsilon \to 0$, we find that $\wt H$ has at least $L = \abs{A_q}$ eigenvalues in $[x_0^q, x_1^q]$.

What remains is to prove that $\wt H$ has at most $\abs{A_q}$ eigenvalues in $[x_0^q, x_1^q]$. We prove this using interlacing, similarly to the corresponding argument given in the sketch at the beginning of the proof. Together with Proposition \ref{prop: initial outliers}, we have proved that there are at least $\abs{A_1} + k^+$ eigenvalues of $\wt H$ in $[x_0^1, \infty)$. By interlacing (i.e.\ a repeated application of Lemma \ref{lemma: interlacing}), we find that there are at most $\abs{A_1} + k^+$ eigenvalues of $\wt H$ in $[x_0^1, \infty)$. We deduce, again using Proposition \ref{prop: initial outliers}, that there are exactly $\abs{A_1}$ eigenvalues of $\wt H$ in $[x_0^1, x_1^1]$.

We have proved that there are at least $\abs{A_1} + \abs{A_2} + k^+$ eigenvalues of $\wt H$ in $[x_0^2, \infty)$. Using eigenvalue interlacing, we find that there are at most $\abs{A_1} + \abs{A_2} + k^+$ eigenvalues of $\wt H$ in $[x_0^2, \infty)$.  We conclude that there are exactly $\abs{A_2}$ eigenvalues of $\wt H$ in $[x_0^2, x_1^2]$.

We may now repeat this argument for $q = 3, 4, \dots, Q$, to get that $\wt H$ has exactly $\abs{A_q}$ eigenvalues in $[x_0^q, x_1^q]$, for $q = 1,2, \dots, Q$. Moreover, by \eqref{estimate on size of Aq}, we find for any $\alpha \in A_q$ that
\begin{equation} \label{diameter of the group}
\sup \, \hB{\abs{x - \lambda_\alpha} \col \alpha \in A_q \,,\, x \in [x_0^q,x_1^q]} \;\leq\; \varphi^{C_4} N^{-1 + \delta'} \;\leq\; N^{-1 + \delta}\,.
\end{equation}
Therefore the proof is complete.
\end{proof}

\subsection{Bootstrapping and conclusion of the proof of Theorem \ref{theorem: rank-k lde}}
We may now complete the proof of Theorem \ref{theorem: rank-k lde}. In order to extend the statements of Propositions \ref{prop: initial outliers} and \ref{prop: intial bulk} to arbitrary $N$-dependent configurations $\b d \in \cal D(\wt C_2)$, we continuously deform an $N$-independent $\b d$, for which Propositions \ref{prop: initial outliers} and \ref{prop: intial bulk} hold, to the desired $N$-dependent $\b d$. The statements of Propositions \ref{prop: initial outliers} and \ref{prop: intial bulk} remain valid for all intermediate $\b d$'s; this will follow from the continuity of the eigenvalues of $\wt H$ as a function of $\b d$ and from Proposition \ref{prop: permissible region}. Roughly, Proposition \ref{prop: permissible region} establishes a forbidden region, for arbitrary $\b d$, which the eigenvalues of $\wt H$ cannot cross since they are deformed continuously.

Let $\b d(1) \equiv \b d_N(1) \in \cal D^*(\wt C_2)$ be given (and possibly $N$-dependent), with associated $N$-independent indices $k^-, k^0, k^+$. Choose an $N$-independent $\b d(0) \in \cal D(\wt C_2)$ with the same indices $k^-, k^0, k^+$, such that $\b d^0(0) = 0$ and $(\b d^-(0), \b d^+(0))$ satisfies \eqref{condition for initial data}. We shall use a bootstrap argument by choosing a continuous (possibly $N$-dependent) path $(\b d(t) \col 0 \leq t \leq 1)$ that connects $\b d(0)$ and $\b d(1)$. We require the path $\b d(t)$ to have the following properties.
\begin{enumerate}
\item
For all $t \in [0,1]$ the point $\b d(t)$ satisfies \eqref{ordering of ds} and  $\b d(t) \in \cal D(\wt C_2)$.
\item
If $I^+_i(\b d(1)) \cap I^+_j(\b d(1)) = \emptyset$ for a pair $1 \leq i < j \leq k^+$ then $I^+_i(\b d(t)) \cap I^+_j(\b d(t)) = \emptyset$ for all $t \in [0,1]$. The same restriction is imposed for $+$ replaced with $-$.
\end{enumerate}
It is easy to see that such a path exists. Informally, condition (ii) states that if the allowed regions for the outliers $i$ and $j$ do not over lap at time $t = 1$ (i.e.\ the outliers can be distinguished), then they may not overlap at any earlier time.

We continue to work at fixed $N$ and with a fixed realization $H \equiv H^\omega$ with $\omega \in \Xi$. Let $\wt C_2$ and $\wt C_3$ be the constants from Proposition \ref{prop: permissible region}, and choose $\delta > 0$ such that $\wt \psi \leq N^{1/3 - \delta}$. Define
\begin{equation*}
\wt H(t) \;\deq\; H + V \diag(d_1(t), \dots, d_k(t)) V^*
\end{equation*}
and abbreviate $\mu_\alpha(t) = \lambda_\alpha(\wt H(t))$.
By Propositions \ref{prop: initial outliers} and \ref{prop: intial bulk}, we have that
\begin{subequations} \label{init outliers}
\begin{align}
\mu_{N - k^+ + i}(0) &\;\in\; I^+_i(\b d(0)) \qquad (i = 1, \dots, k^+)\,,
\\
\mu_i(0) &\;\in\; I^-_i(\b d(0)) \qquad (i = 1, \dots, k^-)\,,
\end{align}
\end{subequations}
as well as
\begin{subequations} \label{init bulk}
\begin{align}
\lambda_\alpha \;\geq\; 2 - \varphi^{\wt K} N^{-2/3} \qquad &\Longrightarrow \qquad \abs{\lambda_\alpha - \mu_{\alpha - k^+}(0)} \;\leq\; N^{-2/3} \wt \psi^{-1}\,,
\\
\lambda_\alpha \;\leq\; -2 + \varphi^{\wt K} N^{-2/3} \qquad &\Longrightarrow \qquad \abs{\lambda_\alpha - \mu_{\alpha + k^-}(0)} \;\leq\; N^{-2/3} \wt \psi^{-1}\,.
\end{align}
\end{subequations}

In order to invoke a continuity argument, we note that Proposition \ref{prop: permissible region} yields
\begin{equation} \label{condition for continuity argument}
\sigma(\wt H(t)) \cap S(\wt K) \;\subset\; \Gamma(\b d(t))
\end{equation}
for all $t \in [0,1]$. Moreover, since $t \mapsto \wt H(t)$ is continuous, we find that $\mu_\alpha(t)$ is continuous in $t \in [0,1]$ for all $\alpha$.

Let us first analyse the outliers. We focus on the positive outliers associated with $\b d^+$; the negative ones are dealt with in the same way. Assume first that the $k^+$ intervals $I^+_1(\b d(t)), \dots, I^+_{k^+}(\b d(t))$ are disjoint for $t = 1$. Then, from Property (ii) above, we know that they are disjoint for all $t \in [0,1]$. Thus we find, from \eqref{init outliers}, \eqref{condition for continuity argument}, and the continuity of $t \mapsto \mu_\alpha(t)$ that
\begin{equation} \label{non-deg outliers}
\mu_{N - k^+ + i}(t) \;\in\; I^+_i(\b d(t)) \qquad (i = 1, \dots, k^+)
\end{equation}
for all $t \in [0,1]$, and in particular for $t = 1$.

If $I^+_1(\b d(1)), \dots, I^+_{k^+}(\b d(1))$ are not disjoint, the situation is only slightly more complicated. Let $\cal B$ denote the finest partition of $\{1, \dots, k^+\}$ such that $i$ and $j$ belong to the same block of $\cal B$ if $I_i^+(\b d(1)) \cap I_j^+(\b d(1)) \neq \emptyset$. Note that the blocks of $\cal B$ are sequences of consecutive integers. Denote by $B_i$ the block of $\cal B$ that contains $i$. Then \eqref{init outliers} and \eqref{condition for continuity argument} yield, instead of \eqref{non-deg outliers}, that
\begin{equation} \label{deg outliers}
\mu_{N - k^+ + i}(t) \;\in\; \bigcup_{j \in B_i} I_j^+(\b d(t)) \qquad (i = 1, \dots, k^+)
\end{equation}
for all $t \in [0,1]$. At $t = 1$, the right-hand side of \eqref{deg outliers} is an interval that contains $\theta(d_j)$ for all $j \in B_i$. In order to estimate its size, we pick a $j \in B_i$ that is not the largest element of $B_i$. To streamline notation, abbreviate $d \deq d^+_j(1)$ and $d' \deq d^+_{j + 1}(1)$. Our first task is to estimate $d' - d$. Since $I^+_j(\b d(1)) \cap I^+_{j + 1}(\b d(1)) \neq \emptyset$, we have
\begin{equation*}
\pbb{1 - \frac{1}{(d')^2}} (d' - d) \;\leq\;  \theta(d') - \theta(d) \;\leq\; 2 \varphi^{\wt C_3} N^{-1/2} (d' - 1)^{1/2}\,.
\end{equation*}
where the second inequality follows from the definition of $I_i^+(\cdot)$. This yields
\begin{equation*}
d' - d \;\leq\; C \varphi^{\wt C_3} N^{-1/2} (d' - 1)^{-1/2} \;\leq\; C \varphi^{\wt C_3} N^{-1/2} (d - 1)^{-1/2}\,,
\end{equation*}
where the constant $C$ depends only on $\Sigma$.
Thus we get
\begin{equation*}
(d' - 1)^{1/2} \;\leq\; (d - 1)^{1/2} \pbb{1 + \frac{d' - d}{d - 1}} \;\leq\; (d - 1)^{1/2} \pB{1 + C \varphi^{\wt C_3} N^{-1/2} (d - 1)^{-3/2}} \;\leq\; (d - 1)^{1/2} (1 + o(1))\,,
\end{equation*}
where the last inequality follows from \eqref{C_2 geq C_3}. Repeating this estimate of $\theta(d_{j + 1}^+(1)) - \theta(d_j^+(1))$ for the remaining $j \in B_i$, we find
\begin{equation*}
\diam \pBB{\bigcup_{j \in B_i} I_j^+(\b d(1))} \;\leq\; (2 \abs{B_i} + 2) \, \varphi^{\wt C_3} N^{-1/2} \min_{j \in B_i} (d_j^+(1) - 1)^{1/2} (1 + o(1))\,.
\end{equation*}
This immediately yields
\begin{equation*}
\abs{\mu_{N - k^+ + i}(1) - \theta(d^+_i)} \;\leq\; \varphi^{\wt C_3 + 1} N^{-1/2} (d_i^+(1) - 1)^{1/2} \qquad (i = 1, \dots, k^+)\,,
\end{equation*}
and the claim follows.

What remains is the analysis of the extremal bulk eigenvalues. Once again, we make use of a continuity argument. As before, we only consider positive eigenvalues, $\lambda_\alpha \geq 2 - \varphi^{\wt K} N^{-2/3}$ for some $\wt K$ to be chosen below. Note that by interlacing, Lemma \ref{lemma: interlacing}, we have
\begin{equation} \label{rank k interlacing}
\lambda_{\alpha - k} \;\leq\; \mu_\alpha \;\leq\; \lambda_{\alpha + k}
\end{equation}
(using the convention that $\lambda_{\alpha} = +\infty$ for $\alpha > N$). Recall the role of $K$ from the assumptions of Theorem \ref{theorem: rank-k lde}. Therefore using the definition of $\Xi$ (see Theorem \ref{theorem: rigidity}), we find that there is a $\wt K = \wt K(K)$ such that if $\alpha \geq N - \varphi^{K}$ then
\begin{equation*}
\lambda_{\alpha - k} \;\geq\; 2 - \varphi^{\wt K} N^{-2/3} \qquad \text{and} \qquad \mu_\alpha \;\geq\; 2 - \varphi^{\wt K} N^{-2/3}\,.
\end{equation*}
Let now $\alpha$ satisfy $N - \varphi^K \leq \alpha \leq N - k^+$. Using \eqref{init bulk}, \eqref{condition for continuity argument}, and Proposition \ref{prop: permissible region}, we find
\begin{equation} \label{bulk estimates for continuity}
\abs{\lambda_{\alpha + k^+} - \mu_{\alpha}(0)} \;\leq\; N^{-2/3} \wt \psi^{-1} \qquad \text{and} \qquad \dist(\mu_\alpha(t), \sigma(H)) \;\leq\; N^{-2/3} \wt \psi^{-1}
\end{equation}
for all $t \in [0,1]$. In addition, we know the two following facts about $\mu_\alpha(t)$, for all $t \in [0,1]$.
\begin{enumerate}
\item
$\mu_\alpha(t)$ is in the same connected component of $I^0 \subset \R$ as $\mu_\alpha(0)$ (by continuity of $\mu_\alpha(t)$ and Proposition \ref{prop: permissible region}).
\item
$\mu_\alpha(t)$ satisfies the interlacing bound \eqref{rank k interlacing} for all $t \in [0,1]$.
\end{enumerate}
Let $B_\alpha$ be the set of $\beta = 1, \dots, N$ such that $\lambda_{\beta}$ and $\lambda_\alpha$ are in the same connected component of $I^0$. Thus we conclude from (i) and (ii) that
\begin{equation*}
\mu_\alpha(t) \;\in\; \bigcup_{\substack{\beta \in B_{\alpha + k^+} \col \\ \abs{\alpha + k^+ - \beta} \leq k}} \qb{\lambda_\beta - N^{-2/3} \wt \psi^{-1} \,,\, \lambda_\beta + N^{-2/3} \wt \psi^{-1}}\,.
\end{equation*}
Thus we get
\begin{equation} \label{final sticking bound}
\abs{\lambda_{\alpha + k^+} - \mu_{\alpha}(t)} \;\leq\; 2k N^{-2/3} \wt \psi^{-1}
\end{equation}
for all $t \in [0,1]$.
Choosing
\begin{equation*}
C_2 \;\deq\; \wt C_2 + 1 \,, \qquad C_3 \;\deq\; \wt C_3 + 1
\end{equation*}
completes the proof of Theorem \ref{theorem: rank-k lde} (recall the definition \eqref{def of psi tilde}).

\section{Distribution of the outliers: proof of Theorem \ref{theorem: outlier distributions}} \label{section: ditribution of outl}

\subsection{Reduction to the law of $G_{\b v^{(i)} \b v^{(i)}}(\theta(d_i))$}
The following proposition reduces the problem to analysing a single explicit random variable.

\begin{proposition} \label{proposition: rank k distribution}
There is a constant $C_2$, depending on $\zeta$, such that the following holds.
Suppose that
\begin{equation*}
\abs{d_i} \;\leq\; \Sigma - 1 \,, \qquad \absb{\abs{d_i} - 1} \;\geq\; \varphi^{C_2} N^{-1/3}
\end{equation*}
for all $i = 1, \dots, k$. Suppose moreover that for all $i \in O$ \eqref{non-degeneracy condition} holds. Recall the definitions \eqref{set of outliers} and \eqref{indices of outliers}. Then we have for all $i \in O$
\begin{equation*}
N^{1/2} (\abs{d_i} - 1)^{-1/2} \pb{\mu_{\alpha(i)} - \theta(d_i)} \;=\; - (1 + O(\varphi^{-1})) (\abs{d_i} + 1) N^{1/2} (\abs{d_i} - 1)^{1/2} \pbb{G_{\b v^{(i)} \b v^{(i)}}(\theta(d_i)) + \frac{1}{d_i}} + O (\varphi^{-1})
\end{equation*}
\hp{\zeta}.
\end{proposition}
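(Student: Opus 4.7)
The plan is to derive the claimed relationship by reducing the equation $\det M(\mu) = 0$ (where $M$ is the $k\times k$ matrix from Lemma~\ref{lemma: det identity} and \eqref{definition of M}) to a single scalar equation in the $(i,i)$-component, then Taylor expanding that component around $\theta(d_i)$. First, by Theorem~\ref{theorem: rank-k lde} applied with $\psi = \varphi^{1/2}$ (say), \hp{\zeta} the eigenvalue $\mu_{\alpha(i)}$ lies in an interval $I$ of radius $\varphi^{C_3} N^{-1/2}(\abs{d_i}-1)^{1/2}$ around $\theta(d_i)$ and stays well outside $\sigma(H)$, so Lemma~\ref{lemma: det identity} gives that $M(\mu_{\alpha(i)})$ is singular. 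I would then analyze the structure of $M(x)$ for $x \in I$: by Theorem~\ref{theorem: strong estimate} together with the identity $\msc(\theta(d_i)) = -1/d_i$ (cf.~\eqref{identity for gamma}) and the non-degeneracy condition \eqref{non-degeneracy condition}, we have, \hp{\zeta}, $\abs{M_{jj}(x)} \gtrsim \abs{1/d_j - 1/d_i} \gtrsim \varphi^{C_2} N^{-1/2}(\abs{d_i}-1)^{-1/2}$ for all $j \neq i$, and $\abs{M_{jl}(x)} \leq \varphi^{C_\zeta} N^{-1/2}(\abs{d_i}-1)^{-1/4}$ for $j \neq l$ by \eqref{control parameter outside} and the orthogonality $\scalar{\b v^{(j)}}{\b v^{(l)}} = 0$; meanwhile $M_{ii}(\theta(d_i)) = G_{\b v^{(i)} \b v^{(i)}}(\theta(d_i)) + 1/d_i$ is the error quantity of interest.

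Next, I would eliminate the $k-1$ ``non-critical'' directions by a Schur complement argument. Writing $M$ in block form separating the $i$-th row/column from the remainder $M^{\#}$, the equation $\det M(x) = 0$ becomes
\begin{equation*}
M_{ii}(x) - \scalar{\b m_i(x)}{M^{\#}(x)^{-1} \b m_i(x)} \;=\; 0,
\end{equation*}
where $\b m_i(x)$ collects the off-diagonal entries $(M_{ji}(x))_{j \neq i}$. Because $M^{\#}(\theta(d_i))$ is essentially diagonally dominant with diagonal entries of order $\abs{1/d_j - 1/d_i}$, a Neumann series gives $\norm{M^{\#}(x)^{-1}} \leq C \varphi^{-C_2} N^{1/2}(\abs{d_i}-1)^{1/2}$, and combined with the off-diagonal bounds above the Schur complement correction is at most $\varphi^{-C_2 + C_\zeta} N^{-1/2}(\abs{d_i}-1)^{1/2}$. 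Choosing $C_2$ large enough compared to $C_\zeta$, this correction is $\varphi^{-1}$ times the natural size $N^{-1/2}(\abs{d_i}-1)^{1/2}$ of $M_{ii}(x)$ on $I$, and can be absorbed into the error term in the claim.

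Finally, I would Taylor expand $M_{ii}(\mu_{\alpha(i)}) = M_{ii}(\theta(d_i)) + (\mu_{\alpha(i)} - \theta(d_i)) \partial_z G_{\b v^{(i)}\b v^{(i)}}(\theta(d_i)) + O((\mu_{\alpha(i)} - \theta(d_i))^2 \sup_{x \in I}\abs{\partial_z^2 G_{\b v^{(i)}\b v^{(i)}}})$. A spectral-decomposition argument analogous to Lemma~\ref{lemma: derivative of Gvv} (already alluded to in Remark~\ref{remark: lattice}) shows that $\partial_z G_{\b v^{(i)}\b v^{(i)}}(\theta(d_i)) = \msc'(\theta(d_i)) + O(\varphi^{C_\zeta} N^{-1/2}(\abs{d_i}-1)^{-3/4})$ and that the second derivative is bounded by $C(\abs{d_i}-1)^{-3}$ on $I$, so the quadratic remainder contributes at most $\varphi^{2C_3 - 1} N^{-1}(\abs{d_i}-1)^{-2} \cdot (\abs{d_i}-1) = \varphi^{2C_3-1} N^{-1}(\abs{d_i}-1)^{-1}$, which is again $\varphi^{-1}$ times the relevant scale provided $\abs{d_i}-1 \gg N^{-1/3}$. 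Using $\msc'(\theta(d_i)) = 1/(d_i^2-1) = 1/((\abs{d_i}-1)(\abs{d_i}+1))$ from \eqref{diff msc} and \eqref{identity for gamma}, solving for $\mu_{\alpha(i)} - \theta(d_i)$ gives exactly the claim after multiplication by $N^{1/2}(\abs{d_i}-1)^{-1/2}$. The main technical obstacle is obtaining the derivative bounds with the correct scaling in $(\abs{d_i}-1)$; this is the content of a separate lemma (the anticipated Lemma~\ref{lemma: derivative of Gvv}) proved by dyadically decomposing the spectral sum $\sum_\alpha \abs{\scalar{\b u^{(\alpha)}}{\b v^{(i)}}}^2 / (\lambda_\alpha - \theta(d_i))^m$ and invoking the isotropic delocalization bound \eqref{dyadic delocalization}, exactly as in the bulk-sum estimates from the proof of Theorem~\ref{theorem: rank-one lde}.
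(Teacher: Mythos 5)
Your proposal is correct and rests on the same pillars as the paper's own proof: Lemma \ref{lemma: det identity} combined with the a priori localization $\abs{\mu_{\alpha(i)} - \theta(d_i)} \leq \varphi^{C_3} N^{-1/2} (\abs{d_i}-1)^{1/2}$ from Theorem \ref{theorem: rank-k lde}, the isotropic law outside the spectrum (Theorem \ref{theorem: strong estimate}) with $\kappa_{\theta(d_i)} \asymp (\abs{d_i}-1)^2$, the non-degeneracy condition \eqref{non-degeneracy condition} to decouple the $i$-th direction, the derivative estimate of Lemma \ref{lemma: derivative of Gvv}, and a final linearization using $\msc'(\theta(d)) = 1/(d^2-1)$. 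Where you differ is in how the $k \times k$ problem is reduced to a scalar one. The paper works with the matrix $M(x) - \msc(x)$, compares its spectrum with that of its diagonal part by second-order perturbation theory (the first-order correction vanishes), shows via Lemma \ref{lemma: derivative of Gvv} that the eigenvalue branches move slowly in $x$, and then must separately prove that the branch hitting $-\msc(\mu_{\alpha(i)})$ is the one near $1/d_i$ (the ``$q=r$'' step). Your Schur-complement factorization $\det M = \det M^{\#} \cdot \pb{M_{ii} - \scalar{\b m_i}{(M^{\#})^{-1} \b m_i}}$ achieves the same decoupling and, once $M^{\#}(x)$ is shown invertible on the whole interval $I$ (which uses exactly the non-degeneracy input underlying the paper's gap bound \eqref{non-deg for wt A}), it automatically singles out the correct scalar equation, so the branch-identification step disappears. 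This is a genuine, if modest, streamlining; the quantitative content of the two arguments is identical.

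A few of your exponents are off, though none is fatal since every error is ultimately absorbed by taking $C_2$ large. Since $\kappa_x \asymp (\abs{d_i}-1)^2$ on $I$, the off-diagonal bound from Theorem \ref{theorem: strong estimate} is $\varphi^{C_\zeta} N^{-1/2} (\abs{d_i}-1)^{-1/2}$, not $(\abs{d_i}-1)^{-1/4}$. The error in Lemma \ref{lemma: derivative of Gvv} is $\varphi^{C_\zeta} N^{-1/3} \kappa_x^{-1} \asymp \varphi^{C_\zeta} N^{-1/3} (\abs{d_i}-1)^{-2}$, not $N^{-1/2}(\abs{d_i}-1)^{-3/4}$; it is still $o\pb{(\abs{d_i}-1)^{-1}} = o\pb{\msc'(\theta(d_i))}$, which is all you need. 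The quadratic remainder is $\varphi^{2C_3} N^{-1} (\abs{d_i}-1)^{-2}$, which is at most $\varphi^{-1} N^{-1/2} (\abs{d_i}-1)^{-1/2}$ precisely when $\abs{d_i}-1 \geq \varphi^{(4C_3+2)/3} N^{-1/3}$. Finally, you should state explicitly that your diagonal-dominance estimates hold uniformly for $x \in I$ and not only at $\theta(d_i)$, since $M^{\#}$ must be invertible at $x = \mu_{\alpha(i)}$; the additional drift $\abs{\msc(x) - \msc(\theta(d_i))} = O\pb{\varphi^{C_3} N^{-1/2} (\abs{d_i}-1)^{-1/2}}$ is harmless for $C_2 > C_3 + C_\zeta$.
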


Before proving Proposition \ref{proposition: rank k distribution}, we record the following auxiliary result.

\begin{lemma} \label{lemma: derivative of Gvv}
Let $C_1$ denote the constant from Theorem \ref{theorem: strong estimate}. For any
\begin{equation*}
x \;\in\; \qb{-\Sigma, -2 - \varphi^{C_1} N^{-2/3}} \cup \qb{2 + \varphi^{C_1} N^{-2/3}, \Sigma}
\end{equation*}
and any normalized $\b v \in \C^N$ we have
\begin{equation} \label{derivative Gvv 1}
\absb{\partial_x G_{\b v \b v}(x) - \partial_x \msc(x)} \;\leq\; \varphi^{C_\zeta} N^{-1/3} \kappa_x^{-1}
\end{equation}
\hp{\zeta}. More generally, we have, for any normalized $\b v , \b w \in \C^N$,
\begin{equation} \label{derivative Gvv 2}
\absb{\partial_x G_{\b v \b w}(x) - \partial_x \msc(x) \scalar{\b v}{\b w}} \;\leq\; \varphi^{C_\zeta} N^{-1/3} \kappa_x^{-1}
\end{equation}
\hp{\zeta}.
\end{lemma}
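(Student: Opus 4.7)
My plan is to use Cauchy's integral formula to transfer the uniform control on $G_{\b v \b w}(\zeta) - \msc(\zeta)\scalar{\b v}{\b w}$ provided by Theorem \ref{theorem: strong estimate} to control on its derivative at the real point $x$. Note that \eqref{derivative Gvv 1} is the special case $\b w = \b v$ of \eqref{derivative Gvv 2}, so I only need to prove the latter. By symmetry it suffices to treat $x \in [2 + \varphi^{C_1} N^{-2/3}, \Sigma]$.

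First I will handle the main case $\kappa_x \geq 2 \varphi^{C_1} N^{-2/3}$. Let $\Gamma$ be the positively-oriented circle of radius $r \deq \kappa_x/2$ centred at $x$. Every $\zeta \in \Gamma$ then satisfies $\re \zeta \geq 2 + \varphi^{C_1} N^{-2/3}$, so $\zeta$ lies in the region of validity of Theorem \ref{theorem: strong estimate}. Invoking the uniform version of that theorem (Remark \ref{remark: lattice}), extended to the two points of $\Gamma$ on the real axis by analyticity of $G(\zeta) - \msc(\zeta) \umat$ off the spectrum of $H$, and using \eqref{control parameter outside} together with the bounds $\kappa_\zeta \asymp \kappa_x$ and $\abs{\im \zeta} \leq r \leq \kappa_\zeta$ valid on $\Gamma$, I obtain \hp{\zeta}
\begin{equation*}
\sup_{\zeta \in \Gamma} \absb{G_{\b v \b w}(\zeta) - \msc(\zeta) \scalar{\b v}{\b w}} \;\leq\; \varphi^{C_\zeta} N^{-1/2} \kappa_x^{-1/4}\,.
\end{equation*}

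Cauchy's integral formula, applied to the holomorphic function $\zeta \mapsto G_{\b v \b w}(\zeta) - \msc(\zeta)\scalar{\b v}{\b w}$, combined with the standard length-times-sup estimate, then gives
\begin{equation*}
\absb{\partial_x G_{\b v \b w}(x) - \partial_x \msc(x) \scalar{\b v}{\b w}} \;\leq\; \frac{1}{r}\, \varphi^{C_\zeta} N^{-1/2} \kappa_x^{-1/4} \;\leq\; 2 \varphi^{C_\zeta} N^{-1/2} \kappa_x^{-5/4}\,.
\end{equation*}
Since $\kappa_x \geq \varphi^{C_1} N^{-2/3}$ forces $\kappa_x^{-1/4} \leq \varphi^{-C_1/4} N^{1/6}$, the right-hand side is bounded by $\varphi^{C_\zeta - C_1/4} N^{-1/3} \kappa_x^{-1}$, and absorbing the improvement $\varphi^{-C_1/4}$ into the constant yields \eqref{derivative Gvv 2}.

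The main technical obstacle will be the narrow transitional regime $\varphi^{C_1} N^{-2/3} \leq \kappa_x \leq 2 \varphi^{C_1} N^{-2/3}$, in which no disk of positive radius around $x$ lies entirely within the region of applicability of Theorem \ref{theorem: strong estimate}. In this regime the target bound is of order $\varphi^{C_\zeta - C_1} N^{1/3}$; I plan to treat it either by invoking Theorem \ref{theorem: strong estimate} with the constant $C_1$ replaced by a slightly larger one (which is harmless, since enlarging the excluded band about $\pm 2$ only weakens the hypothesis), thereby restoring a Cauchy contour of radius comparable to $\kappa_x$ and running the argument above verbatim, or alternatively by a direct spectral-decomposition bound on $\sum_\alpha \abs{\scalar{\b v}{\b u^{(\alpha)}}}^2/(\lambda_\alpha - x)^2$ via Theorem \ref{theorem: rigidity} and the dyadic delocalization estimate \eqref{dyadic delocalization}.
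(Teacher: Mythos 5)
Your argument is correct in its main regime and takes a genuinely different route from the paper. The paper works on a vertical segment rather than a circle: it writes $\partial_x G_{\b v \b v}(x) = \sum_\alpha \abs{\scalar{\b u^{(\alpha)}}{\b v}}^2 (\lambda_\alpha - x)^{-2}$, compares this with $\eta^{-1} \im G_{\b v \b v}(x + \ii \eta)$ (the replacement error being at most $2\eta^2 \kappa_x^{-2}$ times $\eta^{-1}\im G_{\b v \b v}(x+\ii\eta)$, by Theorem \ref{theorem: rigidity}), does the same for $\msc$, applies Theorem \ref{theorem: strong estimate} at the single point $x + \ii \eta$, and optimizes $\eta = N^{-1/6}\kappa_x^{3/4}$; the bilinear case \eqref{derivative Gvv 2} is then deduced by polarization. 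Your Cauchy-formula argument instead needs the bound of Theorem \ref{theorem: strong estimate} simultaneously on the whole contour, which Remark \ref{remark: lattice} does supply (and which the paper's single-point argument avoids); in exchange it has no truncation term $\eta^2\kappa_x^{-5/2}$, yields the slightly stronger bound $N^{-1/2}\kappa_x^{-5/4} \leq \varphi^{-C_1/4}N^{-1/3}\kappa_x^{-1}$, and treats $\b v \neq \b w$ directly. One small oversight: for $x$ near $\Sigma$ the contour leaves $[-\Sigma,\Sigma]$; cap the radius at $\min\{\kappa_x/2,1\}$ (harmless, since for $\kappa_x \asymp 1$ a radius of order one already gives a bound $\varphi^{C_\zeta}N^{-1/2} \ll N^{-1/3}\kappa_x^{-1}$) or invoke the theorem with $\Sigma$ replaced by $2\Sigma$.

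The one real soft spot is the band $\varphi^{C_1}N^{-2/3}\leq\kappa_x\leq 2\varphi^{C_1}N^{-2/3}$, and of your two proposed fixes only the second works. The first is circular: whatever constant $C_1'$ defines the lemma's domain, a disk of radius comparable to $\kappa_x$ about the boundary point $x = 2 + \varphi^{C_1'}N^{-2/3}$ protrudes into the band excluded by the theorem with that same constant, so enlarging the constant shrinks both domains in lockstep and never restores the contour. (What it does give is the lemma with a strictly larger constant than the one in Theorem \ref{theorem: strong estimate}; that would suffice for every application in the paper, where $\kappa_x \gg \varphi^{C_1}N^{-2/3}$, but it is not the statement as written.) The second fix is the right one and should be carried out rather than left as an alternative: in this band $\partial_x\msc(x) \asymp \kappa_x^{-1/2} = \varphi^{-C_1/2}N^{1/3}$ while the target is $\varphi^{C_\zeta-C_1}N^{1/3}$, so after taking $C_\zeta$ large enough it suffices to bound $\partial_x G_{\b v\b v}(x)$ and $\partial_x\msc(x)$ separately; the dyadic decomposition of $\sum_\alpha\abs{\scalar{\b u^{(\alpha)}}{\b v}}^2(\lambda_\alpha-x)^{-2}$ using Theorem \ref{theorem: rigidity} and \eqref{dyadic delocalization}, exactly as in the proof of Theorem \ref{theorem: rank-one lde}, gives $\varphi^{C}N^{1/3}$ with $\zeta$-high probability, as required. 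The paper's vertical-segment argument avoids this case split altogether, since $x + \ii N^{-1/6}\kappa_x^{3/4}$ lies in the theorem's domain for every admissible $x$.
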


\begin{proof}
By symmetry, we may assume that $x \geq 0$. Moreover, \eqref{derivative Gvv 2} follows from \eqref{derivative Gvv 1} and polarization.

We therefore prove \eqref{derivative Gvv 1} for $x \geq 0$.
We have
\begin{equation*}
\partial_x G_{\b v \b v}(x) \;=\; \sum_\alpha \frac{\abs{\scalar{\b u^{(\alpha)}}{\b v}}^2}{(\lambda_\alpha - x)^2}\,.
\end{equation*}
Choose $x \geq 2 + N^{-2/3} \varphi^{C_1}$ and abbreviate $\kappa \equiv \kappa_x$. Thus we get, for $\eta \geq \varphi^\zeta N^{-1}$,
\begin{align*}
\absbb{\partial_x G_{\b v \b v}(x) - \frac{1}{\eta} \im G_{\b v \b v}(x + \ii \eta)} &\;=\;\absBB{\sum_\alpha \frac{\abs{\scalar{\b u^{(\alpha)}}{\b v}}^2}{(\lambda_\alpha - x)^2} - \sum_\alpha \frac{\abs{\scalar{\b u^{(\alpha)}}{\b v}}^2}{(\lambda_\alpha - x)^2 + \eta^2}}
\\
&\;\leq\; \frac{\eta^2}{(x - \lambda_N)^2} \frac{1}{\eta} \im G_{\b v \b v}(x + \ii \eta)
\\
&\;\leq\; 2 \frac{\eta^2}{\kappa^2} \frac{1}{\eta} \im G_{\b v \b v}(x + \ii \eta)
\end{align*}
\hp{\zeta}, where in the last step we used Theorem \ref{theorem: rigidity}. (In the proof of Theorem \ref{theorem: strong estimate}, the constant $C_1$ was chosen large enough for this application of Theorem \ref{theorem: rigidity}; see \eqref{strong 4}.) A similar calculation using the definition \eqref{definition of msc} yields
\begin{equation*}
\absbb{\partial_x \msc(x) - \frac{1}{\eta} \im \msc(x + \ii \eta)} \;\leq\; \frac{\eta^2}{\kappa^2} \frac{1}{\eta} \im \msc(x + \ii \eta)\,.
\end{equation*}
Therefore we get, using Theorem \ref{theorem: strong estimate} and Lemma \ref{lemma: msc},
\begin{align*}
\absb{\partial_x G_{\b v \b v}(x) - \partial_x \msc(x)} &\;\leq\; \frac{2 \eta}{\kappa^2} \pB{\im G_{\b v \b v}(x + \ii \eta) + \im \msc(x + \ii \eta)} + \frac{1}{\eta} \varphi^{C_\zeta} \sqrt{\frac{\im \msc(x + \ii \eta)}{N \eta}}
\\
&\;\leq\; C \eta^2 \kappa^{-5/2} + \varphi^{C_\zeta} \pb{\eta \kappa^{-2} + \eta^{-1}} N^{-1/2} \kappa^{-1/4}
\end{align*}
\hp{\zeta}. Choosing $\eta \deq N^{-1/6} \kappa^{3/4}$ yields the claim.
\end{proof}

\begin{proof}[Proof of Proposition \ref{proposition: rank k distribution}]
We only prove the claim for the case $d_i > 1$; the case $d_i < -1$ is handled similarly.

For $2 + \varphi^{C_1} N^{-2/3} \leq x \leq \Sigma$, where $C_1$ is the constant from Theorem \ref{theorem: strong estimate}, we define the $k \times k$ Hermitian matrices $A(x)$ and $\wt A(x)$ through
\begin{equation*}
A_{ij}(x) \;\deq\; G_{\b v^{(i)} \b v^{(j)}}(x) - \msc(x) \delta_{ij} + d_i^{-1} \delta_{ij}\,,
\qquad \wt A_{ij}(x) \;\deq\; \delta_{ij} \pB{G_{\b v^{(i)} \b v^{(i)}}(x) - \msc(x) + d_i^{-1}}\,.
\end{equation*}
(Here we subtract $m(x) \umat$ so as to ensure that $\partial_x A(x)$ is well-behaved; see below.)
We denote the ordered eigenvalues of $A(x)$ and $\wt A(x)$ by $a_1(x) \leq \cdots \leq a_k(x)$ and $\wt a_1(x) \leq \cdots \leq \wt a_k(x)$ respectively.

For the rest of the proof we fix $i \in O$ satisfying $d_i > 1$. We abbreviate $\theta_i \deq \theta(d_i)$. We begin by comparing the eigenvalues of $\wt A(\theta_i)$ and $D^{-1}$. Define the eigenvalue index $r \equiv r(i) = 1, \dots, k$ through
\begin{equation} \label{def of wt a r}
\wt a_{r}(x) \;=\; \frac{1}{d_i} + G_{\b v^{(i)} \b v^{(i)}}(x) - \msc(x)\,.
\end{equation}
In particular,
\begin{equation*}
\wt a_{r}(\theta_i) \;=\; G_{\b v^{(i)} \b v^{(i)}}(\theta_i) + \frac{2}{d_i}\,.
\end{equation*}
Theorem \ref{theorem: strong estimate} implies that
\begin{equation} \label{error wt a d}
\absB{G_{\b v^{(j)} \b v^{(j)}}(\theta_i) - \msc(\theta_i)} \;\leq\; \varphi^{C_\zeta} N^{-1/2} (d_i - 1)^{-1/2}\,.
\end{equation}
\hp{\zeta} for $j = 1, \dots, k$. In particular,
\begin{equation*}
\absbb{\wt a_r(\theta_i) - \frac{1}{d_i}} \;\leq\; \varphi^{C_\zeta} N^{-1/2} (d_i - 1)^{-1/2}
\end{equation*}
\hp{\zeta}. Moreover, \eqref{error wt a d} and the condition \eqref{non-degeneracy condition} yield, for $j \neq i$,
\begin{equation} \label{perturbed non-degeneracy}
\absB{G_{\b v^{(j)} \b v^{(j)}}(\theta_i) - \msc(\theta_i)} \;\ll\; \abs{d_i - d_j}
\end{equation}
\hp{\zeta}, provided $C_2$ is chosen large enough. We therefore conclude that
\begin{equation} \label{non-deg for wt A}
\min_{j \neq r} \absb{\wt a_j(\theta_i) - \wt a_{r}(\theta_i)} \;\geq\; \varphi^{C_2 - 1} N^{-1/2} (d_i - 1)^{-1/2}
\end{equation}
\hp{\zeta}, provided $C_2$ is large enough.

Next, we compare the eigenvalues of $A(\theta_i)$ and $\wt A(\theta_i)$ using second-order perturbation theory (the first-order correction vanishes by definition of $\wt A$ and $A$). Theorem \ref{theorem: strong estimate} yields
\begin{equation*}
\norm{A(\theta_i) - \wt A(\theta_i)} \;\leq\; \varphi^{C_\zeta} N^{-1/2} (d_i - 1)^{-1/2}
\end{equation*}
\hp{\zeta}.
Therefore \eqref{non-deg for wt A} and nondegenerate second-order perturbation theory yield, for large enough $C_2$,
\begin{equation} \label{a a wt}
a_r(\theta_i) \;=\; \wt a_r(\theta_i) + O \pbb{\frac{\varphi^{C_\zeta} N^{-1} (d_i - 1)^{-1}}{\min_{j \neq r} \abs{\wt a_j(\theta_i) - \wt a_r(\theta_i)}}}
\;=\; \wt a_r(\theta_i) + O\pB{\varphi^{C_\zeta - C_2} N^{-1/2} (d_i - 1)^{-1/2}}
\end{equation}
\hp{\zeta}.

Next, we analyse $A(x)$ and make the link to $\mu_{\alpha(i)}$. From Lemma \ref{lemma: derivative of Gvv} we find
\begin{equation*}
\normb{\partial_x A(x)} \;\leq\; \varphi^{C_\zeta} N^{-1/3} \kappa_x^{-1}
\end{equation*}
\hp{\zeta}. In particular, we have for all $j = 1, \dots, k$ that
\begin{equation} \label{a's move slowly}
\absb{a_j(x) - a_j(y)} \;\leq\; \varphi^{C_\zeta} N^{-1/3} (\kappa_x^{-1} + \kappa_y^{-1}) \abs{x - y}
\end{equation}
\hp{\zeta}, provided that $2 + \varphi^{C_1} N^{-2/3} \leq x,y \leq \Sigma$.

Recall the definition \eqref{indices of outliers} of $\alpha(i)$.
From Lemma \ref{lemma: det identity} and Theorem \ref{theorem: rigidity}, we know that $\mu_{\alpha(i)}$ is  characterized by the property that there is a $q \equiv q(i) \in \{1, \dots, k\}$ such that
\begin{equation*}
a_{q}(\mu_{\alpha(i)}) \;=\; - \msc(\mu_{\alpha(i)})\,.
\end{equation*}
By Theorem \ref{theorem: rank-k lde} we have
\begin{equation} \label{apriori bound on mu alpha}
\abs{\mu_{\alpha(i)} - \theta_i} \;\leq\; \varphi^{C_3} N^{-1/2} (d_i - 1)^{1/2}
\end{equation}
\hp{\zeta}. Provided $C_2$ is large enough (depending on $C_3$), it is easy to see from \eqref{apriori bound on mu alpha} that
\begin{equation} \label{kappa sim}
\mu_{\alpha(i)} - 2 \;\asymp\; \theta_i - 2 \;\asymp\; (d_i - 1)^{2}
\end{equation}
\hp{\zeta}.
Thus we find, using \eqref{a's move slowly}, \eqref{apriori bound on mu alpha}, and \eqref{kappa sim}, that for large enough $C_2$ we have
\begin{equation} \label{m sc mu alpha i}
\msc(\mu_{\alpha(i)})  \;=\; - a_{q}(\theta_i) + O\pB{\varphi^{C_\zeta} N^{-5/6} (d_i - 1)^{-3/2}}
\end{equation}
\hp{\zeta}. (Here we absorbed the constant $C_3$ into $C_\zeta$.)

We now prove that $q = r$ \hp{\zeta} provided $C_2$ is large enough.  Assume by contradiction that $q \neq r$. Then we get, using Theorem \ref{theorem: strong estimate} and the condition \eqref{non-degeneracy condition}, that
\begin{equation} \label{separation of a's}
\absbb{a_q(\theta_i) - \frac{1}{d_i}} \;\geq\; \varphi^{C_2 - 1} N^{-1/2} (d_i - 1)^{-1/2}
\end{equation}
\hp{\zeta}. Moreover, \eqref{a's move slowly}, \eqref{apriori bound on mu alpha}, and \eqref{kappa sim} yield
\begin{align*}
a_q(\theta_i) &\;=\; a_q(\mu_{\alpha(i)}) + O \pB{\varphi^{C_\zeta} N^{-5/6} (d_i - 1)^{-3/2}}
\\
&\;=\; -\msc(\mu_{\alpha(i)}) + O \pB{\varphi^{C_\zeta} N^{-5/6} (d_i - 1)^{-3/2}}
\\
&\;=\; \frac{1}{d_i} + O \pB{\varphi^{C_\zeta} N^{-1/2} (d_i - 1)^{-1/2} + \varphi^{C_\zeta} N^{-5/6} (d_i - 1)^{-3/2}}
\end{align*}
\hp{\zeta}, where in the last step we used \eqref{msc prime}. Together with \eqref{separation of a's}, this yields the desired contradiction provided $C_2$ is large enough. Hence $q = r$.

Putting \eqref{def of wt a r}, \eqref{m sc mu alpha i}, and \eqref{a a wt} together, we get
\begin{equation*}
\msc(\mu_{\alpha(i)}) \;=\; - G_{\b v^{(i)} \b v^{(i)}}(\theta_i) - \frac{2}{d_i} + O \pB{\varphi^{C_\zeta} N^{-5/6} (d_i - 1)^{-3/2} + \varphi^{C_\zeta - C_2} N^{-1/2} (d_i - 1)^{-1/2}}
\end{equation*}
\hp{\zeta}. Thus we find that, for all $x$ between $\theta_i$ and $\mu_{\alpha(i)}$, we have
\begin{equation*}
\msc'(x) \;=\; \msc'(\theta_i) + O(\varphi^{C_3} N^{-1/2} (d_i - 1)^{-5/2}) \;=\; \msc'(\theta_i) (1 + O(\varphi^{-1}))
\end{equation*}
\hp{\zeta}, where we used \eqref{msc prime} and \eqref{apriori bound on mu alpha}. Using \eqref{identity for gamma}, \eqref{kappa sim}, and \eqref{msc prime}, we conclude that
\begin{equation*}
\mu_{\alpha(i)} - \theta_i \;=\; - (1 + O(\varphi^{-1})) \frac{G_{\b v^{(i)} \b v^{(i)}}(\theta_i) + d_i^{-1}}{\msc'(\theta_i)} + O \pB{\varphi^{C_\zeta} N^{-5/6} (d_i - 1)^{-1/2} + \varphi^{C_\zeta - C_2} N^{-1/2} (d_i - 1)^{1/2}}
\end{equation*}
\hp{\zeta}. The claim now follows for large enough $C_2$, using the identity \eqref{identity for gamma}.
\end{proof}

\subsection{The GOE/GUE case} \label{section: GOE/GUE}
By Proposition \ref{proposition: rank k distribution}, it is enough to analyse the random variable
\begin{equation}
X \;\deq\; N^{1/2} (\abs{d} + 1) (\abs{d} - 1)^{1/2} \pbb{G_{\b v \b v}(\theta) + \frac{1}{d}}\,,
\end{equation}
where $\b v \in \C^N$ is normalized, $d$ satisfies
\begin{equation} \label{conditions for d GUE}
1 + \varphi^{C_2} N^{-1/3} \;\leq\; \abs{d} \;\leq\; \Sigma - 1\,,
\end{equation}
and we abbreviated $\theta \;\equiv\; \theta(d)$. For definiteness, we choose $d > 1$ in the following.

The following notion of convergence of random variables is convenient for our needs.
\begin{definition} \label{definition: asymptotic distribution}
Two sequences of random variables, $\{A_N\}$ and $\{B_N\}$, are \emph{asymptotically equal in distribution}, denoted $A_N \simd B_N$, if they are tight and satisfy
\begin{equation} \label{conv in asympt dist}
\lim_{N \to \infty} \pb{\E f(A_N) - \E f(B_N)} \;=\; 0
\end{equation}
for all bounded and continuous $f$.
\end{definition}

\begin{remark}
Definition \ref{definition: asymptotic distribution} extends the notion of convergence in distribution, in the sense that $\E f(A_N)$ need not have a limit as $N \to \infty$.
\end{remark}

\begin{remark} \label{remark: cont to smooth}
In order to show that $A_N \simd B_N$, it suffices to establish the tightness of either $\{A_N\}$ or $\{B_N\}$ and to verify \eqref{conv in asympt dist} for all $f \in C_c^\infty(\R)$. Indeed, if $\{A_N\}$ is tight then so is $\{B_N\}$, by \eqref{conv in asympt dist}. By tightness of $A_N$ and $B_N$, we may replace in \eqref{conv in asympt dist} the bounded and continuous $f$ with a compactly supported continuous function $g$. Next, we can approximate $g$ uniformly with $C_c^\infty$-functions.
\end{remark}

\begin{remark}
Clearly, $A_N \simd B_N$ if $A_N \eqdist B_N$ for all $N$.
\end{remark}

\begin{lemma} \label{lemma: error in weak conv}
Let $A_N \simd B_N$ and $R_N$ satisfy $\lim_N \P(\abs{R_N} \leq \epsilon_N) = 1$, where $\{\epsilon_N\}$ is a positive null sequence. Then $A_N \simd B_N + R_N$.
\end{lemma}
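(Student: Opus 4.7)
The strategy is completely standard: combine the triangle inequality with uniform continuity of test functions and Markov-type estimates.

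First, I would dispose of tightness. Since $\{A_N\}$ is tight by hypothesis and $A_N \simd B_N$, Definition \ref{definition: asymptotic distribution} directly gives tightness of $\{B_N\}$. The assumption on $R_N$ implies $R_N \to 0$ in probability, hence $\{R_N\}$ is tight. It follows that $\{B_N + R_N\}$ is tight as well.

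Next, by Remark \ref{remark: cont to smooth} it suffices to verify
\begin{equation*}
\lim_{N \to \infty} \pb{\E f(A_N) - \E f(B_N + R_N)} \;=\; 0
\end{equation*}
for all $f \in C_c^\infty(\R)$. For such an $f$, write
\begin{equation*}
\E f(A_N) - \E f(B_N + R_N) \;=\; \pb{\E f(A_N) - \E f(B_N)} + \pb{\E f(B_N) - \E f(B_N + R_N)}\,.
\end{equation*}
The first bracket tends to $0$ by the hypothesis $A_N \simd B_N$. For the second bracket, denote by $\omega_f$ the modulus of continuity of $f$, which satisfies $\omega_f(t) \to 0$ as $t \to 0$ because $f$ is uniformly continuous. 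Splitting according to the event $\{\abs{R_N} \leq \epsilon_N\}$ and its complement gives
\begin{equation*}
\absb{\E f(B_N) - \E f(B_N + R_N)} \;\leq\; \omega_f(\epsilon_N) + 2 \norm{f}_\infty \, \P\pb{\abs{R_N} > \epsilon_N}\,.
\end{equation*}
Both terms on the right-hand side tend to $0$ as $N \to \infty$, by the assumptions on $\epsilon_N$ and $R_N$. Combining the two estimates concludes the proof.

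There is no real obstacle here; the lemma is a soft statement and the proof is essentially a one-line triangle inequality argument. The only minor point requiring care is that ordinary Slutsky-type arguments assume convergence to a fixed limit law, whereas the notion $\simd$ in Definition \ref{definition: asymptotic distribution} permits the reference law to vary with $N$ -- this is precisely why one invokes uniform continuity of $f$ together with tightness of $\{B_N\}$ rather than the Portmanteau theorem.
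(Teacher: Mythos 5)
Your proof is correct and follows essentially the same route as the paper: the same two-term decomposition, reduction to nice test functions via Remark \ref{remark: cont to smooth}, and a split on the event $\{\abs{R_N} \leq \epsilon_N\}$. The only cosmetic difference is that you control $f(B_N) - f(B_N + R_N)$ on the good event via the modulus of continuity of $f$, whereas the paper uses boundedness of $f'$; for $f \in C_c^\infty$ these are interchangeable.
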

\begin{proof}
By Remark \ref{remark: cont to smooth}, it suffices to prove \eqref{conv in asympt dist} for $f \in C^1(\R)$ such that $f$ and $f'$ are bounded. Then
\begin{align*}
\E f(A_N) - \E f(B_N + R_N) &\;=\; \pb{\E f(A_N) - \E f(B_N)} + \pb{\E f(B_N) - \E f(B_N + R_N)}
\\
&\;=\; o(1) + \E \qB{\ind{\abs{R_N} \leq \epsilon_N} \pb{f(B_N) - f(B_N + R_N)}}
\\
&\;=\; o(1)\,
\end{align*}
where in the last step we used the boundedness of $f'$.
\end{proof}

\begin{lemma} \label{lemma: sum for weak conv}
Let $\{A_N\}$, $\{A_N'\}$, $\{B_N\}$, and $\{B_N'\}$ be sequences of random variables. Suppose that $A_N \simd A_N'$, $B_N \simd B_N'$, $A_N$ and $B_N$ are independent, and $A_N'$ and $B_N'$ are independent. Then
\begin{equation*}
A_N + B_N \;\simd\; A_N' + B_N'\,.
\end{equation*}
\end{lemma}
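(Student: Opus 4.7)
The plan is to exploit independence via characteristic functions, which behave nicely under products. First I would verify that the sums are tight: since $\{A_N\}$ and $\{B_N\}$ are tight by assumption, so is $\{A_N+B_N\}$ via the crude bound $\P(\abs{A_N+B_N}>M) \leq \P(\abs{A_N}>M/2) + \P(\abs{B_N}>M/2)$, and similarly for the primed sequences. By Remark \ref{remark: cont to smooth}, it then suffices to verify
\begin{equation*}
\E f(A_N+B_N) - \E f(A_N'+B_N') \;\longrightarrow\; 0
\end{equation*}
for every $f \in C_c^\infty(\R)$.

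Next, I would introduce the characteristic functions
\begin{equation*}
\phi_N(t) \;\deq\; \E \me^{\ii t (A_N+B_N)}\,, \qquad \phi_N'(t) \;\deq\; \E \me^{\ii t (A_N'+B_N')}\,.
\end{equation*}
Applying the hypothesis $A_N \simd A_N'$ to the bounded continuous functions $x \mapsto \cos(tx)$ and $x \mapsto \sin(tx)$ yields, for every fixed $t \in \R$, that $\E \me^{\ii t A_N} - \E \me^{\ii t A_N'} \to 0$; the same argument applies to $B_N, B_N'$. Using the two independence assumptions to factor $\phi_N(t) = \E\me^{\ii t A_N}\,\E\me^{\ii tB_N}$ and $\phi_N'(t) = \E\me^{\ii t A_N'}\,\E\me^{\ii tB_N'}$, the telescopic identity
\begin{equation*}
\phi_N(t) - \phi_N'(t) \;=\; \E\me^{\ii t A_N}\pb{\E\me^{\ii t B_N} - \E\me^{\ii t B_N'}} + \E\me^{\ii t B_N'}\pb{\E\me^{\ii t A_N} - \E\me^{\ii t A_N'}}
\end{equation*}
together with $\abs{\E \me^{\ii tA_N}}, \abs{\E \me^{\ii tB_N'}} \leq 1$ gives $\phi_N(t) - \phi_N'(t) \to 0$ pointwise in $t$.

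Finally, for $f \in C_c^\infty(\R)$ I would invoke Fourier inversion, $f(x) = (2\pi)^{-1}\int \hat f(t) \me^{-\ii tx}\,\dd t$, with $\hat f$ a Schwartz function (in particular integrable), to write
\begin{equation*}
\E f(A_N+B_N) - \E f(A_N'+B_N') \;=\; \frac{1}{2\pi}\int \hat f(t)\pb{\phi_N(-t) - \phi_N'(-t)}\,\dd t\,.
\end{equation*}
The integrand is dominated by $2\abs{\hat f(t)} \in L^1(\R)$ and tends to $0$ pointwise, so dominated convergence concludes the proof. There is no substantial obstacle here; the only mild subtlety is that $A_N \simd A_N'$ is a weaker notion than convergence in distribution (neither sequence need have a limit), so one cannot argue by a single application of the continuous mapping theorem; the telescoping at the level of characteristic functions is precisely what circumvents this.
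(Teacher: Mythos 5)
Your proof is correct and follows essentially the same route as the paper's: telescoping at the level of characteristic functions using the independence to factor, then tightness of the sums, reduction to $f \in C_c^\infty$, Fourier inversion, and dominated convergence. The only (cosmetic) difference is that you factor each characteristic function before telescoping, which lets you avoid the paper's preliminary step of placing all four variables on a common probability space as jointly independent copies.
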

\begin{proof}
Without loss of generality, we may assume that $A_N, B_N, A_N', B_N'$ are independent (after replacing $A_N'$ and $B_N'$ with new random variables without changing their laws.) Then for any $\lambda \in \R$ we have
\begin{align*}
\E\, \me^{\ii \lambda(A_N + B_N)} - \E\, \me^{\ii \lambda (A_N' + B_N')} &\;=\; \E \qB{\me^{\ii \lambda A_N} (\me^{\ii \lambda B_N} - \me^{\ii \lambda  B_N'}) + (\me^{\ii \lambda A_N} - \me^{\ii \lambda  A_N'}) \me^{\ii \lambda B_N'}}
\\
&\;=\;
\E\, \me^{\ii \lambda A_N} \, \E (\me^{\ii \lambda B_N} - \me^{\ii \lambda B_N'}) + \E (\me^{\ii \lambda A_N} - \me^{\ii \lambda A_N'}) \, \E\, \me^{\ii \lambda B_N'}
\\
&\;\longrightarrow\; 0
\end{align*}
as $N \to \infty$.

Next, we observe that $A_N + B_N$ and $A_N' + B_N'$ are tight. Therefore, recalling Remark \ref{remark: cont to smooth}, we find that it suffices to prove
\begin{equation*}
\E\, f(A_N + B_N) - \E\, f(A_N' + B_N') \;\longrightarrow\; 0
\end{equation*}
$f \in C_c^\infty$. Denoting by $\hat f$ the Fourier transform of $f$, we find
\begin{equation*}
\E\, f(A_N + B_N) - \E\, f(A_N' + B_N') \;=\; \int \dd \lambda \, \hat f(\lambda) \, \qB{\E\, \me^{\ii \lambda(A_N + B_N)} - \E\, \me^{\ii \lambda (A_N' + B_N')}} \;\longrightarrow\; 0
\end{equation*}
by dominated convergence.
\end{proof}

\begin{proposition} \label{GUE distribution}
Let $H$ be a GOE/GUE matrix. Assume that $d$ satisfies \eqref{conditions for d GUE}. Then for large enough $C_2$ we have
\begin{equation*}
X \;\simd\; \cal N \pbb{0, \frac{2 (d+1)}{\beta d^2}}\,.
\end{equation*}
\end{proposition}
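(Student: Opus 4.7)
By the orthogonal/unitary invariance of the law of $H$ in the GOE/GUE case, we may choose a deterministic isometry sending $\b v$ to $\b e_1$ and reduce to $\b v = \b e_1$. The Schur complement formula then gives
\begin{equation*}
G_{11}(\theta)^{-1} \;=\; h_{11} - \theta - \b h^* G^{(1)}(\theta) \b h\,,
\end{equation*}
where $\b h = (h_{j1})_{j \neq 1}$ and $G^{(1)}(z) \deq (H^{(1)} - z)^{-1}$ is the resolvent of the $(N-1)$-dimensional minor obtained from $H$ by removing its first row and column. Using $\msc(\theta) = -1/d$, which gives $\theta + \msc(\theta) = d$, this rewrites as $G_{11}(\theta)^{-1} = -d + \delta$ with
\begin{equation*}
\delta \;\deq\; h_{11} + \msc(\theta) - \b h^* G^{(1)}(\theta)\b h\,,
\end{equation*}
and a geometric expansion yields $G_{11}(\theta) + 1/d = -\delta/d^2 + O(\delta^2)$.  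Multiplying by $N^{1/2}(d+1)(d-1)^{1/2}$, and anticipating the variance calculation below, the $O(\delta^2)$-contribution is $O\pb{(N(d-1))^{-1/2}}$ with high probability and hence vanishes since $(d-1)\gg \varphi^{C_2}N^{-1/3}$.

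The law of $\delta$ is analyzed conditionally on $H^{(1)}$. Since $h_{11}$ and $\b h$ are jointly Gaussian and independent of $H^{(1)}$, a spectral decomposition of $G^{(1)}(\theta)$ -- exploiting the orthogonal/unitary invariance of $\b h$ -- yields
\begin{equation*}
\b h^* G^{(1)}(\theta) \b h - T_1 \;=\; \frac{1}{N} \sum_\alpha \frac{Y_\alpha - 1}{\lambda_\alpha(H^{(1)}) - \theta}\,, \qquad T_1 \;\deq\; \frac{1}{N} \tr G^{(1)}(\theta)\,,
\end{equation*}
where, conditional on $H^{(1)}$, the variables $Y_\alpha = N\absb{\scalar{\b u^{(\alpha)}(H^{(1)})}{\b h}}^2$ are independent with mean $1$ and variance $2/\beta$.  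Thus $\delta$ has conditional mean $\msc(\theta) - T_1$ and conditional variance
\begin{equation*}
\sigma^2_N(H^{(1)}) \;=\; \frac{2}{\beta N} + \frac{2}{\beta N^2}\tr G^{(1)}(\theta)^2\,.
\end{equation*}
Moreover, the summands of the quadratic-form fluctuation have $\ell^\infty$-norm $\asymp (N(d-1)^2)^{-1}$ while their $\ell^2$-norm is $\asymp (N(d-1))^{-1/2}$, so their ratio $(N(d-1)^3)^{-1/2}$ vanishes under the hypothesis $(d-1) \gg \varphi^{C_2}N^{-1/3}$; conditional Lindeberg therefore applies and, after centring and normalizing, the quadratic form is asymptotically standard Gaussian conditional on $H^{(1)}$.

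It remains to verify that the conditional parameters converge to their deterministic limits in probability. Theorem \ref{theorem: strong estimate} (applied with $\b v = \b w = \b e_i$ and averaged in $i$, supplemented by a rank-one interlacing estimate relating $\tr G(\theta)$ and $\tr G^{(1)}(\theta)$) gives $\absb{T_1 - \msc(\theta)} \leq \varphi^{C_\zeta}(N(d-1))^{-1}$ \hp{\zeta}, so that the conditional mean rescaled by $N^{1/2}(d+1)(d-1)^{1/2}/d^2$ vanishes. Similarly, the identity $\tr G^2 = \partial_\theta \tr G$ combined with Lemma \ref{lemma: derivative of Gvv} yields $\absb{N^{-1}\tr G^{(1)}(\theta)^2 - \msc'(\theta)} \leq \varphi^{C_\zeta} N^{-1/3}(d-1)^{-2}$ \hp{\zeta}; since $\msc'(\theta) = 1/(d^2 - 1)$, this error is $o(\msc'(\theta))$ under the assumption on $d$. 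The algebraic identity
\begin{equation*}
\frac{(d+1)^2(d-1)}{d^4}\pbb{1 + \frac{1}{d^2 - 1}} \;=\; \frac{d+1}{d^2}
\end{equation*}
then shows that the rescaled conditional variance $N(d+1)^2(d-1)/d^4 \cdot \sigma^2_N(H^{(1)})$ converges in probability to $2(d+1)/(\beta d^2)$.  Combining the conditional Gaussian approximation with these convergences (via Lemmas \ref{lemma: error in weak conv} and \ref{lemma: sum for weak conv}, or a direct characteristic-function argument using bounded convergence) yields $X \simd \cal N\pb{0, 2(d+1)/(\beta d^2)}$. The main technical obstacle is establishing the trace estimates for $T_1$ and $N^{-1}\tr G^{(1)}(\theta)^2$ at the precise scales needed when $(d-1)$ is as small as $\varphi^{C_2}N^{-1/3}$, since both the Lindeberg condition on the quadratic-form sum and the negligibility of the mean shift live at exactly this critical scale.
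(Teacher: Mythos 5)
Your route is genuinely different from the paper's. The paper also reduces to $\b v = \b e_1$ by invariance and applies Schur's formula, but it then uses invariance a \emph{second} time to replace the quadratic form by $\norm{\b a}^2 G^{(1)}_{22}$, which produces a distributional fixed-point equation $Y_1 + R_1 \eqdist (d+1)(d-1)^{1/2}W + \msc^2 Y_2 + R_2$ with $Y_1 \eqdist Y_2$ independent of the explicit linear statistic $W$; this equation is then solved by induction on moments. The payoff of that trick is that the only inputs are the entrywise bound of Theorem \ref{theorem: strong estimate} and a CLT for $W$ -- no averaged (trace-level) resolvent estimates are ever needed. Your conditional-CLT-for-the-quadratic-form argument is more classical and transparent, and your bookkeeping is correct: the identity $\frac{(d+1)^2(d-1)}{d^4}\bigl(1+\frac{1}{d^2-1}\bigr) = \frac{d+1}{d^2}$ checks out, the Lindeberg ratio $(N(d-1)^3)^{-1/2}$ indeed vanishes under \eqref{conditions for d GUE}, and the $\tr (G^{(1)})^2$ estimate via Lemma \ref{lemma: derivative of Gvv} gives an error $\varphi^{C_\zeta}N^{-1/3}(d-1)^{-2} = o\pb{(d-1)^{-1}}$ as needed. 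But the approach stands or falls on the trace estimates you flag.

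That is where the one genuine gap sits. The bound $\absb{T_1 - \msc(\theta)} \leq \varphi^{C_\zeta}(N(d-1))^{-1}$ does \emph{not} follow from ``Theorem \ref{theorem: strong estimate} applied with $\b v = \b w = \b e_i$ and averaged in $i$'': averaging $N$ high-probability bounds each of size $\varphi^{C_\zeta}N^{-1/2}(d-1)^{-1/2}$ yields only $\varphi^{C_\zeta}N^{-1/2}(d-1)^{-1/2}$ again, since no cancellation between the $G_{ii}-\msc$ is exploited. This is exactly the borderline scale: with only that bound, the rescaled conditional mean $\nu N^{1/2}d^{-2}(T_1 - \msc(\theta))$ is $O(\varphi^{C_\zeta})$ rather than $o(1)$, and the argument fails. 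To close the gap you need either a fluctuation-averaging argument for $\sum_i (G_{ii}-\msc)$, or -- simpler in the Gaussian case -- a direct computation from eigenvalue rigidity: writing $N^{-1}\tr G^{(1)}(\theta) - \msc(\theta)$ as a sum over eigenvalues and using Theorem \ref{theorem: rigidity} with a dyadic decomposition near the edge gives a bound of order $\varphi^{C}N^{-1}(d-1)^{-2}$, which is $\ll N^{-1/2}(d-1)^{-1/2}$ precisely because $d-1 \geq \varphi^{C_2}N^{-1/3}$ with $C_2$ large. With either supplement (plus the routine rank-one comparison of $\tr G$ and $\tr G^{(1)}$) your proof is complete; the remaining steps are justified at the stated scales.
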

\begin{proof}
By unitary invariance, we have $G_{\b v \b v}  \eqdist G_{11}$, where $\eqdist$ denotes equality in distribution. In order to handle the exceptional low-probability events, we add a small imaginary part to the spectral parameter $z \deq \theta + \ii N^{-4}$. Throughout the following we abbreviate $G \equiv G(z)$ and $\msc \equiv \msc(z)$. Writing $\b a^* \deq (h_{12}, h_{13}, \dots, h_{1N})$, we get from Schur's formula and \eqref{identity for msc} that
\begin{multline} \label{G11 expanded}
G_{11} \;=\; \frac{1}{h_{11} - z - \b a^*  G^{(1)} \b a} \;=\; \frac{1}{- \msc - z + h_{11} - \pb{\b a^* G^{(1)} \b a - \msc}}
\\
=\; \msc - \msc^2 h_{11} + \msc^2 \pb{\b a^* G^{(1)} \b a - \msc} + O(\abs{h_{11}}^2) + O \pB{\absb{\b a^* G^{(1)} \b a - \msc}^2}
\end{multline}
\hp{\zeta}. Again by unitary invariance, we have $\b a^* G^{(1)} \b a \eqdist \norm{\b a}^2 G^{(1)}_{22}$. Moreover, both sides are independent of $h_{11}$, so that
\begin{equation} \label{sum of indeps}
- \msc^2 h_{11} + \msc^2 \pb{\b a^* G^{(1)} \b a - \msc} \;\eqdist\; - \msc^2 h_{11} + \msc^2 \pb{\norm{\b a}^2 G^{(1)}_{22} - \msc}\,.
\end{equation}
In order to estimate the error term in \eqref{G11 expanded}, we write
\begin{equation} \label{decoupl of G and a}
\norm{\b a}^2 G_{22}^{(1)} - \msc \;=\; \pb{\norm{\b a}^2 -1 }G_{22}^{(1)} + (G_{22}^{(1)} - \msc)\,.
\end{equation}
Using \eqref{Gij Gijk} to estimate $G_{22}^{(1)} - G_{22}$, as well as Theorem \ref{theorem: strong estimate}, Lemma \ref{lemma: LDE}, and Lemma \ref{lemma: msc}, we therefore find that
\begin{equation} \label{G11 error GUE}
\absb{\norm{\b a}^2 G_{22}^{(1)} - \msc} \;\leq\; \varphi^{C_\zeta} N^{-1/2} (d - 1)^{-1/2}
\end{equation}
\hp{\zeta}. Moreover, we have the trivial bound $\E \absb{\norm{\b a}^2 G_{22}^{(1)} - \msc}^k \leq (kN)^{C k}$ for $k \in \N$.

From \eqref{G11 expanded}, \eqref{sum of indeps}, \eqref{decoupl of G and a}, and \eqref{G11 error GUE}, we conclude that there exist random variables $\wt R_1$ and $\wt R_2$ satisfying
\begin{equation} 
\abs{\wt R_1} + \abs{\wt R_2} \;\leq\; \varphi^{C_\zeta} N^{-1} (d - 1)^{-1}
\end{equation}
\hp{\zeta}, the rough bound
\begin{equation} 
\E (\abs{\wt R_1} + \abs{\wt R_2})^k \;\leq\; (kN)^{C k}\,,
\end{equation}
and
\begin{align*}
\pb{G_{11}^{(2)} - \msc} + \wt R_1 &\;=\;
- \msc^2 h_{11} + \msc^2 \pb{\b a^* G^{(1)} \b a - \msc}
\\
&\;\eqdist\; - \msc^2 h_{11} + \msc^2 \pb{\norm{\b a}^2 G^{(1)}_{22} - \msc}
\\
&\;=\; - \msc^2 h_{11} + \msc^3 \pb{\norm{\b a}^2 - 1} + \msc^2 (G_{22}^{(1)} - \msc) + \wt R_2\,.
\end{align*}

Defining
\begin{align*}
Y_1 &\;\deq\; N^{1/2} (d+1) (d - 1)^{1/2} \re \pb{G_{11}^{(2)} - \msc}\,, & Y_2 &\;\deq\; N^{1/2} (d+1) (d - 1)^{1/2} \re \pb{G_{22}^{(1)} - \msc}\,,
\\
W &\;\deq\; N^{1/2} \re \pB{- \msc^2 h_{11} + \msc^3 \pb{\norm{\b a}^2 - 1}} \,, & R_i &\;\deq\; N^{1/2} (d+1) (d - 1)^{1/2} \re \wt R_i \quad (i = 1,2)\,,
\end{align*}
we therefore get
\begin{equation} \label{main equation for Y}
Y_1 + R_1 \;\eqdist\; (d+1) (d - 1)^{1/2} W + \msc^2 Y_2 + R_2\,.
\end{equation}

In order to infer the distribution of $Y_1$ from \eqref{main equation for Y}, we observe that the random variables $Y_2$ and $W$ are independent. Also, $Y_1 \eqdist Y_2$. Recalling Theorem \ref{theorem: strong estimate} and \eqref{Gij Gijk}, we find the bounds
\begin{equation} \label{estimate of R 1}
\abs{Y_i} \;\leq\; \varphi^{C_\zeta}\,, \qquad \abs{R_i} \;\leq\; \varphi^{C_\zeta} N^{-1/2} (d - 1)^{-1/2} \qquad (i = 1,2)
\end{equation}
\hp{\zeta}, and the rough bounds
\begin{equation} \label{estimate of R 2}
\abs{Y_i} \;\leq\; N^2\,, \qquad \E \abs{R_i}^k \;\leq\; (kN)^{C k} \qquad (i = 1,2)\,.
\end{equation}
Moreover, by the Central Limit Theorem 
\begin{equation} \label{CLT for GUE 0}
\pbb{\frac{2 (d^2 + 1)}{\beta d^6}}^{-1} W \;\simd\; \cal N(0,1)\,,
\end{equation}
where we used \eqref{identity for gamma}.

Next, let $B$ and $Z_2$ be independent random variables whose laws are given by
\begin{equation*}
B \;\eqdist\; \cal N\pbb{0, \frac{2 (d^2 + 1)}{\beta d^6}}\,, \qquad
Z_2 \;\eqdist\; \cal N (0,\xi^2)\,,
\end{equation*}
where we introduced
\begin{equation*}
\xi^2 \;\equiv\; \xi_N^2 \;\deq\; d^4 \frac{(d^2 - 1)(d+1)}{d^4 - 1} \, \frac{2 (d^2 + 1)}{\beta d^6} \;=\; \frac{2 (d+1)}{\beta d^2}\,.
\end{equation*}
Defining
\begin{equation} \label{recursion for Z}
Z_1 \;\deq\; (d+1)(d - 1)^{1/2} B + d^{-2} Z_2\,,
\end{equation}
we find that $Z_1 \eqdist Z_2$. Moreover, a standard moment calculation and the definition of $W$ yield
\begin{equation} \label{CLT for GUE}
\lim_{N \to \infty} (\E W^k - \E B^k) \;=\; 0\,;
\end{equation}
as usual, only the pairings in the moment expansion of $\E W^k$ survive the limit $N \to \infty$.
(See also \eqref{CLT for GUE 0}, which however cannot be used to deduce \eqref{CLT for GUE} directly.)

We now compare the distributions of $Y_1$ and $Z_1$ by computing moments.
Note that the family $\{\E Z_1^k\}_{N \in \N}$ is bounded for each $k \in \N$.
We claim that
\begin{equation} \label{convergence of moments}
\lim_{N \to \infty} \pb{\E Y_1^k - \E Z_1^k} \;=\; 0
\end{equation}
for all $k \in \N$. (This will imply that $Y_1 \simd Z_1$.) We shall prove \eqref{convergence of moments} by induction on $k$. Taking the expectation of \eqref{main equation for Y} yields
\begin{equation*}
\E Y_1 \;=\; \msc^2 \E Y_1 + O\pB{\varphi^{C_\zeta} N^{-1/2} (d - 1)^{-1/2}}
\end{equation*}
where we used \eqref{estimate of R 1}, \eqref{estimate of R 2}, and $\E W = O(N^{-1/2})$.
Therefore
\begin{equation*}
\E Y_1 \;\leq\; C \varphi^{C_\zeta} N^{-1/2} (d - 1)^{-3/2} \;=\; o(1)
\end{equation*}
provided $C_2$ in \eqref{conditions for d GUE} is large enough. Here we used that
\begin{equation} \label{comp m_sc and d}
\msc(z) \;=\; d^{-1} + O(N^{-3})\,,
\end{equation}
as follows from the definition of $z = \theta + \ii N^{-4}$, \eqref{diff msc}, Lemma \ref{lemma: msc}, and \eqref{identity for gamma}. Therefore \eqref{convergence of moments} for $k = 1$ follows using $\E Z_1 = 0$.

For the induction step, we assume that \eqref{convergence of moments} holds for all $k' \leq k - 1$. From \eqref{main equation for Y} we find
\begin{multline} \label{expansion of moment equation}
\E Y_1^k + \sum_{l = 1}^k \binom{k}{l} \E \pb{R_1^l Y_1^{k - l}}
\\
=\; \E \pb{(d+1)(d - 1)^{1/2} W + \msc^2 Y_2}^k 
+ \sum_{l = 1}^k \binom{k}{l} \E \pB{R_1^l \pb{(d+1)(d - 1)^{1/2} W + \msc^2 Y_2}^{k - l}}\,.
\end{multline}
We estimate the summands on the left-hand side by
\begin{align*}
\absb{\E \pb{R_1^l Y_1^{k - l}}} &\;\leq\; N^C \exp(- \varphi^C) + \pB{\varphi^{C_\zeta} N^{-1/2} (d - 1)^{-1/2}}^l \E \abs{Y_1}^{k - l}
\\
&\;\leq\; C \pB{\varphi^{C_\zeta} N^{-1/2} (d - 1)^{-1/2}}^l \varphi^{C_\zeta}
\\
&\;\leq\; \varphi^{C_\zeta} N^{-1/2} (d - 1)^{-1/2}\,,
\end{align*}
where in the first step we used \eqref{estimate of R 1} and \eqref{estimate of R 2}, in the second step the estimate $\E \abs{Y_1}^{k - l} \leq \varphi^{C_\zeta}$ as follows from the induction assumption \eqref{convergence of moments} applied to even moments (recall that $Y_1$ is real) as well as \eqref{estimate of R 1} and \eqref{estimate of R 2}, and in the third step the fact that $l \geq 1$. Note that the constant $C_\zeta$ is independent of $k$. A similar estimate applies to the summands on the right-hand side of \eqref{expansion of moment equation}. Thus \eqref{expansion of moment equation} yields
\begin{align*}
\E Y_1^k &\;=\; \E \pb{(d+1)(d - 1)^{1/2} W + \msc^2 Y_2}^k + O \pb{\varphi^{C_\zeta} N^{-1/2} (d - 1)^{-1/2}}
\\
&\;=\; \msc^{2k} \, \E Y_1^k + \sum_{l = 2}^k \binom{k}{l} \E \pb{(d+1)(d - 1)^{1/2} W}^l \, \E \pb{\msc^2 Y_2}^{k - l} + O \pb{\varphi^{C_\zeta} N^{-1/2} (d - 1)^{-1/2}}\,,
\end{align*}
where in the second step we used the induction assumption and the estimate $\E W = O(N^{-1/2})$.
Therefore we get
\begin{equation} \label{main moment equation for Y}
\E Y_1^k \;=\; \frac{1}{1 - \msc^{2k}} \sum_{l = 2}^k \binom{k}{l} \E \pb{(d+1)(d - 1)^{1/2} W}^l \, \E \pb{\msc^2 Y_2}^{k - l} + O \pb{\varphi^{C_\zeta} N^{-1/2} (d - 1)^{-3/2}}\,,
\end{equation}
where we used \eqref{comp m_sc and d}.

In order to conclude the proof of \eqref{convergence of moments}, we deduce from \eqref{recursion for Z} that
\begin{equation} \label{main moment equation for Z}
\E Z_1^k \;=\; \frac{1}{1 - d^{-2k}} \sum_{l = 2}^k \binom{k}{l} \E \pb{(d+1)(d - 1)^{1/2} B}^l \, \E \pb{d^{-2} Z_2}^{k - l}\,.
\end{equation}
Using the induction assumption \eqref{convergence of moments} for $k' = k - l$, \eqref{comp m_sc and d}, and the condition $l \geq 2$, we get from \eqref{main moment equation for Y}, \eqref{main moment equation for Z}, and \eqref{CLT for GUE} that
\begin{equation*}
\lim_{N \to \infty} \pb{\E Y_1^k - \E Z_1^k} \;=\; 0
\end{equation*}
for large enough $C_2$. This concludes the proof of \eqref{convergence of moments}.

Next, by definition we have $\xi^{-1} Z_1 \eqdist \cal N(0,1)$. Moreover, we have that $\xi \in [c,C]$ for some  positive constants $c$ and $C$ depending only on $\Sigma$. Together with \eqref{convergence of moments} for $k = 2$, we infer that the families $\{\xi^{-1} Y_1\}_{N \in \N}$ and $\{\xi^{-1} Z_1\}_{N \in \N}$ are tight. Therefore we get from \eqref{convergence of moments} that
\begin{equation} \label{comparison Y Z}
\lim_{N \to \infty} \pb{\E f(\xi^{-1}Y_1) - \E f(\xi^{-1}Z_1)} \;=\; 0
\end{equation}
for any continuous bounded function $f$.
Next, we estimate
\begin{multline*}
\absb{G_{11}(\theta) - G_{11}^{(2)}(z)} \;\leq\; \absb{G_{11}(\theta) - G_{11}(z)} + \absb{G_{11}(z) - G_{11}^{(2)}(z)}
\\
\leq\; N^{-4} N^2 + \varphi^{C_\zeta} N^{-1} (d - 1)^{-1} \;\leq\; \varphi^{C_\zeta} N^{-1} (d - 1)^{-1}
\end{multline*}
\hp{\zeta},
where in the second step we used Lemma \ref{lemma: derivative of Gvv}, \eqref{diff msc}, and Lemma \ref{lemma: msc} to estimate the first term, and Theorem \ref{theorem: strong estimate} and \eqref{kappa sim d - 1} to estimate the second term. Therefore
\begin{equation*}
X \;\eqdist\; N^{1/2} (d+1)(d - 1)^{1/2} \pb{G_{11}(\theta) + d^{-1}} \;=\; Y_1 + O \pb{\varphi^{C_\zeta} N^{-1/2} (d - 1)^{-1/2}} \;=\; Y_1 + o(1)
\end{equation*}
\hp{\zeta}, where in the second step we used \eqref{comp m_sc and d}. Therefore \eqref{comparison Y Z}, the fact that $Z \eqdist Z_1$, and dominated convergence yield
\begin{equation} \label{weak conv for X}
\lim_{N \to \infty} \pb{\E f(\xi^{-1}X) - \E f(\xi^{-1}Z)} \;=\; 0\,.
\end{equation}
The claim now follows from Lemma \ref{lemma: weak conv trick} below.
\end{proof}

\begin{lemma} \label{lemma: weak conv trick}
Let $\{\xi_N\}$ be a bounded deterministic sequence. Let $A_\infty, A_1, A_2, \dots$ be random variables such that $A_N$ converges weakly to $A_\infty$. Then we have for any bounded continuous function $f$
\begin{equation*}
\E f(\xi_N A_N) - \E f(\xi_N A_\infty) \;\longrightarrow\; 0
\end{equation*}
as $N \to \infty$.
\end{lemma}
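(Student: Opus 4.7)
The plan is to proceed by a subsequence argument, exploiting the boundedness of $\{\xi_N\}$ together with Slutsky's theorem. Write $\Delta_N \deq \E f(\xi_N A_N) - \E f(\xi_N A_\infty)$. To prove $\Delta_N \to 0$, it suffices to show that every subsequence of $\{\Delta_N\}$ has a further subsequence tending to $0$.

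Given an arbitrary subsequence $\{N'\}$, the boundedness of $\{\xi_N\}$ together with the Bolzano--Weierstrass theorem allows one to extract a further subsequence $\{N_k\} \subset \{N'\}$ along which $\xi_{N_k} \to \xi_*$ for some $\xi_* \in \R$. Along this subsequence, one would invoke Slutsky's theorem (or equivalently the continuous mapping theorem applied to the map $(x,\xi) \mapsto \xi x$, together with the fact that convergence of a deterministic sequence coincides with convergence in probability): since $A_{N_k} \Rightarrow A_\infty$ and $\xi_{N_k} \to \xi_*$, we obtain $\xi_{N_k} A_{N_k} \Rightarrow \xi_* A_\infty$. Similarly, $\xi_{N_k} A_\infty \Rightarrow \xi_* A_\infty$ by Slutsky applied to the trivial weak convergence $A_\infty \Rightarrow A_\infty$.

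Then the bounded continuous function $f$ paired with weak convergence yields
\begin{equation*}
\E f(\xi_{N_k} A_{N_k}) \;\longrightarrow\; \E f(\xi_* A_\infty) \,, \qquad \E f(\xi_{N_k} A_\infty) \;\longrightarrow\; \E f(\xi_* A_\infty)\,,
\end{equation*}
so $\Delta_{N_k} \to 0$. Since every subsequence of $\{\Delta_N\}$ admits a further subsequence converging to $0$, the full sequence $\Delta_N$ tends to $0$, completing the proof.

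There is no substantive obstacle in this argument; the only mildly subtle point is that the function $x \mapsto f(\xi_N x)$ depends on $N$, which prevents a direct appeal to the definition of weak convergence of $A_N$ to $A_\infty$. The subsequence trick, which fixes $\xi_N$ at a limit $\xi_*$, is precisely designed to eliminate this $N$-dependence.
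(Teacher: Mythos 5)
Your proof is correct, but it follows a genuinely different route from the paper's. The paper applies Skorokhod's representation theorem to replace the weak convergence $A_N \Rightarrow A_\infty$ by almost sure convergence of coupled copies $\wt A_N \to \wt A_\infty$; for each fixed $\omega$ the boundedness of $\{\xi_N\}$ confines all the points $\xi_N \wt A_N(\omega)$, $\xi_N \wt A_\infty(\omega)$ to a compact interval $[-C(\omega), C(\omega)]$, on which $f$ is uniformly continuous, so that $f(\xi_N \wt A_N(\omega)) - f(\xi_N \wt A_\infty(\omega)) \to 0$ pointwise; dominated convergence then finishes the argument. You instead handle the $N$-dependence of the test function $x \mapsto f(\xi_N x)$ by a sub-subsequence compactness argument on $\{\xi_N\}$, freezing $\xi_N$ at a limit $\xi_*$ and invoking Slutsky's theorem twice; the standard principle that a real sequence converges to $0$ if every subsequence has a further subsequence converging to $0$ then yields the claim. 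Both arguments are complete. Your version is somewhat more elementary in that it avoids Skorokhod's theorem entirely, at the cost of the subsequence bookkeeping; the paper's version produces the stronger pointwise (almost sure) statement $f(\xi_N \wt A_N) - f(\xi_N \wt A_\infty) \to 0$ along the original sequence, and its key observation — that boundedness of $\xi_N$ plus uniform continuity of $f$ on compacts is all that is needed — is closer in spirit to how the lemma is used elsewhere in the paper.
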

\begin{proof}
By Skorokhod's representation theorem, there exist new random variables $\wt A_\infty, \wt A_1, \wt A_2, \dots$ such that $A_\infty \eqdist \wt A_\infty$, $A_N \eqdist \wt A_N$ for all $N \in \N$, and $\wt A_N \to \wt A_\infty$ almost surely. Let $\omega$ be such that $\wt A_N(\omega) \to \wt A_\infty(\omega)$. By assumption on $\xi_N$, we find that there exists a $C \equiv C(\omega)$ such that $\xi_N \wt A_N(\omega) \in [-C, C]$ and $\xi_N \wt A_\infty(\omega) \in [-C, C]$ for all $N \in \N$. Since $f$ is uniformly continuous on $[-C,C]$, we find that
\begin{equation*}
\lim_{N \to \infty} \pB{f(\xi_N \wt A_N(\omega)) - f(\xi_N \wt A_\infty(\omega))} \;=\; 0\,.
\end{equation*}
The claim now follows by dominated convergence.
\end{proof}

\subsection{The almost-GOE/GUE case} \label{section: almost Gaussian}
As it turns out, replacing the matrix element $h_{ij}$ with a Gaussian in the Green function comparison step below (Section \ref{sec: Green function comparison}) is only possible if $\abs{v_i} \leq \varphi^{-D}$ and $\abs{v_i} \leq \varphi^{-D}$, for some large enough constant $D > 0$. If this assumption is not satisfied, we first have to replace $h_{ij}$ with a Gaussian using a different method, which effectively keeps track of the fluctuations of $G_{\b v \b v}$ resulting from large components of $\b v$.
Thus we shall proceed in two steps:
\begin{enumerate}
\item
We compare the original Wigner matrix $H$ with $\wh H$, a Wigner matrix obtained from $H$ by replacing the $(i,j)$-th entry of $H$ with a Gaussian whenever $\abs{v_i} \leq \varphi^{-D}$ and $\abs{v_j} \leq \varphi^{-D}$. 
\item
We compare the matrix $\wh H$ to a Gaussian matrix.
\end{enumerate}
The step (ii) is performed in this section. To simplify notation, we write $H$ instead of $\wh H$ throughout this section. The step (i) is performed using Green function comparison in Section \ref{sec: Green function comparison} below.

The following shorthand will prove useful.
\begin{definition} \label{definition: sum of indep G}
Let $\{\sigma_N\}$ be a bounded positive sequence. If $A_N$ and $B_N$ are independent random variables with $B_N \simd \cal N(0, \sigma_N^2)$, and if $S_N \simd A_N + B_N$, then we write
\begin{equation*}
S_N \;\simd\; A_N + \cal N(0, \sigma_N^2)\,.
\end{equation*}
\end{definition}

For the following we write
\begin{equation*}
X \;=\; \nu N^{1/2} \pbb{G_{\b v \b v}(\theta) + \frac{1}{d}}\,, \qquad \nu \equiv \nu_N \;\deq\; (d+1) (d - 1)^{1/2}\,.
\end{equation*}

\begin{proposition} \label{proposition: almost-GUE}
Fix $D > 0$. Let $\b v \in \C^N$ be normalized and $H$ be a Wigner matrix such that if $\abs{v_i} \leq \varphi^{-D}$ and $\abs{v_j} \leq \varphi^{-D}$ then $h_{ij}$ is Gaussian. Then we have
\begin{equation*}
X \;\simd\; -\nu N^{1/2} d^{-2} \scalar{\b v}{H \b v}
+ \cal N \pBB{0 \,,\, \frac{2 (d+1)}{\beta d^4} + \frac{4 \nu^2 Q(\b v)}{d^5}
+ \frac{\nu^2 R(\b v)}{d^6}}\,,
\end{equation*}
where $Q(\b v)$ and $R(\b v)$ were defined in \eqref{definition of QRS}.
\end{proposition}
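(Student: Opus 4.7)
The strategy is a block decomposition of $\wh H$ adapted to $\b v$. Define $L \deq \{i \col \abs{v_i} > \varphi^{-D}\}$; by $\norm{\b v}=1$ we have $\abs L\leq \varphi^{2D}$, while on $I \deq L^c$ every entry $\wh h_{ij}$ is Gaussian by hypothesis. Partition $\wh H$ into blocks $A$ on $L\times L$, $B$ on $L\times I$, and $C$ on $I\times I$; here $C$ is a pure Gaussian Wigner matrix of dimension $\abs I = N(1-o(1))$. Write $\b v = (\b v_L,\b v_I)$ correspondingly, and set $z \deq \theta + \ii N^{-4}$. The Schur complement yields
\begin{equation*}
G_{\b v\b v}(z) \;=\; \scalar{\b v_I}{(C-z)^{-1}\b v_I} \;+\; \scalar{\b w}{M\b w},
\end{equation*}
with $\b w \deq \b v_L - B(C-z)^{-1}\b v_I$ and $M \deq (A-z-B(C-z)^{-1}B^*)^{-1}$.

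The first step analyses the Gaussian block. The isotropic local semicircle law (Theorems~\ref{thm: ILSC} and~\ref{theorem: strong estimate}) combined with Gaussian concentration gives $(C-z)^{-1} = -d^{-1}\umat + d^{-2}(C - \msc(z)\umat) + \cdots$ in a quadratic-form sense on vectors supported in $I$, and Proposition~\ref{GUE distribution} applied to the rescaled matrix $\sqrt{N/\abs I}\,C$ identifies the Gaussian law of its principal fluctuation. In particular $\b w = \b v_L + d^{-1} B\b v_I + o(1)$, and $M$ reduces to a perturbation of the fixed $\abs L\times\abs L$ matrix $M_0^{-1} \deq -\theta\umat - d^{-1}BB^*$. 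Since $\abs L$ is only polylogarithmic, a finite-order Neumann expansion of $M$ around $M_0$ truncates with error $o(N^{-1/2})$.

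The second step is to expand $\nu N^{1/2}(G_{\b v\b v}(\theta)+d^{-1})$ in the entries of $A$ and $B$, and to organize the fluctuations into six families $\Gamma_1,\dots,\Gamma_6$: (a) the linear-in-$(A,B)$ piece, which combines with the first-moment term of $\scalar{\b v_I}{(C-z)^{-1}\b v_I}$ extracted from the expansion $-d^{-1}\umat + d^{-2}C + \cdots$, assembles into $-\nu N^{1/2}d^{-2}\scalar{\b v}{\wh H\b v}$; (b) the bulk Gaussian fluctuation of $\scalar{\b v_I}{(C-z)^{-1}\b v_I}$ about $-d^{-1}\norm{\b v_I}^2$, which by Proposition~\ref{GUE distribution} contributes $\cal N(0,2(\abs d+1)/(\beta d^4))$ (the factor $1/d^2$ reduction relative to Proposition~\ref{GUE distribution} is absorbed into (a)); (c) second-order Neumann terms weighted by third moments $M^{(3)}_{ij}$ of the non-Gaussian entries of $\wh H$ (all of which live in positions with $i\in L$ or $j\in L$), whose Gaussian fluctuation contributes variance $4\nu^2 Q(\b v)/d^5$---the symmetrized weight $\tfrac12(\abs{v_i}^2+\abs{v_j}^2)$ in $Q$ arises because each non-Gaussian entry enters through two paths, one via $\b v_L$ directly and one via the cross-term $d^{-1}B\b v_I$ in $\b w$; (d) diagonal quartic contributions from $M^{(4)}_{ij}$ which, after subtracting the Gaussian variance already captured in (b), reorganize into $\nu^2 R(\b v)/d^6$ via the combination $M^{(4)}_{ij}-4+\beta$. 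Asymptotic independence between the bulk Gaussian from $C$ and the $\scalar{\b v}{\wh H\b v}$ term follows by splitting off the scalar $\scalar{\b v_I}{C\b v_I}$ before invoking unitary invariance of $C$, exactly as in the decoupling~\eqref{sum of indeps} used in the proof of Proposition~\ref{GUE distribution}. Lemmas~\ref{lemma: sum for weak conv} and~\ref{lemma: error in weak conv} then assemble the pieces into the claimed distribution.

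The main obstacle will be the exact combinatorial matching of the third- and fourth-moment corrections. One must verify that every cubic term produced by the Neumann expansion carries precisely the symmetrized weight $\tfrac12(\abs{v_i}^2+\abs{v_j}^2)$ of $Q$ (rather than, e.g., $\abs{v_j}^2$ alone), and that after subtracting the Gaussian background from (b) the quartic terms collapse into $M^{(4)}-4+\beta$ rather than bare $M^{(4)}$. Disentangling which second-order terms contribute to the variance versus to a deterministic mean shift (the $S(\b v)$ correction does not appear here but will enter the subsequent Green-function comparison between $H$ and $\wh H$), and carefully separating the contributions from $A$, from $B$, and from the Gaussian fluctuations of $C$, constitutes the technical heart of the argument.
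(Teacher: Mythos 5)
Your overall architecture coincides with the paper's: isolate the set $L$ of large coordinates of $\b v$, perform a Schur complement against the purely Gaussian block $C$ on $L^c \times L^c$, feed the deep Gaussian resolvent into Proposition \ref{GUE distribution}, extract the $Q$ and $R$ corrections from the third and fourth moments of the entries touching $L$, and assemble with Lemmas \ref{lemma: error in weak conv} and \ref{lemma: sum for weak conv}. However, step (b) contains a genuine error. The quantity $\scalar{\b v_I}{(C-z)^{-1}\b v_I}$ probes the Gaussian bulk only along $\b v_I$, whose norm need not be close to $1$ (for $\b v = \b e_1$ it vanishes identically), so its residual fluctuation after removing the linear term has variance of order $\norm{\b v_I}^4 \cdot 2(d+1)/(\beta d^4)$, not $2(d+1)/(\beta d^4)$. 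The missing variance is carried by the fluctuations of $B(C-z)^{-1}B^*$ inside $M$ and by the cross term $B(C-z)^{-1}\b v_I$ in $\b w$ --- and these are \emph{correlated} with $\scalar{\b v_I}{(C-z)^{-1}\b v_I}$, since all three are functionals of the same resolvent probed along overlapping directions. A one-line appeal to unitary invariance of $C$ does not decouple them, and your proposal gives no mechanism for doing so.

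The device that resolves this in the paper is a deterministic rotation $S$ of the Gaussian block chosen so that $S \b v_I = (\norm{\b v_I}, 0, \dots, 0)^*$; by invariance of the Gaussian ensemble this reduces the problem to an $(\abs{L}+1)$-dimensional Schur complement against a pure Gaussian block $H_1$ that is probed along a \emph{single} vector $F \b x = \wt S B \b v_L + \norm{\b v_I}\, \b a$ of norm close to $1$. A further polar decomposition of $\b a$ then renders the six fluctuation families genuinely independent, and the deep-resolvent variance $2(d+1)/(\beta d^6)$ combines with the $\msc^3$-weighted quadratic fluctuations of $\norm{F\b x}^2$ (contributing $2\nu^2/(\beta d^6)$) to yield exactly $2(d+1)/(\beta d^4)$. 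Without this rotation, your variance accounting and independence claims are unjustified; the rest of your outline (the origin of $Q$ from the third-moment covariance between linear and quadratic terms in $B$, of $R$ from the fourth moments, and the absorption of the linear pieces into $\scalar{\b v}{H \b v}$) is sound and matches the paper. A minor point: your $M_0^{-1}$ should read $-\theta \umat + d^{-1} B B^* \approx -d\, \umat$, since $-B(C-z)^{-1}B^* \approx -\msc\, BB^* \approx + d^{-1} BB^*$.
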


\begin{proof}
As before, we consistently drop the spectral parameter $z = \theta$ from our notation.

Let $M \in \N$ denote the number of entries of $\b v$ satisfying $\abs{v_i} > \varphi^{-D}$. Since $\b v$ is normalized, we have $M \leq \varphi^{2D}$.
To simplify notation, we assume (after a suitable permutation of the rows and columns of $H$) that the entries of $\b v$ satisfy $\abs{v_i} > \varphi^{-D}$ for $i \leq M$ and $\abs{v_i} \leq \varphi^{-D}$ for $i > M$. Split $\b v = \binom{\b u}{\b w}$, where $\b u \in \C^M$ and $\b w \in \C^{N - M}$. (Throughout the following we assume that $\b w \neq 0$; the case $\b w = 0$ may be easily handled by approximation with nonzero $\b w$.) We also split
\begin{equation*}
H \;=\;
\begin{pmatrix}
A & B^*
\\
B & H_0
\end{pmatrix}\,,
\end{equation*}
where $A$ is an $M \times M$ matrix and $H_0$ an $(N - M) \times (N - M)$ matrix with Gaussian entries. Choose a deterministic orthogonal/unitary $(N - M) \times (N - M)$ matrix $S$ such that $S \b w = (\norm{\b w}, 0, \dots, 0)^*$. Thus we get
\begin{align*}
G_{\b v \b v} &\;=\; \b v^*
\begin{pmatrix}
\umat & 0
\\
0 & S^*
\end{pmatrix}
\begin{pmatrix}
\umat & 0
\\
0 & S
\end{pmatrix}
\begin{pmatrix}
A - z & B^*
\\
B & H_0 - z
\end{pmatrix}^{-1}
\begin{pmatrix}
\umat & 0
\\
0 & S^*
\end{pmatrix}
\begin{pmatrix}
\umat & 0
\\
0 & S
\end{pmatrix}
\b v
\\
&\;\eqdist\;
\begin{pmatrix}
\b u \\ \norm{\b w} \\ 0
\end{pmatrix}^{\!\! *}
\begin{pmatrix}
A - z & B^* S^*
\\
SB & H_0 - z
\end{pmatrix}^{-1}
\begin{pmatrix}
\b u \\ \norm{\b w} \\ 0
\end{pmatrix}\,,
\end{align*}
where we used that $S H_0 S^* \eqdist H_0$ and the fact that $A$, $B$, and $H_0$ are independent.

Next, we split
\begin{equation*}
S \;=\;
\begin{pmatrix}
\b w^* / \norm{\b w}        
\\
\wt S
\end{pmatrix}\,, \qquad
H_0 \;=\;
\begin{pmatrix}
g & \b a^*
\\
\b a & H_1
\end{pmatrix}\,,
\end{equation*}
where $\b a \in \C^{N - M -1}$ is a vector of i.i.d.\ Gaussians. Note that $\wt S^*$ is an isometry, i.e.\ $\wt S \wt S^* = \umat$.
Thus we may write
\begin{align}
G_{\b v \b v} &\;\eqdist\;
\begin{pmatrix}
\b u \\ \norm{\b w} \\ 0
\end{pmatrix}^{\!\! *}
\begin{pmatrix}
A - z & B^* \b w / \norm{\b w} & B^* \wt S^*
\\
\b w^* B / \norm{\b w} & g - z & \b a^*
\\
\wt S B & \b a & H_1 - z
\end{pmatrix}^{-1}
\begin{pmatrix}
\b u \\ \norm{\b w} \\ 0
\end{pmatrix}
\notag \\ 
&\;\eqd\;
\begin{pmatrix}
\b x \\ 0
\end{pmatrix}^{\!\! *}
\begin{pmatrix}
E - z & F^*
\\
F & H_1 - z
\end{pmatrix}^{-1}
\begin{pmatrix}
\b x \\ 0
\end{pmatrix}
\notag \\ \label{Gvv in terms of x}
&\;\eqd\; \Gamma\,,
\end{align}
where the second equality defines the right-hand side using self-explanatory notation. Note that, by definition, $\norm{\b x} = \norm{\b v} = 1$.

Next, we claim that
\begin{equation} \label{estimate on F star F}
(F^* F)_{ij} \;=\; \delta_{ij} + O \pb{\varphi^{C_\zeta} N^{-1/2}}
\end{equation}
\hp{\zeta}. In order to prove \eqref{estimate on F star F}, write
\begin{equation*}
F^* F \;=\;
\begin{pmatrix}
B^* \wt S^* \wt S B & B^* \wt S^* \b a
\\
\b a^* \wt S B & \b a^* \b a
\end{pmatrix}\,.
\end{equation*}
We consider four cases. First, if $1 \leq i \neq j \leq M$ we find using \eqref{two-set LDE} that
\begin{equation*}
\absb{(F^* F)_{ij}} \;=\; \absBB{ \sum_{k,l} B^*_{ik} (\wt S^* \wt S)_{kl} \wt B_{lj} } \;\leq\; \frac{\varphi^{C_\zeta}}{N} \pBB{\sum_{k,l} \absb{(\wt S^* \wt S)_{kl}}^2}^{1/2}
\;=\; \frac{\varphi^{C_\zeta}}{N} \pB{\tr (\wt S^* \wt S)^2}^{1/2} \;\leq\; \varphi^{C_\zeta} N^{-1/2}
\end{equation*}
\hp{\zeta}.
Second, if $1 \leq i \leq M$ we find using \eqref{diag LDE} and \eqref{offdiag LDE} that
\begin{equation*}
\absb{(F^* F)_{ii} - 1} \;=\; \absBB{\sum_{k,l} B^*_{ik} (\wt S^* \wt S)_{kl} B_{li} - 1} \;\leq\; \absbb{\frac{1}{N} \sum_i (\wt S^* \wt S)_{ii} - 1} + \frac{\varphi^{C_\zeta}}{N} \pBB{\sum_{k,l} \absb{(\wt S^* \wt S)_{kl}}^2}^{1/2} \;\leq\; \varphi^{C_\zeta} N^{-1/2}
\end{equation*}
\hp{\zeta}. Third, for $i = M + 1$ we have by \eqref{diag LDE}
\begin{equation*}
\absb{(F^* F)_{ii} - 1} \;=\; \abs{\b a^* \b a - 1} \;\leq\; \varphi^{C_\zeta} N^{-1/2}
\end{equation*}
\hp{\zeta}.
Finally, for $1 \leq i < j = M+1$ we have by \eqref{two-set LDE}
\begin{equation*}
\absb{(F^* F)_{ij}} \;=\; \absBB{\sum_{k,l} B^*_{ik} \wt S^*_{kl} a_l} \;\leq\;  \frac{\varphi^{C_\zeta}}{N} \pBB{\sum_{k,l} \abs{\wt S^*_{kl}}^2}^{1/2} \;=\; \frac{\varphi^{C_\zeta}}{N} \pB{\tr \wt S^* \wt S}^{1/2} \;\leq\; \varphi^{C_\zeta} N^{-1/2}
\end{equation*}
\hp{\zeta}. This completes the proof of \eqref{estimate on F star F}.

Next, abbreviate $G_1(z) \deq (H_1 - z)^{-1}$. Since $N^{1/2} (N - M - 1)^{-1/2} H_1$ is an $(N - M - 1) \times (N - M - 1)$ GOE/GUE matrix, we find from \eqref{estimate on F star F}, Theorem \ref{theorem: strong estimate}, and Lemma \ref{lemma: msc} that
\begin{equation} \label{error for Schur}
\absB{\pb{F^* G_1 F}_{ij} - \delta_{ij} \msc} \;\leq\; \varphi^{C_\zeta} N^{-1/2} (d - 1)^{-1/2}
\end{equation}
\hp{\zeta}. Therefore Schur's formula yields
\begin{align}
\Gamma &\;=\; \b x^* \pB{- z - \msc - \pb{-E + F^* G_1 F - \msc}}^{-1} \b x
\notag \\
&\;=\; \msc \norm{\b x}^2 - \msc^2 \scalar{\b x}{E \b x} + \msc^2 \pB{\scalar{F \b x}{G_1 F \b x} - \msc \norm{\b x}^2}
+ O \pB{\varphi^{C_\zeta} N^{-1} (d - 1)^{-1}}\,.
\end{align}
\hp{\zeta}, where in the second step we expanded using \eqref{identity for msc}, and estimated the error term using \eqref{error for Schur} as well as the bounds $M \leq \varphi^{C_\zeta}$ and $\abs{E_{ij}} \leq \varphi^{C_\zeta} N^{-1/2}$\,. Recalling that $\norm{\b x} = 1$, we find
\begin{multline} \label{x F x computation 2}
\Gamma - \msc
\;=\; - \msc^2
\begin{pmatrix}
\b u \\ \norm{\b w}
\end{pmatrix}^{\!\! *}
\begin{pmatrix}
A & B^* \b w / \norm{\b w}
\\
\b w^* B / \norm{\b w} & g
\end{pmatrix}
\begin{pmatrix}
\b u \\ \norm{\b w}
\end{pmatrix}
+ \msc^2 \norm{F \b x}^2 \pbb{\frac{1}{\norm{F \b x}^2} \scalar{F \b x}{G_1 F \b x} - \msc}
\\
+ \msc^3 \pb{\norm{F \b x}^2 - 1} + O \pB{\varphi^{C_\zeta} N^{-1} (d - 1)^{-1}}
\end{multline}
\hp{\zeta}.

Next, from $F \b x = \wt S B \b u + \norm{\b w} \b a$ we find
\begin{align*}
\norm{F \b x}^2 &\;=\; \scalarb{B \b u}{\wt S^* \wt S  B \b u} + 2 \norm{\b w} \re \scalarb{B \b u}{\wt S^* \b a} + \norm{\b w}^2 \norm{\b a}^2
\\
&\;=\; \scalarb{B \b u}{B \b u} - \absb{\scalar{\b w}{B \b u}}^2 + 2 \norm{\b w} \re \scalarb{B \b u}{\wt S^* \b a} + \norm{\b w}^2 \norm{\b a}^2\,.
\end{align*}
Applying \eqref{LDE} to $\scalar{\b w}{B \b u} = \sum_{i,j} \ol w_i u_j B_{ij}$ (with $N$ in \eqref{LDE} replaced by $M (N - M)$), we find
\begin{equation*}
\absb{\scalar{\b w}{B \b u}}^2 \;\leq\; \varphi^{C_\zeta} N^{-1}\,.
\end{equation*}
Similarly, using \eqref{diag LDE} and \eqref{offdiag LDE} we find that
\begin{equation*}
\norm{B \b u}^2 \;=\; \norm{\b u}^2 + O \pb{\varphi^{C_\zeta} N^{-1/2}}\,, \qquad \norm{\wt S B \b u}^2 \;=\; \norm{\b u}^2 + O \pb{\varphi^{C_\zeta} N^{-1/2}}
\end{equation*}
\hp{\zeta},
using \eqref{two-set LDE} that
\begin{equation*}
\absb{\scalarb{B \b u}{\wt S^* \b a}} \;\leq\; \varphi^{C_\zeta} N^{-1/2}
\end{equation*}
\hp{\zeta},
and using \eqref{diag LDE} that
\begin{equation*}
\norm{\b a}^2 \;=\; 1 + O\pb{\varphi^{C_\zeta} N^{-1/2}}
\end{equation*}
\hp{\zeta}.
Using $\norm{\b u} \geq \varphi^{-D}$ (by definition of $\b u$), we therefore conclude that
\begin{equation} \label{F x computed}
\norm{F \b x}^2 \;=\; \norm{B \b u}^2 + 2 \re \frac{\norm{\b u} \norm{\b w}}{\norm{\wt S B \b u} \norm{\b a}} \scalarb{\wt S B \b u}{\b a} + \norm{\b w}^2 \norm{\b a}^2 + O \pb{\varphi^{C_\zeta} N^{-1}}
\;=\; 1 + O \pb{\varphi^{C_\zeta} N^{-1/2}}
\end{equation}
\hp{\zeta}.
Using Theorem \ref{theorem: strong estimate} applied to $G_1$ (recall that $F$ and $H_1$ are independent), we therefore get from \eqref{x F x computation 2} that
\begin{multline}
\Gamma - \msc \;=\; - \msc^2 \pB{\scalar{\b u}{A \b u} + \norm{\b w}^2 g + 2 \re \scalar{\b w}{B \b u}} + \msc^2 \pBB{\frac{1}{\norm{F \b x}^2} \scalar{F \b x}{G_1 F \b x} - \msc}
\\
+ \msc^3 \pBB{ \norm{B \b u}^2 - \norm{\b u}^2 + 2 \re \frac{\norm{\b u} \norm{\b w}}{\norm{\wt S B \b u} \norm{\b a}} \scalarb{\wt S B \b u}{\b a} + \norm{\b w}^2 \pb{\norm{\b a}^2 - 1}} + O \pB{\varphi^{C_\zeta} N^{-1} (d - 1)^{-1}}
\end{multline}
\hp{\zeta}. We write this as
\begin{equation} \label{Gamma decomposed}
\Gamma - \msc \;=\; \Gamma_1 + \cdots + \Gamma_6 + O\pB{\varphi^{C_\zeta} N^{-1} (d - 1)^{-1}}
\end{equation}
\hp{\zeta}, where
\begin{gather*}
\Gamma_1 \;\deq\; - \msc^2 \scalar{\b u}{A \b u}\,, \qquad
\Gamma_2 \;\deq\; - \msc^2 \norm{\b w}^2 g\,, \qquad
\Gamma_3 \;\deq\; \msc^2 \pBB{\frac{1}{\norm{F \b x}^2} \scalar{F \b x}{G_1 F \b x} - \msc}\,,
\\
\Gamma_4 \;\deq\; - 2 \msc^2 \re \scalar{\b w}{B \b u} + \msc^3 \pb{\norm{B \b u}^2 - \norm{\b u}^2} \,, \qquad
\Gamma_5 \;\deq\; 2 \msc^3 \re \frac{\norm{\b u} \norm{\b w}}{\norm{\wt S B \b u} \norm{\b a}} \scalarb{\wt S B \b u}{\b a}\,,
\\
\Gamma_6 \;\deq\; \msc^3 \norm{\b w}^2 \pb{\norm{\b a}^2 - 1}\,.
\end{gather*}

We now claim that $\Gamma_1, \dots, \Gamma_6$ are independent. In order to prove this, let $f_1, \dots, f_6$ be indicator functions of Borel sets in $\R$. Write $\b a = a \b \omega$ in polar coordinates, where $a > 0$ and $\b \omega \in S^{N - M - 2}$. Since $\b a$ is Gaussian, $a$ and $\b \omega$ are independent. Denote by $\rho_1, \dots, \rho_6$ the laws of $A, B, g, a, \b \omega, H_1$ respectively. Then we get
\begin{align*}
\E \prod_{i = 1}^6 f_i(\Gamma_i) &\;=\; \int \dd \rho_1(A) \, \dd \rho_2(B) \, \dd \rho_3(d) \, \dd \rho_4(a) \, \dd \rho_5(\b \omega) \, \dd \rho_6(H_1) \, \prod_{i = 1}^6 f_i(\Gamma_i)
\\
&\;=\; \pb{\E f_1(\Gamma_1)} \pb{\E f_2(\Gamma_2)} \pb{\E f_6(\Gamma_6)} \int \dd \rho_2(B) \, \dd \rho_5(\b \omega) \, \dd \rho_6(H_1) \, f_3(\Gamma_3) f_4(\Gamma_4) f_5(\Gamma_5)
\\
&\;=\; \pb{\E f_1(\Gamma_1)} \pb{\E f_2(\Gamma_2)} \pb{\E f_6(\Gamma_6)} \pb{\E f_3(\Gamma_3)} \pb{\E f_5(\Gamma_5)} \int \dd \rho_2(B)  f_4(\Gamma_4)
\\
&\;=\; \prod_{i = 1}^6 \E f_i(\Gamma_i)\,,
\end{align*}
where the second equality follows by definition of the $\Gamma$'s, and the third from the invariance of the law of $\b \omega$ under rotations (applied to $\Gamma_5$) and from the invariance of the law of $H_1$ under orthogonal/unitary conjugations (applied to $\Gamma_3$). This proves the independence of $\Gamma_1, \dots, \Gamma_6$.

Next, we identify the asymptotic laws of $\Gamma_1, \dots, \Gamma_6$. There is nothing to be done with $\Gamma_1$. By definition,
\begin{equation} \label{dist Gamma 2}
\nu N^{1/2} \Gamma_2 \;\eqdist\; \cal N \pb{0, 2 \nu^2 \beta^{-1} \msc^4 \norm{\b w}^4 }\,.
\end{equation}
Since $F \b x$ is independent of $H_1$ and $M \leq \varphi^{2D}$, we get from Proposition \ref{GUE distribution} that
\begin{equation} \label{dist Gamma 3}
\nu N^{1/2} \Gamma_3 \;\simd\; \cal N\pbb{0, \msc^4 \frac{2 (d+1)}{\beta d^2}}\,.
\end{equation}
In order to analyse $\Gamma_4$, we define $b_i \deq (B \b u)_i$ for $i = 1, \dots, N - M$. Then $\{b_i\}_i$ are independent and satisfy
\begin{equation*}
\E b_i \;=\; 0\,, \qquad \E \abs{b_i}^2 = \frac{1}{N} \norm{\b u}^2\,, \qquad \E \abs{b_i}^4 \;=\; \frac{4 - \beta}{N^2} \norm{\b u}^4 + \frac{1}{N^2} \sum_j \pb{M^{(4)}_{ij} - 4 + \beta} \abs{u_j}^4\,.
\end{equation*}
Thus we find
\begin{equation*}
\Gamma_4 \;=\; \sum_i \pB{- 2 \msc^2 \re \ol w_i b_i + \msc^3 \pb{\abs{b_i}^2 - \E \abs{b_i}^2}} + O (M/N)\,.
\end{equation*}
The variance of the term in parentheses is
\begin{align*}
&\mspace{-10mu} \E \pB{- 2 \msc^2 \re \ol w_i b_i + \msc^3 \pb{\abs{b_i}^2 - \E \abs{b_i}^2}}^2
\\
&\;=\; 4 \msc^4 \E (\re \ol w_i b_i)^2 - 4 \msc^5 \E \re \pb{(\ol w_i b_i) \abs{b_i}^2} + \msc^6 \E \pb{\abs{b_i}^2 - \E \abs{b_i}^2}^2
\\
&\;=\; 4 \msc^4 \beta^{-1} N^{-1} \norm{\b u}^2 \abs{w_i}^2 - 4 \msc^5 N^{-3/2} \re \pbb{\ol w_i \sum_j M^{(3)}_{ij} u_j \abs{u_j}^2}
\\
&\mspace{40mu}
+ \msc^6 N^{-2} \pbb{(3 - \beta) \norm{\b u}^4 + \sum_j \pb{M^{(4)}_{ij} - 4 + \beta} \abs{u_j}^4}\,.
\end{align*}
Since $\abs{w_i} \leq \varphi^{-D}$, we get from the Central Limit Theorem and Lemma \ref{lemma: weak conv trick} that
\begin{equation} \label{dist Gamma 4}
\nu N^{1/2} \Gamma_4
\;\simd\; \cal N \pbb{0, \nu^2 \frac{4 \msc^4}{\beta} \norm{\b u}^2 \norm{\b w}^2 - 4 \nu^2 \msc^5 Q(\b w, \b u) + \nu^2 \msc^6 \pB{2 \beta^{-1} \norm{\b u}^4 +R(\b u)}}\,,
\end{equation}
where we abbreviated
\begin{equation*}
Q(\b w, \b u) \;\deq\; N^{-1/2} \re \sum_{i,j} \ol w_i M^{(3)}_{ij} u_j \abs{u_j}^2\,, \qquad R(\b u) \;\deq\; \frac{1}{N} \sum_{i,j} \pb{M^{(4)}_{ij} - 4 + \beta} \abs{u_j}^4\,,
\end{equation*}
and used that $3 - \beta = 2 \beta^{-1}$ for $\beta = 1,2$. Since $\norm{\b u} \leq 1$ and $\norm{\b w} \leq 1$, we find that $Q(\b w, \b u) \leq C$ and $R(\b u) \leq C$ for some positive constant $C$. Next, using $\Gamma_5 = 2 \msc^3 \norm{\b w} \re \scalar{\wt S B \b u}{\b a} + O(\varphi^{C_\zeta} N^{-1})$ \hp{\zeta} and
\begin{equation*}
\E \pb{2 \re \scalar{\wt S B \b u}{\b a}}^2 \;=\; \frac{4}{\beta N^{2}} (N - M - 1) \norm{\b u}^2\,,
\end{equation*}
we find from the Central Limit Theorem and Lemma \ref{lemma: weak conv trick} that
\begin{equation} \label{dist Gamma 5}
\nu N^{1/2} \Gamma_5 \;\simd\; \cal N \pb{0, 4 \nu^2 \beta^{-1} \msc^6 \norm{\b u}^2 \norm{\b w}^2}\,.
\end{equation}
Finally, we have $\norm{\b a}^2 - 1 = \norm{\b a}^2 - \E \norm{\b a}^2 + O(M/N)$ and
\begin{equation*}
\E \pb{\abs{a_i}^2 - \E \abs{a_i}^2}^2 \;=\; 2 \beta^{-1} N^{-2}\,.
\end{equation*}
Thus we conclude from the Central Limit Theorem and Lemma \ref{lemma: weak conv trick} that
\begin{equation} \label{dist Gamma 6}
\nu N^{1/2} \Gamma_6 \;\simd\; \cal N \pb{0, 2 \nu^2 \beta^{-1} \msc^6 \norm{\b w}^4}\,.
\end{equation}

Next, \eqref{dist Gamma 2} -- \eqref{dist Gamma 6} imply that $\nu N^{1/2} \Gamma_2, \dots, \nu N^{1/2} \Gamma_6$ are tight (as $N$-dependent random variables). Moreover, an easy variance calculation shows that $\nu N^{1/2} \Gamma_1$ is also tight. Therefore we get from \eqref{Gvv in terms of x}, \eqref{Gamma decomposed}, \eqref{dist Gamma 2} -- \eqref{dist Gamma 6}, Lemma \ref{lemma: error in weak conv}, and Lemma \ref{lemma: sum for weak conv} that (recall the notation from Definition \ref{definition: sum of indep G})
\begin{equation*}
X \;\simd\; - \nu N^{1/2} \msc^2 \scalar{\b u}{A \b u} + \cal N(0, V_1)\,,
\end{equation*}
where
\begin{multline*}
V_1 \;\deq\; \frac{2 (d+1)}{\beta d^6} + \frac{2 \nu^2}{\beta d^4} \pb{\norm{\b w}^4 + 2 \norm{\b u}^2 \norm{\b w}^2} + \frac{4 \nu^2}{d^5} Q(\b w, \b u)
\\
+ \frac{\nu^2}{d^6} R(\b u)
+ \frac{2 \nu^2}{\beta d^6} \pB{\norm{\b u}^4 + 2 \norm{\b u}^2 \norm{\b w}^2 + \norm{\b w}^4}\,.
\end{multline*}
Here we used \eqref{identity for gamma}.

Next, from
\begin{equation*}
\scalar{\b v}{H \b v} \;=\; \scalar{\b u}{A \b u} + 2 \re \scalar{\b w}{B \b u} + \scalar{\b w}{H_0 \b w}\,,
\end{equation*}
the Central Limit Theorem, Lemma \ref{lemma: weak conv trick}, and Lemma \ref{lemma: sum for weak conv} we find
\begin{equation} \label{replace A with H}
\nu N^{1/2} \scalar{\b v}{H \b v} \;\simd\; \nu N^{1/2} \scalar{\b u}{A \b u} + \cal N \pbb{0, \frac{2 \nu^2}{\beta} (1 - \norm{\b u}^4)}\,.
\end{equation}
Moreover, using that the dimension $M$ of $\b u$ satisfies $M \leq \varphi^{2D}$ and the fact that $\max_i \abs{w_i} \leq \varphi^{-D}$, we find
\begin{equation*}
Q (\b w, \b u)  \;=\; Q(\b v) + O(\varphi^{-D}) \,, \qquad R(\b u) \;=\; R(\b v) + O(\varphi^{-2D})\,.
\end{equation*}
Therefore we get, using Lemma \ref{lemma: sum for weak conv} and recalling that $1 = \norm{\b v}^2 = \norm{\b u}^2 + \norm{\b w}^2$,
\begin{equation*}
X \;\simd\; -\nu N^{1/2} d^{-2} \scalar{\b v}{H \b v} + \cal N\pbb{0\,,\, \frac{2 (d+1)}{\beta d^6} + \frac{4 \nu^2}{d^5} Q(\b v)
+ \frac{\nu^2}{d^6} R(\b v)
+ \frac{2 \nu^2}{\beta d^6}}\,.
\end{equation*}
This concludes the proof.
\end{proof}

\subsection{Conclusion of the proof of Theorem \ref{theorem: outlier distributions}} \label{sec: Green function comparison}
In this section we compute the distribution of $G_{\b v \b v}(\theta) - \msc(\theta)$ for a general Wigner matrix $H$, and hence complete the proof of Theorem \ref{theorem: outlier distributions}. We use the Green function comparison method from the proof of Lemma \ref{lemma: three moment bound}.

Let $H = (h_{ij}) = (N^{-1/2} W_{ij})$ be an arbitrary real symmetric / Hermitian Wigner matrix, $V = (N^{-1/2} V_{ij})$ a GOE/GUE matrix independent of $H$, and $\b v \in \C^N$ be normalized. For $D > 0$ define the subset
\begin{equation*}
I_D \;\deq\; \hb{i = 1, \dots, N \col \abs{v_i} \leq \varphi^{-D}}\,.
\end{equation*}
Define a new Wigner matrix $\wh H = (\wh h_{ij}) = (N^{-1/2} \wh W_{ij})$ through
\begin{equation*}
\wh W_{ij} \;\deq\;
\begin{cases}
V_{ij} & \text{if } i \in I_D \text{ and } j \in I_D
\\
W_{ij} & \text{otherwise}\,.
\end{cases}
\end{equation*}
Thus, $\wh H$ satisfies the assumptions of Proposition \ref{proposition: almost-GUE}. Let
\begin{equation*}
J_D \;\deq\; \hb{1 \leq i \leq j \leq N \col i \in I_D \text{ and } j \in I_D}
\end{equation*}
be the set of matrix indices to be replaced. Similarly to \eqref{ordering map}, we choose a bijective map $\phi \col J_D \to \{1, \dots, \gamma_{\rm max}(D)\}$ and denote by $H_\gamma = (h_{ij}^\gamma)$ the matrix defined by

\begin{equation*}
h_{ij}^\gamma \;\deq\;
\begin{cases}
N^{-1/2} W_{ij} & \text{if } \phi(i,j)\le \gamma
\\
N^{-1/2} \wh W_{ij} & \text{otherwise}\,.
\end{cases}
\end{equation*}
In particular, $H_0 = \wh H$ and $H_{\gamma_{\rm max}(D)} = H$. Let now $(a,b) \in J_D$ satisfy $\phi(a,b) = \gamma$. Similarly to \eqref{defHg1}, we write
\begin{equation*}
H_{\gamma-1} \;=\; Q + N^{-1/2} V \where V \;\deq\; V_{ab} E^{(ab)} + \ind{a \neq b} V_{ba} E^{(ba)}\,,
\end{equation*}
and
\begin{equation*}
H_\gamma \;=\; Q + N^{-1/2} W \where W \;\deq\; W_{ab} E^{(ab)}+ \ind{a \neq b} W_{ba} E^{(ba)}\,.
\end{equation*}

In order to avoid singular behaviour on exceptional low-probability events, we add a small imaginary part to the spectral parameter $\theta$, and set $z \deq \theta + \ii N^{-4}$.
Abbreviate
\begin{equation} \label{definition of x}
x \;\deq\; \nu N^{1/2} \re (G_{\b v \b v}(z) - \msc(z))\,.
\end{equation}
Thus we have the rough bound $\abs{x} \leq N^4$ which we shall tacitly use in the following. We use the notation \eqref{defG}, which gives rise to the quantities $x_R, x_S, x_T$ defined through \eqref{definition of x} with $G$ replaced by $R,S,T$ respectively. We may now state the main comparison estimate.

\begin{lemma} \label{lemma: Green function comp with shift}
Provided $D$ is a large enough constant, the following holds.
Let $f \in C^3(\R)$ be bounded with bounded derivatives and $q \equiv q_N$ be an arbitrary deterministic real sequence. Then
\begin{align} \label{comparison for T}
\E f(x_T + q) &\;=\; \E f(x_R + q) + Y_{ab} \E f'(x_R + q) + A_{ab} + O \pb{\varphi^{-1} \wh {\cal E}_{ab}}\,,
\\ \label{comparison for S}
\E f(x_S + q) &\;=\; \E f(x_R + q) + A_{ab} + O \pb{\varphi^{-1} \wh {\cal E}_{ab}}\,,
\end{align}
where $A_{ab}$ satisfies $\abs{A_{ab}} \leq \varphi^{-1}$,
\begin{equation*}
Y_{ab} \;\deq\; - \nu N^{-1} \re \pB{\msc^4 M^{(3)}_{ab} \ol v_a v_b + \msc^4 M^{(3)}_{ba} \ol v_b v_a}\,,
\end{equation*}
and
\begin{equation*}
\wh {\cal E}_{ab} \;\deq\; \sum_{\sigma, \tau = 0}^2 N^{-2 + \sigma/2 + \tau/2} \abs{v_a}^\sigma \abs{v_b}^\tau + \delta_{ab} \sum_{\sigma = 0}^2 N^{-1 + \sigma/2} \abs{v_a}^\sigma\,.
\end{equation*}
\end{lemma}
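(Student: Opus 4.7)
The plan is to carry out the standard Lindeberg replacement in the Green function comparison method, but tracking the third-moment shift explicitly. Since $Q$ is independent of the random variables $V_{ab}, V_{ba}$ and $W_{ab}, W_{ba}$, conditioning on $Q$ (equivalently on $R$) decouples the problem: for each realization of $R$, one Taylor-expands $f$ around $x_R + q$ and expands $T - R$ (resp.\ $S - R$) as a polynomial in $W_{ab}, W_{ba}$ (resp.\ $V_{ab}, V_{ba}$) via the resolvent identity \eqref{resolvent exp}, then integrates out the single matrix entries. The fact that $\E W_{ab}^k = \E V_{ab}^k$ for $k=1,2$ forces the contributions up to the second moment to agree, and this common piece defines $A_{ab}$; the third moment of $V$ vanishes while that of $W$ produces $M^{(3)}$, and this discrepancy is the origin of $Y_{ab}$.

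First I would apply the resolvent expansion to write
\begin{equation*}
T - R \;=\; \sum_{k=1}^{3} N^{-k/2} (-RW)^k R + N^{-2} (-RW)^4 T,
\end{equation*}
with an analogous expression for $S - R$, substitute $W = W_{ab} E^{(ab)} + \ind{a\neq b} W_{ba} E^{(ba)}$, and expand every $(RW)^k R$ into a polynomial whose coefficients are products of the entries $R_{\b v a}, R_{\b v b}, R_{a\b v}, R_{b\b v}, R_{aa}, R_{ab}, R_{ba}, R_{bb}$. Then I would Taylor-expand $f$ around $x_R + q$ up to order three, obtaining
\begin{equation*}
\E f(x_T+q) - \E f(x_R+q) \;=\; \sum_{\ell=1}^{3} \frac{1}{\ell!}\, \E\qb{f^{(\ell)}(x_R+q)\, \E\qb{(x_T-x_R)^\ell \mid Q}} + \cal R_4,
\end{equation*}
and similarly for $S$. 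The inner conditional expectations are polynomials in the (at most third) moments of $W_{ab}, W_{ba}$ with $R$-measurable coefficients, and the difference $\E f(x_T+q) - \E f(x_S+q)$ retains only the monomials in which exactly the third moment of $W$ appears (once), with no $V$-analogue surviving.

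The crucial input for identifying $Y_{ab}$ is Theorem \ref{theorem: strong estimate} applied at $z = \theta + \ii N^{-4}$, which gives \hp{\zeta}
\begin{equation*}
\absb{R_{\b v a} - \msc\, \ol v_a}, \;\absb{R_{a\b v} - \msc\, v_a}, \;\absb{R_{ab} - \delta_{ab}\msc}, \;\absb{R_{aa} - \msc} \;\leq\; \varphi^{C_\zeta} N^{-1/2}(d-1)^{-1/2},
\end{equation*}
together with the smallness $|v_a|, |v_b|\leq \varphi^{-D}$ enforced by $(a,b)\in J_D$. Tracking the leading cubic-in-$W$ contributions from all three Taylor orders (they come from the pure cubic term in $T - R$ at order $\ell=1$, from the (linear)$\times$(quadratic) cross-term at $\ell=2$, and from the pure cubic at $\ell=3$), replacing every $R_{\b v a}$ by $\msc \ol v_a$ and every diagonal resolvent entry by $\msc$, and collecting the moment combinations $\E|W_{ab}|^2 W_{ab} = M^{(3)}_{ab}$ and $\E|W_{ba}|^2 W_{ba} = M^{(3)}_{ba}$, one obtains a coefficient of $\E f'(x_R+q)$ of exactly the stated form $Y_{ab}$ (the factor $\msc^4$ arises as two powers of $\msc$ from the $R_{\b v a}, R_{b\b v}$ pair contracted against $W_{ab}$ and two further powers from the pair of diagonal resolvents in the cubic block).

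The main obstacle, and the part that will consume most of the estimates, is the bookkeeping of the error term $O(\varphi^{-1}\wh{\cal E}_{ab})$. This requires estimating (i) the subleading corrections in the $R_{\b v a} \approx \msc \ol v_a$ approximations, whose sizes $\varphi^{C_\zeta}N^{-1/2}(d-1)^{-1/2}$ multiplied by third moments of $W$ must be absorbed into $\wh{\cal E}_{ab}$; (ii) the fourth-order Taylor remainder $\cal R_4$, which by boundedness of $f'''$ and of the conditional moments $\E[|W_{ab}|^k\mid Q]$ (using subexponential decay \eqref{subexp for h} and the rough deterministic bound $|x_T|,|x_R|\leq N^4$ on the exceptional event $\Xi^c$) contributes a factor controlled by $N^{-2}$ times a polynomial in $|v_a|, |v_b|$ of degree at most four, matching $\wh{\cal E}_{ab}$; and (iii) the diagonal contribution when $a=b$, which provides the extra $\delta_{ab}(\abs{v_a}^2 + N^{-3/2})$ piece. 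The choice of $D$ must be large enough (depending on $\zeta$) so that the bound $|v_a|, |v_b|\leq \varphi^{-D}$ defeats the $\varphi^{C_\zeta}$ factors arising from the isotropic law and from the moment estimates; an analogous argument applied to $V$ instead of $W$ yields \eqref{comparison for S}, with the third-moment shift now absent.
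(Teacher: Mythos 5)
Your overall architecture coincides with the paper's: Lindeberg replacement conditioned on $Q$, resolvent expansion of $T-R$ in powers of $W$, Taylor expansion of $f$ around $x_R+q$, matching of the first two moments to define $A_{ab}$, and identification of $Y_{ab}$ from the third moment via the cubic term. (One small imprecision: only the order-one Taylor term paired with the cubic resolvent term $y_3$ can produce a coefficient of $\E f'(x_R+q)$; the third-moment contributions you attribute to the $\ell=2$ and $\ell=3$ Taylor orders multiply $f''$ and $f'''$ and must instead be shown negligible, which works by absolute-value bounds since $\abs{y_1}\abs{y_2}$ and $\abs{y_1}^3$ carry enough powers of $N^{-1/2}$ and of $\abs{v_a},\abs{v_b}$.)

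The genuine gap is in your item (i), the claim that the corrections to $R_{\b v a}\approx \msc\,\ol v_a$ "must be absorbed into $\wh{\cal E}_{ab}$" by size alone. Writing $R_{\b v a}=\msc\,\ol v_a + \ol v_a(R_{aa}-\msc)+\ol v_b R_{ba}+\msc\,\cal R_{\b u a}+(R_{aa}-\msc)\cal R_{\b u a}$ with $\cal R_{\b u a}\deq -\sum_i^{(a)}R^{(a)}_{\b u i}h_{ia}$, the fluctuating piece $\cal R_{\b u a}$ has size $\varphi^{C_\zeta}N^{-1/2}(d-1)^{-1/2}$ and is \emph{not} controlled by $\abs{v_a}$. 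In the third-moment term $\nu N^{-1}\msc^2 M^{(3)}_{ab}R_{\b v a}R_{b\b v}$ this produces, e.g., a contribution $\nu N^{-1}\msc^2 v_b\,\cal R_{\b u a}$ of size $\varphi^{C_\zeta}N^{-3/2}\abs{v_b}$ and a contribution $\nu N^{-1}\msc^2\cal R_{\b u a}\cal R_{b\b u}$ of size $\varphi^{C_\zeta}N^{-2}(d-1)^{-1/2}$. Neither is $O(\varphi^{-1}\wh{\cal E}_{ab})$: the first exceeds $\varphi^{-1}N^{-3/2}\abs{v_b}$ by a factor $\varphi^{C_\zeta+1}$ (so that summing over all $\sim N^2$ replacement steps the total error does not vanish), and the second exceeds $\varphi^{-1}N^{-2}$ by $(d-1)^{-1/2}\varphi^{C_\zeta+1}$, which is as large as $N^{1/6}$ when $d-1\asymp N^{-1/3}$. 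No choice of $D$ repairs this, since these terms carry no factor $\abs{v_a}$ or $\abs{v_b}$ beyond what is shown. The paper closes this gap not by an absolute-value bound but by a cancellation: $\E_a\cal R_{\b u a}=0$, so after replacing $x_R$ by the minor quantity $x_R^{(a)}$ (at a cost controlled by $\abs{x_R-x_R^{(a)}}$ via \eqref{Gvw minor}) the expectation of these terms against $f'(x_R^{(a)}+q)$ vanishes exactly; the $\cal R_{\b u a}\cal R_{b\b u}$ term additionally requires decoupling $\cal R_{b\b u}$ from the $a$-th column. Without this partial-expectation argument the error estimate for $Y_{ab}$ fails, so your proof as written does not go through.
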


Before proving Lemma \ref{lemma: Green function comp with shift}, we show how it implies Theorem \ref{theorem: outlier distributions}.

\begin{proof}[Proof of Theorem \ref{theorem: outlier distributions}]
Fix $D > 0$ large enough that the conclusion of Lemma \ref{lemma: Green function comp with shift} holds. By Remark \ref{remark: cont to smooth}, we may assume that $f \in C_c^\infty(\R)$. Let $\gamma = \phi(a,b)$. Since $\abs{v_a} \leq \varphi^{-D}$ and $\abs{v_b} \leq \varphi^{-D}$, we find
\begin{equation} \label{Y squared}
Y_{ab}^2 \;\leq\; \varphi^{-1} \wh {\cal E}_{ab}\,.
\end{equation}
Applying \eqref{comparison for T} and \eqref{comparison for S} with $f$ replaced by $f'$ yields
\begin{equation*}
Y_{ab} \E f'(x_T + q) \;=\; Y_{ab} \E f'(x_R + q) + Y_{ab} A_{ab} + O \pb{\varphi^{-1} \wh {\cal E}_{ab}}\,.
\end{equation*}
Subtracting this from \eqref{comparison for T} and using $\abs{A_{ab}} \leq \varphi^{-1}$ yields
\begin{equation*}
\E f(x_T + q) \;=\; \E f(x_R + q) + Y_{ab} \E f'(x_T + q) + A_{ab} + O \pb{\varphi^{-1} \wh {\cal E}_{ab} + \varphi^{-1} \abs{Y_{ab}}}\,.
\end{equation*}
Subtracting \eqref{comparison for S} yields
\begin{equation*}
\E f(x_\gamma + q) \;=\; \E f(x_{\gamma - 1} + q) + Y_{ab} \E f'(x_\gamma + q) + O \pb{\varphi^{-1} \wh {\cal E}_{ab} + \varphi^{-1} \abs{Y_{ab}}}\,,
\end{equation*}
where we introduced the notation $x_\gamma \deq \nu N^{1/2} \re \pb{ (H_\gamma - z)^{-1}_{\b v \b v} - \msc(z)}$. Using \eqref{Y squared} we therefore get
\begin{equation} \label{iterable identity}
\E f(x_\gamma + q - Y_{ab}) \;=\; \E f(x_{\gamma - 1} + q) + O \pb{\varphi^{-1} \wh {\cal E}_{ab} + \varphi^{-1} \abs{Y_{ab}}}\,.
\end{equation}
We now iterate \eqref{iterable identity}, starting at $\gamma = 1$ and $q = 0$. Using that $\sum_{a,b} \wh {\cal E}_{ab} \leq C$ and $\sum_{a,b} \abs{Y_{ab}} \leq C$, we find after $\gamma_{\rm max}$ iterations of \eqref{iterable identity}
\begin{equation*}
\E f \pBB{x_{\gamma_{\rm max}(D)} - \sum_{\gamma = 1}^{\gamma_{\rm max}(D)} Y_{\varphi^{-1}(\gamma)}} \;=\; \E f(x_0) + O(\varphi^{-1})\,.
\end{equation*}
Moreover, using $\abs{v_a} \leq \varphi^{-D}$ and $\abs{v_b} \leq \varphi^{-D}$, we find that
\begin{align*}
\sum_{\gamma = 1}^{\gamma_{\rm max}(D)} Y_{\varphi^{-1}(\gamma)} &\;=\; - \nu N^{-1} \re \sum_{a , b \in I_D} \ind{a \leq b} \msc(z)^4 \pB{M^{(3)}_{ab} \ol v_a v_b + M^{(3)}_{ba} \ol v_b v_a}
\\
&\;=\; - \nu N^{-1} \re \sum_{a , b  = 1}^N \msc(z)^4 M^{(3)}_{ab} \ol v_a v_b + O(\varphi^{-2D})\,.
\end{align*}
Using Lemma \ref{lemma: sum for weak conv} we find
\begin{equation*}
\nu N^{1/2} \pB{(H - z)^{-1}_{\b v \b v} - \msc(z)} \;\simd\; \nu N^{1/2} \pB{(\wh H - z)^{-1}_{\b v \b v} - \msc(z)} - \nu N^{-1} \re \sum_{a , b  = 1}^N \msc(z)^4 M^{(3)}_{ab} \ol v_a v_b\,.
\end{equation*}
Using Lemma \ref{lemma: derivative of Gvv}, it is now easy to remove the imaginary part $N^{-4}$ of $z$ to get
\begin{equation*}
\nu N^{1/2} \pB{(H - \theta)^{-1}_{\b v \b v} + d^{-1}} \;\simd\; \nu N^{1/2} \pB{(\wh H - \theta)^{-1}_{\b v \b v} + d^{-1}} - \frac{\nu S(\b v)} {d^4}\,.
\end{equation*}
Since $\wh H$ satisfies the assumptions of Proposition \ref{proposition: almost-GUE}, we find
\begin{equation*}
\nu N^{1/2} \pB{(H - \theta)^{-1}_{\b v \b v} + d^{-1}} \;\simd\; - \frac{\nu N^{1/2} \scalar{\b v}{H \b v}} {d^2} 
- \frac{\nu S(\b v)} {d^4}
+ \cal N \pBB{0 \,,\, \frac{2 (d+1)}{\beta d^4} + \frac{4 \nu^2 Q(\b v)}{d^5}
+ \frac{\nu^2 R(\b v)}{d^6}}\,,
\end{equation*}
using the notation of Definition \ref{definition: sum of indep G}. Now Theorem \ref{theorem: outlier distributions} follows from Proposition \ref{proposition: rank k distribution} and Lemma \ref{lemma: error in weak conv}.
\end{proof}

\begin{proof}[Proof of Lemma \ref{lemma: Green function comp with shift}]
As before, we consistently drop the spectral parameter $z = \theta + \ii N^{-4}$ from $G$ and $\msc$. We focus on \eqref{comparison for T}. From Theorem \ref{theorem: strong estimate}, \eqref{Rva estimate}, and \eqref{Sva Rva} (with $S$ replaced by $T$), we find
\begin{equation} \label{Tva and Rva estimate}
\abs{T_{\b v a}} \;\leq\; \varphi^{C_\zeta} N^{-1/2} (d - 1)^{-1/2} + C \abs{v_a} \,, \qquad \abs{R_{\b v a}} \;\leq\; \varphi^{C_\zeta} N^{-1/2} (d - 1)^{-1/2} + C \abs{v_a} + \varphi^{C_\zeta} N^{-1/2} \abs{v_b}
\end{equation}
\hp{\zeta},
and similar results hold for $T_{a \b v}$, $T_{\b v b}$, $T_{b \b v}$, $R_{a \b v}$, $R_{\b v b}$, and $R_{b \b v}$. Similarly, from the first inequality of \eqref{Svv Rvv} (with $S$ replaced by $T$), we get
\begin{equation*}
\absb{T_{\b v \b v} - R_{\b v \b v}} \;\leq\; \varphi^{C_\zeta} N^{-1/2} \pB{N^{-1} (d - 1)^{-1} + N^{-1/2} (d - 1)^{-1/2}(\abs{v_a} + \abs{v_b}) + \abs{v_a} \abs{v_b} + N^{-1/2} (\abs{v_a}^2 + \abs{v_b}^2)}
\end{equation*}
\hp{\zeta}.
This yields
\begin{equation} \label{xT - xR}
\abs{x_T - x_R} \;\leq\; \varphi^{\wt C_\zeta} \qB{N^{-1} (d - 1)^{-1/2} + N^{-1/2} (\abs{v_a} + \abs{v_b}) + (d - 1)^{1/2} \abs{v_a} \abs{v_b} + N^{-1/2} (d - 1)^{1/2} (\abs{v_a}^2 + \abs{v_b}^2)}
\end{equation}
\hp{\zeta} for some constant $\wt C_\zeta$. Now choose $D \geq \wt C_\zeta + 1$. By definition of $J_D$, we have that $\abs{v_a} \leq \varphi^{-D}$ and $\abs{v_b} \leq \varphi^{-D}$. Therefore
\begin{equation*}
\abs{x_T - x_R}^3 \;\leq\; \varphi^{-1} \wh {\cal E}_{ab}
\end{equation*}
\hp{\zeta}.
This yields
\begin{equation} \label{main Taylor expansion}
\E f(x_T + q) \;=\; \E f(x_R + q) + \E \pb{f'(x_R + q) (x_T - x_R)} + \frac{1}{2} \E \pb{f''(x_R + q)(x_T - x_R)^2} + O \pb{\varphi^{-1} \wh {\cal E}_{ab}}\,.
\end{equation}

In order to analyse $x_T - x_R = \nu N^{1/2} \re (T_{\b v \b v} - R_{\b v \b v})$, we write
\begin{equation*}
x_T - x_R \;=\; y_1 + y_2 + y_3 + y_4\,,
\end{equation*}
where
\begin{equation*}
\qquad y_k \;\deq\;
\begin{cases}
\nu N^{1/2 - k / 2} \re \pb{(-R W)^k R}_{\b v \b v} & \text{if } k = 1,2,3
\\
\nu N^{-3/2} \re \pb{(-R W)^4 T}_{\b v \b v} & \text{if } k = 4\,.
\end{cases}
\end{equation*}
Using \eqref{Tva and Rva estimate}, it is easy to check that $y_1$ is bounded by the right-hand side of \eqref{xT - xR}, and that
\begin{equation} \label{yk bound}
\abs{y_k} \;\leq\; \varphi^{C_\zeta} N^{1/2 - k/2} \pB{N^{-1} (d - 1)^{-1/2} + (d - 1)^{1/2} \pb{\abs{v_a}^2 + \abs{v_b}^2}} \qquad (k = 2, 3, 4)
\end{equation}
\hp{\zeta}. In particular,
\begin{equation*}
x_T - x_R \;=\; y_1 + y_2 + y_3 + O\pb{\varphi^{-1} \wh {\cal E}_{ab}}
\end{equation*}
\hp{\zeta}. Moreover, using, $\abs{v_a} \leq \varphi^{-D}$, $\abs{v_b} \leq \varphi^{-D}$, \eqref{yk bound} for $k = 2$, and the fact that $y_1$ is bounded by the right-hand side of \eqref{xT - xR}, we find that
\begin{equation*}
\abs{y_1} \abs{y_2} \;\leq\; \varphi^{-1} \wh {\cal E}_{ab}
\end{equation*}
\hp{\zeta},
provided $D$ is chosen large enough. Similarly, using \eqref{yk bound} we find that $\abs{y_k} \abs{y_{k'}} \leq \varphi^{-1} \wh {\cal E}_{ab}$ for $k,k' \geq 2$ for large enough $D$. Thus we conclude from \eqref{main Taylor expansion} that
\begin{equation*}
\E f(x_T + q) \;=\; \E f(x_R + q) + \E \pb{f'(x_R + q) y_3} + A_{ab} + O\pb{\varphi^{-1} \wh {\cal E}_{ab}}\,,
\end{equation*}
where
\begin{equation*}
A_{ab} \;\deq\; \E \pb{ f'(x_R + q) (y_1 + y_2)} + \frac{1}{2} \E \pb{f''(x_R + q) y_1^2}
\end{equation*}
depends on the randomness only through $R$ and the first two moments of $W_{ab}$. Moreover, from \eqref{yk bound} and the fact that $y_1$ is bounded by the right-hand side of \eqref{xT - xR}, we conclude that $\abs{A_{ab}} \leq \varphi^{-1}$.

What remains is the analysis of the term $\E \pb{f'(x_R + q) y_3}$. We shall prove that
\begin{equation} \label{shift computation}
\absB{\E \pb{f'(x_R + q) y_3} - Y_{ab} \, \E f'(x_R + q)} \;\leq\; C \varphi^{-1} \wh {\cal E}_{ab}\,.
\end{equation}
If $a = b$, it is easy to see from \eqref{yk bound} and the definition of $Y_{ab}$ that
\begin{equation*}
\abs{y_3} + \abs{Y_{ab}} \;\leq\; \varphi^{-1} \wh{\cal E}_{ab}\,,
\end{equation*}
from which \eqref{shift computation} follows.

Let us therefore assume that $a \neq b$. We multiply out the matrix product in $\pb{(-RW)^3 R}_{\b v \b v}$ and regroup the resulting eight terms according to the number, $r$, of off-diagonal matrix elements ($R_{ab}$ or $R_{ba}$) of $R$. (By convention, the endpoint matrix elements $R_{\b v \cdot}$ and $R_{\cdot \b v}$ are not counted as off-diagonal.) This gives, in self-explanatory notation, $y_3 = \sum_{r = 0}^2 y_{3,r}$. Using Theorem \ref{theorem: strong estimate} and \eqref{Tva and Rva estimate}, we find
\begin{equation*}
\abs{y_{3,1}} + \abs{y_{3,2}} \;\leq\; \varphi^{C_\zeta} N^{-3/2} \pB{N^{-1/2} (d - 1)^{-1/2} + \abs{v_a} + \abs{v_b}}^2 \;\leq\; \varphi^{-1} \wh {\cal E}_{ab}
\end{equation*}
\hp{\zeta}\,. Therefore it suffices to prove that
\begin{equation} \label{shift computation 2}
\absB{\E \pb{f'(x_R + q) y_{3,0}} - Y_{ab} \, \E f'(x_R + q)} \;\leq\; C \varphi^{-1} \wh {\cal E}_{ab}
\end{equation}
for $a \neq b$. By definition,
\begin{align*}
y_{3,0} &\;=\; - \nu N^{-1} \re \pB{R_{\b v a} W_{ab} R_{bb} W_{ba} R_{aa} W_{ab} R_{b \b v} + R_{\b v b} W_{ba} R_{aa} W_{ab} R_{bb} W_{ba} R_{a \b v}}\,.
\end{align*}
Using \eqref{Tva and Rva estimate} and Theorem \ref{theorem: strong estimate} we find
\begin{equation*}
\absB{y_{3,0} + \nu N^{-1} \re \pB{\msc^2 \abs{W_{ab}}^2 \pb{R_{\b v a} W_{ab} R_{b \b v} + R_{\b v b} W_{ba} R_{a \b v}}}} \;\leq\; \varphi^{-1} \wh {\cal E}_{ab}
\end{equation*}
\hp{\zeta}. We only deal with the first term of $y_{3,0}$; the second one is dealt with analogously. Recalling the definition of $Y_{ab}$, we conclude that, in order to establish \eqref{shift computation 2}, it suffices to prove
\begin{equation} \label{res estimate step 6}
\absbb{\E \pbb{f'(x_R + q) \nu N^{-1} \re \pB{\msc^2 M^{(3)}_{ab} R_{\b v a} R_{b \b v} - \msc^4 M^{(3)}_{ab} \ol v_a v_b}}} \;\leq\; C \varphi^{-1} \wh {\cal E}_{ab}
\end{equation}
\hp{\zeta}; here we used that $R$ is independent of $W_{ab}$.

Setting $\b u = (u_i)$ with $u_i \deq \ind{i \notin \{a,b\}} v_i$ and recalling \eqref{def G prime} and \eqref{G G prime}, we get
\begin{align} \label{exp ab of R}
R_{\b v a} \;=\; \ol v_a R_{aa} + \ol v_b R_{ba} + R_{\b u a}
\;=\; \ol v_a \msc + \ol v_a (R_{aa} - \msc) + \ol v_b R_{ba} + \msc \cal R_{\b u a} + (R_{aa} - \msc) \cal R_{\b u a}\,,
\end{align}
where we defined
\begin{equation*}
\cal R_{\b u a} \;\deq\; - \sum_{i}^{(a)} R^{(a)}_{\b u i} h_{ia}\,, \qquad \cal R_{b \b u} \;\deq\; - \sum_i^{(b)} h_{b i} R^{(b)}_{i \b u}\,;
\end{equation*}
see \eqref{def R prime R double prime}.
The second and third terms are estimated using \eqref{Tva and Rva estimate} and Theorem \ref{theorem: strong estimate}:
\begin{equation} \label{res exp estimate 1}
\abs{R_{aa} - \msc} + \abs{R_{ba}} \;\leq\; \varphi^{C_\zeta} N^{-1/2} (d - 1)^{-1/2}
\end{equation}
\hp{\zeta}.
Moreover, since $R^{(a)} = T^{(a)}$, we find from Lemma \eqref{LDE}, Theorem \ref{theorem: strong estimate}, and \eqref{Gvw minor} that
\begin{multline} \label{res exp estimate 2}
\absb{\cal R_{\b u a}} \;\leq\; \varphi^{C_\zeta} \pBB{\frac{1}{N} \sum_{i}^{(a)} \absb{T_{\b u i}^{(a)}}^2}^{1/2}
\\
\leq\;
\varphi^{C_\zeta} \pBB{\frac{1}{N} \sum_i \pB{\abs{u_i}^2 + N^{-1} (d - 1)^{-1}}}^{1/2} \;\leq\; \varphi^{C_\zeta} N^{-1/2} (d - 1)^{-1/2}
\end{multline}
\hp{\zeta}. A similar estimate holds for $\cal R_{b \b u}$. Using \eqref{exp ab of R}, \eqref{res exp estimate 1}, \eqref{res exp estimate 2}, and \eqref{Tva and Rva estimate} we get
\begin{multline} \label{estimate of remaining m3 terms}
\nu N^{-1} \absB{\E \pB{f'(x_R + q) \pb{R_{\b v a} R_{b \b v} - \msc^2  \ol v_a v_b}}}
\\
\leq\; \nu N^{-1} \absB{\E \qB{f'(x_R + q) \pB{\msc^2 v_b \cal R_{\b u a} + \msc^2 \ol v_a \cal R_{b \b u} + \msc^2 \cal R_{\b u a} \cal R_{b \b u} } }} + C \varphi^{-1} \wh {\cal E}_{ab}
\end{multline}
\hp{\zeta}.

What remains is to estimate the right-hand side of \eqref{estimate of remaining m3 terms}. Defining
\begin{equation*}
x_R^{(a)} \;\deq\; \nu N^{1/2} \re (R^{(a)}_{\b v \b v} - \msc)\,,
\end{equation*}
we find from \eqref{Gvw minor} and \eqref{Tva and Rva estimate} that
\begin{equation*}
\absb{x_R - x_R^{(a)}} \;\leq\; \varphi^{C_\zeta} \pB{N^{-1/2} (d - 1)^{-1/2} + N^{1/2} (d - 1)^{1/2} \abs{v_a}^2 + N^{-1/2} (d - 1)^{1/2} \abs{v_b}^2}
\end{equation*}
\hp{\zeta}. Using \eqref{res exp estimate 2} and using that the derivative of $f$ is bounded, we may estimate the first term of \eqref{estimate of remaining m3 terms} as
\begin{equation*}
\nu N^{-1} \absB{\E \qB{f'(x_R + q) v_b \cal R_{\b u a}}} \;\leq\; \nu N^{-1} \absB{\E \qB{f'(x_R^{(a)} + q) v_b \cal R_{\b u a}}} + C \varphi^{-1} \wh {\cal E}_{ab} \;=\; C \varphi^{-1} \wh {\cal E}_{ab}
\end{equation*}
\hp{\zeta}. In the second step we used that $x_R^{(a)}$ is independent of the the $a$-th column of $Q$ and that $\E_a \cal R_{\b u a} = 0$. The second term of \eqref{estimate of remaining m3 terms} is similar. In order to estimate the third, we have to make $\cal R_{b \b u}$ independent of the $a$-th column of $Q$.
(See the definition \eqref{defHg1}.)
We estimate, using \eqref{Gvw minor}, $R^{(b)} = T^{(b)}$, \eqref{LDE}, and \eqref{Tva and Rva estimate}
\begin{align*}
\absBB{\cal R_{b \b u} + \sum_i^{(ab)} h_{b i} R^{(ab)}_{i \b u}} &\;\leq\; \absb{h_{ba} T_{a \b u}^{(b)}} + \absBB{\sum_i^{(ab)} h_{bi} \frac{T_{ia}^{(b)} T_{a \b u}^{(b)}}{T_{aa}^{(b)}}}
\\
&\;\leq\; \varphi^{C_\zeta} \pb{N^{-1} (d - 1)^{-1/2} + N^{-1/2} \abs{v_a}} + \varphi^{C_\zeta} N^{-1/2} \pBB{\sum_i^{(ab)} \absB{T_{ia}^{(b)} T_{a \b u}^{(b)} / T_{aa}^{(b)}}^2}^{1/2}
\\
&\;\leq\; \varphi^{C_\zeta} \pb{N^{-1} (d - 1)^{-1} + N^{-1/2} (d - 1)^{-1/2} \abs{v_a}}
\end{align*}
\hp{\zeta}. Thus we may estimate the third term of \eqref{estimate of remaining m3 terms} by
\begin{equation*}
\nu N^{-1} \absB{\E \qB{f'(x_R + q) \cal R_{\b u a} \cal R_{b \b u}}} \;\leq\; \nu N^{-1} \absBB{\E \qBB{f'(x_R^{(a)} + q) \cal R_{\b u a} \sum_i^{(ab)} h_{b i} R^{(ab)}_{i \b u} }} + C \varphi^{-1} \wh {\cal E}_{ab} \;=\; C \varphi^{-1} \wh {\cal E}_{ab}\,,
\end{equation*}
where in the second step we again used that $\E_a \cal R_{\b u a} = 0$. This concludes the proof of \eqref{res estimate step 6}, and hence of \eqref{comparison for T}.

The proof of \eqref{comparison for S} is almost identical to the proof of \eqref{comparison for T}, except that $\E \abs{V_{ab}}^2 V_{ab} = 0$, so that the left-hand side of the analogue of \eqref{res estimate step 6} vanishes. Note that, by definition, $A_{ab}$ depends only on $R$ and on the first two moments of $W_{ab}$, which coincide with those of $V_{ab}$. Hence $A_{ab}$ is the same in \eqref{comparison for T} and \eqref{comparison for S}. This concludes the proof.
\end{proof}

\providecommand{\bysame}{\leavevmode\hbox to3em{\hrulefill}\thinspace}
\providecommand{\MR}{\relax\ifhmode\unskip\space\fi MR }
\providecommand{\MRhref}[2]{%
  \href{http://www.ams.org/mathscinet-getitem?mr=#1}{#2}
}
\providecommand{\href}[2]{#2}

\end{document}